\newcommand\tsout{\bgroup\markoverwith{\textcolor{red}{\rule[0.5ex]{2pt}{1.4pt}}}\ULon}
\newcommand{\stkout}[1]{\ifmmode\text{\tsout{\ensuremath{#1}}}\else\tsout{#1}\fi}
\theoremstyle{definition}
\newtheorem{theorem}{Theorem}[section]
\newtheorem{lemma}[theorem]{Lemma}
\newtheorem{proposition}[theorem]{Proposition}
\newtheorem{definition}[theorem]{Definition}
\newtheorem{remark}[theorem]{Remark}
\numberwithin{equation}{section}
\newcommand{\dif}{\mathrm{d}}
\newcommand{\bff}{\boldsymbol}
\newcommand{\bb}{\mathbb}
\newcommand{\dt}{\mathrm{d}t}
\newcommand{\dr}{\mathrm{d}r}
\newcommand{\ds}{\mathrm{d}s}
\newcommand{\dW}{\mathrm{d}W}
\newcommand{\norm}[2]{\left\|{#1}\right\|_{#2}}
\newcommand{\inpro}[2]{\left\langle#1,#2\right\rangle}
\newcommand{\abs}[1]{\left|{#1}\right|}
\def\be{\begin{equation}\label}
\def\ee{\end{equation}}
\def\bd{\begin{definition}\label}
\def\ed{\end{definition}}
\def\bt{\begin{theorem}\label}
\def\et{\end{theorem}}
\def\br{\begin{remark}\label}
\def\er{\end{remark}}
\def\bal{\[\begin{aligned}}
\def\eal{\end{aligned}\]}
\begin{document}
\setcounter{page}{1}

\title[The stochastic Landau--Lifshitz--Baryakhtar equation]{The stochastic Landau--Lifshitz--Baryakhtar equation: Global solution and invariant measure}

\author[Beniamin Goldys]{Beniamin Goldys}
\address{School of Mathematics and Statistics, The University of Sydney, Sydney 2006, Australia}
\email{\textcolor[rgb]{0.00,0.00,0.84}{beniamin.goldys@sydney.edu.au}}

\author[Agus L. Soenjaya]{Agus L. Soenjaya}
\address{School of Mathematics and Statistics, The University of New South Wales, Sydney 2052, Australia}
\email{\textcolor[rgb]{0.00,0.00,0.84}{a.soenjaya@unsw.edu.au}}

\author[Thanh Tran]{Thanh Tran}
\address{School of Mathematics and Statistics, The University of New South Wales, Sydney 2052, Australia}
\email{\textcolor[rgb]{0.00,0.00,0.84}{thanh.tran@unsw.edu.au}}

\date{\today}

\keywords{}
\subjclass{}

\begin{abstract}
The Landau--Lifshitz--Baryakhtar (LLBar) equation perturbed by both additive and multiplicative noises is a system of fourth-order stochastic PDEs which models the evolution of magnetic spin fields in ferromagnetic materials at elevated temperatures, taking into account longitudinal damping, long-range interactions, spin current, and noise-induced phenomena at high temperatures. In this paper, we show the existence of a unique pathwise solution (which is analytically strong) to the stochastic LLBar equation posed in a bounded domain $\mathscr{D}\subset \mathbb{R}^d$, where $d=1,2,3$. We also prove the convergence of this pathwise solution to that of the stochastic Landau--Lifshitz--Bloch (LLB) equation in the limit of vanishing exchange relaxation parameter. Finally, we show the Feller property of the Markov semigroup associated with the strong solution, and prove the existence of nontrivial invariant measures. For temperatures above the Curie temperature, exponential stability of the solution and uniqueness of invariant measure are obtained under certain dissipativity conditions.
\end{abstract}
\maketitle

%\addcontentsline
\tableofcontents

\section{Introduction}
In recent years there has been growing interest in systems of stochastic partial differential equations (SPDEs), especially systems with high order partial derivatives; see for example an important paper \cite{Portal-Veraar}. However, these high order systems are still little understood. On the other hand, well-posedness for some fourth-order scalar-valued SPDEs, such as the stochastic Kuramoto--Sivashinsky equation~\cite{DuaErv01, Fer08} and the stochastic Swift--Hohenberg equation~\cite{Gao17, WanSunDua05} have been analysed in the literature, but mainly for the case of one- or two-dimensional spatial domains. 

In this paper, we are concerned with a particular fourth-order system of nonlinear SPDEs arising in the theory of ferromagnetism. This is a semi-classical continuum theory that aims to describe the magnetic properties of a ferromagnetic object. The problem takes the following form:
Given a magnetic body $\mathscr{D} \subset \bb{R}^d$, $d\in \{1,2,3\}$, with sufficiently smooth boundary $\partial \mathscr{D}$, its magnetisation vector $\bff{u}(t,\bff{x}) \in \bb{R}^3$ for any time $t>0$ satisfies the following equations 
\begin{subequations}\label{equ:sllbar}
	\begin{alignat}{2}
		&\dif \bff{u}
		= 
		\big( \lambda_r \bff{H}
		- \lambda_e \Delta \bff{H} 
		- \gamma \bff{u} \times \bff{H} 
		+ S(\bff{u}) \big)\, \dt
		+
		\sum_{k=1}^\infty G_k(\bff{u}) \, \dif W_k(t)
		\; && \quad\text{for $(t,\bff{x})\in(0,T)\times \mathscr{D}$,}
  \label{equ:sllbar a}
		\\
		&\bff{H}
		= 
		\alpha\Delta \bff{u}
		+ \kappa_1 \bff{u}
		- \kappa_2 |\bff{u}|^2 \bff{u}
		\; && \quad\text{for $(t,\bff{x})\in(0,T)\times\mathscr{D}$,}
  \label{equ:sllbar b}
		\\[1ex]
		&\bff{u}(0,\bff{x})= \bff{u}_0(\bff{x}) 
		\; && \quad\text{for } \bff{x}\in \mathscr{D},
  \label{equ:sllbar c}
		\\[1ex]
			&\frac{\partial \bff{u}}{\partial \bff{n}}= \bff{0}, 
		\;\displaystyle{\frac{\partial \bff{H}}{\partial \bff{n}}= \bff{0}} 
		\; && \quad\text{for } (t,\bff{x})\in (0,T) \times \partial \mathscr{D},
  \label{equ:sllbar d}
	\end{alignat}
\end{subequations}
where $\{W_k\}_{k\in \bb{N}}$ is a family of independent real-valued Wiener processes and $S(\bff{u})$ is a nonlinear function of $\bff{u}$ to be specified later.
For physical reasons and concreteness we take 
\begin{equation}\label{equ:Gk}
G_k(\bff{u}):= \bff{g}_k + \gamma \bff{u} \times \bff{h}_k,
\end{equation}
where $\bff{g}_k$ and $\bff{h}_k$ are given functions with certain regularity. For simplicity of presentation, we set~$\alpha=\kappa_1=\kappa_2=1$ (corresponding to the regime below the Curie temperature), except in Theorem~\ref{the:stab above} and Theorem~\ref{the:llbar llb} where we set $\alpha=\kappa_2=1$ and $\kappa_1=-1$ (corresponding to the regime above the Curie point). This system is known as an initial-boundary value problem associated with the stochastic Landau--Lifshitz--Baryakhtar (LLBar) equation, the physical background of which (together with the physical meanings of all coefficients in the equations) and the literature review are deferred to a subsequent paragraph.

To the best of our knowledge, the existing general theory of SPDEs does not cover problem~\eqref{equ:sllbar}. The results in \cite{Portal-Veraar} would require the equations under consideration posed in the full space $\mathbb R^d$ and containing Lipschitz nonlinearities only. While an extension might be possible, we have chosen instead to use the classical arguments based on Galerkin approximations. We begin by proving the existence of an analytically strong global martingale solution. It is worth noting that the right-hand side of~\eqref{equ:sllbar a} does not define a monotone operator over the full range of parameters, making uniform-in-time estimates nontrivial. Next, we establish pathwise uniqueness, which in turn yields the existence of a unique pathwise global strong solution to the problem. These solutions generate a Markov family on $\mathbb H^2$, and we show that the corresponding transition semigroup $\{P_t\}_{t\geq 0}$ is Feller, in the sense that $P_tC_b(\mathbb H^2)\subset C_b(\mathbb H^2)$. We remark that, for related nonlinear problems such as the Landau--Lifshitz--Bloch equation, the Feller property has been established only when the state space is endowed with the weak topology~\cite{BrzGolLe20}. In \cite{Gus22}, the Feller property is proved for the strong topology but the domain is one-dimensional. The proof of the Feller property in $\mathbb H^2$ endowed with the norm topology requires some delicate arguments. We introduce two sequences of stopping times: the first one bounds the $\mathbb{H}^2$ norm of the solution, while the second one captures the instantaneous parabolic regularisation inherent in the equation. Together, these stopping times are essential in establishing a continuous dependence estimate, which in turn yields the Feller property of the transition semigroup on $\mathbb{H}^2$. The existence of an invariant measure then follows from the classical Krylov–Bogoliubov theorem.

Problem \eqref{equ:sllbar} is not dissipative in general. In Theorem \ref{the:stab above}, we consider the dissipative case under the assumption of the absence of additive noise in \eqref{equ:sllbar}. In this case, the Dirac measure $\delta_{\bff{0}}$ is trivially invariant. Theorem~\ref{the:stab above} further shows that $\delta_{\bff{0}}$ is the unique invariant measure for the system, and that solutions converge exponentially fast to this stationary state, provided the dissipativity is sufficiently strong. This dissipativity assumption corresponds to the situation in which the temperature of the ferromagnetic material lies above the Curie point; see the discussion below.

Finally, in Theorem~\ref{the:llbar llb}, we show that as~$\lambda_e \to 0^+$, the pathwise solution converges to that of the stochastic Landau–Lifshitz–Bloch (LLB) equation, given by~\eqref{equ:sllbar a} with~$\lambda_e = 0$. In the deterministic setting ($\bff{g}_k=\bff{h}_k=\bff{0}$), this fact allows us to approximate the solution of the LLB equation by that of the LLBar equation, the latter can be solved numerically by a mixed finite element scheme efficiently~\cite{Soe24}. In the stochastic setting, this result forms a basis for the mixed finite element method proposed in~\cite{GolSoeTra25b}, which further reduce the computational cost compared to the $C^1$-conforming method proposed in~\cite{GolJiaLe25}.

We now provide some physical background of the problem under consideration. The standard model in the theory of ferromagnetism is the Landau--Lifshitz--Gilbert (LLG) equation~\cite{LL35}. It is widely known, however, that this model is only valid at temperatures far below the Curie temperature of the material~\cite{BarIva15, ChuNie20}. On the other hand, many modern magnetic devices, including the heat-assisted magnetic recording (HAMR)~\cite{ZhuLi13} and the thermally-assisted magnetic random access memory~\cite{PreKer07}, operate at temperatures very close to (or even exceeding) its Curie temperature. As such, it is essential to have an accurate physical model for magnetisation dynamics at elevated temperatures. Several micromagnetic models which are valid at high temperatures have been proposed in the literature, two of the earliest being the LLB equation~\cite{Gar97} and the LLBar equation~\cite{BarBar98, Bar84}.

Some comments on a version of the LLBar equation we study in this paper are in place now. In equations~\eqref{equ:sllbar a} and~\eqref{equ:sllbar b}, the positive constants $\lambda_r, \lambda_e$, $\alpha$, and $\gamma$ are, respectively, the relativistic damping constant, exchange relaxation constant, exchange field constant, and electron gyromagnetic ratio. Physical considerations dictate that the constants~$\kappa_1$ and~$\kappa_2$ are positive for temperatures below the Curie temperature, while above the Curie temperature $\kappa_1$ is negative. 
In~\eqref{equ:sllbar a},
\begin{equation}\label{equ:S u}
	S(\bff{u}):= R(\bff{u})+ L(\bff{u}),
\end{equation}
where $L(\bff{u})$ is a Lipschitz function of $\bff{u}$ and $R(\bff{u})$ is the spin-torque terms (see~\cite{AyoKotMouZak21, MelPta13, SchHinKla09, Alb16, YasFasIvaMak22, ZhaLi04}) defined by
\begin{equation}\label{equ:torque}
	R(\bff{u})
    := 
    \beta_1 (\bff{\nu} \cdot \nabla) \bff{u} 
    + 
    \beta_2 \bff{u} \times (\bff{\nu} \cdot \nabla) \bff{u},
\end{equation}
with $\bff{\nu}(t): \mathscr{D} \to \bb{R}^d$ being the spin current, and $\beta_1$ and $\beta_2$ being constants.
More precise assumptions on $L(\bff{u})$ are elaborated in Section~\ref{subsec:assump}.
%There are two physical reasons to include the term $L(\bff{u})$ in~\eqref{equ:S u}:
%\begin{enumerate}
%	\item The stochastic term is usually interpreted in the Stratonovich sense for the LLG or the LLB equation, and converting the Stratonovich differential to the It\^o differential results in an additional drift term. For the function $G_k(\bff{u})$ defined in \eqref{equ:Gk} (or other form which is linear in $\bff{u}$), this additional term satisfies our assumptions for $L(\bff{u})$ described in Section~\ref{subsec:assump}.
%	\item There are other types of phenomenological torque terms which are linear in $\bff{u}$; see \cite{MonBerOpp18}. The simplest anisotropy field is linear as well. These terms can be included in $L(\bff{u})$ too.
%\end{enumerate}

To incorporate random perturbation into the dynamics, many approaches are available in the physics literature. A suggestion is made in~\cite{EvaHin12} to perturb the precessional term and add a random torque, while another argument is put forth in~\cite{XuZha13}, by virtue of the fluctuation-dissipation theorem, to perturb only the precessional term. For the stochastic LLBar problem~\eqref{equ:sllbar} considered in this paper, we generally add noise to the precessional term and append a random forcing term (cf.~\eqref{equ:Gk}), except in Theorem~\ref{the:stab above} and in Section~\ref{sec:stab above} where noise enters only through the precessional term.

The deterministic version of~\eqref{equ:sllbar} (corresponding to~$G_k(\bff{u})\equiv0$ in~\eqref{equ:sllbar a}) has been studied in~\cite{SoeTra23, SoeTra23b}. Other fourth-order equations of Landau--Lifshitz-type are studied in~\cite{melcher}. 
It is noted that systems of fourth-order equations also appear in certain areas of chemistry and biology.
In particular, if in~\eqref{equ:sllbar a} $\gamma=0$ and $\beta_2=0$, then we have a generalised reaction-diffusion-advection model with long range effects, which is prominently used in mathematical biology~\cite[Chapter~11]{Mur02} (see also~\cite{CocDi23, CohMur81, Och84}), and in chemistry to model anomalous bi-flux diffusion process~\cite{BevGalSimRio13, JiaBevZhu20}.

The stochastic versions of the LLG and LLB equations have been studied before, for instance in~\cite{AloBouHoc14, BrzGolLe20, BrzGolLi24, GolLeTra16}. In particular, the well-posedness and long-time behaviour of the stochastic LLG equation, including large deviations results, have been studied in~\cite{BrzGolJeg17, GusHoc23}. The existence of invariant measures for $d=1$ has been established in~\cite{Gus22, NekPro13}, while numerical simulations aimed at capturing the behaviour of invariant measures in two- or three-dimensional domains have been carried out in~\cite{BanBrzNekPro14}. For the stochastic LLB equation, the existence of pathwise solutions and invariant measures for $d\leq 2$ has been shown in~\cite{BrzGolLe20}. 
After the completion of this work, we become aware of the preprint~\cite{XuZhaLiu24}, which proves the unique existence of \emph{analytically weak} pathwise global solutions for the stochastic LLBar equation for $d=1$. In contrast, our results establish the existence and uniqueness of \emph{analytically strong} pathwise global solutions for dimensions $d \leq 3$. A key element of our analysis is a uniform $\mathbb{H}^1$ estimate, obtained by exploiting the energy structure of the equation, which could also be employed to extend the results of~\cite{XuZhaLiu24} to higher dimensions. Furthermore, we show the existence of invariant measures supported on a more regular space and valid for $d \leq 3$, whereas~\cite{XuZhaLiu24} obtains such invariant measures only for $d=1$.

The main results of this paper are as follows:
\begin{enumerate}[(i)]
    \item existence and uniqueness of (probabilistically and analytically) strong solution (Theorem~\ref{the:unique}),
    \item existence of a non-trivial invariant measure (Theorem~\ref{the:invariant}),
    \item exponential stability of the solution and uniqueness of invariant measure for temperatures above the Curie point, in the case where the noise enters only through the precessional term (Theorem~\ref{the:stab above}),
    \item convergence of the strong solution of the stochastic LLBar equation to that of the stochastic LLB equation in the limit of vanishing exchange relaxation parameter (Theorem~\ref{the:llbar llb}).
\end{enumerate}

\section{Preliminaries}
\subsection{Notations}
We begin by defining some notations used in this paper. Let $k\in \bb{N}$ and $p\in [1,\infty]$. The function space $\bb{L}^p := \bb{L}^p(\mathscr{D}; \bb{R}^3)$ denotes the usual space of $p$-th integrable functions defined on~$\mathscr{D}$ and taking values in $\bb{R}^3$ with an obvious modification for $p=\infty$, and $\bb{W}^{k,p} := \bb{W}^{k,p}(\mathscr{D}; \bb{R}^3)$ denotes the usual Sobolev space of 
functions on $\mathscr{D} \subset \bb{R}^d$ taking values in $\bb{R}^3$. We
write $\bb{H}^k := \bb{W}^{k,2}$. The dual space of $\bb{H}^k$ is denoted by $\widetilde{\bb{H}}^{-k}$.
The partial derivative
$\partial/\partial x_i$ will be written by $\partial_i$ for short. The partial derivative of~$f$ with respect to time $t$ will be denoted by $\partial_t$.

If $X$ is a Banach space, the spaces $L^p(0,T; X)$ and $W^{k,p}(0,T;X)$ denote respectively the usual Lebesgue and Sobolev spaces of strongly measurable functions on $(0,T)$ taking values in $X$. The space $C([0,T];X)$ denotes the space of continuous functions on $[0,T]$ taking values in $X$, while $B_b(X)$ and $C_b(X)$ denotes the space of bounded real-valued Borel functions on $X$ and the space of bounded continuous functions on $X$, respectively. The space $L^p(\Omega; X)$ denotes the space of $X$-valued random variables with finite $p$-th moment, where $(\Omega,\mathcal{F},\bb{P})$ is a probability space. The expectation of a random variable $Y$ will be denoted by $\bb{E}[Y]$.

Throughout this paper, we denote the scalar product in a Hilbert space $H$ by $\langle \cdot, \cdot\rangle_H$ and its corresponding norm by $\|\cdot\|_H$. We will not distinguish between the scalar product of $\bb{L}^2$ vector-valued functions taking values in $\bb{R}^3$ and the scalar product of $\bb{L}^2$ matrix-valued functions taking values in $\bb{R}^{3\times 3}$, and still denote them by $\langle\cdot,\cdot\rangle_{\bb{L}^2}$.

In various estimates, the constant $C$ in the estimate denotes a
generic constant which takes different values at different occurrences. If
the dependence of $C$ on some variable, e.g.~$S$, is highlighted, we will write $C_S$ or $C(S)$. The notation $A\lesssim_S B$ means $A \le C_S B$, while the notation $A \lesssim B$ means $A \le C B$ where the specific form of
the constant $C$ is not important to clarify.

\subsection{Assumptions}\label{subsec:assump}
Assume that $\mathscr{D}$ is an open bounded domain with $C^4$-smooth boundary. Let $\Delta$ denote the Neumann Laplacian acting on $\bb{R}^3$-valued functions with the domain
\begin{equation}\label{equ:neumann delta}
{\text{D}(\Delta):= \left\{ \bff{v}\in \bb{H}^2 : \frac{\partial\bff{v}}{\partial \bff{n}}=0 \text{ on } \partial\mathscr{D} \right\}.}
\end{equation}
For equation~\eqref{equ:sllbar}, we assume the following:
\begin{enumerate}
	\item For each $k\in \bb{N}$,
	\[
		G_k(\bff{u}) = \bff{g}_k + \gamma \bff{u} \times \bff{h}_k,
	\]
	where $\bff{g}_k:\mathscr{D}\to \bb{R}^3$ and $\bff{h}_k:\mathscr{D}\to\bb{R}^3$ are functions such that {$\bff{g}_k, \bff{h}_k \in \mathrm{D}(\Delta)$} and
	\begin{equation}\label{equ:sigma g sigma h}
		\sigma_g^2:= \sum_{k=1}^\infty \norm{\bff{g}_k}{\bb{H}^2}^2 <\infty
        \quad \text{and} \quad
        \sigma_h^2:= \sum_{k=1}^\infty \norm{\bff{h}_k}{\bb{H}^2}^2 <\infty.
	\end{equation}
	\item The spin current vector field $\bff{\nu}\in L^\infty\big(\bb{R}^+;\bb{L}^\infty(\mathscr{D};\bb{R}^d)\big)$ is given.
	\item $L:\bb{R}^3 \to \bb{R}^3$ is a Lipschitz-continuous function representing any additional phenomenological torque terms arising from an applied field, an optical field, or others~\cite{MonBerOpp16, MonBerOpp18}. More precisely, there exists a constant $C>0$ such that for all $\bff{v}_1$, $\bff{v}_2\in \bb{R}^3$,
	\begin{equation*}
		|L(\bff{v}_1) - L(\bff{v}_2)| \leq C|\bff{v}_1-\bff{v}_2|.
	\end{equation*}
\end{enumerate}

For any $\beta>0$, we define the Hilbert space 
\begin{equation}\label{equ:X beta}
    \bb{X}^\beta= \text{D}(\Delta^\beta), \; \text{ equipped with the norm }\; \norm{\bff{v}}{\bb{X}^\beta}:= \norm{(-\Delta+I)^\beta \bff{v}}{\bb{L}^2}.
\end{equation}
The dual space of $\bb{X}^\beta$ will be denoted by $\bb{X}^{-\beta}$.
We also define the operator
\begin{equation}\label{equ:op A}
    A:= \lambda_e \Delta^2 -(\lambda_r-\lambda_e)\Delta + \beta_0 I, \quad \text{with} \;\;
    \text{D}(A) = \bb{X}^2,
\end{equation}
where $\beta_0>0$ is sufficiently large so that $A\geq \lambda_0 I$ for some $\lambda_0>0$. Then $A$ is a positive, self-adjoint operator in $\bb{L}^2$.

\subsection{Formulations and main results}

%We assume that we have {are given} a filtered probability space $(\Omega, \mathcal{F}, \bb{F}, \bb{P})$, where $\bb{F}= \{\mathcal{F}_t\}_{t\geq 0}$ is a filtration satisfying the usual conditions. 
%We assume that on this probability space a sequence of independent real-valued $\bb{F}$-adapted Wiener processes $\{W_k(t)\}_{t\geq 0}$ is defined.
%
The notion of solution to \eqref{equ:sllbar} used in this paper can now be stated in the following definition.

\begin{definition}\label{def:mart sol}
    Given $T>0$ and initial data $\bff{u}_0\in\bb{H}^2$, a martingale solution $(\Omega, \mathcal{F}, \bb{F}, \bb{P}, W, \bff{u})$ to problem~\eqref{equ:sllbar} in $[0,T]$ consists of 
	\begin{enumerate}
	\item a filtered probability space $(\Omega, \mathcal{F}, \bb{F}, \bb{P})$ with the filtration $\bb{F}=\{\mathcal{F}_t\}_{t\geq 0}$ satisfying the usual conditions,
	\item a sequence of real-valued $\bb{F}$-adapted Wiener processes $W_{k}=\{W_{k}(t)\}_{t\geq 0}$,
	\item a progressively measurable process $\bff{u}: [0,T]\times \Omega \to \bb{H}^2$ such that $\bb{P}$-a.s.
	\[
	\bff{u}\in  L^\infty(0,T;\bb{H}^2) \cap L^2(0,T;\bb{H}^4),
%	\quad \text{and} \quad 
%	\bff{H}\in L^2(0,T;\bb{H}^2),
	\]
	and for every $t\in [0,T]$, $\bb{P}$-a.s.
	\begin{align}\label{equ:weakform}
			\bff{u}(t)
			&=
			\bff{u}_0
			+
			\lambda_r 
			\int_{0}^{t}  \bff{H}(s)\,\ds
			-
			\lambda_e 
			\int_{0}^{t} \Delta \bff{H}(s) \,\ds
			-
			\gamma 
			\int_{0}^{t} \bff{u}(s) \times
			\bff{H}(s) \,\ds
			\nonumber \\
			&\quad
			+
			\int_0^t R\big(\bff{u}(s)\big) \,\ds
			+
			\int_0^t L\big(\bff{u}(s)\big) \, \ds
			+
			\sum_{k=1}^\infty \int_0^t \left(\bff{g}_k+  \gamma \bff{u}(s)\times \bff{h}_k \right) \dif W_k(s),
   \end{align}
   where $\bff{H}(t) =\Delta \bff{u}(t) \pm\bff{u}(t)-|\bff{u}(t)|^2 \bff{u}(t)$ a.e.
	\end{enumerate}
\end{definition}

The main results of this paper are stated below.

\begin{theorem}\label{the:exist}
Let $\mathscr{D}\subset \bb{R}^d$, where $d=1$, $2$, $3$. Let $\bff{u}_0\in \mathrm{D}(\Delta)$ and {$\bff{g}_k,\bff{h}_k\in \mathrm{D}(\Delta)$ be given satisfying \eqref{equ:sigma g sigma h}}. There exists a martingale solution $(\Omega, \mathcal{F}, \bb{F}, \bb{P}, W, \bff{u})$ of \eqref{equ:sllbar} such that
\begin{enumerate}
	\item for any $q\geq 1$,
		\begin{align*}
			\bff{u} &\in L^q \Big(\Omega; L^\infty(0,T;\bb{H}^2) 
            \cap L^2(0,T;\bb{H}^4)\Big),
		\end{align*}
%	\item for every $\alpha\in (0,\frac12)$, $\bb{P}$-a.s.,
%    \begin{equation}\label{equ:u holder}
%        \bff{u}\in C^\alpha([0,T];\bb{L}^2).
%    \end{equation}
    \item for every $\beta\in [\frac12,1)$, $\delta\in (0,1-\beta)$, $\bb{P}$-a.s.,
    \begin{equation}\label{equ:u holder reg}
        \bff{u}\in C^\delta\big((0,T);\text{D}(A^\beta)\big),
    \end{equation}
    where $A$ is defined in~\eqref{equ:op A}. In particular, $\bff{u}\in C([0,T]; \bb{H}^2)$.
\end{enumerate}
\end{theorem}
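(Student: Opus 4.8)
The plan is to construct the solution by a Galerkin approximation, followed by a compactness and martingale-identification argument, and then to upgrade the time regularity using the analytic smoothing of the leading fourth-order operator. Let $\{\bff{e}_n\}$ be the orthonormal basis of $\bb{L}^2$ consisting of eigenfunctions of the Neumann Laplacian $\Delta$ (equivalently of $A$), and let $\Pi_n$ be the orthogonal projection onto $\bb{S}_n:=\mathrm{span}\{\bff{e}_1,\dots,\bff{e}_n\}$. Projecting \eqref{equ:sllbar} onto $\bb{S}_n$ produces a finite-dimensional Itô SDE with locally Lipschitz coefficients, which admits a unique local solution $\bff{u}_n$; the a priori bounds below promote this to a global solution on $[0,T]$.

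The heart of the argument is a hierarchy of a priori estimates, uniform in $n$ and in all $L^q(\Omega)$ moments. First I would exploit the energy structure: writing $\bff{H}_n=\Delta\bff{u}_n\pm\bff{u}_n-|\bff{u}_n|^2\bff{u}_n=-E'(\bff{u}_n)$ for the Ginzburg--Landau energy $E(\bff{v})=\int_{\mathscr{D}}\big(\tfrac12|\nabla\bff{v}|^2\mp\tfrac12|\bff{v}|^2+\tfrac14|\bff{v}|^4\big)\,\dx$, and applying Itô's formula to $\|\bff{u}_n\|_{\bb{L}^2}^2$ and to $E(\bff{u}_n)$, the gyromagnetic term does not contribute to the energy balance since $\bff{u}_n\times\bff{H}_n$ is pointwise orthogonal to both $\bff{u}_n$ and $\bff{H}_n$, while the biharmonic part $-\lambda_e\Delta\bff{H}_n$ supplies the coercive dissipation $\lambda_e\|\nabla\bff{H}_n\|_{\bb{L}^2}^2$. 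Since $E$ is bounded below and controls $\|\nabla\bff{u}_n\|_{\bb{L}^2}^2$, this yields the uniform $\bb{H}^1$ estimate flagged in the introduction, bounding $\bff{u}_n$ in $L^q(\Omega;L^\infty(0,T;\bb{H}^1))\cap L^q(\Omega;L^2(0,T;\bb{H}^3))$. I would then estimate $\|A\bff{u}_n\|_{\bb{L}^2}$, treating the nonlinearities $|\bff{u}_n|^2\bff{u}_n$, $\bff{u}_n\times\Delta\bff{u}_n$, and the first-order torque $R(\bff{u}_n)$ by Gagliardo--Nirenberg interpolation and the Sobolev embeddings available for $d\le 3$ (in particular $\bb{H}^2\hookrightarrow\bb{L}^\infty$), absorbing the top-order contributions into the dissipation and closing a Gronwall inequality, with the noise handled via the Burkholder--Davis--Gundy inequality and \eqref{equ:sigma g sigma h}. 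This step is the main obstacle: because the drift is not monotone, the higher-order estimate does not close by a soft argument, and it genuinely relies on the lower-order energy bound as an input together with a delicate interpolation to absorb the cubic and cross terms. The outcome is a bound for $\bff{u}_n$ uniform in $L^q(\Omega;L^\infty(0,T;\bb{H}^2)\cap L^2(0,T;\bb{H}^4))$.

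With these bounds I would establish tightness. A fractional-in-time estimate of the form $\bff{u}_n\in W^{\sigma,p}(0,T;\bb{X}^{-r})$ uniformly in $n$ (the deterministic integrals giving classical time regularity, the stochastic convolution giving $C^\sigma$ regularity via the factorization method and BDG) combines with the spatial bounds through an Aubin--Lions--Simon criterion to show that the laws of $\{\bff{u}_n\}$ are tight on $L^2(0,T;\bb{H}^2)\cap C([0,T];\bb{X}^{-r})$. Prokhorov's theorem and the Skorokhod representation theorem then furnish, on a new probability space, a subsequence converging almost surely in this topology to a limit $\bff{u}$, together with limiting Wiener processes. Strong convergence in $L^2(0,T;\bb{H}^2)$ (using $\bb{H}^2\hookrightarrow\bb{L}^\infty$) lets me pass to the limit in each nonlinear term, and a standard martingale-representation argument identifies the stochastic integral, producing a martingale solution satisfying \eqref{equ:weakform}. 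The uniform moment bounds are inherited by the limit through weak lower semicontinuity of norms, which gives part~(1).

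For part~(2) I would use the mild formulation $\bff{u}(t)=e^{-tA}\bff{u}_0+\int_0^t e^{-(t-s)A}\bff{F}(s)\,\ds+\int_0^t e^{-(t-s)A}\bff{G}(\bff{u}(s))\,\dif W(s)$, where $\bff{F}$ collects the lower-order drift terms. The analytic estimate $\|A^\beta e^{-tA}\|\lesssim t^{-\beta}$, together with the already-established integrability of $\bff{F}$ and the factorization method for the stochastic convolution, yields, for $\beta\in[\tfrac12,1)$ and $\delta\in(0,1-\beta)$, that $\bff{u}\in C^\delta\big((0,T);\mathrm{D}(A^\beta)\big)$. Since $A$ is fourth-order, $\mathrm{D}(A^{1/2})=\bb{H}^2$; taking $\beta=\tfrac12$ and using continuity up to $t=0$ supplied by $\bff{u}_0\in\mathrm{D}(\Delta)$ gives $\bff{u}\in C([0,T];\bb{H}^2)$.
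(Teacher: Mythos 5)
Your proposal is correct and follows essentially the same route as the paper: Galerkin approximation, an $\bb{H}^1$ energy estimate obtained by testing with $\bff{H}_n$ so that the precessional term drops out and the dissipation $\lambda_r\|\bff{H}_n\|_{\bb{L}^2}^2+\lambda_e\|\nabla\bff{H}_n\|_{\bb{L}^2}^2$ appears, a higher-order estimate closed by Gagliardo--Nirenberg interpolation against this lower-order input, tightness plus Skorokhod representation, and finally the mild formulation with analytic-semigroup estimates and the Kolmogorov/factorization machinery for \eqref{equ:u holder reg} and continuity in $\bb{H}^2$. The only cosmetic differences (martingale-representation versus the paper's direct weak-limit identification of $\bff{M}_n$, and factorization versus a direct BDG computation for the stochastic convolution) are interchangeable standard tools.
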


\begin{theorem}\label{the:unique}
Suppose that $(\Omega, \mathcal{F}, \bb{F}, \bb{P}, W, \bff{u}_1)$ and $(\Omega, \mathcal{F}, \bb{F}, \bb{P}, W, \bff{u}_2)$ are two martingale solutions to \eqref{equ:sllbar} in the sense of Definition~\ref{def:mart sol}.
Then, for $\bb{P}$-a.s. $\omega\in \Omega$,
\[
	\bff{u}_1(\cdot, \omega) = \bff{u}_2(\cdot, \omega).
\]
This implies the existence of a pathwise unique (probabilistically) strong solution of \eqref{equ:sllbar} and the uniqueness in law of martingale solution of \eqref{equ:sllbar} by the Yamada--Watanabe theorem ~\cite[Theorem~2.2 and Theorem~12.1]{Ond04}.
\end{theorem}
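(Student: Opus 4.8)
The plan is to prove pathwise uniqueness via an energy estimate on the difference $\bff{w} := \bff{u}_1 - \bff{u}_2$ of two martingale solutions driven by the same Wiener processes on the same probability space. Since both solutions satisfy \eqref{equ:weakform} with the same initial data $\bff{u}_0$ and the same noise, subtracting the two equations gives an evolution equation for $\bff{w}$ with zero initial condition and, crucially, no additive-noise contribution (the $\bff{g}_k$ terms cancel), leaving only the multiplicative noise $\gamma \bff{w}\times \bff{h}_k$. First I would write down the equation for $\bff{w}$ and identify the leading-order dissipative term: using $\bff{H}_i = \Delta\bff{u}_i \pm \bff{u}_i - |\bff{u}_i|^2\bff{u}_i$, the exchange terms $\lambda_r\bff{H} - \lambda_e\Delta\bff{H}$ produce the operator $A$ from \eqref{equ:op A} acting on $\bff{w}$, which is coercive (bounded below by $\lambda_0 I$) and supplies the good sign.

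The core computation is to apply the It\^o formula to $\norm{\bff{w}(t)}{\bb{L}^2}^2$. The deterministic drift contributes the coercive term $-2\lambda_e\norm{\Delta\bff{w}}{\bb{L}^2}^2$ (plus lower-order gradient terms from $A$) on the dissipative side, against which I must control: (i) the cubic difference $|\bff{u}_1|^2\bff{u}_1 - |\bff{u}_2|^2\bff{u}_2$, (ii) the gyromagnetic cross-product differences $\bff{u}_1\times\bff{H}_1 - \bff{u}_2\times\bff{H}_2$, (iii) the spin-torque term $R(\bff{u}_1)-R(\bff{u}_2)$ from \eqref{equ:torque}, (iv) the Lipschitz term $L(\bff{u}_1)-L(\bff{u}_2)$, and (v) the It\^o correction $\sum_k \gamma^2\norm{\bff{w}\times\bff{h}_k}{\bb{L}^2}^2$ from the quadratic variation, which is bounded by $\gamma^2\sigma_h^2\norm{\bff{w}}{\bb{L}^2}^2$ using $\norm{\bff{h}_k}{\bb{L}^\infty}\lesssim\norm{\bff{h}_k}{\bb{H}^2}$ and \eqref{equ:sigma g sigma h}. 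Each difference is decomposed by adding and subtracting mixed terms (e.g. $\bff{u}_1\times\bff{H}_1-\bff{u}_2\times\bff{H}_2 = \bff{w}\times\bff{H}_1 + \bff{u}_2\times(\bff{H}_1-\bff{H}_2)$) so that every factor carries either $\bff{w}$ or a derivative of $\bff{w}$. The stochastic integral term $2\gamma\sum_k\inpro{\bff{w}}{\bff{w}\times\bff{h}_k}_{\bb{L}^2}\dif W_k$ vanishes pointwise because $\inpro{\bff{w}}{\bff{w}\times\bff{h}_k}_{\bb{L}^2}=0$, so after taking expectations it disappears.

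The main obstacle, as flagged in the introduction, is that the operator on the right-hand side is not monotone over the full parameter range, so the troublesome terms cannot simply be absorbed into the dissipation by Young's inequality with constant coefficients; instead their bounds involve the $\bb{H}^2$ (or $\bb{H}^4$) norms of the two solutions, which are only known to be finite $\bb{P}$-a.s. and are not uniformly bounded in $\omega$. The standard remedy is a localisation argument: I would introduce the stopping times
\[
	\tau_R := \inf\Big\{ t\geq 0 : \norm{\bff{u}_1(t)}{\bb{H}^2}^2 + \norm{\bff{u}_2(t)}{\bb{H}^2}^2 + \int_0^t \big(\norm{\bff{u}_1(s)}{\bb{H}^4}^2 + \norm{\bff{u}_2(s)}{\bb{H}^4}^2\big)\,\ds \geq R \Big\},
\]
which satisfy $\tau_R\to\infty$ $\bb{P}$-a.s. as $R\to\infty$ by virtue of the regularity in Theorem~\ref{the:exist}. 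On $[0,\tau_R]$ all the coefficient norms are bounded by $R$, so the nonlinear differences can be estimated to yield a differential inequality of the form $\ddt\bb{E}\norm{\bff{w}(t\wedge\tau_R)}{\bb{L}^2}^2 \leq C_R\,\bb{E}\norm{\bff{w}(t\wedge\tau_R)}{\bb{L}^2}^2$, where the Gagliardo--Nirenberg and Sobolev embeddings ($d\leq 3$) are used to interpolate the cubic and cross-product terms, distributing half-derivatives onto $\bff{w}$ and absorbing the top-order piece into $-2\lambda_e\norm{\Delta\bff{w}}{\bb{L}^2}^2$.

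Finally I would close the argument by Gr\"onwall's inequality: since $\bff{w}(0)=\bff{0}$, the differential inequality forces $\bb{E}\norm{\bff{w}(t\wedge\tau_R)}{\bb{L}^2}^2=0$ for every $t\in[0,T]$ and every $R$, hence $\bff{w}(t\wedge\tau_R)=\bff{0}$ $\bb{P}$-a.s. Letting $R\to\infty$ and using $\tau_R\to\infty$ gives $\bff{u}_1(t)=\bff{u}_2(t)$ in $\bb{L}^2$ for all $t$, and by the continuity $\bff{u}_i\in C([0,T];\bb{H}^2)$ from Theorem~\ref{the:exist} the two paths coincide for $\bb{P}$-a.s.\ $\omega$. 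Pathwise uniqueness together with the existence of a martingale solution then yields, through the Yamada--Watanabe theorem as cited, the existence of a pathwise unique probabilistically strong solution and uniqueness in law, completing the proof.
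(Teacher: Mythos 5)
Your proposal is correct and follows essentially the same route as the paper's proof: form the difference $\bff{v}=\bff{u}_1-\bff{u}_2$, apply It\^o's formula to $\norm{\bff{v}}{\bb{L}^2}^2$, note that the noise contribution vanishes since $\inpro{\bff{v}\times\bff{h}_k}{\bff{v}}_{\bb{L}^2}=0$, localise with a stopping time controlling the $\bb{H}^2$ norms of both solutions, absorb the top-order pieces of the nonlinear differences into $-\lambda_e\norm{\Delta\bff{v}}{\bb{L}^2}^2$ via Sobolev embedding and Young's inequality, and conclude by Gr\"onwall followed by $R\uparrow\infty$ and monotone convergence. The only cosmetic differences are that the paper's stopping time uses only the $\bb{H}^2$ norms (the integrated $\bb{H}^4$ bound in your $\tau_R$ is not needed) and that the paper works directly with the explicit linear terms rather than packaging them as the operator $A$.
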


\begin{theorem}\label{the:invariant}
There exists an ergodic invariant measure for \eqref{equ:sllbar} supported on $\bb{H}^3$.
\end{theorem}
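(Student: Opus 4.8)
The plan is to invoke the Krylov--Bogoliubov theorem, for which I need two ingredients: the Feller property of the Markov semigroup $\{P_t\}_{t\ge 0}$ generated by the pathwise strong solution (guaranteed by the discussion in the introduction and the uniqueness in Theorem~\ref{the:unique}), and tightness of the family of time-averaged measures. I would fix the initial datum $\bff{u}_0=\bff{0}$ (or any fixed datum in $\mathrm{D}(\Delta)$) and define, for $T>0$, the Cesàro means
\[
\mu_T(\cdot):= \frac{1}{T}\int_0^T P_s^\ast \delta_{\bff{u}_0}(\cdot)\,\ds
= \frac{1}{T}\int_0^T \bb{P}\big(\bff{u}(s)\in\cdot\big)\,\ds.
\]
To prove tightness on $\bb{H}^2$, the key is a uniform-in-time \emph{a priori} bound in a space compactly embedded into $\bb{H}^2$; since $\bb{H}^3\hookrightarrow\bb{H}^2$ is compact on the bounded domain $\mathscr{D}$, it suffices to establish a uniform bound of the form
\[
\sup_{t\ge 0}\,\bb{E}\big[\norm{\bff{u}(t)}{\bb{H}^3}^2\big]\le C,
\]
with $C$ independent of $t$. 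This is exactly the type of uniform-in-time higher-order estimate one obtains by applying Itô's formula to $\norm{(-\Delta+I)^{3/2}\bff{u}(t)}{\bb{L}^2}^2$ (equivalently to $\norm{\bff{u}}{\bb{X}^{3/2}}^2$), using the coercivity of the dominant biharmonic term $\lambda_e\Delta^2$ to absorb the lower-order nonlinear contributions and the quadratic variation coming from the multiplicative noise $\gamma\bff{u}\times\bff{h}_k$, and then applying a Gronwall-type argument with the linear dissipation to close the bound uniformly in $t$. Once this uniform $\bb{H}^3$-moment bound is in hand, Chebyshev's inequality shows that $\{\mu_T\}_{T\ge 1}$ is tight on $\bb{H}^2$, and Krylov--Bogoliubov produces an invariant measure $\mu$; the uniform bound also passes to $\mu$ in the limit, so $\mu$ is supported on $\bb{H}^3$.

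To upgrade the existence of an invariant measure to the existence of an \emph{ergodic} invariant measure, I would invoke the Krein--Milman theorem on the (nonempty, by the above) convex set of invariant probability measures, which is compact in the topology of weak convergence precisely because of the same tightness estimate. The ergodic invariant measures are exactly the extreme points of this set, and Krein--Milman guarantees that a nonempty compact convex set has extreme points; selecting one yields an ergodic invariant measure supported on $\bb{H}^3$.

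The main obstacle I anticipate is the uniform-in-time $\bb{H}^3$ estimate. Unlike a finite-time energy estimate, here the constant must not grow with $t$, which forces the dissipation to strictly dominate every other term \emph{globally in time}. The delicate points are: controlling the cubic nonlinearity $|\bff{u}|^2\bff{u}$ and its third derivatives via the Gagliardo--Nirenberg interpolation inequalities in dimensions $d\le 3$ so that the resulting powers of $\norm{\bff{u}}{\bb{H}^1}$ or $\norm{\bff{u}}{\bb{H}^2}$ can be bounded using the already-established uniform $\bb{H}^2$ bound from Theorem~\ref{the:exist}; handling the gyromagnetic term $\gamma\bff{u}\times\bff{H}$ and the spin-torque term $R(\bff{u})$, whose first-order derivatives must be absorbed into the biharmonic dissipation after integration by parts; and ensuring the noise-induced quadratic-variation terms, which are at most quadratic in $\bff{u}$ by \eqref{equ:sigma g sigma h}, are dominated by the linear dissipative term so that a genuine exponential-type Gronwall inequality $\ddt\,\bb{E}[\norm{\bff{u}}{\bb{X}^{3/2}}^2]\le -c\,\bb{E}[\norm{\bff{u}}{\bb{X}^{3/2}}^2]+C$ can be closed. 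Because the drift is not monotone over the full parameter range (as noted in the introduction), I expect to rely on the coercivity built into $A$ together with the \emph{a priori} lower-order control rather than on any global monotonicity.
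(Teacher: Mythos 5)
Your overall strategy (Krylov--Bogoliubov for existence, Krein--Milman for ergodicity, compactness of $\bb{H}^3\hookrightarrow\bb{H}^2$ for tightness and for the support statement) is the same as the paper's, but there are two genuine gaps in how you propose to supply the ingredients.

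First, you treat the Feller property as already available from uniqueness and ``the discussion in the introduction.'' It is not: pathwise uniqueness gives you a Markov family, but continuity of $\bff{u}_0\mapsto P_t\phi(\bff{u}_0)$ in the $\bb{H}^2$ \emph{norm} topology is a separate and delicate matter, and the paper flags it as one of its main technical points. The paper's proof requires a continuous dependence estimate up to a stopping time bounding the $\bb{H}^2$ norms of both solutions (Lemma~\ref{lem:cont dep H2}), a second stopping time exploiting the instantaneous parabolic regularisation $t\norm{\bff{u}(t)}{\bb{H}^3}^2\le R$ (Lemma~\ref{lem:t u H3} and \eqref{equ:rho R v0}), probability bounds on both stopping times, and a Kirszbraun extension of a Lipschitz approximation of $\phi$ on a compact ball of $\bb{H}^3$. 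None of this is automatic, and your proposal contains no substitute for it.

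Second, and more seriously, your tightness argument rests on a uniform-in-time bound $\sup_{t\ge 0}\bb{E}\big[\norm{\bff{u}(t)}{\bb{H}^3}^2\big]\le C$, which is far stronger than what is needed and is not established anywhere in the paper --- nor is it likely to hold over the full parameter range, since the drift is not dissipative (the paper's higher-order estimates, e.g.\ Proposition~\ref{pro:E sup un H2} and Lemma~\ref{lem:t u H3}, all grow like $e^{Ct}$). You correctly identify this as the main obstacle but do not resolve it. The standard and much cheaper route, which the paper takes, is to bound only the \emph{time-averaged first moment}: the $\bb{L}^2$- and $\bb{H}^1$-level It\^o identities close with linear growth in $t$ because the quartic dissipation $-\lambda_r\norm{\bff{u}}{\bb{L}^4}^4$ absorbs the anti-dissipative quadratic term via $x^2\le\epsilon x^4+1/(4\epsilon)$, yielding $\int_0^t\bb{E}\big[\norm{\bff{H}(s)}{\bb{H}^1}^2\big]\,\ds\le C(1+t)$ and hence, writing $\nabla\Delta\bff{u}=\nabla\bff{H}-\nabla\bff{u}+\nabla(|\bff{u}|^2\bff{u})$, the bound $\int_0^t\bb{E}\big[\norm{\nabla\Delta\bff{u}(s)}{\bb{L}^2}\big]\,\ds\le C(1+t)$ (Lemma~\ref{lem:E u H2 t}). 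Markov's inequality applied to the Ces\`aro means then gives $\sup_{T\ge1}\mu_T\big(\bb{H}^2\setminus\bff{B}_3(R)\big)\le 2C/R$, which is all that tightness requires. The support statement is then obtained not by ``passing the uniform bound to the limit'' but by testing the invariance identity against the truncations $\norm{\nabla\Delta\Pi_n\bff{u}_0}{\bb{L}^2}\wedge R$ and using Fatou and monotone convergence. You should replace your proposed uniform second-moment estimate by this time-averaged first-moment argument; as written, the key step of your proof would fail.
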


\begin{remark}
The above theorems remain valid under a more general assumption on the noise term, requiring only minor modifications to the proofs. To formulate this, we introduce the following notion. Let $Y$ and $Z$ be Banach spaces. A progressively measurable map $h:\Omega\times [0,T] \times Y\to Z$ is said to be uniformly Lipschitz with constant $L_Y$ if, for all $\bff{v},\bff{w}\in Y$ and $(\omega,t)\in \Omega\times[0,T]$,
\begin{align*}
	\norm{h(\omega,t,\bff{v})-h(\omega,t,\bff{w})}{Z} \leq L_Y \norm{\bff{v}-\bff{w}}{Y}
%\end{align*}
\quad\text{and}\quad
%\begin{align*}
	\norm{h(\omega,t,\bff{v})}{Z} \leq L_Y \left(1+\norm{\bff{v}}{Y} \right).
\end{align*}
The collection of all such mappings is denoted by $\text{Lip}(Y, Z)$. With this notation, the above theorems extend to the case where $G:= \{G_k\}_{k\in \bb{N}}:\Omega\times [0,T] \times {\mathrm{D}(\Delta)} \to \ell^2({\mathrm{D}(\Delta)})$ in \eqref{equ:sllbar a} satisfies the condition $G\in \text{Lip}\big({\mathrm{D}(\Delta^s)}, \ell^2({\mathrm{D}(\Delta^s)})\big)$ for $s\in \big\{0,\frac12,1 \big\}$. We fix a specific form of $G$ since this is the form widely adopted for this model and for other stochastic ferromagnetic models.
\end{remark}

In the following theorem, we assume that the stochastic perturbation acts only on the precessional term, i.e. only the multiplicative noise is present. This form of the noise was also considered for Landau--Lifshitz-type equations in~\cite{BrzGolJeg13, EvaHin12, XuZha13}. Above the Curie temperature, we have the following exponential stability result when the relativistic damping constant $\lambda_r$ is sufficiently large (relative to $\lambda_e$ and $\gamma^2 \sigma_h^2$). This corresponds to a physically observable loss of magnetisation above the Curie temperature~\cite{Hah19}.

\begin{theorem}\label{the:stab above}
Suppose that $\lambda_r>\frac12 \gamma^2 \sigma_h^2$ and let $\mu:= \lambda_r- \frac12 \gamma^2 \sigma_h^2$. Let $\bff{u}$ be the solution of \eqref{equ:sllbar} given by Theorem~\ref{the:exist}, corresponding to the case $\kappa_1=-1$, $\kappa_2=1$, $\bff{g}_k\equiv \bff{0}$, and $S(\bff{u})\equiv \bff{0}$. Then $\bb{P}$-a.s. for all $t> 0$,
\begin{equation}\label{equ:u L2 decay}
	\norm{\bff{u}(t)}{\bb{L}^2}^2 \leq e^{-\mu t} \norm{\bff{u}_0}{\bb{L}^2}^2,
\end{equation}
Furthermore, for $d=1,2$, there exists a unique invariant measure which is the Dirac measure at $\bff{0}$.

For $d=3$, the uniqueness of invariant measure holds with an additional assumption $\lambda_r\geq 3\gamma^2\sigma_h^2$.
\end{theorem}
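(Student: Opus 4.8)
The plan is to derive a pathwise differential inequality for $t\mapsto\norm{\bff{u}(t)}{\bb{L}^2}^2$ by applying the It\^o formula in $\bb{L}^2$, and then to promote the resulting exponential decay into a statement about invariant measures. Writing the drift of \eqref{equ:sllbar a} (with $\kappa_1=-1$, $\kappa_2=1$, $S\equiv\bff 0$) as $\bff F=\lambda_r\bff H-\lambda_e\Delta\bff H-\gamma\,\bff u\times\bff H$ and the diffusion coefficient as $G_k(\bff u)=\gamma\,\bff u\times\bff h_k$ (since $\bff g_k\equiv\bff 0$), the It\^o formula for $\norm{\bff u}{\bb L^2}^2$ reads
\[
\dif\norm{\bff u}{\bb L^2}^2=\Big(2\inpro{\bff u}{\bff F}_{\bb L^2}+\gamma^2\sum_k\norm{\bff u\times\bff h_k}{\bb L^2}^2\Big)\dt+2\gamma\sum_k\inpro{\bff u}{\bff u\times\bff h_k}_{\bb L^2}\,\dif W_k.
\]
The crucial structural observation is that $\bff u\cdot(\bff u\times\bff w)=0$ pointwise for every $\bff w$, so both the precession drift $-\gamma\inpro{\bff u}{\bff u\times\bff H}_{\bb L^2}$ and the entire martingale term vanish identically. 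Hence the evolution of $\norm{\bff u}{\bb L^2}^2$ is in fact pathwise (no stochastic integral survives), which is exactly what makes an almost sure, rather than merely an averaged, decay estimate possible.

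The next step is to extract dissipativity from the two surviving drift terms. Using the Neumann conditions $\partial\bff u/\partial\bff n=\partial\bff H/\partial\bff n=\bff 0$ and integrating by parts, with $\kappa_1=-1$ one finds $\inpro{\bff u}{\bff H}_{\bb L^2}=-\norm{\nabla\bff u}{\bb L^2}^2-\norm{\bff u}{\bb L^2}^2-\norm{\bff u}{\bb L^4}^4$, while a double integration by parts gives $-\lambda_e\inpro{\bff u}{\Delta\bff H}_{\bb L^2}=-\lambda_e\inpro{\Delta\bff u}{\bff H}_{\bb L^2}=-\lambda_e\big(\norm{\Delta\bff u}{\bb L^2}^2+\norm{\nabla\bff u}{\bb L^2}^2+Q\big)\le0$, where $Q=2\int_{\mathscr D}\sum_j(\bff u\cdot\partial_j\bff u)^2+\int_{\mathscr D}|\bff u|^2|\nabla\bff u|^2\ge0$. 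It remains to dominate the It\^o correction. For $d\le2$ one bounds $\sum_k\norm{\bff u\times\bff h_k}{\bb L^2}^2\le\big\|\sum_k|\bff h_k|^2\big\|_{L^\infty}\norm{\bff u}{\bb L^2}^2$ and controls $\big\|\sum_k|\bff h_k|^2\big\|_{L^\infty}$ by $\sigma_h^2$ via the embedding $\bb H^2\hookrightarrow\bb L^\infty$; discarding the manifestly nonpositive terms then gives $\ddt\norm{\bff u}{\bb L^2}^2\le(-2\lambda_r+\gamma^2\sigma_h^2)\norm{\bff u}{\bb L^2}^2=-2\mu\norm{\bff u}{\bb L^2}^2\le-\mu\norm{\bff u}{\bb L^2}^2$, and Gronwall's inequality yields \eqref{equ:u L2 decay}. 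For $d=3$ the crude $\bb L^\infty$ bound is too lossy, so instead I would estimate the noise term by interpolation, e.g.\ $\gamma^2\int_{\mathscr D}|\bff u|^2\sum_k|\bff h_k|^2\lesssim\gamma^2\sigma_h^2\norm{\bff u}{\bb L^2}\norm{\bff u}{\bb H^1}$, and absorb the surplus gradient factor into the coercive terms $-\lambda_r\norm{\nabla\bff u}{\bb L^2}^2$ and $-\lambda_e\norm{\nabla\bff u}{\bb L^2}^2$ produced above; after Young's inequality the net coefficient is negative precisely under the stronger hypothesis $\lambda_r\ge3\gamma^2\sigma_h^2$.

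For the invariant measures, I first note that $\bff u\equiv\bff 0$ solves the system (the drift and, since $\bff g_k\equiv\bff 0$, the noise both vanish at $\bff 0$), so $\delta_{\bff 0}$ is invariant. To prove it is the only one, let $\pi$ be any invariant measure for the Markov semigroup on the state space $\mathrm D(\Delta)$; the decay \eqref{equ:u L2 decay}, valid for every initial datum in $\mathrm D(\Delta)$, then holds for $\pi$-a.e.\ starting point $\bff u_0$. Take the bounded Borel test function $\vp(\bff v):=1\wedge\norm{\bff v}{\bb L^2}$. For each fixed $\bff u_0$ the decay gives $\vp\big(\bff u(t;\bff u_0)\big)\le e^{-\mu t/2}\norm{\bff u_0}{\bb L^2}\to0$ almost surely while staying bounded by $1$, so dominated convergence yields $P_t\vp(\bff u_0)=\bb E\big[\vp(\bff u(t;\bff u_0))\big]\to0$. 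Using invariance $\int\vp\,\dif\pi=\int P_t\vp\,\dif\pi$ together with dominated convergence (now in $\bff u_0$ against the finite measure $\pi$), we obtain $\int\vp\,\dif\pi=\lim_{t\to\infty}\int P_t\vp\,\dif\pi=0$. Since $\vp\ge0$ and $\vp(\bff v)=0$ iff $\bff v=\bff 0$, this forces $\pi=\delta_{\bff 0}$, giving uniqueness.

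I expect the main obstacle to be the $d=3$ control of the multiplicative-noise It\^o correction: the naive $\bb L^\infty$ embedding does not deliver a clean rate, and one must trade the excess against the relaxation/exchange dissipation, which is exactly what produces the stronger assumption $\lambda_r\ge3\gamma^2\sigma_h^2$ and requires care to keep the net coefficient strictly negative. A second, more routine point is the justification of the It\^o formula at the $\bb L^2$ level and of the repeated integration by parts, both licensed by the regularity $\bff u\in L^\infty(0,T;\bb H^2)\cap L^2(0,T;\bb H^4)$ from Theorem~\ref{the:exist}. Finally, one should observe that although the attraction to $\bff 0$ is established only in the $\bb L^2$ topology while the state space is $\mathrm D(\Delta)\subset\bb H^2$, the uniqueness argument is unaffected, because it is run with the $\bb L^2$-continuous bounded test function $\vp$, which is admissible in the invariance identity.
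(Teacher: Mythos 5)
Your derivation of the decay \eqref{equ:u L2 decay} is essentially the paper's own argument (Lemma~\ref{lem:exp L2}): It\^o's formula for $\norm{\bff u}{\bb L^2}^2$, the pointwise identities $\bff u\cdot(\bff u\times\bff H)=\bff u\cdot(\bff u\times\bff h_k)=0$ killing both the precession drift and the stochastic integral, the sign computations for $\inpro{\bff H}{\bff u}_{\bb L^2}$ and $-\inpro{\Delta\bff H}{\bff u}_{\bb L^2}$, and absorption of the It\^o correction by the $-2\lambda_r\norm{\bff u}{\bb L^2}^2$ term. Your one error here is the claim that the $\bb L^\infty$ bound on $\sum_k\norm{\bff u\times\bff h_k}{\bb L^2}^2$ ``is too lossy'' in $d=3$: the embedding $\bb H^2\hookrightarrow\bb L^\infty$ is valid for all $d\le 3$, so $\sum_k\norm{\bff h_k}{\bb L^\infty}^2\lesssim\sigma_h^2$ and the decay \eqref{equ:u L2 decay} is dimension-independent — no interpolation detour is needed, and the hypothesis $\lambda_r\ge3\gamma^2\sigma_h^2$ has nothing to do with this step. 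In the paper that hypothesis enters only in the uniqueness argument for $d=3$ (via the exponential moment bound of Lemma~\ref{lem:Ke}).

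For uniqueness of the invariant measure your route is genuinely different from the paper's, and it is correct. The paper proves the exponential stability estimate \eqref{equ:E u0 u1} for the difference of two solutions; the Gronwall factor $\exp(R(t))$ there involves $\int_0^t\norm{\bff u_i(s)}{\bb L^\infty}^4\,\ds$, which for $d=1,2$ is controlled pathwise by Agmon's inequality together with \eqref{equ:K0}, but for $d=3$ requires the exponential moment bound of Lemma~\ref{lem:Ke} and hence the stronger dissipativity assumption. You instead test the invariance identity with the single function $\vp=1\wedge\norm{\cdot}{\bb L^2}\in C_b(\bb H^2)$ and use the almost sure decay of every trajectory together with dominated convergence (twice) to conclude $\int\vp\,\dif\pi=0$, hence $\pi=\delta_{\bff 0}$; your closing remark that $\vp$ is $\bb L^2$-continuous on $\bb H^2$ correctly disposes of the topology mismatch. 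This is shorter, avoids the stability estimate entirely, and — if one only wants uniqueness as stated — does not appear to need the extra assumption in $d=3$ at all. What the paper's heavier approach buys in exchange is the quantitative statement \eqref{equ:E u0 u1}, i.e.\ exponential convergence of any two solutions (and hence of transition laws) rather than mere uniqueness of the invariant measure.
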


\subsection{Auxiliary facts}

In our analysis, we will frequently use the following results. For any vector-valued function $\bff{v}:\mathscr{D} \to\bb{R}^3$, we have
	\begin{align}
 	\label{equ:nor der v2v}
		\nabla (|\bff{v}|^2 \bff{v}) 
		&= 
		2 \bff{v} \ (\bff{v}\cdot \nabla\bff{v}) 
		+ |\bff{v}|^2 \nabla \bff{v},
        \\
        \label{equ:ddn v2v}
        \frac{\partial(\abs{\bff{v}}^2 \bff{v})}{\partial \bff{n}}
        &=
        2\bff{v} \left(\bff{v}\cdot \frac{\partial \bff{v}}{\partial \bff{n}}\right)
        +
        \abs{\bff{v}}^2 \frac{\partial \bff{v}}{\partial \bff{n}},
        \\
        \label{equ:del v2v}
		\Delta (|\bff{v}|^2 \bff{v}) 
		&= 
		2|\nabla \bff{v}|^2 \bff{v} 
		+ 2(\bff{v}\cdot \Delta \bff{v})\bff{v} 
		+ 4 \nabla \bff{v} \ (\bff{v}\cdot \nabla\bff{v})^\top
		+ |\bff{v}|^2 \Delta \bff{v},
	\end{align}
	provided that the partial derivatives are well defined. As a consequence of \eqref{equ:sllbar b}, \eqref{equ:sllbar d}, and \eqref{equ:ddn v2v}, for a sufficiently regular solution $\bff{u}$ of problem~\eqref{equ:sllbar}, we have $\partial(\Delta \bff{u})/\partial \bff{n}=\bff{0}$ on $\partial\mathscr{D}$.

\begin{lemma}
	Let $\mathscr{D} \subset \bb{R}^d$ be an open bounded domain with smooth boundary and $\epsilon>0$ be
	given. Then there exists a positive constant $C$ such that the following
	inequalities hold:
	\begin{enumerate}
		\renewcommand{\labelenumi}{\theenumi}
		\renewcommand{\theenumi}{{\rm (\roman{enumi})}}
		\item for any $\bff{v} \in \text{D}(\Delta)$, 
		\begin{align}
			\label{equ:nabla v L2}
			\norm{\nabla \bff{v}}{\bb{L}^2}^2 
			&\leq \frac{1}{4\epsilon} \norm{\bff{v}}{\bb{L}^2}^2 + \epsilon \norm{\Delta \bff{v}}{\bb{L}^2}^2,
		\end{align}
		
		\item for any $\bff{v},\bff{w} \in \bb{H}^s$, where $s>d/2$,
		\begin{align}
        %\label{equ:prod Hs scal vec}
	%		\norm{\varphi \bff{w}}{\bb{H}^s}
	%		&\leq
	%		C \norm{\varphi}{H^s}
	%		\norm{\bff{w}}{\bb{H}^s},
	%		\\
		\label{equ:prod Hs mat dot}
			\norm{\bff{v} \odot \bff{w}}{\bb{H}^s}
			&\leq
			C \norm{\bff{v}}{\bb{H}^s}
			\norm{\bff{w}}{\bb{H}^s},
			\\
			\label{equ:prod Hs triple}
			\norm{(\bff{u} \times \bff{v}) \odot \bff{w}}{\bb{H}^s}
			&\leq
			C \norm{\bff{u}}{\bb{H}^s} \norm{\bff{v}}{\bb{H}^s} \norm{\bff{w}}{\bb{H}^s}.
		\end{align}
		Here $\odot$ denotes either the dot product or cross product.
	\end{enumerate}
\end{lemma}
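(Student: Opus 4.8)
The plan is to handle the two parts separately, since (i) is an elementary interpolation estimate obtained from integration by parts, while (ii) rests on the Banach algebra structure of $\bb{H}^s$ when $s>d/2$.

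For part (i), I would integrate by parts. Since $\bff{v}\in\text{D}(\Delta)$ satisfies the Neumann condition $\partial\bff{v}/\partial\bff{n}=\bff{0}$ on $\partial\mathscr{D}$, the boundary term vanishes and
\[
	\norm{\nabla\bff{v}}{\bb{L}^2}^2
	= -\inpro{\bff{v}}{\Delta\bff{v}}_{\bb{L}^2}
	\leq \norm{\bff{v}}{\bb{L}^2}\,\norm{\Delta\bff{v}}{\bb{L}^2}.
\]
Young's inequality in the form $ab\leq \frac{1}{4\epsilon}a^2+\epsilon b^2$ (which is just the expansion of $\big(\frac{a}{2\sqrt\epsilon}-\sqrt\epsilon\,b\big)^2\geq 0$), applied with $a=\norm{\bff{v}}{\bb{L}^2}$ and $b=\norm{\Delta\bff{v}}{\bb{L}^2}$, then yields the claim.

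For part (ii), the key point is that $\bb{H}^s(\mathscr{D})$ is a Banach algebra for $s>d/2$. I would establish this first for scalar-valued functions and transfer it to the vector-valued dot and cross products componentwise. Because $\mathscr{D}$ is bounded with smooth boundary, there is a bounded extension operator $E\colon H^s(\mathscr{D})\to H^s(\bb{R}^d)$; for $f,g\in H^s(\mathscr{D})$ the product $(Ef)(Eg)$ agrees with $fg$ on $\mathscr{D}$, so it suffices to prove the algebra estimate on $\bb{R}^d$ and then restrict, giving $\norm{fg}{H^s(\mathscr{D})}\leq\norm{(Ef)(Eg)}{H^s(\bb{R}^d)}$. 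On $\bb{R}^d$ I would invoke the fractional Leibniz (Kato--Ponce) inequality
\[
	\norm{FG}{H^s(\bb{R}^d)}
	\leqs \norm{F}{L^\infty}\norm{G}{H^s(\bb{R}^d)}
	+ \norm{F}{H^s(\bb{R}^d)}\norm{G}{L^\infty},
\]
combined with the Sobolev embedding $H^s(\bb{R}^d)\hookrightarrow L^\infty(\bb{R}^d)$ valid for $s>d/2$, to conclude $\norm{FG}{H^s}\leqs\norm{F}{H^s}\norm{G}{H^s}$. If one prefers to avoid quoting Kato--Ponce, the same bound follows by a direct Fourier argument: writing $\widehat{FG}=\hat F*\hat G$ and using the elementary estimate $(1+|\xi|^2)^{s/2}\leqs (1+|\xi-\eta|^2)^{s/2}+(1+|\eta|^2)^{s/2}$, one splits the convolution into two pieces and applies Young's convolution inequality, each piece being controlled by $\norm{F}{L^\infty}\norm{G}{H^s}$ or $\norm{F}{H^s}\norm{G}{L^\infty}$, after which the Sobolev embedding closes the estimate as before. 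With the scalar algebra property in hand, the vector inequalities are immediate: each component of $\bff{v}\odot\bff{w}$ is a finite sum of products $v_iw_j$, so $\norm{\bff{v}\odot\bff{w}}{\bb{H}^s}\leqs\norm{\bff{v}}{\bb{H}^s}\norm{\bff{w}}{\bb{H}^s}$, and applying this twice (first to $\bff{u}\times\bff{v}$, then to the result paired with $\bff{w}$) gives the triple-product estimate.

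The main obstacle is confined to the fractional product rule, namely controlling the $\bb{H}^s$ norm of a product when $s$ is non-integer; the extension step and the componentwise reduction are routine bookkeeping. The substantive content is the Kato--Ponce inequality (or the equivalent direct Fourier computation), and this is precisely where the hypothesis $s>d/2$ is used, entering through the Sobolev embedding into $L^\infty$. In the application to this paper one will typically take $s$ an integer (e.g.\ $s=2$ for $d\leq 3$), in which case the Leibniz rule together with Hölder's inequality and the same $L^\infty$-embedding already suffices, so the general-$s$ argument can be viewed as the natural interpolated version of this elementary case.
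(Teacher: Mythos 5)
Your proposal is correct. Note that the paper does not actually prove this lemma: it simply cites \cite[Lemma~2.2]{SoeTra23b}, so there is no in-text argument to compare against. Your write-up supplies a complete, self-contained substitute along the standard lines: for (i), integration by parts (the Neumann condition killing the boundary term), Cauchy--Schwarz, and Young's inequality in the form $ab\leq \frac{1}{4\epsilon}a^2+\epsilon b^2$ give exactly the stated constant; for (ii), the extension-operator reduction to $\bb{R}^d$, the Kato--Ponce (or direct Fourier-convolution) product estimate, and the embedding $H^s\hookrightarrow L^\infty$ for $s>d/2$ yield the Banach algebra property, after which the componentwise reduction and the two-fold application for the triple product are routine. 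This is almost certainly the same argument as in the cited reference, and your closing remark is apt: in this paper the lemma is only ever invoked with $s=2$ and $d\leq 3$, where the integer-order Leibniz rule together with H\"older and the $L^\infty$-embedding already suffices, so the fractional machinery is not strictly needed for the applications at hand.
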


\begin{proof}
	This is shown in~\cite[Lemma~2.2]{SoeTra23b}.
\end{proof}

\begin{lemma}\label{lem:tec lem}
	For any~$\bff{v}\in\bb{H}^1$, $\bff{w}\in\bb{W}^{1,2d}$, and~$\delta>0$, the
	following inequality holds:
	\[
	\Big|
	\inpro{\bff{v}\times\nabla\bff{w}}{\nabla\bff{v}}_{\bb{L}^2}
	\Big|
	\le
	\Phi(\bff{w})
	\norm{\bff{v}}{\bb{L}^2}^2
	+
	\delta \norm{\nabla\bff{v}}{\bb{L}^2}^2
	\]
	where
	\begin{equation}\label{equ:Phi}
		\Phi(\bff{w})
		=
		\begin{cases}
			\frac{c^2}{\delta}
			\norm{\nabla\bff{w}}{\bb{L}^2}^2
			+
			\frac{c^2}{4\delta^3}
			\norm{\nabla\bff{w}}{\bb{L}^2}^4,
			\quad & d=1,
			\\[1ex]
			\frac{3^3 c^4}{2^8\delta^3}
			\norm{\nabla\bff{w}}{\bb{L}^4}^4 
			+ \delta,
			\quad & d=2,
			\\[1ex]
			\frac{3^3 c^4}{2^8\delta^3}
			\norm{\nabla\bff{w}}{\bb{L}^6}^4 
			+ \delta,
			\quad & d=3.
		\end{cases}
	\end{equation}
	The positive constant~$c$ (given by the Gagliardo--Nirenberg inequality) depends
	only on the domain $\mathscr{D}$.
\end{lemma}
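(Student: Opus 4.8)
The plan is to reduce the inequality to a pointwise estimate followed by Hölder's inequality, the Gagliardo--Nirenberg interpolation inequality, and Young's inequality, with the case split dictated entirely by the integrability of $\nabla\bff{w}$ (which lies only in $\bb{L}^{2d}$). First I would use the elementary bound $|\bff{v}\times\partial_i\bff{w}|\le|\bff{v}|\,|\partial_i\bff{w}|$ together with the Cauchy--Schwarz inequality to obtain
\[
\Big|\inpro{\bff{v}\times\nabla\bff{w}}{\nabla\bff{v}}_{\bb{L}^2}\Big|
\le
\int_{\mathscr{D}} |\bff{v}|\,|\nabla\bff{w}|\,|\nabla\bff{v}|\,\dx .
\]
The target is to peel off a clean $\delta\norm{\nabla\bff{v}}{\bb{L}^2}^2$, so the crux of the argument is to arrange Young's inequality so that the coefficient of $\norm{\nabla\bff{v}}{\bb{L}^2}^2$ is \emph{exactly} the prescribed $\delta$, independent of $\bff{w}$. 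This is what makes the lemma usable as an absorption device in the energy estimates, and it is precisely what dictates the order of the steps below.

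For $d=2,3$ I would apply Hölder's inequality with exponents $\big(\tfrac{2d}{d-1},2d,2\big)$, giving
\[
\int_{\mathscr{D}} |\bff{v}|\,|\nabla\bff{w}|\,|\nabla\bff{v}|\,\dx
\le
\norm{\bff{v}}{\bb{L}^{2d/(d-1)}}\,\norm{\nabla\bff{w}}{\bb{L}^{2d}}\,\norm{\nabla\bff{v}}{\bb{L}^2},
\]
so that $\nabla\bff{w}$ lands precisely in $\bb{L}^{2d}$. Since $2d/(d-1)=4$ for $d=2$ and $=3$ for $d=3$, the Gagliardo--Nirenberg inequality applies with interpolation exponent $\theta=\tfrac12$ in both cases, yielding $\norm{\bff{v}}{\bb{L}^{2d/(d-1)}}\le c\norm{\bff{v}}{\bb{L}^2}^{1/2}\norm{\nabla\bff{v}}{\bb{L}^2}^{1/2}$ up to a lower-order term. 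Substituting and collecting the powers of $\norm{\nabla\bff{v}}{\bb{L}^2}$ produces the factor $\norm{\nabla\bff{v}}{\bb{L}^2}^{3/2}$, which I would absorb into $\delta\norm{\nabla\bff{v}}{\bb{L}^2}^2$ by a \emph{single} application of Young's inequality with conjugate exponents $(4/3,4)$; the companion factor $\big(c\norm{\bff{v}}{\bb{L}^2}^{1/2}\norm{\nabla\bff{w}}{\bb{L}^{2d}}\big)^4$ is exactly what generates $\tfrac{3^3 c^4}{2^8\delta^3}\norm{\nabla\bff{w}}{\bb{L}^{2d}}^4\norm{\bff{v}}{\bb{L}^2}^2$, while the lower-order term in the interpolation inequality accounts for the additive $\delta$ in $\Phi(\bff{w})$.

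The case $d=1$ is slightly different and is responsible for the two $\bb{L}^2$-terms in $\Phi(\bff{w})$. Here $\nabla\bff{w}\in\bb{L}^2$ only, so I would instead apply Young's inequality first to split $\int_{\mathscr{D}}|\bff{v}||\nabla\bff{w}||\nabla\bff{v}|\,\dx\le\epsilon\norm{\nabla\bff{v}}{\bb{L}^2}^2+\tfrac{1}{4\epsilon}\int_{\mathscr{D}}|\bff{v}|^2|\nabla\bff{w}|^2\,\dx$, bound $\int_{\mathscr{D}}|\bff{v}|^2|\nabla\bff{w}|^2\,\dx\le\norm{\bff{v}}{\bb{L}^\infty}^2\norm{\nabla\bff{w}}{\bb{L}^2}^2$, and then use the one-dimensional Sobolev inequality $\norm{\bff{v}}{\bb{L}^\infty}^2\le c^2\norm{\bff{v}}{\bb{L}^2}^2+c^2\norm{\bff{v}}{\bb{L}^2}\norm{\nabla\bff{v}}{\bb{L}^2}$. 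The zeroth-order part $c^2\norm{\bff{v}}{\bb{L}^2}^2$ immediately produces $\tfrac{c^2}{\delta}\norm{\nabla\bff{w}}{\bb{L}^2}^2\norm{\bff{v}}{\bb{L}^2}^2$, while the mixed part $c^2\norm{\bff{v}}{\bb{L}^2}\norm{\nabla\bff{v}}{\bb{L}^2}$, after a further Young's inequality tuned so that the resulting $\norm{\nabla\bff{v}}{\bb{L}^2}^2$ contribution is reabsorbed, produces $\tfrac{c^2}{4\delta^3}\norm{\nabla\bff{w}}{\bb{L}^2}^4\norm{\bff{v}}{\bb{L}^2}^2$; choosing $\epsilon$ as a suitable multiple of $\delta$ then gives the stated form.

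The main obstacle I anticipate is purely bookkeeping: keeping the coefficient of $\norm{\nabla\bff{v}}{\bb{L}^2}^2$ equal to $\delta$ while the interpolation inequalities repeatedly reintroduce $\norm{\nabla\bff{v}}{\bb{L}^2}$. This is exactly what forces the two different orderings above---applying Gagliardo--Nirenberg \emph{before} Young in dimensions $2$ and $3$, so that the full $\norm{\nabla\bff{v}}{\bb{L}^2}^{3/2}$ is consumed in one Young step, but applying Young \emph{before} the Sobolev embedding in dimension $1$, so that the $\bb{L}^\infty$-bound, which unavoidably carries the full $\bb{H}^1$ norm, splits into the two separate $\bb{L}^2$-contributions. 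No genuinely hard analytic input is needed beyond the embedding and interpolation inequalities already recorded in the lemma preceding this one.
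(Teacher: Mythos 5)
The paper does not actually prove this lemma; it defers to \cite[Lemma~A.3]{LeSoeTra24}, so there is no in-paper argument to compare against. Your proposal is, on its own terms, a correct and essentially complete derivation. For $d=2,3$ the computation checks out exactly: Hölder with exponents $\big(\tfrac{2d}{d-1},2d,2\big)$, Gagliardo--Nirenberg with $\theta=\tfrac12$ (valid since $\tfrac{2d}{d-1}=4$ or $3$), and a single Young step with exponents $(4,4/3)$ calibrated so that the $\norm{\nabla\bff{v}}{\bb{L}^2}^2$ coefficient is $\delta$ reproduce the constant $\tfrac{3^3c^4}{2^8\delta^3}$ precisely ($\lambda^{4/3}=\tfrac{4\delta}{3}$ gives $\tfrac{1}{4\lambda^4}=\tfrac{27}{256\delta^3}$); the cleanest way to see the additive $\delta$ in $\Phi$ is to bound $\norm{\nabla\bff{v}}{\bb{L}^2}\le\norm{\bff{v}}{\bb{H}^1}$ and apply Young to $c\norm{\bff{v}}{\bb{L}^2}^{1/2}\norm{\nabla\bff{w}}{\bb{L}^{2d}}\norm{\bff{v}}{\bb{H}^1}^{3/2}$, so that the resulting $\delta\norm{\bff{v}}{\bb{H}^1}^2$ splits into $\delta\norm{\nabla\bff{v}}{\bb{L}^2}^2+\delta\norm{\bff{v}}{\bb{L}^2}^2$ --- this is what you describe as the ``lower-order term,'' and your accounting is correct in substance. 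For $d=1$ your ordering (Young first, then the $\bb{L}^\infty$ bound, then a second Young step) is the right structure and yields the two-term form of $\Phi$; the only discrepancy is that your bookkeeping produces a coefficient proportional to $c^4$ on the $\norm{\nabla\bff{w}}{\bb{L}^2}^4$ term rather than the stated $c^2$ (e.g.\ $\tfrac{c^2}{4\epsilon}\norm{\nabla\bff{w}}{\bb{L}^2}^2\norm{\bff{v}}{\bb{L}^2}\norm{\nabla\bff{v}}{\bb{L}^2}$ squares the prefactor under the second Young step). This is a constant-level mismatch that depends on how $c$ is normalised in the one-dimensional Agmon inequality and is immaterial for every use of the lemma in the paper, where $\Phi(\bff{u}^\varepsilon)$ only needs to be an integrable-in-time Gronwall weight. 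No gap in the argument.
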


\begin{proof}
    This is shown in~\cite[Lemma~A.3]{LeSoeTra24}.
\end{proof}

\begin{lemma}
Let $R$ be the map defined in \eqref{equ:torque} and $\bff{\nu}$ be given. Then for each $\epsilon>0$, there exists a positive constant $C_\epsilon$ such that for any $\bff{v}, \bff{w} \in \bb{H}^1 \cap \bb{L}^\infty$,
\begin{align}
	\label{equ:Rv v}
	\big| \inpro{R(\bff{v})}{\bff{v}}_{\bb{L}^2} \big|
	&\leq 
	C_\epsilon\norm{\bff{\nu}}{\bb{L}^2(\mathscr{D};\bb{R}^d)}^2 
	+
	\epsilon \norm{|\bff{v}|\, |\nabla \bff{v}|}{\bb{L}^2}^2,
	\\
	\label{equ:Rv w}
	\big| \inpro{R(\bff{v})}{\bff{w}}_{\bb{L}^2} \big|
	&\leq
	C_\epsilon \norm{\bff{\nu}}{\bb{L}^\infty(\mathscr{D};\bb{R}^d)}^2 
	\left( \norm{\nabla \bff{v}}{\bb{L}^2}^2
	+
	\norm{|\bff{v}|\, |\nabla \bff{v}|}{\bb{L}^2}^2 \right)
	+
	\epsilon \norm{\bff{w}}{\bb{L}^2}^2,
    \\
    \label{equ:Rvw vw}
    \big| \inpro{R(\bff{v})-R(\bff{w})}{\bff{v}-\bff{w}}_{\bb{L}^2} \big|
	&\leq
    C_\epsilon \norm{\bff{\nu}}{\bb{L}^\infty(\mathscr{D};\bb{R}^d)}^2 \left(1+\norm{\bff{w}}{\bb{L}^\infty}^2 \right)
    \norm{\bff{v}-\bff{w}}{\bb{L}^2}^2
    +
    \epsilon \norm{\nabla \bff{v}-\nabla \bff{w}}{\bb{L}^2}^2.
\end{align}
\end{lemma}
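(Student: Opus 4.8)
The plan is to split $R(\bff{v})=\beta_1(\bff{\nu}\cdot\nabla)\bff{v}+\beta_2\,\bff{v}\times(\bff{\nu}\cdot\nabla)\bff{v}$ into its advection part and its cross-product part, and to estimate each by Hölder's inequality (placing $\bff{\nu}$ in $\bb{L}^2$ or $\bb{L}^\infty$ as dictated by the right-hand side) followed by Young's inequality. The recurring structural fact is the pointwise orthogonality $(\bff{a}\times\bff{b})\cdot\bff{a}=0$, which annihilates any cross-product term paired against the vector that appears as its first factor, together with the pointwise bound $|(\bff{\nu}\cdot\nabla)\bff{v}|\le|\bff{\nu}|\,|\nabla\bff{v}|$. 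No integration by parts is used, so no control on $\nabla\cdot\bff{\nu}$ is ever required.

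For \eqref{equ:Rv v}, the cross term $\inpro{\bff{v}\times(\bff{\nu}\cdot\nabla)\bff{v}}{\bff{v}}_{\bb{L}^2}$ vanishes identically, leaving only the advection term. Cauchy--Schwarz gives $\big|\inpro{(\bff{\nu}\cdot\nabla)\bff{v}}{\bff{v}}_{\bb{L}^2}\big|\le\norm{\bff{\nu}}{\bb{L}^2}\norm{|\bff{v}|\,|\nabla\bff{v}|}{\bb{L}^2}$, and Young's inequality yields the stated $C_\epsilon$ and $\epsilon$ terms. For \eqref{equ:Rv w}, both parts of $R(\bff{v})$ now survive because the test vector $\bff{w}$ differs from $\bff{v}$: the advection term is bounded by $\norm{\bff{\nu}}{\bb{L}^\infty}\norm{\nabla\bff{v}}{\bb{L}^2}\norm{\bff{w}}{\bb{L}^2}$ and the cross term by $\norm{\bff{\nu}}{\bb{L}^\infty}\norm{|\bff{v}|\,|\nabla\bff{v}|}{\bb{L}^2}\norm{\bff{w}}{\bb{L}^2}$, and applying Young's inequality with weight on $\norm{\bff{w}}{\bb{L}^2}$ to both produces exactly the parenthesised sum on the right, the two $\epsilon\norm{\bff{w}}{\bb{L}^2}^2$ contributions being merged.

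The one genuinely delicate estimate is \eqref{equ:Rvw vw}. Setting $\bff{z}:=\bff{v}-\bff{w}$, the advection part contributes $\beta_1\inpro{(\bff{\nu}\cdot\nabla)\bff{z}}{\bff{z}}_{\bb{L}^2}$, which is treated as in \eqref{equ:Rv v} but with $\bff{\nu}\in\bb{L}^\infty$, producing $C_\epsilon\norm{\bff{\nu}}{\bb{L}^\infty}^2\norm{\bff{z}}{\bb{L}^2}^2$ (the ``$1$'' in the factor) plus $\epsilon\norm{\nabla\bff{z}}{\bb{L}^2}^2$. For the cross part the point is to use the telescoping decomposition
\[
\bff{v}\times(\bff{\nu}\cdot\nabla)\bff{v}-\bff{w}\times(\bff{\nu}\cdot\nabla)\bff{w}
=\bff{z}\times(\bff{\nu}\cdot\nabla)\bff{v}+\bff{w}\times(\bff{\nu}\cdot\nabla)\bff{z},
\]
which is arranged so that $\bff{w}$, not $\bff{v}$, carries the $\bb{L}^\infty$ norm in the surviving summand. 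Pairing with $\bff{z}$, the first summand vanishes by orthogonality, while the second is bounded by $\norm{\bff{\nu}}{\bb{L}^\infty}\norm{\bff{w}}{\bb{L}^\infty}\norm{\nabla\bff{z}}{\bb{L}^2}\norm{\bff{z}}{\bb{L}^2}$; Young's inequality then delivers the $\norm{\bff{w}}{\bb{L}^\infty}^2$ contribution, and combining the two parts gives the factor $\big(1+\norm{\bff{w}}{\bb{L}^\infty}^2\big)$.

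The main obstacle, and the only step needing thought, is choosing the correct grouping in the last identity: the alternative split $\bff{v}\times(\bff{\nu}\cdot\nabla)\bff{z}+\bff{z}\times(\bff{\nu}\cdot\nabla)\bff{w}$ also telescopes and also loses one term to orthogonality, but it would leave the surviving term governed by $\norm{\bff{v}}{\bb{L}^\infty}$ instead of $\norm{\bff{w}}{\bb{L}^\infty}$. That is useless in the intended uniqueness application, where only the more regular solution (here called $\bff{w}$) is known to be bounded. Every remaining manipulation is a routine application of Hölder's and Young's inequalities.
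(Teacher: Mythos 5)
Your proof is correct and follows essentially the same route as the paper: the cross term against $\bff{v}$ (resp.\ $\bff{v}-\bff{w}$) is killed by $(\bff{a}\times\bff{b})\cdot\bff{a}=0$, the remaining terms are handled by H\"older and Young, and for \eqref{equ:Rvw vw} the paper uses exactly your telescoping split $R(\bff{v})-R(\bff{w})=\beta_1(\bff{\nu}\cdot\nabla)(\bff{v}-\bff{w})+\beta_2(\bff{v}-\bff{w})\times(\bff{\nu}\cdot\nabla)\bff{v}+\beta_2\,\bff{w}\times(\bff{\nu}\cdot\nabla)(\bff{v}-\bff{w})$, so that only $\norm{\bff{w}}{\bb{L}^\infty}$ survives. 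Your remark on why the alternative grouping would be useless is a correct observation that the paper leaves implicit.
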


\begin{proof}
Inequality \eqref{equ:Rv v} follows directly from Young's inequality. To show \eqref{equ:Rv w}, we apply H\"older's and Young's inequalities to obtain
\begin{align*}
	\big| \inpro{R(\bff{v})}{\bff{w}}_{\bb{L}^2} \big|
    &=
    \big| \inpro{\beta_1(\bff{\nu}\cdot\nabla)\bff{v}+\beta_2 \bff{v}\times (\bff{\nu}\cdot\nabla)\bff{v}}{\bff{w}}_{\bb{L}^2} \big|
    \\
	&\leq
	C \Big(\norm{\bff{\nu}}{\bb{L}^\infty(\mathscr{D};\bb{R}^d)} \norm{\nabla \bff{v}}{\bb{L}^2}
	+
	\norm{\bff{\nu}}{\bb{L}^\infty(\mathscr{D};\bb{R}^d)} \norm{|\bff{v}|\, |\nabla \bff{v}|}{\bb{L}^2} \Big) \norm{\bff{w}}{\bb{L}^2}
	\\
	&\leq
	C_\epsilon \norm{\bff{\nu}}{\bb{L}^\infty(\mathscr{D};\bb{R}^d)}^2 
	\left( \norm{\nabla \bff{v}}{\bb{L}^2}^2
	+
	\norm{|\bff{v}|\, |\nabla \bff{v}|}{\bb{L}^2}^2 \right)
	+
	\epsilon \norm{\bff{w}}{\bb{L}^2}^2
\end{align*}
as required. Finally, writing
\begin{align*}
    R(\bff{v})-R(\bff{w})
    =
    -(\bff{\nu}\cdot \nabla) (\bff{v}-\bff{w})
    +
    \beta (\bff{v}-\bff{w}) \times (\bff{\nu}\cdot \nabla) \bff{v}
    +
    \beta \bff{w} \times (\bff{\nu}\cdot \nabla) (\bff{v}-\bff{w}),
\end{align*}
we then have by Young's inequality,
\begin{align*}
    \big| \inpro{R(\bff{v})-R(\bff{w})}{\bff{v}-\bff{w}}_{\bb{L}^2} \big|
    &\leq
    C_\epsilon \norm{\bff{\nu}}{\bb{L}^\infty(\mathscr{D};\bb{R}^d)}^2 \norm{\bff{v}-\bff{w}}{\bb{L}^2}^2
    +
    \epsilon \norm{\nabla \bff{v}-\nabla \bff{w}}{\bb{L}^2}^2
    \\
    &\quad
    +
    C_\epsilon \norm{\bff{\nu}}{\bb{L}^\infty(\mathscr{D};\bb{R}^d)}^2 \norm{\bff{w}}{\bb{L}^\infty}^2 
    \norm{\bff{v}-\bff{w}}{\bb{L}^2}^2.
\end{align*}
This completes the proof of the lemma.
\end{proof}

\section{Faedo--Galerkin Approximation}\label{sec:faedo}

Let $\{\bff{e}_i\}_{i\in \bb{N}}$ denote an orthonormal basis of $\bb{L}^2$
consisting of smooth eigenfunctions~$\bff{e}_i$ of $-\Delta$ such that
\begin{align*}
-\Delta \bff{e}_i =\mu_i \bff{e}_i \ \text{ in $\mathscr{D}$}
\quad\text{and}\quad
 \frac{\partial \bff{e}_i}{\partial \bff{n}}= \bff{0} \ \text{ on } \partial \mathscr{D},
 \quad \forall i\in \bb{N},
\end{align*}
where $\mu_i\geq 0$ are the eigenvalues of $-\Delta$, associated with
$\bff{e}_i$.

%By elliptic regularity, $\bff{e_i}$ is smooth up to the
%boundary, and we also have
%\begin{align*}
%	(-\Delta)^k \bff{e_i} 
%	=	 
%	\mu_i^k \bff{e_i} \ \text{ in $\mathscr{D}$}
%	\quad\text{and}\quad
% 	\frac{\partial \bff{e_i}}{\partial \bff{n}}
%	=
%	\frac{\partial \Delta^k
% 	\bff{e_i}}{\partial \bff{n}} =\bff{0} \ \text{ on } \partial \mathscr{D},
%	\quad
%	\forall k\in \bb{N}.
%\end{align*}
%Now, write $A:=-\Delta$. For $\beta>0$, define the Hilbert space %$X^\beta:= \text{D}(A^\beta)$ endowed with the norm
%\[
%	\norm{\bff{v}}{X^\beta} := \norm{(I+A)^\beta \bff{v}}{\bb{L}^2}.
%\]
%If $\bff{v}\in \text{D}(A^{\beta/2})$, then
%\[
%	\norm{A^{\beta/2} \bff{v}}{\bb{L}^2} \leq
%	\norm{\bff{v}}{\bb{H}^\beta}
%	\leq
%	C_\beta \left(\norm{A^{\beta/2} \bff{v}}{\bb{L}^2} + %\norm{\bff{v}}{\bb{L}^2} \right),
%\]
%where $C_\beta$ is a constant depending only on $\beta$ and $\Omega$.

For any $n\in \bb{N}$, let $\bb{V}_n:= \text{span}\{\bff{e_1},\ldots,\bff{e_n}\}$ and $\Pi_n: \bb{L}^2\to
\bb{V}_n$ be the orthogonal projection defined by
\begin{align}\label{equ:proj}
	\inpro{ \Pi_n \bff{v}}{\bff{\phi}}_{\bb{L}^2}
	= 
	\inpro{ \bff{v}}{\bff{\phi}}_{\bb{L}^2},
	\quad \forall \bff{\phi}\in \bb{V}_n,
	\;\bff{v}\in\bb{L}^2.
\end{align} 
Note that $\Pi_n$ is self-adjoint and satisfies
\begin{align}
    \label{equ:Pi v L2}
	\norm{\Pi_n \bff{v}}{\bb{L}^2} 
	&\leq 
	\norm{\bff{v}}{\bb{L}^2},
	\quad \forall
	\bff{v}\in \bb{L}^2,
	\\
    \label{equ:nab Pi v L2}
	\norm{\nabla \Pi_n \bff{v}}{\bb{L}^2}
	&\leq
	\norm{\nabla \bff{v}}{\bb{L}^2},
	\quad \forall
	\bff{v} \in \bb{H}^1.
\end{align}
Moreover,
\begin{equation*}
	\inpro{\Pi_n \Delta \bff{v}}{\bff{\phi}}_{\bb{L}^2}
	=
	\inpro{\Delta \Pi_n \bff{v}}{\bff{\phi}}_{\bb{L}^2}
    =
    -\inpro{\nabla \Pi_n \bff{v}}{\nabla \bff{\phi}}_{\bb{L}^2},
	\quad \forall
	\bff{\phi}\in \bb{V}_n,\;
	\bff{v}\in \bb{H}^2.
\end{equation*}

The Faedo--Galerkin method seeks to approximate the solution 
to~\eqref{equ:sllbar} by~$(\bff{u}_n(t), \bff{H}_n(t)) \in \bb{V}_n \times \bb{V}_n$ satisfying 
\begin{equation}\label{equ:faedo}
	\left\{
	\begin{alignedat}{2}
		&\dif \bff{u}_n
		=
		\big(\lambda_r \bff{H}_n
		-
		\lambda_e \Delta \bff{H}_n
		-
		\gamma \Pi_n (\bff{u}_n \times \bff{H}_n)
		+
		\Pi_n S(\bff{u}_n)
		\big) \,\dt
		+
		\sum_{k=1}^n \Pi_n G_k(\bff{u}_n) \, \dif W_k(t)
		&&
		\quad\text{in $(0,T)\times\mathscr{D}$,}
		\\
		&\bff{H}_n
		=
		\Delta \bff{u}_n
		+
		\bff{u}_n
		-
		\Pi_n ( |\bff{u}_n|^2 \bff{u}_n ) 
		&&
		\quad\text{in $(0,T)\times\mathscr{D}$,}
		\\
		&\bff{u}_n(0) = \bff{u}_{0n}
		&& \quad\text{in $\mathscr{D}$,}
	\end{alignedat}
	\right.
\end{equation}
where $\bff{u}_{0n}= \Pi_n \bff{u}_0 \in \bb{V}_n$. Note that $\bff{u}_n, \Delta \bff{u}_n, \bff{H}_n \in \bb{V}_n$; thus in particular $\partial \bff{u}_n/\partial \bff{n}= \partial (\Delta \bff{u}_n)/\partial \bff{n} = \partial \bff{H}_n/\partial \bff{n}= \bff{0}$ on $\partial\mathscr{D}$.

Substituting the second equation in \eqref{equ:faedo} into the first gives
\begin{align}\label{equ:subs faedo}
	\nonumber
	\dif \bff{u}_n
	&=
	\big((\lambda_r-\lambda_e) \Delta \bff{u}_n
	-
	\lambda_e \Delta^2 \bff{u}_n
	+
	\lambda_r \bff{u}_n
	-
	\lambda_r \Pi_n ( |\bff{u}_n|^2 \bff{u}_n ) 
	+
	\lambda_e \Delta \Pi_n  (|\bff{u}_n|^2 \bff{u}_n )
	-
	\gamma \Pi_n (\bff{u}_n \times \Delta \bff{u}_n)
	\\
	&\quad
	+
	\gamma \Pi_n( \bff{u}_n\times \Pi_n(|\bff{u}_n|^2 \bff{u}_n)) 
	+
	\Pi_n S(\bff{u}_n)
	\big) \,\dt
	+
	\sum_{k=1}^n \Pi_n G_k(\bff{u}_n) \, \dif W_k(t).
\end{align}
The existence of a unique local strong solution to \eqref{equ:subs faedo}, hence to~\eqref{equ:faedo}, follows from Lemmas~\ref{lem:Lip} and \ref{lem:growth} shown in the following and~\cite[Theorem~10.6]{ChuWil14}.

\begin{lemma}\label{lem:Lip}
	For each $n\in\bb{N}$ and $\bff{v}\in\bb{V}_n$, define
	%, define $F_n^j : \bb{V}_n \to \bb{V}_n$, $j=1,\ldots,5$, by
	\begin{align*}
		F_n^1(\bff{v}) &= \Delta \bff{v}+ \bff{v},
		\\
		F_n^2(\bff{v}) &= -\Delta^2 \bff{v},
		\\
		F_n^3(\bff{v}) &= -\Pi_n (|\bff{v}|^2 \bff{v}),
		\\
		F_n^4(\bff{v}) &= \Pi_n \Delta (|\bff{v}|^2 \bff{v}),
		\\
		F_n^5(\bff{v}) &= \Pi_n (\bff{v}\times \Delta \bff{v}),
		\\
		F_n^6(\bff{v}) &= \Pi_n( \bff{v}\times \Pi_n(|\bff{v}|^2 \bff{v})),
		\\
		F_n^7(\bff{v}) &= \Pi_n S(\bff{v}),
		\\
		G_n(\bff{v}) &= \Pi_n G_k(\bff{v}).
	\end{align*}
	Then $F_n^j$, $j=1$, $2,\ldots,7$, are well-defined mappings from $\bb{V}_n$
	into~$\bb{V}_n$. Moreover, $F_n^1$, $F_n^2$, and $G_n$ are globally Lipschitz while
	$F_n^3$, $F_n^4$, $F_n^5$, $F_n^6$, and $F_n^7$ are locally Lipschitz.
\end{lemma}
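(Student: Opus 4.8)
The plan is to exploit the finite-dimensionality of $\bb{V}_n$, on which all norms are equivalent (with constants depending on $n$), so that every Sobolev or $\bb{L}^\infty$ norm of an element of $\bb{V}_n$ is controlled by its $\bb{L}^2$ norm. First I would record the structural facts that make each $F_n^j$ well-defined as a self-map of $\bb{V}_n$: since every $\bff{e}_i$ is a smooth eigenfunction of $-\Delta$, each $\bff{v}\in\bb{V}_n$ is smooth, and $\Delta$, $\Delta^2$ act as bounded diagonal operators on $\bb{V}_n$ (multiplying the $i$-th coordinate by $-\mu_i$ and $\mu_i^2$), hence map $\bb{V}_n$ into itself. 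Any polynomial nonlinearity in $\bff{v}$ and its derivatives --- such as $|\bff{v}|^2\bff{v}$, the expansion of $\Delta(|\bff{v}|^2\bff{v})$ from \eqref{equ:del v2v}, $\bff{v}\times\Delta\bff{v}$, and $R(\bff{v})$ from \eqref{equ:torque} --- is again smooth, hence lies in $\bb{L}^2$, so composing with the bounded projection $\Pi_n$ (see \eqref{equ:Pi v L2}) lands in $\bb{V}_n$. This settles well-definedness.

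For the global Lipschitz claims, $F_n^1$ and $F_n^2$ are restrictions of bounded linear operators on $\bb{V}_n$, hence trivially globally Lipschitz with constants $\max_{i\le n}(1+\mu_i)$ and $\max_{i\le n}\mu_i^2$. For $G_n$, which I read as the Hilbert--Schmidt coefficient $\bff{v}\mapsto\{\Pi_n(\bff{g}_k+\gamma\,\bff{v}\times\bff{h}_k)\}_{k}$ valued in $\ell^2(\bb{V}_n)$, the dependence on $\bff{v}$ is affine through the linear map $\bff{v}\mapsto\bff{v}\times\bff{h}_k$; bounding $\|\Pi_n(\bff{v}-\bff{w})\times\bff{h}_k\|_{\bb{L}^2}\le\|(\bff{v}-\bff{w})\times\bff{h}_k\|_{\bb{L}^2}\le\|\bff{v}-\bff{w}\|_{\bb{L}^\infty}\|\bff{h}_k\|_{\bb{L}^2}$, summing over $k$, and using $\sum_k\|\bff{h}_k\|_{\bb{L}^2}^2\le\sigma_h^2<\infty$ from \eqref{equ:sigma g sigma h} together with the norm equivalence $\|\cdot\|_{\bb{L}^\infty}\lesssim_n\|\cdot\|_{\bb{L}^2}$ on $\bb{V}_n$ yields a global Lipschitz bound in $\ell^2(\bb{V}_n)$.

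For the local Lipschitz claims I would use standard pointwise polynomial difference estimates. For $F_n^3$ (and the cubic pieces of $F_n^4$, $F_n^6$), the identity $|\bff{v}|^2\bff{v}-|\bff{w}|^2\bff{w}=(|\bff{v}|^2-|\bff{w}|^2)\bff{v}+|\bff{w}|^2(\bff{v}-\bff{w})$ gives the pointwise bound $C(|\bff{v}|^2+|\bff{w}|^2)|\bff{v}-\bff{w}|$, which after applying $\Pi_n$ and invoking norm equivalence produces a Lipschitz constant proportional to $(\|\bff{v}\|_{\bb{L}^2}^2+\|\bff{w}\|_{\bb{L}^2}^2)$, hence local Lipschitz. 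The term $F_n^5$ is bilinear in $(\bff{v},\Delta\bff{v})$ and so is locally Lipschitz by the same polarization, and $F_n^7=\Pi_n(R+L)$ combines the globally Lipschitz $L$ with the linear-plus-quadratic $R$, which is locally Lipschitz in $\bff{v}$ once $\|\bff{\nu}\|_{\bb{L}^\infty}$ is controlled by assumption and the first derivatives by norm equivalence.

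I expect the only genuinely delicate bookkeeping to be $F_n^4=\Pi_n\Delta(|\bff{v}|^2\bff{v})$, since here a second-order operator is applied to a cubic nonlinearity. Using the expansion \eqref{equ:del v2v}, the difference $\Delta(|\bff{v}|^2\bff{v})-\Delta(|\bff{w}|^2\bff{w})$ is a finite sum of terms, each quadratic in the pair $(\bff{v},\bff{w})$ and their first and second derivatives; on $\bb{V}_n$ all these derivative norms are comparable to the $\bb{L}^2$ norm, so every term is controlled by a constant (depending on $n$ and on a common bound $\|\bff{v}\|_{\bb{L}^2},\|\bff{w}\|_{\bb{L}^2}\le\rho$) times $\|\bff{v}-\bff{w}\|_{\bb{L}^2}$, giving local Lipschitz continuity. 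With all seven maps handled, the existence of a unique local strong solution to \eqref{equ:subs faedo} follows from the cited SDE existence theorem for locally Lipschitz coefficients.
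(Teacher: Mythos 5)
Your proposal is correct and follows essentially the same route as the paper: finite-dimensional norm equivalence on $\bb{V}_n$ combined with pointwise polynomial difference estimates for the nonlinear terms, the boundedness of $\Pi_n$, and the assumed Lipschitz property of $L$. The only difference is presentational — the paper delegates $F_n^1$ through $F_n^5$ to a citation of an earlier lemma and writes out only $F_n^6$ and $F_n^7$, whereas you spell out all the terms explicitly.
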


\begin{proof}
Global Lipschitz continuity for $F_n^1$ and $F_n^2$, and local Lipschitz continuity for $F_n^3$, $F_n^4$, and $F_n^5$ were shown in \cite[Lemma~3.2]{SoeTra23}. Also, $G_n$ is linear, so it is clearly globally Lipschitz. Next, for any $\bff{v}$, $\bff{w}\in \bb{V}_n$,
\begin{align*}
	\norm{F_n^6(\bff{v})-F_n^6(\bff{w})}{\bb{L}^2}
	&\leq
	\norm{\bff{v} \times F_n^3(\bff{v}) - \bff{w}\times F_n^3(\bff{w})}{\bb{L}^2}
	\\
	&\leq
	\norm{\bff{v} \times \big(F_n^3(\bff{v}) -F_n^3(\bff{w}) \big)}{\bb{L}^2}
	+
	\norm{(\bff{v}-\bff{w}) \times F_n^3(\bff{w})}{\bb{L}^2}
	\\
	&\leq
	\norm{\bff{v}}{\bb{L}^\infty} \norm{F_n^3(\bff{v}) -F_n^3(\bff{w})}{\bb{L}^2}
	+
	\norm{\bff{w}}{\bb{L}^\infty}^3 \norm{\bff{v}-\bff{w}}{\bb{L}^2}.
\end{align*}
Since all norms on $\bb{V}_n$ are equivalent and $F_n^3$ is locally Lipschitz, $F_n^6$ is locally Lipschitz.

Finally, $F_n^7(\bff{v})= \Pi_n R(\bff{v})+ \Pi_n L(\bff{v})$. The map $L$ is globally Lipschitz by assumption. Moreover, for any $\bff{v}$, $\bff{w}\in \bb{V}_n$,
\begin{align*}
	&\norm{\Pi_n R(\bff{v})- \Pi_n R(\bff{w})}{\bb{L}^2}
 \\
	&\leq
	\norm{(\bff{\nu}\cdot \nabla) (\bff{v}-\bff{w})}{\bb{L}^2}
	+
	\beta \norm{(\bff{v}-\bff{w}) \times (\bff{\nu}\cdot\nabla)\bff{v}}{\bb{L}^2}
	+
	\beta \norm{\bff{w} \times (\bff{\nu}\cdot \nabla)(\bff{v}-\bff{w})}{\bb{L}^2}
	\\
	&\leq
	\norm{\bff{\nu}}{\bb{L}^\infty(\mathscr{D};\bb{R}^d)} \norm{\bff{v}-\bff{w}}{\bb{H}^1}
	+
	\beta \norm{\bff{\nu}}{\bb{L}^\infty(\mathscr{D};\bb{R}^d)} 
        \norm{\bff{v}}{\bb{H}^1} \norm{\bff{v}-\bff{w}}{\bb{L}^\infty}
	+
	\beta \norm{\bff{\nu}}{\bb{L}^\infty(\mathscr{D};\bb{R}^d)} \norm{\bff{w}}{\bb{L}^\infty}
	\norm{\bff{v}-\bff{w}}{\bb{H}^1}.
\end{align*}
Since all norms on $\bb{V}_n$ are equivalent, this implies $F_n^7$ is locally Lipschitz. This completes the proof.
\end{proof}

\begin{lemma}\label{lem:growth}
For each $n\in \bb{N}$ and $\bff{v} \in \bb{V}_n$,
\begin{align*}
	\inpro{\sum_{j=1}^7 F_n^j(\bff{v})}{\bff{v}}_{\bb{L}^2}
	+
	\sum_{k=1}^n \norm{\Pi_n G_k(\bff{v})}{\bb{L}^2}^2
	\leq
	C_n \left(1+ \norm{\bff{v}}{\bb{L}^2}^2 \right),
\end{align*}
where $C_n$ is a constant, possibly depending on $n$.
\end{lemma}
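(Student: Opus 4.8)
The plan is to test the reassembled drift from \eqref{equ:subs faedo} against $\bff{v}$ in $\bb{L}^2$ and to estimate $\inpro{F_n^j(\bff{v})}{\bff{v}}_{\bb{L}^2}$ for each $j$ separately, together with the noise sum. Throughout I will use that $\Pi_n$ is self-adjoint and that $\bff{v}\in\bb{V}_n$, so that $\inpro{\Pi_n\bff{w}}{\bff{v}}_{\bb{L}^2}=\inpro{\bff{w}}{\bff{v}}_{\bb{L}^2}$ for any $\bff{w}\in\bb{L}^2$; this removes every projection from the pairings. The decisive point is that the \emph{quartic} contributions which could destroy a quadratic bound — those coming from $F_n^4$, $F_n^6$, and the torque $R$ — must be shown either to vanish or to carry a favourable sign. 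This is exactly why norm equivalence on the finite-dimensional space $\bb{V}_n$ alone does not suffice: those terms scale like a fourth power of $\bff{v}$, so the estimate genuinely rests on the dissipative structure of the equation rather than on finite dimensionality.

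First I would collect the sign-definite contributions. Integrating by parts (legitimate since $\partial\bff{v}/\partial\bff{n}=\bff{0}$, and also $\partial(\Delta\bff{v})/\partial\bff{n}=\bff{0}$, for $\bff{v}\in\bb{V}_n$) gives $\inpro{F_n^1(\bff{v})}{\bff{v}}_{\bb{L}^2}=\norm{\bff{v}}{\bb{L}^2}^2-\norm{\nabla\bff{v}}{\bb{L}^2}^2$ and $\inpro{F_n^2(\bff{v})}{\bff{v}}_{\bb{L}^2}=-\norm{\Delta\bff{v}}{\bb{L}^2}^2$, while $\inpro{F_n^3(\bff{v})}{\bff{v}}_{\bb{L}^2}=-\norm{\bff{v}}{\bb{L}^4}^4\le 0$. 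The precessional terms drop out entirely by pointwise orthogonality: since $(\bff{v}\times\bff{w})\cdot\bff{v}=0$ almost everywhere for any $\bff{w}$, one has $\inpro{F_n^5(\bff{v})}{\bff{v}}_{\bb{L}^2}=\inpro{\bff{v}\times\Delta\bff{v}}{\bff{v}}_{\bb{L}^2}=0$ and likewise $\inpro{F_n^6(\bff{v})}{\bff{v}}_{\bb{L}^2}=0$.

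The main computation, which I expect to be the crux, is the exchange nonlinearity $F_n^4$. After removing the projection I would integrate by parts once; the boundary integral $\int_{\partial\mathscr{D}}\big(\partial(|\bff{v}|^2\bff{v})/\partial\bff{n}\big)\cdot\bff{v}$ vanishes because $\partial(|\bff{v}|^2\bff{v})/\partial\bff{n}=\bff{0}$ on $\partial\mathscr{D}$ by \eqref{equ:ddn v2v}. Inserting the product rule \eqref{equ:nor der v2v} for $\nabla(|\bff{v}|^2\bff{v})$ and contracting with $\nabla\bff{v}$ then gives
\[
\inpro{F_n^4(\bff{v})}{\bff{v}}_{\bb{L}^2}
=-\inpro{\nabla(|\bff{v}|^2\bff{v})}{\nabla\bff{v}}_{\bb{L}^2}
=-2\norm{\bff{v}\cdot\nabla\bff{v}}{\bb{L}^2}^2-\norm{|\bff{v}|\,|\nabla\bff{v}|}{\bb{L}^2}^2
\le-\norm{|\bff{v}|\,|\nabla\bff{v}|}{\bb{L}^2}^2.
\]
Thus the fourth-order nonlinearity is converted into a strictly dissipative quartic term, which is precisely what is needed to absorb the only remaining positive quartic contribution.

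Finally, for the torque $F_n^7=\Pi_n R(\bff{v})+\Pi_n L(\bff{v})$ I would apply \eqref{equ:Rv v} with a small $\epsilon$ (say $\epsilon=\tfrac12$), so that $\big|\inpro{R(\bff{v})}{\bff{v}}_{\bb{L}^2}\big|\le C\norm{\bff{\nu}}{\bb{L}^2(\mathscr{D};\bb{R}^d)}^2+\tfrac12\norm{|\bff{v}|\,|\nabla\bff{v}|}{\bb{L}^2}^2$; the quartic part is then absorbed by the negative term produced by $F_n^4$, and $\norm{\bff{\nu}}{\bb{L}^2(\mathscr{D};\bb{R}^d)}$ is a fixed constant by the boundedness assumption on $\bff{\nu}$. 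The Lipschitz map $L$ has linear growth, so $\inpro{L(\bff{v})}{\bff{v}}_{\bb{L}^2}\le C(1+\norm{\bff{v}}{\bb{L}^2}^2)$ by Young's inequality. For the noise I would estimate $\norm{\Pi_n G_k(\bff{v})}{\bb{L}^2}\le\norm{\bff{g}_k}{\bb{L}^2}+\gamma\norm{\bff{h}_k}{\bb{L}^\infty}\norm{\bff{v}}{\bb{L}^2}$ and use the embedding $\bb{H}^2\hookrightarrow\bb{L}^\infty$ (valid for $d\le 3$) together with \eqref{equ:sigma g sigma h} to sum over $k$, obtaining $\sum_k\norm{\Pi_n G_k(\bff{v})}{\bb{L}^2}^2\lesssim\sigma_g^2+\gamma^2\sigma_h^2\norm{\bff{v}}{\bb{L}^2}^2$. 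Discarding all the remaining non-positive terms then yields the claimed bound, in fact with a constant independent of $n$.
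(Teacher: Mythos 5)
Your proposal is correct and follows essentially the same route as the paper: the same sign-definite computations for $F_n^1,\dots,F_n^4$ (the paper records the $F_n^4$ dissipation as $-\norm{|\bff{v}|\,|\nabla\bff{v}|}{\bb{L}^2}^2-\tfrac12\norm{\nabla|\bff{v}|^2}{\bb{L}^2}^2$, which is your $-\norm{|\bff{v}|\,|\nabla\bff{v}|}{\bb{L}^2}^2-2\norm{\bff{v}\cdot\nabla\bff{v}}{\bb{L}^2}^2$), the same vanishing of the precessional terms, the same absorption of the torque's quartic contribution via \eqref{equ:Rv v}, and the same bound on the noise sum. Your closing remark that the constant is in fact independent of $n$ is also consistent with the paper's estimates.
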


\begin{proof}
We will estimate each term on the left-hand side. Firstly, since $\bff{v},\Delta \bff{v}\in \bb{V}_n$, using integration by parts we have
\begin{align*}
	\inpro{F_n^1(\bff{v})+F_n^2(\bff{v})}{\bff{v}}_{\bb{L}^2}
	=
	-
	\norm{\nabla \bff{v}}{\bb{L}^2}^2
	+
	\norm{\bff{v}}{\bb{L}^2}^2
	-
	\norm{\Delta \bff{v}}{\bb{L}^2}^2
	\leq
	\norm{\bff{v}}{\bb{L}^2}^2.
\end{align*}
Moreover, noting \eqref{equ:proj} and \eqref{equ:ddn v2v}, we obtain
\begin{align*}
	\inpro{F_n^3(\bff{v})+F_n^4(\bff{v})}{\bff{v}}_{\bb{L}^2}
	=
	- \norm{\bff{v}}{\bb{L}^4}^4
	- \norm{|\bff{v}|\, |\nabla \bff{v}|}{\bb{L}^2}^2
	- \frac{1}{2} \norm{\nabla |\bff{v}|^2}{\bb{L}^2}^2
	\leq 
	- \norm{|\bff{v}|\, |\nabla \bff{v}|}{\bb{L}^2}^2.
\end{align*}
Similarly, by \eqref{equ:proj} and the fact that $(\bff{a}\times\bff{b})\cdot \bff{a}=0$ for any $\bff{a},\bff{b}\in \bb{R}^3$, we have
\begin{align*}
	\inpro{F_n^5(\bff{v})+F_n^6(\bff{v})}{\bff{v}}_{\bb{L}^2} = 0.
\end{align*}
Also, by Young's inequality and the assumptions on $\bff{\nu}$ and $L$,
\begin{align*}
	\inpro{F_n^7(\bff{v})}{\bff{v}}_{\bb{L}^2}
	=
	\inpro{R(\bff{v})+L(\bff{v})}{\bff{v}}_{\bb{L}^2}
	\leq
	C \norm{\bff{\nu}}{\bb{L}^2(\mathscr{D};\bb{R}^d)}^2 
	+
	\epsilon \norm{|\bff{v}|\, |\nabla \bff{v}|}{\bb{L}^2}^2
	+
	C \big( 1+\norm{\bff{v}}{\bb{L}^2}^2 \big)
\end{align*}
for any $\epsilon>0$. Finally, by H\"older's inequality and \eqref{equ:Pi v L2},
\begin{align*}
	\sum_{k=1}^n \norm{\Pi_n G_k(\bff{v})}{\bb{L}^2}^2
	\leq
	C \sigma_g \sigma_h \left(1+ \norm{\bff{v}}{\bb{L}^2}^2 \right).
\end{align*}
Adding all the above inequalities yields the required result.
\end{proof}

%we first make the following remark.
%Formally, we have the following equality relating the Stratonovich and It\^o differentials~\cite{BrzGolJeg13}:
%\[
%G(\bff{v}) \circ \dif W(t)
%= 
%\frac{1}{2} G'(\bff{v})[G(\bff{v})] \,\dt
%+ 
%G(\bff{v})\, \dif W(t).
%\]
%Applying this to the map $G_n$ defined in Lemma~\ref{lem:Lip}, and noting that
%\[
%G_n'(\bff{v})[G_n(\bff{v})]= -\gamma \Pi_n (G_n(\bff{v}) \times \bff{g}),
%\]
%we infer that~\eqref{equ:faedo} can be written in It\^o form as

Before proceeding to prove uniform bounds for the approximate solutions $\bff{u}_n$, to simplify notations we write~\eqref{equ:faedo} as
\begin{equation}\label{equ:faedo ito}
	\left\{
	\begin{alignedat}{2}
		&\dif \bff{u}_n
		=
		F_n(\bff{u}_n, \bff{H}_n) \,\dt
		+
		\sum_{k=1}^n G_k(\bff{u}_n) \, \dif W_k(t),
		&&
		\quad\text{in $(0,T)\times\mathscr{D}$,}
		\\
		&\bff{H}_n
		=
		\Delta \bff{u}_n
		+
		\bff{u}_n
		-
		\Pi_n ( |\bff{u}_n|^2 \bff{u}_n ) 
		&&
		\quad\text{in $(0,T)\times\mathscr{D}$,}
		\\
		&\bff{u}_n(0) = \bff{u}_{0n}
		&& \quad\text{in $\mathscr{D}$,}
	\end{alignedat}
	\right.
\end{equation}
where
\begin{align}\label{equ:Fn}
	F_n(\bff{u}_n, \bff{H}_n) 
	&:=
	\lambda_r \bff{H}_n
	-
	\lambda_e \Delta \bff{H}_n
	-
	\gamma \Pi_n(\bff{u}_n\times \bff{H}_n)
	-
	\Pi_n S(\bff{u}_n),
	\\
	\label{equ:Gn}
	G_k(\bff{u}_n)
	&:=
	\Pi_n(\bff{g}_k+ \gamma \bff{u}_n \times \bff{h}_k).
\end{align}
Now, we prove uniform bounds for the approximate solutions $\bff{u}_n$.

\begin{proposition}\label{pro:E sup un L2}
For any $p\geq 1$, $n\in \bb{N}$, $t\in (0,\infty)$,
\begin{align*}
	&\bb{E} \left[\sup_{\tau \in [0,t]} \norm{\bff{u}_n(\tau)}{\bb{L}^2}^{2p} \right]
	+
	\bb{E} \left[\left( \int_0^t \norm{\nabla \bff{u}_n(s)}{\bb{L}^2}^2 \ds \right)^p \right]
	+
	\bb{E} \left[ \left( \lambda_e \int_0^t \norm{\Delta \bff{u}_n(s)}{\bb{L}^2}^2 \ds \right)^p \right]
	\\
	&\quad
    +
	\bb{E} \left[\left(\int_0^t \norm{|\bff{u}_n(s)|\, |\nabla \bff{u}_n(s)|}{\bb{L}^2}^2 \ds \right)^p \right]
	+
	\bb{E} \left[\left(\int_0^t \norm{\bff{u}_n(s)}{\bb{L}^4}^4 \ds \right)^p \right]
	\leq 
	C\left(1+\norm{\bff{u}_0}{\bb{L}^2}^{2p}\right) e^{Ct},
\end{align*}
where $C$ is a positive constant depending on $p$,  $\sigma_g$, and $\sigma_h$ (but is independent of $n$ and $t$).
\end{proposition}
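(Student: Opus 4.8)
The plan is to apply It\^o's formula to $\norm{\bff{u}_n(t)}{\bb{L}^2}^{2p}$ and exploit the dissipative energy structure of the drift, which is essentially the coercivity already visible in Lemma~\ref{lem:growth}, but now tracking the physical coefficients $\lambda_r,\lambda_e$ and keeping the quartic dissipative terms. Since the Galerkin system has only locally Lipschitz coefficients, I would first localise with the stopping times $\tau_R^n := \inf\{s\geq 0 : \norm{\bff{u}_n(s)}{\bb{L}^2}\geq R\}$, carry out all estimates on $[0,t\wedge\tau_R^n]$ where the stochastic integral is a genuine martingale of zero expectation, and only at the end let $R\to\infty$ by monotone convergence; the uniformity of the bound in $R$ simultaneously rules out blow-up.

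For the drift $\inpro{F_n(\bff{u}_n,\bff{H}_n)}{\bff{u}_n}_{\bb{L}^2}$ I would substitute $\bff{H}_n$ and evaluate term by term as in Lemma~\ref{lem:growth}: the two cross-product terms vanish since $(\bff{a}\times\bff{b})\cdot\bff{a}=0$; the biharmonic term gives $-\lambda_e\norm{\Delta\bff{u}_n}{\bb{L}^2}^2$; the cubic terms (using \eqref{equ:nor der v2v}, \eqref{equ:ddn v2v} and integration by parts with the Neumann conditions) produce the nonpositive contributions $-\lambda_r\norm{\bff{u}_n}{\bb{L}^4}^4-\lambda_e\norm{|\bff{u}_n|\,|\nabla\bff{u}_n|}{\bb{L}^2}^2-\tfrac{\lambda_e}{2}\norm{\nabla|\bff{u}_n|^2}{\bb{L}^2}^2$; the spin-torque and Lipschitz terms are bounded via \eqref{equ:Rv v} and the Lipschitz bound on $L$, with the resulting $\epsilon\norm{|\bff{u}_n|\,|\nabla\bff{u}_n|}{\bb{L}^2}^2$ absorbed into the dissipation; and the It\^o correction $\sum_k\norm{\Pi_n G_k(\bff{u}_n)}{\bb{L}^2}^2$ is bounded by $C(1+\norm{\bff{u}_n}{\bb{L}^2}^2)$ using $\bb{H}^2\hookrightarrow\bb{L}^\infty$ (valid for $d\leq 3$) on $\bff{g}_k,\bff{h}_k$ together with \eqref{equ:sigma g sigma h}. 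The only sign subtlety is the first-order term $-(\lambda_r-\lambda_e)\norm{\nabla\bff{u}_n}{\bb{L}^2}^2$, which, when $\lambda_e>\lambda_r$, I would reabsorb into the $\Delta$-dissipation via \eqref{equ:nabla v L2} with $\epsilon$ small. The upshot, writing $X_n:=\norm{\bff{u}_n}{\bb{L}^2}^2$, is a differential inequality $\dif X_n+\mathcal{D}_n\,\dt\leq C(1+X_n)\,\dt+\dif M_n$, where $\mathcal{D}_n$ is a sum of $\norm{\Delta\bff{u}_n}{\bb{L}^2}^2$, $\norm{|\bff{u}_n|\,|\nabla\bff{u}_n|}{\bb{L}^2}^2$ and $\norm{\bff{u}_n}{\bb{L}^4}^4$ with strictly positive coefficients, and $M_n$ is the martingale whose bracket obeys $\dif\langle M_n\rangle\leq CX_n(1+X_n)\,\dt$ by Cauchy--Schwarz.

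To reach the $2p$-th moment I would apply It\^o again to $X_n^p$, obtaining $\dif X_n^p+pX_n^{p-1}\mathcal{D}_n\,\dt\leq C(1+X_n^p)\,\dt+pX_n^{p-1}\,\dif M_n$ after bounding the extra correction $\tfrac{p(p-1)}{2}X_n^{p-2}\dif\langle M_n\rangle$ by $C(1+X_n^p)\,\dt$. Taking the supremum over $[0,t\wedge\tau_R^n]$ and then expectations, the martingale supremum is controlled by Burkholder--Davis--Gundy: $\bb{E}\sup_{\tau\leq t}\big|\int_0^\tau X_n^{p-1}\dif M_n\big|\lesssim\bb{E}\big(\int_0^t X_n^{2(p-1)}\dif\langle M_n\rangle\big)^{1/2}\lesssim\bb{E}\big(\sup_{\tau\leq t}X_n^p\int_0^t X_n^p\,\ds\big)^{1/2}$, and a Young splitting absorbs $\tfrac12\bb{E}\sup_{\tau\leq t}X_n^p$ into the left-hand side. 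Gronwall's lemma then yields the stated bound on $\bb{E}\sup_{\tau\leq t}\norm{\bff{u}_n(\tau)}{\bb{L}^2}^{2p}$.

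The integral bounds are obtained in a final step by returning to the $p=1$ energy inequality, writing $\int_0^t\mathcal{D}_n\,\ds\leq X_n(0)+C\int_0^t(1+X_n)\,\ds+\sup_{\tau\leq t}|M_n(\tau)|$, raising to the power $p$ and taking expectations: H\"older in time together with the sup bound just proved controls the drift contribution, while BDG bounds $\bb{E}\sup_{\tau\leq t}|M_n(\tau)|^p$ by $\bb{E}\langle M_n\rangle_t^{p/2}$, again reduced to the sup bound. Since $\mathcal{D}_n$ dominates each of $\norm{\Delta\bff{u}_n}{\bb{L}^2}^2$, $\norm{|\bff{u}_n|\,|\nabla\bff{u}_n|}{\bb{L}^2}^2$ and $\norm{\bff{u}_n}{\bb{L}^4}^4$ up to constants, their time-integrals raised to $p$ are controlled, and the $\nabla$-integral follows from \eqref{equ:nabla v L2}. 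I expect the main obstacle to be organisational rather than conceptual: arranging the absorptions so that the dissipation coefficients remain strictly positive uniformly in $n$ (in particular in the regime $\lambda_e>\lambda_r$), and passing from the mixed quantity $X_n^{p-1}\mathcal{D}_n$ to the pure integral moments $\big(\int_0^t\mathcal{D}_n\,\ds\big)^p$ demanded by the statement, which is exactly why the separate $p=1$ inequality must be retained and re-exploited.
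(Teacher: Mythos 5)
Your proposal is correct, and its backbone — the Itô energy identity for $\tfrac12\norm{\bff{u}_n}{\bb{L}^2}^2$, the exact cancellations of the precessional terms, the sign-definite quartic and biharmonic dissipation, the absorption of the gradient term via \eqref{equ:nabla v L2} when $\lambda_e>\lambda_r$, the observation that $\inpro{G_k(\bff{u}_n)}{\bff{u}_n}_{\bb{L}^2}$ reduces to $\inpro{\bff{g}_k}{\bff{u}_n}_{\bb{L}^2}$ so the martingale bracket is controlled by $\norm{\bff{u}_n}{\bb{L}^2}^2$, and Burkholder--Davis--Gundy plus Gronwall — is exactly the paper's. The one genuine divergence is how you reach the $2p$-th moment of the supremum: you apply Itô a second time to $X_n^p$ and absorb the correction $\tfrac{p(p-1)}{2}X_n^{p-2}\,\dif\langle M_n\rangle$, whereas the paper stays at the $p=1$ level, integrates, rearranges, and raises the entire pathwise inequality to the power $p$ via the elementary convexity inequality (what it calls Jensen), then applies BDG to $\sup_\tau\bigl|\int_0^\tau\inpro{\bff{g}_k}{\bff{u}_n}_{\bb{L}^2}\,\dif W_k\bigr|^p$. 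The paper's route has two small advantages: it sidesteps the fact that $x\mapsto x^p$ is not $C^2$ at the origin for $1\leq p<2$ (your second Itô application would need a regularisation such as $(\delta+X_n)^p$ there), and it delivers the supremum bound and the $p$-th powers of the dissipative time-integrals in a single stroke, which is precisely the "final step" you correctly identify as necessary and carry out separately. Your explicit localisation by $\tau_R^n$ and passage $R\to\infty$ is a technicality the paper suppresses (local existence is already settled by Lemmas~\ref{lem:Lip} and~\ref{lem:growth}), but it is harmless and arguably cleaner. No gaps.
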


\begin{proof}
Let $\psi:\bb{V}_n\to \bb{R}$ be a function defined by $\bff{v}\mapsto \frac{1}{2} \norm{\bff{v}}{\bb{L}^2}^2$. Then
\[
	\psi'(\bff{v})(\bff{h}) 
	= \inpro{\bff{v}}{\bff{h}}_{\bb{L}^2}
	\quad \text{and} \quad
	\psi''(\bff{v})(\bff{h},\bff{k}) 
	= \inpro{\bff{k}}{\bff{h}}_{\bb{L}^2},
	\quad \forall \bff{h}, \bff{k} \in \bb{V}_n.
\]
By It\^o's Lemma,
\begin{equation}\label{equ:d psi un}
	\dif \psi(\bff{u}_n)
	=
	\inpro{\dif \bff{u}_n}{\bff{u}_n}_{\bb{L}^2}
	+
	\frac{1}{2} \inpro{\dif \bff{u}_n}{\dif \bff{u}_n}_{\bb{L}^2}.
\end{equation}
Therefore, by \eqref{equ:d psi un} and \eqref{equ:faedo ito}, we deduce that
\begin{equation}\label{equ:d un L2}
	\frac{1}{2} \dif \norm{\bff{u}_n(t)}{\bb{L}^2}^2
	=
	\left( \inpro{F_n(\bff{u}_n,\bff{H}_n)}{\bff{u}_n}_{\bb{L}^2}
	+
	\frac{1}{2} \sum_{k=1}^n \norm{G_k(\bff{u}_n)}{\bb{L}^2}^2
	\right) \dt 
	+
	\sum_{k=1}^n \inpro{G_k(\bff{u}_n)}{\bff{u}_n}_{\bb{L}^2}  \dif W_k(t). 
\end{equation}
Note that by \eqref{equ:Fn} and \eqref{equ:Gn}, we have
\begin{align}\label{equ:Fn un un}
	\nonumber
	\inpro{F_n(\bff{u}_n,\bff{H}_n)}{\bff{u}_n}_{\bb{L}^2}
	&=
	\lambda_r \inpro{\bff{H}_n}{\bff{u}_n}_{\bb{L}^2}
	-
	\lambda_e \inpro{\Delta \bff{H}_n}{\bff{u}_n}_{\bb{L}^2}
	+
	\inpro{S(\bff{u}_n)}{\bff{u}_n}_{\bb{L}^2}
	\\
	\nonumber
	&=
	\lambda_r \inpro{\bff{H}_n}{\bff{u}_n}_{\bb{L}^2}
	-
	\lambda_e \inpro{\Delta \bff{H}_n}{\bff{u}_n}_{\bb{L}^2}
	-
	\inpro{(\bff{\nu}\cdot \nabla)\bff{u}_n}{\bff{u}_n}_{\bb{L}^2}
	+
	\inpro{L(\bff{u}_n)}{\bff{u}_n}_{\bb{L}^2}
	\\
	\nonumber
	&=
	(\lambda_e-\lambda_r)\norm{\nabla \bff{u}_n}{\bb{L}^2}^2
	-
	\lambda_e \norm{\Delta \bff{u}_n}{\bb{L}^2}^2
	+
	\lambda_r \norm{\bff{u}_n}{\bb{L}^2}^2
	-
	\lambda_r \norm{\bff{u}_n}{\bb{L}^4}^4
	-
	\lambda_e \norm{|\bff{u}_n| |\nabla \bff{u}_n|}{\bb{L}^2}^2
	\\
	&\quad
	-
	2\lambda_e\norm{\bff{u}_n\cdot \nabla \bff{u}_n}{\bb{L}^2}^2
	-
	\inpro{(\bff{\nu}\cdot \nabla)\bff{u}_n}{\bff{u}_n}_{\bb{L}^2}
	+
	\inpro{L(\bff{u}_n)}{\bff{u}_n}_{\bb{L}^2}
\end{align}
and
\begin{equation}\label{equ:Gn un un}
	\inpro{G_k(\bff{u}_n)}{\bff{u}_n}_{\bb{L}^2}
	=
	\inpro{\bff{g}_k}{\bff{u}_n}_{\bb{L}^2}.
\end{equation}
Substituting \eqref{equ:Fn un un} and \eqref{equ:Gn un un} into~\eqref{equ:d un L2} and applying H\"older's inequality yield
\begin{align}\label{equ:un L2}
	\nonumber
	&\frac{1}{2} \norm{\bff{u}_n(t)}{\bb{L}^2}^2
	+
	\lambda_r \int_0^t \norm{\nabla \bff{u}_n(s)}{\bb{L}^2}^2 \ds
	+
	\lambda_e \int_0^t \norm{\Delta \bff{u}_n(s)}{\bb{L}^2}^2 \ds
	+
	\lambda_r \int_0^t \norm{\bff{u}_n(s)}{\bb{L}^4}^4 \ds
	\\
	\nonumber
	&\quad
	+
	\lambda_e \int_0^t \norm{|\bff{u}_n(s)|\, |\nabla \bff{u}_n(s)|}{\bb{L}^2}^2 \ds
	+
	2\lambda_e \int_0^t \norm{\bff{u}_n(s) \cdot \nabla \bff{u}_n(s)}{\bb{L}^2}^2 \ds
	\\
	\nonumber
	&=
	\frac{1}{2} \norm{\bff{u}_{0}}{\bb{L}^2}^2
	+
	\lambda_r \int_0^t \norm{\bff{u}_n(s)}{\bb{L}^2}^2 \ds
    +
     \lambda_e \int_0^t \norm{\nabla \bff{u}_n(s)}{\bb{L}^2}^2 \ds
	-
	\int_0^t \inpro{(\bff{\nu}\cdot \nabla)\bff{u}_n(s)}{\bff{u}_n(s)}_{\bb{L}^2} \ds
	\\
	\nonumber
	&\quad 
 	+
	\int_0^t \inpro{L(\bff{u}_n(s))}{\bff{u}_n(s)}_{\bb{L}^2} \ds
	+
	\frac{1}{2} \sum_{k=1}^n \int_0^t \norm{G_k(\bff{u}_n)}{\bb{L}^2}^2 \ds
	+
	\sum_{k=1}^n \int_0^t \inpro{\bff{g}_k}{\bff{u}_n(s)}_{\bb{L}^2} \dif W_k(s)
	\\
	\nonumber
	&\leq
	\frac{1}{2} \norm{\bff{u}_{0}}{\bb{L}^2}^2
	+
	\lambda_r \int_0^t \norm{\bff{u}_n(s)}{\bb{L}^2}^2 \ds
    +
    \lambda_e \int_0^t \norm{\nabla \bff{u}_n(s)}{\bb{L}^2}^2 \ds
	\\
	\nonumber
	&\quad
 	+
	C \int_0^t \norm{\bff{\nu}}{\bb{L}^2(\mathscr{D};\bb{R}^d)} \norm{|\bff{u}_n(s)|\, |\nabla \bff{u}_n(s)|}{\bb{L}^2} \ds
        +
	C \int_0^t \big(1+ \norm{\bff{u}_n(s)}{\bb{L}^2}^2 \big) \ds
        \\
        \nonumber
        &\quad
	+
	\frac{1}{2} \sum_{k=1}^n \int_0^t \big( \norm{\bff{g}_k}{\bb{L}^2}^2 + \norm{\bff{h}_k}{\bb{L}^\infty}^2 \norm{\bff{u}_n(s)}{\bb{L}^2}^2 \big) \ds
	+
	\sum_{k=1}^n \int_0^t \inpro{\bff{g}_k}{\bff{u}_n(s)}_{\bb{L}^2} \dif W_k(s)
	\\
         \nonumber
	&\leq
	Ct
	+
	C\norm{\bff{u}_0}{\bb{L}^2}^2 
	+ 
	C\int_0^t \norm{\bff{u}_n(s)}{\bb{L}^2}^2 \ds
        +
	\epsilon \int_0^t \norm{\Delta \bff{u}_n(s)}{\bb{L}^2}^2 \ds
	+
	\epsilon \int_0^t \norm{|\bff{u}_n(s)|\, |\nabla \bff{u}_n(s)|}{\bb{L}^2}^2 \ds
        \\
        &\quad
	+
	\sum_{k=1}^n \int_0^t \inpro{\bff{g}_k}{\bff{u}_n(s)}_{\bb{L}^2} \dif W_k(s),
\end{align}
for any $\epsilon>0$, where in the penultimate step we used the assumptions on $L$ and $\bff{\nu}$, while in the final step we used the assumptions on $\bff{g}_k$ and $\bff{h}_k$, Young's inequality, and interpolation inequality~\eqref{equ:nabla v L2}.

It follows from \eqref{equ:un L2} and Jensen's inequality that, after rearranging the terms, for any $p\geq 1$,

%Consider the term with coefficient $(\lambda_r-\lambda_e)$ on the left-hand side of \eqref{equ:un L2}. If $\lambda_r-\lambda_e>0$, then that term is positive. Otherwise, if $\lambda_r-\lambda_e<0$, then we use the inequality~\eqref{equ:nabla v L2} to give
%\begin{align}\label{equ:nab un inter}
%	\norm{\nabla \bff{u}_n}{\bb{L}^2}^2
%	\leq
%	\frac{\lambda_e}{2} \norm{\Delta \bff{u}_n}{\bb{L}^2}^2
%	+
%	\frac{1}{2\lambda_e} \norm{\bff{u}_n}{\bb{L}^2}^2,
%\end{align}
%so that the term $\frac{\lambda_e}{2} %\norm{\Delta \bff{u}_n}{\bb{L}^2}^2$ can %be absorbed to the left-hand side. 

\begin{align}\label{equ:un L2 2p}
	\nonumber
	&\sup_{\tau\in [0,t]} \norm{\bff{u}_n(\tau)}{\bb{L}^2}^{2p}
	+
	\left( \int_0^t \norm{\nabla \bff{u}_n(s)}{\bb{L}^2}^2 \ds \right)^p
	+
	\left( \int_0^t \norm{\Delta \bff{u}_n(s)}{\bb{L}^2}^2 \ds \right)^p
	+
	\left( \int_0^t \norm{\bff{u}_n(s)}{\bb{L}^4}^4 \ds \right)^p
	\\
	&\leq
	Ct^p
	+
	C\norm{\bff{u}_0}{\bb{L}^2}^{2p}
	+
	C\int_0^t \norm{\bff{u}_n(s)}{\bb{L}^2}^{2p} \ds
	+
	C \sup_{\tau\in [0,t]} \left|\sum_{k=1}^n \int_0^\tau \inpro{\bff{g}_k}{\bff{u}_n(s)}_{\bb{L}^2} \dif W_k(s) \right|^p.
\end{align}
It remains to estimate the last term in \eqref{equ:un L2 2p}. To this end, by the Burkholder--Davis--Gundy inequality and the H\"older inequality (noting the assumption on $\bff{g}_k$), we have
\begin{align*}
	\bb{E} \left[\sup_{\tau \in [0,t]} \left|\sum_{k=1}^n \int_0^\tau \inpro{ \bff{g}_k}{\bff{u}_n(s)}_{\bb{L}^2} \dif W_k(s) \right|^p \right]
	&\leq
	C_p \,\bb{E} \left[\left( \sum_{k=1}^n \int_0^t \big|\inpro{\bff{g}_k}{\bff{u}_n(s)}_{\bb{L}^2} \big|^2 \,\ds \right)^{p/2} \right]
	\\
	&\leq
	C \, \bb{E} \left[ \left(\sum_{k=1}^n \int_0^t \norm{\bff{g}_k}{\bb{L}^2}^2 \norm{\bff{u}_n(s)}{\bb{L}^2}^2 \ds \right)^{p/2} \right]
	\\
	&=
	C \left( \sum_{k=1}^n \norm{\bff{g}_k}{\bb{L}^2}^2 \right)^{p/2} 
	\bb{E} \left[ \left( \int_0^t \norm{\bff{u}_n(s)}{\bb{L}^2}^2 \ds \right)^{p/2} \right]
	\\
	&\leq
	C+ C\int_0^t \bb{E} \left[\sup_{\tau \in [0,s]} \norm{\bff{u}_n(\tau)}{\bb{L}^2}^{2p} \right] \ds.
\end{align*}
Therefore, taking expectation on both sides of~\eqref{equ:un L2 2p}, we obtain
\begin{align*}
	&\bb{E} \left[\sup_{\tau \in [0,t]} \norm{\bff{u}_n(\tau)}{\bb{L}^2}^{2p} \right]
	+
	\bb{E} \left[\left( \int_0^t \norm{\nabla \bff{u}_n(s)}{\bb{L}^2}^2 \ds \right)^p \right]
	+
	\bb{E} \left[ \left( \int_0^t \norm{\Delta \bff{u}_n(s)}{\bb{L}^2}^2 \ds \right)^p \right]
	\\
	&\quad
	+
	\bb{E} \left[\left( \int_0^t \norm{|\bff{u}_n(s)|\, |\nabla \bff{u}_n(s)|}{\bb{L}^2}^2 \ds \right)^p \right]
	+
	\bb{E} \left[\left(\int_0^t \norm{\bff{u}_n(s)}{\bb{L}^4}^4 \ds \right)^p \right]
	\\
    &\leq 
	C(1+t^p) +
	C\norm{\bff{u}_0}{\bb{L}^2}^{2p} + C\int_0^t \bb{E} \left[\sup_{\tau \in [0,s]} \norm{\bff{u}_n(\tau)}{\bb{L}^2}^{2p} \right] \ds.
\end{align*}
The required estimate then follows by the Gronwall inequality.
\end{proof}

\begin{remark}\label{rem:small lambda e}
If $\lambda_e<\lambda_r$, then in the last step of~\eqref{equ:un L2} the integral of $\lambda_e \norm{\nabla \bff{u}_n}{\bb{L}^2}^2$ can be directly absorbed to the left-hand side without using the interpolation inequality $\norm{\nabla \bff{u}_n}{\bb{L}^2}^2 \leq C_\epsilon \norm{\bff{u}_n}{\bb{L}^2}^2 + \epsilon \norm{\Delta \bff{u}_n}{\bb{L}^2}^2$, in which case the constant $C$ in Proposition~\ref{pro:E sup un L2} is independent of $\lambda_e$.
\end{remark}

\begin{proposition}\label{pro:E sup nab un L2}
	For any $p\geq 1$, $n\in \bb{N}$, $t\in (0,\infty)$,
	\begin{align*}
		\bb{E} \left[\sup_{\tau \in [0,t]} \left(\norm{\nabla \bff{u}_n(\tau)}{\bb{L}^2}^{2p} + \norm{\bff{u}_n(\tau)}{\bb{L}^4}^{4p} \right) \right]
		&+
		\bb{E} \left[\left(\lambda_e \int_0^t \norm{\bff{H}_n(s)}{\bb{H}^1}^2 \ds \right)^p \right]
        \\
		&\leq 
		C \left(\norm{\bff{u}_0}{\bb{H}^1}^{2p} + \norm{\bff{u}_0}{\bb{L}^4}^{4p} \right) + C\left(1+\norm{\bff{u}_0}{\bb{L}^2}^{2p}\right) e^{Ct},
	\end{align*}
	where $C$ is a positive constant depending on $p$, $\sigma_g$, and $\sigma_h$ (but is independent of $n$ and $t$).
\end{proposition}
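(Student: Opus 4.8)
The plan is to derive the estimate at the Galerkin level by applying the (finite-dimensional) It\^o formula to the energy functional $\mathcal{E}(\bff{v}) := \tfrac12\norm{\nabla\bff{v}}{\bb{L}^2}^2 + \tfrac14\norm{\bff{v}}{\bb{L}^4}^4$ evaluated along $\bff{u}_n$. The key structural fact is that, restricted to $\bb{V}_n$, one has $\mathcal{E}'(\bff{u}_n) = -\Delta\bff{u}_n + \Pi_n(|\bff{u}_n|^2\bff{u}_n) = -\bff{H}_n + \bff{u}_n$, so the drift of $\mathcal{E}(\bff{u}_n)$ decomposes as $\inpro{-\bff{H}_n}{F_n(\bff{u}_n,\bff{H}_n)}_{\bb{L}^2} + \inpro{\bff{u}_n}{F_n(\bff{u}_n,\bff{H}_n)}_{\bb{L}^2}$. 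The second inner product is exactly the drift already treated in the proof of Proposition~\ref{pro:E sup un L2}, so it contributes only quantities whose time integrals have finite moments of all orders; the first inner product generates the parabolic dissipation. Expanding $\inpro{-\bff{H}_n}{F_n}_{\bb{L}^2}$ via \eqref{equ:Fn} and integrating by parts (legitimate since $\bff{H}_n\in\bb{V}_n$ satisfies the Neumann condition), the gyromagnetic term $\gamma\inpro{\bff{H}_n}{\Pi_n(\bff{u}_n\times\bff{H}_n)}_{\bb{L}^2}$ vanishes by the scalar triple-product identity, and the remaining leading part yields $-\lambda_r\norm{\bff{H}_n}{\bb{L}^2}^2 - \lambda_e\norm{\nabla\bff{H}_n}{\bb{L}^2}^2$; combining these gives $-\min(\lambda_r,\lambda_e)\norm{\bff{H}_n}{\bb{H}^1}^2$, which is the asserted $-\lambda_e\norm{\bff{H}_n}{\bb{H}^1}^2$ in the regime $\lambda_e\le\lambda_r$.

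The leftover drift $-\inpro{\bff{H}_n}{S(\bff{u}_n)}_{\bb{L}^2}$ is split using \eqref{equ:Rv w} for the $R$-part and the Lipschitz bound for $L$: each piece is dominated by $\epsilon\norm{\bff{H}_n}{\bb{L}^2}^2$ (absorbed into the dissipation once $\epsilon$ is small) plus $C_\epsilon\big(\norm{\nabla\bff{u}_n}{\bb{L}^2}^2 + \norm{|\bff{u}_n|\,|\nabla\bff{u}_n|}{\bb{L}^2}^2 + 1 + \norm{\bff{u}_n}{\bb{L}^2}^2\big)$, whose integral is controlled by Proposition~\ref{pro:E sup un L2}.

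The hard part is the It\^o correction $\tfrac12\sum_k\big(\norm{\nabla\bff{\Gamma}_k}{\bb{L}^2}^2 + \norm{|\bff{u}_n|\,|\bff{\Gamma}_k|}{\bb{L}^2}^2 + 2\norm{\bff{u}_n\cdot\bff{\Gamma}_k}{\bb{L}^2}^2\big)$ with $\bff{\Gamma}_k := \Pi_n G_k(\bff{u}_n)$. The projection $\Pi_n$ precludes the naive pointwise bound $|\bff{\Gamma}_k|\le\gamma|\bff{u}_n|\,|\bff{h}_k|$, so the quartic terms must be estimated through $\int|\bff{u}_n|^2|\bff{\Gamma}_k|^2 \le \norm{\bff{u}_n}{\bb{L}^4}^2\norm{\bff{\Gamma}_k}{\bb{L}^4}^2$, with $\norm{\bff{\Gamma}_k}{\bb{L}^4}\lesssim\norm{\bff{\Gamma}_k}{\bb{H}^1}\le\norm{G_k(\bff{u}_n)}{\bb{H}^1}$ (using $\bb{H}^1\hookrightarrow\bb{L}^4$ for $d\le3$ together with \eqref{equ:Pi v L2}--\eqref{equ:nab Pi v L2}) and $\norm{\bff{u}_n\times\bff{h}_k}{\bb{H}^1}\lesssim\norm{\bff{h}_k}{\bb{H}^2}\norm{\bff{u}_n}{\bb{H}^1}$. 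Summing over $k$ and applying \eqref{equ:nabla v L2} bounds this correction by $\sigma_g^2\norm{\bff{u}_n}{\bb{L}^4}^2 + C\gamma^2\sigma_h^2\norm{\bff{u}_n}{\bb{L}^4}^2\big(\norm{\bff{u}_n}{\bb{L}^2}^2 + \norm{\Delta\bff{u}_n}{\bb{L}^2}^2\big)$. The genuinely dangerous term is $\norm{\bff{u}_n}{\bb{L}^4}^2\norm{\Delta\bff{u}_n}{\bb{L}^2}^2$, which is superlinear in $\mathcal{E}(\bff{u}_n)$ and whose coefficient $\norm{\bff{u}_n}{\bb{L}^4}^2$ is not a priori bounded, so it cannot be absorbed into the dissipation.

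The resolution is not to absorb this term but to observe that its time integral is tame once Proposition~\ref{pro:E sup un L2} is available: $\int_0^t\norm{\bff{u}_n}{\bb{L}^4}^2\norm{\Delta\bff{u}_n}{\bb{L}^2}^2\,\ds \le \sup_{[0,t]}\norm{\bff{u}_n}{\bb{L}^4}^2\int_0^t\norm{\Delta\bff{u}_n}{\bb{L}^2}^2\,\ds$, so after raising to the power $p$, taking expectations and using Cauchy--Schwarz in $\omega$ followed by Young's inequality, it is dominated by $\epsilon\,\bb{E}\big[\sup_{[0,t]}\mathcal{E}(\bff{u}_n)^p\big] + C_\epsilon\,\bb{E}\big[\big(\int_0^t\norm{\Delta\bff{u}_n}{\bb{L}^2}^2\ds\big)^{2p}\big]$; the first summand is absorbed into the left-hand side (legitimate since for fixed $n$ all norms on $\bb{V}_n$ are equivalent, whence $\bb{E}\sup_{[0,t]}\mathcal{E}(\bff{u}_n)^p<\infty$), while the second is finite by Proposition~\ref{pro:E sup un L2} with parameter $2p$. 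The martingale $M_t = -\sum_k\int_0^t\inpro{\bff{H}_n}{\bff{\Gamma}_k}_{\bb{L}^2}\dif W_k$ is handled by Burkholder--Davis--Gundy: its quadratic variation is at most $\big(\sigma_g^2 + \gamma^2\sigma_h^2\sup_s\norm{\bff{u}_n}{\bb{L}^2}^2\big)\int_0^t\norm{\bff{H}_n}{\bb{L}^2}^2\ds$, and one more Young splitting absorbs a small multiple of $\big(\lambda_e\int_0^t\norm{\bff{H}_n}{\bb{H}^1}^2\ds\big)^p$ into the left-hand side, leaving a remainder controlled by Proposition~\ref{pro:E sup un L2}. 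Collecting everything, taking the $p$-th power and the supremum in time, and invoking Gr\"onwall's inequality yields the claim, the initial term $\mathcal{E}(\bff{u}_{0n})^p$ being bounded by $\norm{\bff{u}_0}{\bb{H}^1}^{2p}+\norm{\bff{u}_0}{\bb{L}^4}^{4p}$ and $\mathcal{E}(\bff{u}_n)$ being comparable to $\norm{\nabla\bff{u}_n}{\bb{L}^2}^2+\norm{\bff{u}_n}{\bb{L}^4}^4$.
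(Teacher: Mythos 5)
Your proposal is, in substance, the paper's own argument. The paper applies It\^o's formula to $\psi(\bff{v})=\tfrac12\norm{\nabla\bff{v}}{\bb{L}^2}^2+\tfrac14\norm{\bff{v}}{\bb{L}^4}^4-\tfrac12\norm{\bff{v}}{\bb{L}^2}^2$, whose $\bb{V}_n$-gradient is exactly $-\bff{H}_n$; this is the same bookkeeping as your identity $\mathcal{E}'(\bff{u}_n)=-\bff{H}_n+\bff{u}_n$ with the $\inpro{F_n}{\bff{u}_n}_{\bb{L}^2}$ piece recycled from Proposition~\ref{pro:E sup un L2}. The dissipation $-\lambda_r\norm{\bff{H}_n}{\bb{L}^2}^2-\lambda_e\norm{\nabla\bff{H}_n}{\bb{L}^2}^2$, the treatment of $S$ via \eqref{equ:Rv w} and the Lipschitz bound on $L$, and the Burkholder--Davis--Gundy/Young handling of $\sum_k\int_0^t\inpro{G_k(\bff{u}_n)}{\bff{H}_n}_{\bb{L}^2}\,\dif W_k$ all coincide with \eqref{equ:Fn un Hn}--\eqref{equ:sup Hn dW}.

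The one point where you genuinely diverge is the quartic It\^o correction $\sum_k\norm{|\bff{u}_n|\,|\Pi_nG_k(\bff{u}_n)|}{\bb{L}^2}^2$. The paper estimates it as though $\Pi_n$ were a pointwise contraction, obtaining $C\sum_k\big(\norm{\bff{g}_k}{\bb{L}^\infty}^2\norm{\bff{u}_n}{\bb{L}^2}^2+\norm{\bff{h}_k}{\bb{L}^\infty}^2\norm{\bff{u}_n}{\bb{L}^4}^4\big)$, which keeps every term controllable by Proposition~\ref{pro:E sup un L2} at exponent $p$. You correctly observe that the $\bb{L}^2$-orthogonal projection obstructs this pointwise bound and route through $\bb{H}^1\hookrightarrow\bb{L}^4$ together with \eqref{equ:Pi v L2}--\eqref{equ:nab Pi v L2} instead; that workaround is valid, and your subsequent absorption of $\epsilon\,\bb{E}\big[\sup\mathcal{E}^p\big]$ (justified by norm equivalence on $\bb{V}_n$) is sound. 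The cost is that the leftover $C_\epsilon\,\bb{E}\big[\big(\int_0^t\norm{\Delta\bff{u}_n}{\bb{L}^2}^2\ds\big)^{2p}\big]$ must be bounded by Proposition~\ref{pro:E sup un L2} with parameter $2p$, so your right-hand side reads $C\big(1+\norm{\bff{u}_0}{\bb{L}^2}^{4p}\big)e^{Ct}$ rather than the asserted $C\big(1+\norm{\bff{u}_0}{\bb{L}^2}^{2p}\big)e^{Ct}$. This is harmless for every later use of the proposition (only finiteness and polynomial dependence on the data are ever invoked), but to recover the stated exponent you would need either the paper's pointwise bound on the unprojected $G_k$ or a uniform-in-$n$ $\bb{L}^4$-bound for $\Pi_n$, which is not elementary.
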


\begin{proof}
Let $\psi:\bb{V}_n\to \bb{R}$ be a function defined by $\bff{v}\mapsto \frac{1}{2} \norm{\nabla \bff{v}}{\bb{L}^2}^2 + \frac{1}{4} \norm{\bff{v}}{\bb{L}^4}^4 - \frac{1}{2} \norm{\bff{v}}{\bb{L}^2}^2$. Note that by It\^o's formula~\cite{Kry10}, we have
\begin{align*}
	\frac{1}{2} \dif \norm{\nabla \bff{u}_n(t)}{\bb{L}^2}^2
	&=
	\left( -\inpro{F(\bff{u}_n, \bff{H}_n)}{\Delta \bff{u}_n}_{\bb{L}^2}
	+
	\frac{1}{2} \sum_{k=1}^n \norm{\nabla G_k(\bff{u}_n)}{\bb{L}^2}^2
	\right) \dt
	-
	\sum_{k=1}^n \inpro{G_k(\bff{u}_n)}{\Delta \bff{u}_n}_{\bb{L}^2} \dif W_k(t),
	\\
	\frac{1}{4} \dif \norm{\bff{u}_n(t)}{\bb{L}^4}^4
	&=
	\left( \inpro{F(\bff{u}_n, \bff{H}_n)}{|\bff{u}_n|^2 \bff{u}_n}_{\bb{L}^2}
	+
	\frac{3}{2} \sum_{k=1}^n \norm{|\bff{u}_n| |G_k(\bff{u}_n)|}{\bb{L}^2}^2
	\right) \dt
    \\
    &\quad
	+
	\sum_{k=1}^n \inpro{G_k(\bff{u}_n)}{|\bff{u}_n|^2 \bff{u}_n}_{\bb{L}^2} \dif W_k(t),
	\\
	-\frac{1}{2} \dif \norm{\bff{u}_n(t)}{\bb{L}^2}^2
	&=
	\left( - \inpro{F(\bff{u}_n, \bff{H}_n)}{\bff{u}_n}_{\bb{L}^2}
	-
	\frac{1}{2} \sum_{k=1}^n \norm{G_k(\bff{u}_n)}{\bb{L}^2}^2
	\right) \dt
	-
	\sum_{k=1}^n \inpro{G_k(\bff{u}_n)}{\bff{u}_n}_{\bb{L}^2} \dif W_k(t).
\end{align*}
Adding the above equations and noting~\eqref{equ:faedo ito}, we obtain
\begin{align}\label{equ:d psi un L4}
	\nonumber
	\dif \psi(\bff{u}_n)
	&=
	\left( -\inpro{F_n(\bff{u}_n,\bff{H}_n)}{\bff{H}_n}_{\bb{L}^2}
	+
	\sum_{k=1}^n \left[
	\frac{1}{2}  \norm{\nabla G_k(\bff{u}_n)}{\bb{L}^2}^2
	+
	\frac{3}{2} \norm{|\bff{u}_n|\, |G_k(\bff{u}_n)|}{\bb{L}^2}^2
	-
	\frac{1}{2} \norm{G_k(\bff{u}_n)}{\bb{L}^2}^2 \right]
	\right) \dt
	\\
	&\quad
	-
	\sum_{k=1}^n
	\inpro{G_k(\bff{u}_n)}{\bff{H}_n}_{\bb{L}^2} \dif W_k(t).
\end{align}
By \eqref{equ:Fn} and \eqref{equ:Gn}, noting that $\bff{H}_n\in \bb{V}_n$ so that $\partial \bff{H}_n/\partial \bff{n}=\bff{0}$ on $\partial \mathscr{D}$, we have
\begin{align}\label{equ:Fn un Hn}
	\inpro{F_n(\bff{u}_n, \bff{H}_n)}{\bff{H}_n}_{\bb{L}^2}
	=
	\lambda_r \norm{\bff{H}_n}{\bb{L}^2}^2
	+
	\lambda_e \norm{\nabla \bff{H}_n}{\bb{L}^2}^2
	+
	\inpro{S(\bff{u}_n)}{\bff{H}_n}_{\bb{L}^2},
\end{align}
and
\begin{align}\label{equ:Gn un Hn}
	\inpro{G_k(\bff{u}_n)}{\bff{H}_n}_{\bb{L}^2}
	=
	\inpro{\bff{g}_k+ \gamma \bff{u}_n \times \bff{h}_k}{\bff{H}_n}_{\bb{L}^2}.
\end{align}
Substituting~\eqref{equ:Fn un Hn} and~\eqref{equ:Gn un Hn} into~\eqref{equ:d psi un L4}, then applying H\"older's inequality and inequality \eqref{equ:Rv w} (noting the assumptions on $R$ and $L$) yield
\begin{align}\label{equ:psi un nab Hn}
	&\psi(\bff{u}_n(t))
	+
	\lambda_r \int_0^t \norm{\bff{H}_n(s)}{\bb{L}^2}^2 \ds
	+
	\lambda_e \int_0^t \norm{\nabla \bff{H}_n(s)}{\bb{L}^2}^2 \ds
	+
	\frac{1}{2} \sum_{k=1}^n \int_0^t \norm{G_k(\bff{u}_n(s))}{\bb{L}^2}^2 \ds
	\nonumber \\
	&=
	\psi(\bff{u}_n(0))
	-
	\int_0^t \inpro{R(\bff{u}_n)}{\bff{H}_n}_{\bb{L}^2} \ds
	-
	\int_0^t \inpro{L(\bff{u}_n)}{\bff{H}_n}_{\bb{L}^2} \ds
	+
	\frac{1}{2} \sum_{k=1}^n \int_0^t \norm{\nabla G_k(\bff{u}_n(s))}{\bb{L}^2}^2 \ds
	\nonumber\\
	&\quad 
	+
	\frac{3}{2} \sum_{k=1}^n \int_0^t \norm{|\bff{u}_n(s)|\, |G_k(\bff{u}_n(s))|}{\bb{L}^2}^2 \ds
	-
	\sum_{k=1}^n \int_0^t \inpro{\bff{g}_k+ \gamma \bff{u}_n(s) \times \bff{h}_k}{\bff{H}_n}_{\bb{L}^2} \dif W_k(s)
	\nonumber\\
	&\leq
	\psi(\bff{u}_n(0))
	+
	C \int_0^t \norm{\bff{\nu}}{\bb{L}^\infty(\mathscr{D};\bb{R}^d)}^2 \left(\norm{\nabla \bff{u}_n}{\bb{L}^2}^2 + \norm{|\bff{u}_n|\, |\nabla \bff{u}_n|}{\bb{L}^2}^2 \right) \ds
	+
	\epsilon \int_0^t \norm{\bff{H}_n}{\bb{L}^2}^2 \ds
	+
	C\int_0^t \norm{\bff{u}_n}{\bb{L}^2}^2 \ds
	\nonumber\\
	&\quad
	+
	C \sum_{k=1}^n \int_0^t \left( \norm{\nabla \bff{g}_k}{\bb{L}^2}^2 + \norm{\bff{h}_k}{\bb{L}^2}^2 \norm{\nabla \bff{u}_n}{\bb{L}^2}^2 + \norm{\bff{u}_n}{\bb{L}^2}^2 \norm{\nabla \bff{h}_k}{\bb{L}^2}^2 \right) \ds
	\nonumber\\
	&\quad 
	+
	C \sum_{k=1}^n \int_0^t \left( \norm{\bff{g}_k}{\bb{L}^\infty}^2 \norm{\bff{u}_n}{\bb{L}^2}^2 + \norm{\bff{h}_k}{\bb{L}^\infty}^2 \norm{\bff{u}_n}{\bb{L}^4}^4 \right) \ds
	-
	\sum_{k=1}^n \int_0^t \inpro{\bff{g}_k + \gamma \bff{u}_n \times \bff{h}_k}{\bff{H}_n}_{\bb{L}^2} \dif W_k(s)
	\nonumber\\
	&\leq
	\psi(\bff{u}_n(0))+ \epsilon \int_0^t \norm{\bff{H}_n}{\bb{L}^2}^2 \ds 
    +
    C \int_0^t \norm{\nabla \bff{u}_n}{\bb{L}^2}^2 \ds
	+
	C\int_0^t \norm{|\bff{u}_n|\, |\nabla \bff{u}_n|}{\bb{L}^2}^2 \ds
	\nonumber\\
	&\quad 
	+
	C\left( \sum_{k=1}^n \left(\norm{\bff{g}_k}{\bb{H}^1}^2+\norm{\bff{h}_k}{\bb{H}^1}^2 \right)\right)
    \left( \int_0^t \left(1+ \norm{\bff{u}_n}{\bb{L}^2}^2 + \norm{\nabla \bff{u}_n}{\bb{L}^2}^2 \right)\ds \right)
    \nonumber\\
    &\quad
	+
	\left( \sum_{k=1}^n \left(\norm{\bff{g}_k}{\bb{L}^\infty}^2+\norm{\bff{h}_k}{\bb{L}^\infty}^2 \right) \right) \left( \int_0^t \left( \norm{\bff{u}_n}{\bb{L}^2}^2 + \norm{\bff{u}_n}{\bb{L}^4}^4 \right) \ds \right)
	-
	\sum_{k=1}^n \int_0^t \inpro{\bff{g}_k + \gamma \bff{u}_n \times \bff{h}_k}{\bff{H}_n}_{\bb{L}^2} \dif W_k(s)
	\nonumber\\
	&\leq
	\psi(\bff{u}_n(0))+ Ct + \epsilon \int_0^t \norm{\bff{H}_n}{\bb{L}^2}^2 \ds 
	+
	\int_0^t \norm{|\bff{u}_n|\, |\nabla \bff{u}_n|}{\bb{L}^2}^2 \ds
	\nonumber\\
	&\quad 
	+
	\int_0^t \left( \norm{\bff{u}_n}{\bb{L}^2}^2 + \norm{\bff{u}_n}{\bb{L}^4}^4 + \norm{\nabla \bff{u}_n}{\bb{L}^2}^2 \right) \ds 
	-
	\sum_{k=1}^n \int_0^t \inpro{\bff{g}_k + \gamma \bff{u}_n \times \bff{h}_k}{\bff{H}_n}_{\bb{L}^2} \dif W_k(s)
\end{align}
for any $\epsilon>0$, where in the last step we used the assumptions on $\bff{g}_k$ and $\bff{h}_k$.
By Jensen's inequality, after rearranging the terms and noting Proposition~\ref{pro:E sup un L2}, it follows that
\begin{align}\label{equ:sup nab L2}
	\nonumber
	&\sup_{\tau \in [0,t]} \left(\norm{\nabla \bff{u}_n(\tau)}{\bb{L}^2}^{2p} + \norm{\bff{u}_n(\tau)}{\bb{L}^4}^{4p} \right)
	+
	\left(\int_0^t \norm{\bff{H}_n(s)}{\bb{L}^2}^2 \ds \right)^p
	+
	\left( \int_0^t \norm{\nabla \bff{H}_n(s)}{\bb{L}^2}^2 \ds \right)^p
	\\
	\nonumber
	&\leq
	   C\big(\psi(\bff{u}_n(0))+t \big)^p 
       +
	C\left(\int_0^t \left(\norm{\bff{u}_n(s)}{\bb{L}^2}^{2} 
	+ 
	\norm{\bff{u}_n(s)}{\bb{L}^4}^{4}
	+
	\norm{\nabla \bff{u}_n(s)}{\bb{L}^2}^{2} + \norm{|\bff{u}_n|\, |\nabla \bff{u}_n|}{\bb{L}^2}^2  \right) \ds \right)^p
	\\
	&\quad
	+
	C\sup_{\tau \in [0,t]} \left| \sum_{k=1}^n \int_0^\tau \inpro{\bff{g}_k + \gamma \bff{u}_n \times \bff{h}_k}{\bff{H}_n}_{\bb{L}^2} \dif W_k(s) \right|^p.
\end{align}
Now, we estimate the last term on the right-hand side of \eqref{equ:sup nab L2}. By the Burkholder--Davis--Gundy inequality and the H\"older inequality, we have
\begin{align}\label{equ:sup Hn dW}
	\nonumber
	&\bb{E} \left[ \sup_{\tau \in [0,t]} \left| \sum_{k=1}^n \int_0^\tau \inpro{\bff{g}_k + \gamma \bff{u}_n \times \bff{h}_k}{\bff{H}_n}_{\bb{L}^2} \dif W_k(s) \right|^p \right]
	\\
	\nonumber
	&\leq
	C_p\, \bb{E} \left[ \left( \sum_{k=1}^n \int_0^t \big| \inpro{\bff{g}_k + \gamma \bff{u}_n \times \bff{h}_k}{\bff{H}_n}_{\bb{L}^2} \big|^2 \,\ds \right)^{p/2} \right]
	\\
	\nonumber
	&\leq 
	C \left( \sum_{k=1}^n \left(\norm{\bff{g}_k}{\bb{L}^\infty}^2 + \norm{\bff{h}_k}{\bb{L}^\infty}^2 \right) \right)^{p/2} 
	\bb{E} \left[ \left( \int_0^t
	\big(1+ \norm{\bff{u}_n}{\bb{L}^2}^2 \big)
	\norm{\bff{H}_n}{\bb{L}^2}^2 \ds \right)^{p/2} \right]
	\\
	\nonumber
	&\leq
	C \, \bb{E} \left[ \left(1+ \sup_{\tau\in (0,t)} \norm{\bff{u}_n(\tau)}{\bb{L}^2}^2\right)^{p/2} \left(\int_0^t \norm{\bff{H}_n(s)}{\bb{L}^2}^2 \ds \right)^{p/2} \right]
	\\
	\nonumber
	&\leq
	C\bb{E} \left[ \left(1+ \sup_{\tau\in [0,t]} \norm{\bff{u}_n(\tau)}{\bb{L}^2}^2 \right)^p \right]
	+
	\epsilon\bb{E} \left[ \left(\int_0^t 
	\norm{\bff{H}_n(s)}{\bb{L}^2}^2 \ds \right)^p \right]
	\\
	&\leq
	C\left(1+\norm{\bff{u}_0}{\bb{L}^2}^{2p}\right) e^{Ct} + \epsilon \bb{E} \left[ \left(\int_0^t \norm{\bff{H}_n}{\bb{L}^2}^2 \ds \right)^p \right]
\end{align}
for any $\epsilon>0$, where in the penultimate step we used Young's inequality, while in the final step we used Proposition~\ref{pro:E sup un L2}. Taking expectation on both sides of~\eqref{equ:sup nab L2}, choosing $\epsilon>0$ sufficiently small, and substituting~\eqref{equ:sup Hn dW} into the resulting expression, we obtain
\begin{align*}
	&\bb{E} \left[\sup_{\tau \in [0,t]} \left(\norm{\nabla \bff{u}_n(\tau)}{\bb{L}^2}^{2p} + \norm{\bff{u}_n(\tau)}{\bb{L}^4}^{4p} \right) \right]
	+
	\bb{E} \left[ \left(\int_0^t \norm{\bff{H}_n(s)}{\bb{L}^2}^2 \ds \right)^p \right]
	+
	\bb{E} \left[ \left( \int_0^t \norm{\nabla \bff{H}_n(s)}{\bb{L}^2}^2 \ds \right)^p \right]
	\\
	&\leq
	C \left(\norm{\bff{u}_0}{\bb{H}^1}^{2p} + \norm{\bff{u}_0}{\bb{L}^4}^{4p} \right) + C\left(1+\norm{\bff{u}_0}{\bb{L}^2}^{2p}\right) e^{Ct}
    + 
    \bb{E}\left[\left(\int_0^t \norm{\bff{u}_n(s)}{\bb{H}^1}^2 + \norm{\bff{u}_n(s)}{\bb{L}^4}^4 \ds\right)^p\right].
\end{align*}
Applying Proposition~\ref{pro:E sup un L2} again yields the required estimate.
\end{proof}

\begin{proposition}\label{pro:E nab Delta un}
For any $p\geq 1$, $n\in \bb{N}$, $t\in (0,\infty)$,
\begin{equation*}
	\bb{E}\left[ \left(\int_0^t \norm{\nabla \Delta \bff{u}_n(s)}{\bb{L}^2}^2 \ds \right)^p \right]
	\leq
    C\left(1+e^{Ct}\right),
\end{equation*}
where $C$ is a positive constant depending on $p$, $\sigma_g$, $\sigma_h$, and $\norm{\bff{u}_0}{\bb{H}^1}$ (but is independent of $n$ and $t$).
\end{proposition}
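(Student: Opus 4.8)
The plan is to derive this bound \emph{algebraically} from the effective-field estimate of Proposition~\ref{pro:E sup nab un L2}, rather than running a fresh energy estimate. Since $\bff{H}_n = \Delta\bff{u}_n + \bff{u}_n - \Pi_n(|\bff{u}_n|^2\bff{u}_n)$ by \eqref{equ:faedo ito}, applying $\nabla$ and rearranging gives
\[
\nabla\Delta\bff{u}_n = \nabla\bff{H}_n - \nabla\bff{u}_n + \nabla\Pi_n(|\bff{u}_n|^2\bff{u}_n),
\]
so that
\[
\norm{\nabla\Delta\bff{u}_n}{\bb{L}^2}^2 \lesssim \norm{\nabla\bff{H}_n}{\bb{L}^2}^2 + \norm{\nabla\bff{u}_n}{\bb{L}^2}^2 + \norm{\nabla\Pi_n(|\bff{u}_n|^2\bff{u}_n)}{\bb{L}^2}^2.
\]
Once integrated in time and raised to the $p$-th power, the first term is controlled in expectation by Proposition~\ref{pro:E sup nab un L2} (which bounds $\int_0^t\norm{\bff{H}_n}{\bb{H}^1}^2\ds$, hence $\int_0^t\norm{\nabla\bff{H}_n}{\bb{L}^2}^2\ds$) and the second by Proposition~\ref{pro:E sup un L2}. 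Everything therefore reduces to the cubic term.

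For the cubic term I would first remove the projection via \eqref{equ:nab Pi v L2}, giving $\norm{\nabla\Pi_n(|\bff{u}_n|^2\bff{u}_n)}{\bb{L}^2} \leq \norm{\nabla(|\bff{u}_n|^2\bff{u}_n)}{\bb{L}^2}$, and then the product formula \eqref{equ:nor der v2v} to get $\norm{\nabla(|\bff{u}_n|^2\bff{u}_n)}{\bb{L}^2} \lesssim \norm{|\bff{u}_n|^2\,|\nabla\bff{u}_n|}{\bb{L}^2}$. The key estimate, valid uniformly for $d\leq 3$, is the H\"older split
\[
\norm{|\bff{u}_n|^2\,|\nabla\bff{u}_n|}{\bb{L}^2} \leq \norm{\bff{u}_n}{\bb{L}^6}^2 \,\norm{\nabla\bff{u}_n}{\bb{L}^6} \lesssim \norm{\bff{u}_n}{\bb{H}^1}^2 \,\norm{\bff{u}_n}{\bb{H}^2},
\]
where I use $\bb{H}^1\hookrightarrow\bb{L}^6$ and $\nabla\bff{u}_n\in\bb{H}^1\hookrightarrow\bb{L}^6$ (both precisely up to $d=3$), together with the elliptic regularity bound $\norm{\bff{u}_n}{\bb{H}^2}\lesssim\norm{\Delta\bff{u}_n}{\bb{L}^2}+\norm{\bff{u}_n}{\bb{L}^2}$ for the Neumann Laplacian. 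Squaring and integrating,
\[
\int_0^t \norm{|\bff{u}_n|^2\,|\nabla\bff{u}_n|}{\bb{L}^2}^2\,\ds \leq \Big(\sup_{s\in[0,t]}\norm{\bff{u}_n(s)}{\bb{H}^1}^4\Big)\int_0^t \norm{\bff{u}_n(s)}{\bb{H}^2}^2\,\ds.
\]

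To finish, raise to the $p$-th power, take expectations, and apply the Cauchy--Schwarz inequality in $\Omega$ to decouple the supremum and the integral factors: the factor $\bb{E}\big[\sup_{s}\norm{\bff{u}_n(s)}{\bb{H}^1}^{8p}\big]^{1/2}$ is finite by Proposition~\ref{pro:E sup nab un L2} (with the $\bb{L}^2$ bound of Proposition~\ref{pro:E sup un L2}), while $\bb{E}\big[(\int_0^t\norm{\bff{u}_n}{\bb{H}^2}^2\,\ds)^{2p}\big]^{1/2}$ is finite by Proposition~\ref{pro:E sup un L2} via its control of $\int_0^t\norm{\Delta\bff{u}_n}{\bb{L}^2}^2\,\ds$. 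Collecting the three contributions, absorbing the $\norm{\bff{u}_0}{\bb{H}^1}$-dependence into the constant, and using that a product of factors of the form $C(1+e^{Ct})$ is again of that form, yields the stated bound.

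\textbf{Main obstacle.} I expect the delicate point to be the cubic term in dimension $d=3$: a crude estimate through $\norm{\bff{u}_n}{\bb{L}^\infty}$ is unavailable since $\bb{H}^1\not\hookrightarrow\bb{L}^\infty$, and one must interpolate so that $\norm{\bff{u}_n}{\bb{H}^2}$ appears only to the \emph{second} power, matching exactly the $L^2_t\bb{H}^2$ regularity already secured in Proposition~\ref{pro:E sup un L2}. The $\bb{L}^6\times\bb{L}^6$ H\"older split achieves this precisely because $\bb{H}^1\hookrightarrow\bb{L}^6$ is the borderline Sobolev embedding in $d=3$; any attempt to place more derivatives on the gradient factor would overshoot the available integrated regularity.
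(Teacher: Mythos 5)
Your proposal is correct and follows essentially the same route as the paper's own proof: both isolate $\nabla\Delta\bff{u}_n$ from the identity $\bff{H}_n=\Delta\bff{u}_n+\bff{u}_n-\Pi_n(|\bff{u}_n|^2\bff{u}_n)$, control the cubic term via the H\"older split $\norm{\bff{u}_n}{\bb{L}^6}^2\norm{\nabla\bff{u}_n}{\bb{L}^6}$ and the embedding $\bb{H}^1\hookrightarrow\bb{L}^6$, and then invoke Propositions~\ref{pro:E sup un L2} and~\ref{pro:E sup nab un L2}. The only cosmetic difference is that you decouple the supremum and integral factors by Cauchy--Schwarz in $\Omega$ where the paper uses Young's inequality; both work since all moments are available.
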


\begin{proof}
Using the second equation in~\eqref{equ:faedo} and H\"older's inequality,
\begin{align*}
	&\bb{E}\left[ \left(\int_0^t \norm{\nabla \Delta \bff{u}_n(s)}{\bb{L}^2}^2 \ds \right)^p \right]
	\\
	&\leq
	\bb{E} \left[\left(\int_0^t \norm{\nabla \bff{u}_n(s)}{\bb{L}^2}^2 \ds \right)^p \right]
	+
	\bb{E} \left[\left(\int_0^t \norm{\nabla \bff{H}_n(s)}{\bb{L}^2}^2 \ds \right)^p \right]
	+
	\bb{E} \left[\left(\int_0^t \norm{\nabla \big(|\bff{u}_n(s)|^2 \bff{u}_n(s)\big)}{\bb{L}^2}^2 \ds \right)^p \right]
	\\
	&\leq
	\bb{E}\left[\left(\int_0^t \norm{\nabla \bff{u}_n(s)}{\bb{L}^2}^2 \ds \right)^p \right]
	+
	\bb{E}\left[\left(\int_0^t \norm{\nabla \bff{H}_n(s)}{\bb{L}^2}^2 \ds \right)^p \right]
	+
	\bb{E} \left[\left(\int_0^t \norm{\bff{u}_n(s)}{\bb{L}^6}^4
	\norm{\nabla \bff{u}_n(s)}{\bb{L}^6}^2 \ds \right)^p \right]
	\\
	&\leq
	\bb{E} \left[\left(\int_0^t \norm{\nabla \bff{u}_n(s)}{\bb{L}^2}^2 \ds \right)^p \right]
	+
	\bb{E} \left[ \left(\int_0^t \norm{\nabla \bff{H}_n(s)}{\bb{L}^2}^2 \ds \right)^p \right]
	\\
	&\quad 
	+
	C\bb{E}\left[\left(\sup_{\tau\in [0,t]} \norm{\bff{u}_n(\tau)}{\bb{H}^1}^4\right)^p
	\left(\int_0^t 
	\norm{\bff{u}_n(s)}{\bb{H}^2}^2 \ds \right)^p \right]
	\\
	&\leq
	\bb{E} \left[\left(\int_0^t \norm{\nabla \bff{u}_n(s)}{\bb{L}^2}^2 \ds \right)^p \right]
	+
	\bb{E} \left[ \left(\int_0^t \norm{\nabla \bff{H}_n(s)}{\bb{L}^2}^2 \ds \right)^p \right]
	\\
	&\quad 
	+
	C \bb{E}\left[ \left(\sup_{\tau\in [0,t]} \norm{\bff{u}_n(\tau)}{\bb{H}^1}^4 \right)^{2p} \right]
	+
	C \bb{E}\left[ \left(\int_0^t 
	\norm{\bff{u}_n(s)}{\bb{H}^2}^2 \ds \right)^{2p} \right]
    %C\left(1+\norm{\bff{u}_0}{\bb{L}^2}^{4p}\right) e^{Ct}
    %+
    %C \left(\norm{\bff{u}_0}{\bb{H}^1}^{2p} + \norm{\bff{u}_0}{\bb{L}^4}^{4p} \right) + C\left(1+\norm{\bff{u}_0}{\bb{L}^2}^{2p}\right) e^{Ct},
    \leq
    Ce^{Ct},
\end{align*}
where $C$ depends on $\norm{\bff{u_0}}{\bb{H}^1}$.
Here, in the second inequality we also used H\"older's inequality with exponents $3$ and $3/2$, while in the third inequality we used the Sobolev embedding $\bb{H}^1\subset \bb{L}^6$. In the last step we used Young's inequality, Proposition~\ref{pro:E sup un L2}, and \ref{pro:E sup nab un L2}. This shows the required estimate.
\end{proof}

\begin{proposition}\label{pro:E sup un H2}
	For any $p\geq 1$, $n\in \bb{N}$, $t\in (0,\infty)$,
	\begin{align}\label{equ:E sup un H2}
		\bb{E} \left[ \sup_{\tau \in [0,t]} \norm{\Delta \bff{u}_n(\tau)}{\bb{L}^2}^{2p} \right]
		+
		\bb{E} \left[\left( \int_0^t \norm{\nabla \Delta \bff{u}_n(s)}{\bb{L}^2}^2 \ds \right)^p \right]
        +
		\bb{E} \left[ \left( \int_0^t \norm{\Delta^2 \bff{u}_n(s)}{\bb{L}^2}^2 \ds \right)^p \right]
		\leq 
		C\left(1+e^{Ct}\right).
	\end{align}
    Furthermore,
    \begin{align}\label{equ:E sup Hn L2}
        \bb{E} \left[\sup_{\tau \in [0,t]} \norm{\bff{H}_n(\tau)}{\bb{L}^2}^{2p} \right]
        \leq 
        C\left(1+e^{Ct}\right).
    \end{align}
Here, $C$ is a positive constant depending on $p$,  $\sigma_g$, $\sigma_h$, and $\norm{\bff{u}_0}{\bb{H}^2}$ (but is independent of $n$ and $t$).
\end{proposition}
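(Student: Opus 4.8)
The plan is to run an It\^o energy estimate at the $\bb{H}^2$-seminorm level, using the functional $\psi(\bff{v}) = \frac12\norm{\Delta\bff{v}}{\bb{L}^2}^2$ on $\bb{V}_n$, whose derivatives are $\psi'(\bff{v})(\bff{h}) = \inpro{\Delta\bff{v}}{\Delta\bff{h}}_{\bb{L}^2}$ and $\psi''(\bff{v})(\bff{h},\bff{k}) = \inpro{\Delta\bff{h}}{\Delta\bff{k}}_{\bb{L}^2}$. Applying It\^o's formula to $\psi(\bff{u}_n)$ along~\eqref{equ:faedo ito} and integrating by parts (the boundary terms vanish since $\partial\bff{u}_n/\partial\bff{n} = \partial(\Delta\bff{u}_n)/\partial\bff{n} = \bff{0}$), the drift becomes $\inpro{\Delta^2\bff{u}_n}{F_n(\bff{u}_n,\bff{H}_n)}_{\bb{L}^2}$ plus the It\^o correction $\frac12\sum_k\norm{\Delta\Pi_nG_k(\bff{u}_n)}{\bb{L}^2}^2$, while the martingale part is $\sum_k\inpro{\Delta\bff{u}_n}{\Delta\Pi_nG_k(\bff{u}_n)}_{\bb{L}^2}\,\dif W_k$.

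Next I extract the dissipation. Writing $\bff{H}_n = \Delta\bff{u}_n + \bff{u}_n - \Pi_n(\abs{\bff{u}_n}^2\bff{u}_n)$ and using $\Pi_n\Delta = \Delta\Pi_n$ on $\bb{V}_n$, the term $-\lambda_e\inpro{\Delta^2\bff{u}_n}{\Delta\bff{H}_n}_{\bb{L}^2}$ produces the good dissipation $-\lambda_e\norm{\Delta^2\bff{u}_n}{\bb{L}^2}^2$, together with a term $\lambda_e\norm{\nabla\Delta\bff{u}_n}{\bb{L}^2}^2$ and a contribution from $\Delta(\abs{\bff{u}_n}^2\bff{u}_n)$; the first of these is absorbed using the interpolation~\eqref{equ:nabla v L2} applied to $\Delta\bff{u}_n$ (leaving a term $\norm{\Delta\bff{u}_n}{\bb{L}^2}^2$ for Gronwall), and the cubic contribution is expanded with~\eqref{equ:del v2v} and bounded by $\epsilon\norm{\Delta^2\bff{u}_n}{\bb{L}^2}^2$ plus lower-order factors via Sobolev embedding ($\bb{H}^2\subset\bb{L}^\infty$ for $d\leq3$) and Gagliardo--Nirenberg. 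The remaining linear term $\lambda_r\inpro{\Delta^2\bff{u}_n}{\bff{H}_n}_{\bb{L}^2}$ contributes $-\lambda_r\norm{\nabla\Delta\bff{u}_n}{\bb{L}^2}^2$ and Gronwall-type lower-order pieces, and the term $-\inpro{\Delta^2\bff{u}_n}{\Pi_nS(\bff{u}_n)}_{\bb{L}^2}$ is handled by Young's inequality together with the assumptions on $\bff{\nu}$ and $L$.

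The delicate point --- and the main obstacle --- is the precession term $-\gamma\inpro{\Delta^2\bff{u}_n}{\Pi_n(\bff{u}_n\times\bff{H}_n)}_{\bb{L}^2}$ for $d=3$. Integrating by parts to $\inpro{\Delta\bff{u}_n}{\Delta(\bff{u}_n\times\bff{H}_n)}_{\bb{L}^2}$ and expanding the Laplacian of the product, the top-order piece $\inpro{\Delta\bff{u}_n}{\Delta\bff{u}_n\times\bff{H}_n}_{\bb{L}^2}$ vanishes because $(\bff{a}\times\bff{b})\cdot\bff{a}=0$; the surviving pieces are paired with $\Delta^2\bff{u}_n$ through the scalar triple product and controlled by Young's inequality as $\epsilon\norm{\Delta^2\bff{u}_n}{\bb{L}^2}^2 + C_\epsilon\norm{\bff{u}_n}{\bb{L}^\infty}^2\big(\norm{\Delta\bff{u}_n}{\bb{L}^2}^2+\norm{\bff{H}_n}{\bb{L}^2}^2\big)$, plus a cross term involving $\nabla\bff{H}_n$. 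The crucial observation is that, after time integration, factors such as $\norm{\bff{u}_n}{\bb{L}^\infty}^2\norm{\Delta\bff{u}_n}{\bb{L}^2}^2$ are estimated by Gagliardo--Nirenberg ($\norm{\bff{u}_n}{\bb{L}^\infty}^2\leqs\norm{\bff{u}_n}{\bb{H}^2}^{2\theta}\norm{\bff{u}_n}{\bb{H}^1}^{2-2\theta}$ with $\theta\in(\frac12,1)$ for $d\leq3$), so that the resulting power of $\sup_\tau\norm{\Delta\bff{u}_n}{\bb{L}^2}$ is strictly below $2$; Young's inequality then splits this into a small multiple of $\sup_\tau\norm{\Delta\bff{u}_n}{\bb{L}^2}^2$ --- absorbed into the left-hand side --- times the already uniformly controlled space-time integrals $\int_0^t\norm{\Delta\bff{u}_n}{\bb{L}^2}^2\,\ds$ (Proposition~\ref{pro:E sup un L2}) and $\int_0^t\norm{\bff{H}_n}{\bb{H}^1}^2\,\ds$ (Proposition~\ref{pro:E sup nab un L2}), together with powers of $\sup_\tau\norm{\bff{u}_n}{\bb{H}^1}$ (Proposition~\ref{pro:E sup nab un L2}). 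It is precisely the prior boundedness of these lower-order space-time integrals that prevents superlinear-in-time growth. The It\^o correction is treated analogously by expanding $\Delta(\bff{u}_n\times\bff{h}_k)$ and summing with~\eqref{equ:sigma g sigma h}.

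Finally I raise everything to the $p$-th power (Jensen), take the supremum over $[0,t]$ and expectation, estimate the martingale term by the Burkholder--Davis--Gundy inequality exactly as in the proofs of Propositions~\ref{pro:E sup un L2} and~\ref{pro:E sup nab un L2}, choose $\epsilon>0$ small to absorb $\epsilon\,\bb{E}\big[\sup_\tau\norm{\Delta\bff{u}_n}{\bb{L}^2}^{2p}\big]$ and $\epsilon\,\bb{E}\big[\big(\int_0^t\norm{\Delta^2\bff{u}_n}{\bb{L}^2}^2\,\ds\big)^p\big]$ into the left-hand side, and close by Gronwall's inequality; the bound on $\int_0^t\norm{\nabla\Delta\bff{u}_n}{\bb{L}^2}^2\,\ds$ appearing in~\eqref{equ:E sup un H2} is already furnished by Proposition~\ref{pro:E nab Delta un}. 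The estimate~\eqref{equ:E sup Hn L2} then follows from the pointwise bound $\norm{\bff{H}_n}{\bb{L}^2}\leqs\norm{\Delta\bff{u}_n}{\bb{L}^2}+\norm{\bff{u}_n}{\bb{L}^2}+\norm{\bff{u}_n}{\bb{L}^6}^3$, the embedding $\bb{H}^1\subset\bb{L}^6$, and the just-proved $\bb{H}^2$ bound together with Proposition~\ref{pro:E sup nab un L2}.
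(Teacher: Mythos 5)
Your proposal is correct and follows essentially the same route as the paper's proof: It\^o's formula for $\tfrac12\norm{\Delta \bff{u}_n}{\bb{L}^2}^2$, extraction of the dissipation $-\lambda_e\norm{\Delta^2\bff{u}_n}{\bb{L}^2}^2$, control of the cubic term via \eqref{equ:del v2v} with Gagliardo--Nirenberg and Young, the Burkholder--Davis--Gundy inequality for the martingale part, and Propositions~\ref{pro:E sup un L2}, \ref{pro:E sup nab un L2} and~\ref{pro:E nab Delta un} to bound the remaining lower-order space--time integrals (the $\int_0^t\norm{\nabla\Delta\bff{u}_n}{\bb{L}^2}^2\,\ds$ term being exactly Proposition~\ref{pro:E nab Delta un}). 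The one local divergence is the precession term, which the paper estimates directly by H\"older as $\norm{\bff{u}_n}{\bb{L}^4}\norm{\bff{H}_n}{\bb{L}^4}\norm{\Delta^2\bff{u}_n}{\bb{L}^2}$ followed by Young and the embedding $\bb{H}^1\subset\bb{L}^4$; your double integration by parts with the triple-product cancellation is workable but unnecessary, since it reintroduces $\Delta\bff{H}_n$ (hence $\Delta^2\bff{u}_n$) and forces an extra absorption step.
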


\begin{proof}
Let $\psi:\bb{V}_n\to \bb{R}$ be a function defined by $\bff{v}\mapsto \frac{1}{2} \norm{\Delta \bff{v}}{\bb{L}^2}^2$. Similarly to the proof of Proposition~\ref{pro:E sup un L2}, by It\^o's lemma,
\begin{align}\label{equ:d Delta un L2}
	\frac{1}{2} \dif \norm{\Delta \bff{u}_n(t)}{\bb{L}^2}^2
	=
	\left( \inpro{F_n(\bff{u}_n,\bff{H}_n)}{\Delta^2 \bff{u}_n}_{\bb{L}^2}
	+
	\frac{1}{2} \sum_{k=1}^n \norm{\Delta G_k(\bff{u}_n)}{\bb{L}^2}^2
	\right) \dt 
	+
	\sum_{k=1}^n \inpro{G_k(\bff{u}_n)}{\Delta^2 \bff{u}_n}_{\bb{L}^2}  \dif W_k(t). 
\end{align}
By~\eqref{equ:Fn} and \eqref{equ:Gn}, noting that~$\bff{u}_n,\bff{H}_n\in \bb{V}_n$ so that $\partial \bff{u}_n/\partial \bff{n}= \partial (\Delta \bff{u}_n)/\partial \bff{n} = \partial \bff{H}_n/\partial \bff{n}= \bff{0}$ on $\partial\mathscr{D}$ and all boundary terms vanish, integrating by parts as necessary we have
\begin{align}\label{equ:Fn un Hn Delta2}
	\nonumber
	&\inpro{F_n(\bff{u}_n,\bff{H}_n)}{\Delta^2 \bff{u}_n}_{\bb{L}^2}
	\\
	\nonumber
	&=
	\lambda_r \inpro{\bff{H}_n}{\Delta^2 \bff{u}_n}_{\bb{L}^2}
	-
	\lambda_e \inpro{\Delta \bff{H}_n}{\Delta^2 \bff{u}_n}_{\bb{L}^2}
    -
	\gamma \inpro{\bff{u}_n \times \bff{H}_n}{\Delta^2 \bff{u}_n}_{\bb{L}^2}
	+
    \inpro{S(\bff{u}_n)}{\bff{H}_n}_{\bb{L}^2}
	\\
	\nonumber
	&=
	\lambda_r \inpro{\bff{H}_n}{\Delta^2 \bff{u}_n}_{\bb{L}^2}
	-
	\lambda_e \norm{\Delta^2 \bff{u}_n}{\bb{L}^2}^2
	+
	\lambda_e \norm{\nabla \Delta \bff{u}_n}{\bb{L}^2}^2
	+
	\lambda_e \inpro{\Delta(|\bff{u}_n|^2 \bff{u}_n)}{\Delta^2 \bff{u}_n}_{\bb{L}^2}
    \\
	&\quad 
	-
	\gamma \inpro{\bff{u}_n \times \bff{H}_n}{\Delta^2 \bff{u}_n}_{\bb{L}^2}
	+
	\inpro{R(\bff{u}_n)}{\Delta^2 \bff{u}_n}_{\bb{L}^2}
    +
	\inpro{L(\bff{u}_n)}{\Delta^2 \bff{u}_n}_{\bb{L}^2}
\end{align}
and
\begin{align}\label{equ:Gn un Delta2 un}
	\inpro{G_k(\bff{u}_n)}{\Delta^2 \bff{u}_n}_{\bb{L}^2}
	&=
	\inpro{\bff{g}_k + \gamma \bff{u}_n\times \bff{h}_k}{\Delta^2 \bff{u}_n}_{\bb{L}^2}.
\end{align}
Substituting \eqref{equ:Fn un Hn Delta2} and \eqref{equ:Gn un Delta2 un} into \eqref{equ:d Delta un L2} yields
\begin{align}\label{equ:Delta un L2 exp}
	\nonumber
	&\frac{1}{2} \norm{\Delta \bff{u}_n(t)}{\bb{L}^2}^2
	+
	\lambda_e \int_0^t \norm{\Delta^2 \bff{u}_n(s)}{\bb{L}^2}^2 \ds
	\\
	\nonumber
	&=
	\frac{1}{2} \norm{\Delta \bff{u}_0}{\bb{L}^2}^2
    +
    \frac{1}{2} \sum_{k=1}^n \int_0^t \norm{\Delta G_k(\bff{u}_n)}{\bb{L}^2}^2 \ds
	+
	\lambda_r \int_0^t \inpro{\bff{H}_n}{\Delta^2 \bff{u}_n}_{\bb{L}^2} \ds
	+
	\lambda_e \int_0^t \norm{\nabla \Delta \bff{u}_n}{\bb{L}^2}^2 \ds
    \\
	\nonumber
	&\quad 
	+
	\lambda_e \int_0^t \inpro{\Delta(|\bff{u}_n|^2 \bff{u}_n)}{\Delta^2 \bff{u}_n}_{\bb{L}^2} \ds
	-
	\gamma \int_0^t \inpro{\bff{u}_n \times \bff{H}_n}{\Delta^2 \bff{u}_n}_{\bb{L}^2} \ds 
    +
	\int_0^t \inpro{R(\bff{u}_n)}{\Delta^2 \bff{u}_n}_{\bb{L}^2} \ds
    \\
    \nonumber
	&\quad 
    +
	\int_0^t \inpro{L(\bff{u}_n)}{\Delta^2 \bff{u}_n}_{\bb{L}^2} \ds
	+
	\sum_{k=1}^n \int_0^t \inpro{\bff{g}_k + \gamma \bff{u}_n\times \bff{h}_k}{\Delta^2 \bff{u}_n}_{\bb{L}^2} \dif W_k(s)
    \\
    &=:
    \frac{1}{2} \norm{\Delta \bff{u}_0}{\bb{L}^2}^2 + I_2(t) + I_3(t) + I_4(t) + I_5(t)
    + I_6(t) + I_7(t) + I_8(t) + I_9(t).
\end{align}
We shall bound each term appearing on the right-hand side of~\eqref{equ:Delta un L2 exp}.
Firstly, for the term $I_2(t)$, by \eqref{equ:prod Hs mat dot},
\begin{align*}
    |I_2(t)|
    \leq
    C \sum_{k=1}^n \int_0^t \left( \norm{\bff{g}_k}{\bb{H}^2}^2
    + \gamma \norm{\bff{u}_n}{\bb{H}^2}^2 \norm{\bff{h}_k}{\bb{H}^2}^2 \right) \ds
    \leq
    C \left( t + \int_0^t \norm{\bff{u}_n}{\bb{H}^2}^2 \ds \right).
\end{align*}
Secondly, for the term $I_3(t)$, by Young's inequality,
\begin{align*}
    |I_3(t)|
    \leq
    C\int_0^t \norm{\bff{H}_n}{\bb{L}^2}^2 \ds
    +
    \epsilon \int_0^t \norm{\Delta^2 \bff{u}_n}{\bb{L}^2}^2 \ds
\end{align*}
for any $\epsilon>0$. The term $I_4(t)$ will be left as is. For the term $I_5(t)$, noting \eqref{equ:del v2v} and applying H\"older's and Young's inequalities, we have
\begin{align*}
    |I_5(t)|
    &\leq
    \lambda_e \int_0^t \left(\norm{\bff{u}_n}{\bb{L}^\infty}^2 \norm{\Delta \bff{u}_n}{\bb{L}^2} \norm{\Delta^2 \bff{u}_n}{\bb{L}^2}
	+ 
	\norm{\bff{u}_n}{\bb{L}^\infty} \norm{\nabla \bff{u}_n}{\bb{L}^4}^2 \norm{\Delta^2 \bff{u}_n}{\bb{L}^2} \right) \ds 
    \\
    &\leq 
    C \int_0^t \norm{\bff{u}_n}{\bb{L}^\infty}^4 \norm{\Delta \bff{u}_n}{\bb{L}^2}^2 \ds
	+
	\epsilon \int_0^t \norm{\Delta^2 \bff{u}_n}{\bb{L}^2}^2 \ds
	+
	C \int_0^t \norm{\bff{u}_n}{\bb{L}^\infty}^2 \norm{\nabla \bff{u}_n}{\bb{L}^4}^4 \ds
    \\
    &\leq
    C \int_0^t \norm{\bff{u}_n}{\bb{H}^1}^4 \norm{\bff{u}_n}{\bb{H}^3}^2 \ds
    +
    \epsilon \int_0^t \norm{\Delta^2 \bff{u}_n}{\bb{L}^2}^2 \ds
\end{align*}
for any $\epsilon>0$, where in the last step we assumed $d=3$ for brevity and used the Gagliardo--Nirenberg inequalities
\begin{align*}
    \norm{\bff{u}_n}{\bb{L}^\infty}^2 
    \leq C \norm{\bff{u}_n}{\bb{H}^1} \norm{\bff{u}_n}{\bb{H}^2}
    \quad
    \text{and}
    \quad
    \norm{\nabla \bff{u}_n}{\bb{L}^4}^4
    &\leq C \norm{\bff{u}_n}{\bb{H}^1} \norm{\bff{u}_n}{\bb{H}^2}^3,
\end{align*}
and the interpolation inequality
\begin{align*}
    \norm{\bff{u}_n}{\bb{H}^2}^2 
    \leq C \norm{\bff{u}_n}{\bb{H}^1} \norm{\bff{u}_n}{\bb{H}^3}.
\end{align*}
The case $d=1$ and $d=2$ can be handled similarly. Next, for the term $I_6(t)$, we applied Young's inequality and Sobolev embedding to obtain
\begin{align*}
    |I_6(t)|
    \leq
    C \int_0^t \norm{\bff{u}_n}{\bb{L}^4}^2 \norm{\bff{H}_n}{\bb{L}^4}^2 \ds
    +
    \epsilon \int_0^t \norm{\Delta^2 \bff{u}_n}{\bb{L}^2}^2 \ds 
    \leq
    C \int_0^t \norm{\bff{u}_n}{\bb{L}^4}^2 \norm{\bff{H}_n}{\bb{H}^1}^2 \ds
    +
    \epsilon \int_0^t \norm{\Delta^2 \bff{u}_n}{\bb{L}^2}^2 \ds.
\end{align*}
For the term $I_7(t)$, we used \eqref{equ:Rv w} to obtain
\begin{align*}
    |I_7(t)|
    \leq
    C \int_0^t \norm{\bff{\nu}}{\bb{L}^\infty(\mathscr{D};\bb{R}^d)}^2
    \left( \norm{\nabla \bff{u}_n}{\bb{L}^2}^2 + \norm{|\bff{u}_n|\, |\nabla \bff{u}_n|}{\bb{L}^2}^2 \right) \ds
    +
    \epsilon \int_0^t \norm{\Delta^2 \bff{u}_n}{\bb{L}^2}^2 \ds.
\end{align*}
For the term $I_8(t)$, by the assumption on $L$ and Young's inequality,
\begin{align*}
    |I_8(t)|
    \leq
    C \int_0^t \left(1+ \norm{\bff{u}_n}{\bb{L}^2}^2 \right) \ds
    +
    \epsilon \int_0^t \norm{\Delta^2 \bff{u}_n}{\bb{L}^2}^2 \ds.
\end{align*}
For the moment, we keep the term $I_9(t)$ as is. Altogether, substituting these bounds into \eqref{equ:Delta un L2 exp} implies
\begin{align*}
	\nonumber
	&\frac{1}{2} \norm{\Delta \bff{u}_n(t)}{\bb{L}^2}^2
	+
	\lambda_e \int_0^t \norm{\Delta^2 \bff{u}_n(s)}{\bb{L}^2}^2 \ds
	\\
	\nonumber
	&\lesssim_{t} 
    1+ \norm{\Delta \bff{u}_0}{\bb{L}^2}^2
    +
    \epsilon \int_0^t \norm{\Delta^2 \bff{u}_n}{\bb{L}^2}^2 \ds
    +
    \int_0^t \norm{\bff{u}_n}{\bb{H}^3}^2 \ds
    +
    \int_0^t \norm{\bff{H}_n}{\bb{L}^2}^2 \ds
    +
    \int_0^t \norm{\bff{u}_n}{\bb{H}^1}^4 \norm{\bff{u}_n}{\bb{H}^3}^2 \ds
    \\
    &\quad 
    +
    \int_0^t \norm{\bff{u}_n}{\bb{L}^4}^2 \norm{\bff{H}_n}{\bb{H}^1}^2 \ds
    +
    \int_0^t \norm{|\bff{u}_n|\, |\nabla \bff{u}_n|}{\bb{L}^2}^2 \ds
	+
	\sum_{k=1}^n \int_0^t \inpro{\bff{g}_k + \gamma \bff{u}_n\times \bff{h}_k}{\Delta^2 \bff{u}_n}_{\bb{L}^2} \dif W_k(s)
\end{align*} 
for any $\epsilon>0$. Choosing $\epsilon>0$ sufficiently small, rearranging the terms, and applying Jensen's inequality, we obtain for $p\geq 1$,
\begin{align}\label{equ:sup Delta un L2}
	\nonumber
	&\sup_{\tau\in [0,t]} \norm{\Delta \bff{u}_n(\tau)}{\bb{L}^2}^{2p}
	+
	\left( \int_0^t \norm{\Delta^2 \bff{u}_n(s)}{\bb{L}^2}^2 \ds \right)^p
	\\
	\nonumber
	&\lesssim_t
    1
    +
    \norm{\Delta \bff{u}_0}{\bb{L}^2}^{2p}
    +
    \left(\int_0^t \norm{\bff{H}_n(s)}{\bb{L}^2}^2 \ds \right)^p
    +
    \left(1+\sup_{\tau\in [0,t]} \norm{\bff{u}_n(\tau)}{\bb{H}^1}^4\right)^p
	\left(\int_0^t \norm{\bff{u}_n(s)}{\bb{H}^3}^2 \ds \right)^p
    \\
    \nonumber
    &\quad 
    +
    \left(\sup_{\tau\in [0,t]} \norm{\bff{u}_n(\tau)}{\bb{L}^4}^2\right)^p
	\left(\int_0^t \norm{\bff{H}_n(s)}{\bb{H}^1}^2 \ds \right)^p
    +
    \left( \int_0^t \norm{|\bff{u}_n|\, |\nabla \bff{u}_n|}{\bb{L}^2}^2 \ds \right)^p
    \\
    &\quad 
	+
	\sup_{\tau\in [0,t]} \left|\sum_{k=1}^n \int_0^t \inpro{\bff{g}_k + \gamma \bff{u}_n\times \bff{h}_k}{\Delta^2 \bff{u}_n}_{\bb{L}^2} \dif W_k(s) \right|^p.
\end{align} 
We will estimate the last term on the right-hand side of~\eqref{equ:sup Delta un L2}. By the Burkholder--Davis--Gundy inequality and the H\"older inequality, we have
\begin{align}\label{equ:sup Delta2 un dW}
	\nonumber
	&\bb{E} \left[\sup_{\tau\in [0,t]} \left|\sum_{k=1}^n \int_0^\tau \inpro{\bff{g}_k + \gamma \bff{u}_n\times \bff{h}_k}{\Delta^2 \bff{u}_n}_{\bb{L}^2} \dif W_k(s) \right|^p \right]
	\\
	\nonumber
	&\leq 
	C_p \, \bb{E} \left[ \left(\sum_{k=1}^n \int_0^t \big| \inpro{ \bff{g}_k +  \gamma \bff{u}_n\times \bff{h}_k}{\Delta^2 \bff{u}_n}_{\bb{L}^2} \big|^2 \ds \right)^{p/2} \right]
	\\
	\nonumber
	&\leq
	C \left(\sum_{k=1}^n \left(\norm{\bff{g}_k}{\bb{L}^\infty}^2 +\norm{\bff{h}_k}{\bb{L}^\infty}^2 \right)\right)^{p/2}
    \bb{E} \left[\left( \int_0^t 
	\big(1+ \norm{\bff{u}_n(s)}{\bb{L}^2}^2 \big)
	\norm{\Delta^2 \bff{u}_n(s)}{\bb{L}^2}^2 \ds \right)^{p/2} \right]
	\\
	\nonumber
	&\leq
	C \bb{E} \left[ \left(1+ \sup_{\tau\in [0,t]} \norm{\bff{u}_n(\tau)}{\bb{L}^2}^2\right)^{p/2} \left(\int_0^t \norm{\Delta^2 \bff{u}_n(s)}{\bb{L}^2}^2 \ds \right)^{p/2} \right]
	\\
	\nonumber
	&\leq
	C\bb{E} \left[ \left(1+ \sup_{\tau\in [0,t]} \norm{\bff{u}_n(\tau)}{\bb{L}^2}^2 \right)^p \right] 
	+
	\epsilon \bb{E} \left[ \left(\int_0^t 
	\norm{\Delta^2 \bff{u}_n(s)}{\bb{L}^2}^2 \ds \right)^p \right]
	\\
	&\lesssim_{t}
	1 + \epsilon \bb{E} \left[ \left(\int_0^t \norm{\Delta^2 \bff{u}_n(s)}{\bb{L}^2}^2 \ds \right)^p \right]
\end{align}
for any $\epsilon>0$, where in the penultimate step we used Young's inequality, while in the last step we used Proposition~\ref{pro:E sup un L2}. Taking expectation on both sides of~\eqref{equ:sup Delta un L2}, substituting~\eqref{equ:sup Delta2 un dW} into the resulting expression, and choosing $\epsilon>0$ sufficiently small, we obtain
\begin{align*}
	&\bb{E} \left[ \sup_{\tau\in [0,t]} \norm{\Delta \bff{u}_n(\tau)}{\bb{L}^2}^{2p} \right]
	+
	\bb{E} \left[ \left( \int_0^t \norm{\Delta^2 \bff{u}_n(s)}{\bb{L}^2}^2 \ds \right)^p \right]
	\\
	&\lesssim_t
    1 + \norm{\Delta \bff{u}_0}{\bb{L}^2}^{2p}
    +
    \bb{E} \left[\left(\int_0^t \norm{\bff{H}_n}{\bb{L}^2}^2 \ds \right)^p \right]
    +
    \bb{E} \left[ \left(1+ \sup_{\tau\in [0,t]} \norm{\bff{u}_n(\tau)}{\bb{H}^1}^4\right)^p
	\left(\int_0^t \norm{\bff{u}_n(s)}{\bb{H}^3}^2 \ds \right)^p \right]
    \\
    \nonumber
    &\quad 
    +
    \bb{E} \left[ \left(\sup_{\tau\in [0,t]} \norm{\bff{u}_n(\tau)}{\bb{L}^4}^2\right)^p
	\left(\int_0^t \norm{\bff{H}_n(s)}{\bb{H}^1}^2 \ds \right)^p \right]
    +
    \bb{E} \left[ \left(\int_0^t \norm{|\bff{u}_n|\, |\nabla \bff{u}_n|}{\bb{L}^2}^2 \ds \right)^p \right]
    \\
    &\lesssim_t
    1 + \norm{\Delta \bff{u}_0}{\bb{L}^2}^{2p}
    +
    \bb{E} \left[\left(\int_0^t \norm{\bff{u}_n(s)}{\bb{H}^3}^2 \ds \right)^p \right]
    +
    \bb{E} \left[\left(\int_0^t \norm{\bff{H}_n(s)}{\bb{L}^2}^2 \ds \right)^p \right]
    \\
    &\quad 
    +
    \bb{E} \left[ \left(\sup_{\tau\in [0,t]} \norm{\bff{u}_n(\tau)}{\bb{H}^1}^4\right)^{2p} \right]
    +
	\bb{E} \left[ \left(\int_0^t \norm{\bff{u}_n(s)}{\bb{H}^3}^2 \ds \right)^{2p} \right]
    \\
    \nonumber
    &\quad 
    +
    \bb{E} \left[ \left(\sup_{\tau\in [0,t]} \norm{\bff{u}_n(\tau)}{\bb{L}^4}^2\right)^{2p} \right]
    +
	\bb{E} \left[\left(\int_0^t \norm{\bff{H}_n(s)}{\bb{H}^1}^2 \ds \right)^{2p} \right]
    +
    \bb{E} \left[ \left(\int_0^t \norm{|\bff{u}_n|\, |\nabla \bff{u}_n|}{\bb{L}^2}^2 \ds \right)^p \right]
	\lesssim_t
	1,
\end{align*}
where in the penultimate step we used Young's inequality and in the final step we used Proposition~\ref{pro:E sup un L2}, \ref{pro:E sup nab un L2}, and~\ref{pro:E nab Delta un}. This proves~\eqref{equ:E sup un H2}.

Finally, since $\bff{H}_n=\Delta \bff{u}_n +\bff{u}_n - \Pi_n(|\bff{u}_n|^2 \bff{u}_n)$, we obtain~\eqref{equ:E sup Hn L2} by using \eqref{equ:E sup un H2}, Proposition~\ref{pro:E sup nab un L2}, and the triangle inequality. This completes the proof of the proposition.
\end{proof}

\begin{proposition}\label{pro:E un Hn L2}
For any $p\geq 1$, $n\in \bb{N}$, $t\in (0,\infty)$,
\begin{align}
    \label{equ:E Delta Hn 1}
	\bb{E}\left[ \left( \int_0^t \norm{\Delta \bff{H}_n(s)}{\bb{L}^2}^2 \ds \right)^p \right]
	&\leq C(t),
	\\
	\label{equ:E u H H2}
	\bb{E}\left[ \left( \int_0^t \norm{\Pi_n\big(\bff{u}_n(s) \times \bff{H}_n(s) \big)}{\bb{H}^2}^2 \ds \right)^p \right] 
	&\leq 
	C(t),
	\\
	\label{equ:E S un L2}
	\bb{E}\left[ \left( \int_0^t \norm{\Pi_n S(\bff{u}_n)}{\bb{L}^2}^2 \ds \right)^p \right] 
	&\leq C(t),
\end{align}
where $C(t)$ is a positive constant depending on $p$, $t$, $\sigma_g$, $\sigma_h$, and $\norm{\bff{u}_0}{\bb{H}^2}$. 
\end{proposition}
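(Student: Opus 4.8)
The plan is to read off all three bounds directly from the uniform moment estimates of Propositions~\ref{pro:E sup un L2}, \ref{pro:E sup nab un L2}, \ref{pro:E nab Delta un}, and~\ref{pro:E sup un H2}, exploiting the explicit structure $\bff{H}_n = \Delta\bff{u}_n + \bff{u}_n - \Pi_n(|\bff{u}_n|^2\bff{u}_n)$ together with the algebra estimate~\eqref{equ:prod Hs mat dot} (applicable with $s=2$ since $2>d/2$ for $d\le 3$) and the Sobolev embedding $\bb{H}^2\hookrightarrow\bb{L}^\infty$. Two preliminary observations streamline the argument. First, since $\Pi_n\Delta\bff{v}=\Delta\Pi_n\bff{v}$ as elements of $\bb{V}_n$ for every $\bff{v}\in\bb{H}^2$ (both sides lie in $\bb{V}_n$ and agree against all test functions $\bff{\phi}\in\bb{V}_n$), the $\bb{L}^2$-contractivity~\eqref{equ:Pi v L2} upgrades to $\norm{\Pi_n\bff{v}}{\bb{H}^2}\leq C\norm{\bff{v}}{\bb{H}^2}$ through the equivalent norm $\norm{\bff{v}}{\bb{H}^2}^2\eqsim\norm{\Delta\bff{v}}{\bb{L}^2}^2+\norm{\bff{v}}{\bb{L}^2}^2$. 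Second, by elliptic regularity $\norm{\bff{u}_n}{\bb{H}^4}^2\lesssim\norm{\Delta^2\bff{u}_n}{\bb{L}^2}^2+\norm{\bff{u}_n}{\bb{L}^2}^2$, so the space-time $\bb{H}^4$ norm of $\bff{u}_n$ is controlled by Proposition~\ref{pro:E sup un H2}.

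For~\eqref{equ:E Delta Hn 1}, I would use the commutation identity to write $\Delta\bff{H}_n=\Delta^2\bff{u}_n+\Delta\bff{u}_n-\Pi_n\Delta(|\bff{u}_n|^2\bff{u}_n)$, and bound $\norm{\Pi_n\Delta(|\bff{u}_n|^2\bff{u}_n)}{\bb{L}^2}\leq\norm{|\bff{u}_n|^2\bff{u}_n}{\bb{H}^2}\leq C\norm{\bff{u}_n}{\bb{H}^2}^3$ by~\eqref{equ:Pi v L2} and two applications of~\eqref{equ:prod Hs mat dot}. Integrating $\norm{\Delta\bff{H}_n}{\bb{L}^2}^2$ in time, raising to the power $p$, and taking expectation reduces the claim to the finiteness of $\bb{E}[(\int_0^t\norm{\Delta^2\bff{u}_n}{\bb{L}^2}^2\,\ds)^p]$, of $\bb{E}[(\int_0^t\norm{\Delta\bff{u}_n}{\bb{L}^2}^2\,\ds)^p]$, and of $\bb{E}[\,t^p\sup_{\tau\in[0,t]}\norm{\bff{u}_n(\tau)}{\bb{H}^2}^{6p}]$, all supplied by Proposition~\ref{pro:E sup un H2} in conjunction with the lower-order propositions. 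Estimate~\eqref{equ:E S un L2} is simplest: writing $S(\bff{u}_n)=R(\bff{u}_n)+L(\bff{u}_n)$ and using the Lipschitz property of $L$ together with the explicit form~\eqref{equ:torque} of $R$, one has $\norm{\Pi_n S(\bff{u}_n)}{\bb{L}^2}\leq\norm{S(\bff{u}_n)}{\bb{L}^2}\leq C\big(1+\norm{\bff{u}_n}{\bb{L}^2}+\norm{\bff{\nu}}{\bb{L}^\infty}(1+\norm{\bff{u}_n}{\bb{L}^\infty})\norm{\nabla\bff{u}_n}{\bb{L}^2}\big)\leq C(1+\norm{\bff{u}_n}{\bb{H}^2}^2)$ after invoking $\bb{H}^2\hookrightarrow\bb{L}^\infty$, and the resulting bound is controlled by Proposition~\ref{pro:E sup un H2}.

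The delicate estimate is~\eqref{equ:E u H H2}, and it is where I expect the only genuine difficulty. Applying the $\bb{H}^2$-bound for $\Pi_n$ and the cross-product case of~\eqref{equ:prod Hs mat dot} gives $\norm{\Pi_n(\bff{u}_n\times\bff{H}_n)}{\bb{H}^2}\leq C\norm{\bff{u}_n}{\bb{H}^2}\norm{\bff{H}_n}{\bb{H}^2}$; however, since $\bff{H}_n$ contains $\Delta\bff{u}_n$, the factor $\norm{\bff{H}_n}{\bb{H}^2}$ involves the top-order norm $\norm{\bff{u}_n}{\bb{H}^4}$, which is available only in $L^2(0,T)$ and not in the supremum. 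The key is therefore to keep the two factors in distinct norms: bounding $\norm{\bff{H}_n}{\bb{H}^2}\lesssim\norm{\bff{u}_n}{\bb{H}^4}+\norm{\bff{u}_n}{\bb{H}^2}+\norm{\bff{u}_n}{\bb{H}^2}^3$ (the cubic term again via~\eqref{equ:prod Hs mat dot}), I would estimate $\int_0^t\norm{\bff{u}_n\times\bff{H}_n}{\bb{H}^2}^2\,\ds\lesssim\sup_{\tau\in[0,t]}\big(\norm{\bff{u}_n(\tau)}{\bb{H}^2}^2+\norm{\bff{u}_n(\tau)}{\bb{H}^2}^6\big)\int_0^t\big(\norm{\bff{u}_n(s)}{\bb{H}^4}^2+\norm{\bff{u}_n(s)}{\bb{H}^2}^2\big)\,\ds$, so that the time integral carries the top-order norm while the supremum carries only the $\bb{H}^2$ norm.

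Raising this last inequality to the power $p$, applying the Cauchy--Schwarz inequality on $\Omega$ to separate the supremum factor from the time-integral factor, and then invoking the $2p$-th moment bounds of Proposition~\ref{pro:E sup un H2}---for $\sup_{\tau}\norm{\bff{u}_n(\tau)}{\bb{H}^2}$ and for $\int_0^t\norm{\bff{u}_n}{\bb{H}^4}^2\,\ds$ through $\int_0^t\norm{\Delta^2\bff{u}_n}{\bb{L}^2}^2\,\ds$---closes the estimate, yielding a constant $C(t)$ of the stated dependence. The same moment bounds furnish $C(t)$ for~\eqref{equ:E Delta Hn 1} and~\eqref{equ:E S un L2}, completing the three claims.
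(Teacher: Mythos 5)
Your proposal is correct and follows essentially the same route as the paper: decompose $\bff{H}_n=\Delta\bff{u}_n+\bff{u}_n-\Pi_n(|\bff{u}_n|^2\bff{u}_n)$, control the cubic terms via the algebra estimate~\eqref{equ:prod Hs mat dot} and the $\bb{H}^2$-boundedness of $\Pi_n$, and reduce everything to the moment bounds of Propositions~\ref{pro:E sup un L2}--\ref{pro:E sup un H2}. The only cosmetic difference is in~\eqref{equ:E u H H2}, where the paper applies Young's inequality pointwise to $\norm{\bff{u}_n}{\bb{H}^2}\norm{\Delta\bff{u}_n}{\bb{H}^2}$ before integrating, while you keep the supremum and time-integral factors separate and use Cauchy--Schwarz on $\Omega$; both reduce to the same $2p$-th moment estimates.
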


\begin{proof}
By \eqref{equ:faedo} and \eqref{equ:prod Hs triple},
\begin{align*}
	\norm{\Delta \bff{H}_n}{\bb{L}^2}
	&\leq 
	\norm{\Delta^2 \bff{u}_n}{\bb{L}^2}
	+
	\norm{\Delta \bff{u}_n}{\bb{L}^2}
	+
	\norm{\Delta(|\bff{u}_n|^2 \bff{u}_n)}{\bb{L}^2}
	\\
	&\leq
	\norm{\Delta^2 \bff{u}_n}{\bb{L}^2}
	+
	\norm{\Delta \bff{u}_n}{\bb{L}^2}
	+
	C\norm{\bff{u}_n}{\bb{H}^2}^3.
\end{align*}
Similarly, by H\"older's and Young's inequalities,
\begin{align*}
	\bb{E}\left[ \left( \int_0^t \norm{\Delta \bff{H}_n(s)}{\bb{L}^2}^2 \ds \right)^p \right]
	&\leq
	\bb{E} \left[\left( \int_0^t \norm{\Delta^2 \bff{u}_n(s)}{\bb{L}^2}^2 \ds \right)^p \right] 
	+
	\bb{E} \left[\left( \int_0^t \norm{\Delta \bff{u}_n(s)}{\bb{L}^2}^2 \ds \right)^p \right] 
	\\
	&\quad 
	+
	\bb{E} \left[\left( \int_0^t \norm{ \bff{u}_n(s)}{\bb{H}^2}^6 \ds \right)^p \right] 
	\leq C(t),
\end{align*}
where in the last step we used Proposition~\ref{pro:E sup un L2}, \ref{pro:E sup nab un L2}, and~\ref{pro:E nab Delta un}. This proves~\eqref{equ:E Delta Hn 1}.

Next, by \eqref{equ:prod Hs mat dot} and Young's inequality,
\begin{align*}
    \norm{\Pi_n\big(\bff{u}_n \times \bff{H}_n\big)}{\bb{L}^2}
    &\leq
    \norm{\bff{u}_n\times \Delta \bff{u}_n}{\bb{H}^2}
    +
    \norm{\bff{u}_n \times \Pi_n(|\bff{u}_n|^2 \bff{u}_n)}{\bb{H}^2}
    \\
    &\leq
    C\norm{\bff{u}_n}{\bb{H}^2} \norm{\Delta \bff{u}_n}{\bb{H}^2}
    +
    C\norm{\bff{u}_n}{\bb{H}^2}^4
    \\
    &\leq
    C\norm{\bff{u}_n}{\bb{H}^2}^2 
    +
    C\norm{\Delta \bff{u}_n}{\bb{H}^2}^2
    +
    C\norm{\bff{u}_n}{\bb{H}^2}^4.
\end{align*}
A similar argument as before (noting~\eqref{equ:E sup un H2}) yields~\eqref{equ:E u H H2}.

Finally, by the definition of $R$ and $L$, and H\"older's inequality,
\begin{align*}
	\norm{\Pi_n S(\bff{u}_n)}{\bb{L}^2}^2
	\leq
	C \norm{\bff{\nu}}{\bb{L}^\infty(\mathscr{D};\bb{R}^d)}^2 \norm{\nabla \bff{u}_n}{\bb{L}^2}^2
    +
    C \norm{\bff{\nu}}{\bb{L}^\infty(\mathscr{D};\bb{R}^d)}^2 \norm{\bff{u}_n}{\bb{L}^\infty}^2 \norm{\nabla \bff{u}_n}{\bb{L}^2}^2
    +
    C \big(1+ \norm{\bff{u}_n}{\bb{L}^2}^2 \big).
\end{align*}
Therefore, taking expectation, then applying Young's inequality and Sobolev embedding,
\begin{align*}
    \bb{E}\left[ \left( \int_0^t \norm{\Pi_n S(\bff{u}_n)}{\bb{L}^2}^2 \ds \right)^p \right] 
    &\lesssim_t
    1+
    \bb{E} \left[ \left(\int_0^t \norm{\bff{u}_n(s)}{\bb{H}^1}^2\right)^p \right]
    +
    \bb{E} \left[ \left(\int_0^t \norm{\bff{u}_n(s)}{\bb{H}^2}^4\right)^p \right]
    \leq C(t),
\end{align*}
where we also used Proposition~\ref{pro:E sup un L2}, \ref{pro:E sup nab un L2}, and~\ref{pro:E nab Delta un}. This shows \eqref{equ:E S un L2}, thus completing the proof of the proposition.
\end{proof}

\section{Proof of Theorem~\ref{the:exist}: Existence of a Martingale Solution}

In this section, we prove the existence of martingale solution to \eqref{equ:sllbar} as stated in Theorem~\ref{the:exist}. First, note that equation \eqref{equ:faedo ito} can be written as
\begin{align}\label{equ:process}
	\bff{u}_n(t)
	&= 
	\bff{u}_n(0)
	+
	\int_0^t F_n(\bff{u}_n, \bff{H}_n) \,\ds 
	+
	\sum_{k=1}^n \int_0^t G_k(\bff{u}_n)\,\dif W_k(s)
    \nonumber \\
    &=:
    \bff{u}_n(0)
	+
	\int_0^t F_n(\bff{u}_n, \bff{H}_n) \,\ds 
	+
    B_n(\bff{u}_n, W)(t),
\end{align}
where $F_n$ and $G_k$ were defined in~\eqref{equ:Fn} and~\eqref{equ:Gn}, and~$\bff{H}_n=\Delta \bff{u}_n+\bff{u}_n- \Pi_n(|\bff{u}_n|^2 \bff{u}_n)$. Moreover, uniform bounds for $\bff{u}_n$ established in Section~\ref{sec:faedo} imply the following proposition.

\begin{proposition}\label{pro:E int Fn Gk}
Let $p\geq 2$, $q\geq 1$, and $\alpha\in \left(0,\frac{1}{2} \right)$ with $\alpha p>1$. There exists a constant $C$ such that for all $n\in \bb{N}$,
\begin{align}
	\label{equ:E H1 L2}
	\bb{E} \left[ \norm{\int_0^t F_n(\bff{u}_n, \bff{H}_n) \,\ds}{\bb{H}^1(0,T;\bb{L}^2)}^q \right] &\leq C,
	\\
	\label{equ:E Wap H2}
	\bb{E} \left[ \norm{B_n(\bff{u}_n,W)(t)}{\bb{W}^{\alpha,p}(0,T; \bb{L}^2)}^q \right] &\leq C,
\end{align}
where $C$ is a constant depending on $p$, $q$, $\alpha$, $T$, $\sigma_g$, $\sigma_h$, and $\norm{\bff{u}_0}{\bb{H}^2}$ (but is independent of $n$).
\end{proposition}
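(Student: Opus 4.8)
The plan is to establish the two bounds \eqref{equ:E H1 L2} and \eqref{equ:E Wap H2} separately, reducing each to the uniform-in-$n$ moment estimates of Section~\ref{sec:faedo}. For \eqref{equ:E H1 L2} I would set $\Phi_n(t):=\int_0^t F_n(\bff{u}_n,\bff{H}_n)\,\ds$, which is absolutely continuous in time with $\Phi_n(0)=\bff{0}$ and $\partial_t\Phi_n(t)=F_n(\bff{u}_n(t),\bff{H}_n(t))$, so that
\[
\norm{\Phi_n}{\bb{H}^1(0,T;\bb{L}^2)}^2
=
\int_0^T \norm{\Phi_n(t)}{\bb{L}^2}^2\,\dt
+
\int_0^T \norm{F_n(\bff{u}_n(t),\bff{H}_n(t))}{\bb{L}^2}^2\,\dt .
\]
Since $\norm{\Phi_n(t)}{\bb{L}^2}\leq \sqrt{T}\,\big(\int_0^T\norm{F_n}{\bb{L}^2}^2\,\ds\big)^{1/2}$, the whole $\bb{H}^1$-norm is controlled by $\int_0^T\norm{F_n}{\bb{L}^2}^2\,\dt$. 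Recalling \eqref{equ:Fn}, the triangle inequality gives $\norm{F_n}{\bb{L}^2}\lesssim \norm{\bff{H}_n}{\bb{L}^2}+\norm{\Delta\bff{H}_n}{\bb{L}^2}+\norm{\Pi_n(\bff{u}_n\times\bff{H}_n)}{\bb{H}^2}+\norm{\Pi_n S(\bff{u}_n)}{\bb{L}^2}$. Raising $\int_0^T\norm{F_n}{\bb{L}^2}^2\,\dt$ to the power $q/2$, taking expectation, and using $(a_1+\dots+a_4)^{q/2}\lesssim a_1^{q/2}+\dots+a_4^{q/2}$, each of the four terms is bounded uniformly in $n$ by \eqref{equ:E sup Hn L2}, \eqref{equ:E Delta Hn 1}, \eqref{equ:E u H H2}, and \eqref{equ:E S un L2}, respectively, which yields \eqref{equ:E H1 L2}.

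For \eqref{equ:E Wap H2} I would use the Sobolev--Slobodeckij characterisation
\[
\norm{f}{\bb{W}^{\alpha,p}(0,T;\bb{L}^2)}^p
=
\int_0^T\norm{f(t)}{\bb{L}^2}^p\,\dt
+
\int_0^T\!\!\int_0^T \frac{\norm{f(t)-f(r)}{\bb{L}^2}^p}{|t-r|^{1+\alpha p}}\,\dt\,\dr ,
\]
the key input being a uniform increment estimate for the martingale $B_n(\bff{u}_n,W)$. Applying the Burkholder--Davis--Gundy inequality to $B_n(t)-B_n(r)=\sum_{k=1}^n\int_r^t G_k(\bff{u}_n(s))\,\dif W_k(s)$, together with the pointwise bound $\sum_{k=1}^n\norm{G_k(\bff{u}_n)}{\bb{L}^2}^2\leq C(1+\norm{\bff{u}_n}{\bb{L}^2}^2)$ (established in the proof of Lemma~\ref{lem:growth} via $\bb{H}^2\hookrightarrow\bb{L}^\infty$, with $G_k$ as in \eqref{equ:Gn}), I obtain, for every $m\geq p$,
\[
\bb{E}\big[\norm{B_n(t)-B_n(r)}{\bb{L}^2}^m\big]
\leq
C_m\,\bb{E}\Big[\Big(\int_r^t\big(1+\norm{\bff{u}_n(s)}{\bb{L}^2}^2\big)\,\ds\Big)^{m/2}\Big]
\leq
C\,|t-r|^{m/2},
\]
where the last step uses Jensen's inequality in time and the uniform bound $\bb{E}[\sup_{s\leq T}\norm{\bff{u}_n(s)}{\bb{L}^2}^m]\leq C$ from Proposition~\ref{pro:E sup un L2}; the constants are independent of $n$.

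To conclude, I would show $\bb{E}[\norm{B_n}{\bb{W}^{\alpha,p}(0,T;\bb{L}^2)}^m]\leq C$ for every $m\geq p$ and then specialise. For the double-integral (seminorm) part, Minkowski's integral inequality in $L^{m/p}(\Omega)$ (valid since $m/p\geq 1$) reduces the estimate to $\int_0^T\!\int_0^T |t-r|^{-(1+\alpha p)}\big(\bb{E}\norm{B_n(t)-B_n(r)}{\bb{L}^2}^m\big)^{p/m}\,\dt\,\dr$, and inserting the increment bound produces the kernel $|t-r|^{p/2-1-\alpha p}$, which is integrable over $[0,T]^2$ precisely because $\alpha<\tfrac12$ forces $p/2-1-\alpha p>-1$; the first term is handled analogously using $B_n(0)=\bff{0}$. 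Taking $m=q$ when $q\geq p$, and otherwise taking $m=p$ and descending via $\bb{E}[X^q]\leq(\bb{E}[X^p])^{q/p}$, yields \eqref{equ:E Wap H2}. (The complementary hypothesis $\alpha p>1$ is not needed here; it serves later to embed $\bb{W}^{\alpha,p}(0,T;\bb{L}^2)$ into $C([0,T];\bb{L}^2)$ for the tightness argument.)

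I expect \eqref{equ:E H1 L2} to be essentially bookkeeping on top of the bounds of Section~\ref{sec:faedo}, while the main obstacle is the exponent accounting in \eqref{equ:E Wap H2}: one must arrange the Burkholder--Davis--Gundy increment estimate to produce exactly the power $|t-r|^{m/2}$ so that the Slobodeckij kernel remains integrable under the borderline condition $\alpha<\tfrac12$, and simultaneously decouple the fixed fractional exponent $p$ from the arbitrary moment $q$, which is what the Minkowski/Jensen dichotomy achieves.
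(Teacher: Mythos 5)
Your proposal is correct and follows essentially the same route as the paper, whose proof is a one-line citation of the a priori estimates: \eqref{equ:E H1 L2} is bookkeeping on the bounds of Propositions~\ref{pro:E sup nab un L2} and~\ref{pro:E un Hn L2}, and \eqref{equ:E Wap H2} is the standard Burkholder--Davis--Gundy/Slobodeckij-seminorm argument with the increment bound $\bb{E}\big[\norm{B_n(t)-B_n(r)}{\bb{L}^2}^m\big]\leq C|t-r|^{m/2}$ fed into the kernel $|t-r|^{-(1+\alpha p)}$ under $\alpha<\tfrac12$. Your observation that $\alpha p>1$ is not needed here but only for the later embedding into $C([0,T];\bb{X}^{-\beta})$ is also accurate.
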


\begin{proof}
Inequality \eqref{equ:E H1 L2} follows immediately from Proposition~\ref{pro:E sup nab un L2} and \ref{pro:E un Hn L2}, while inequality~\eqref{equ:E Wap H2} is a consequence of Proposition~\ref{pro:E sup nab un L2}.
\end{proof}

Recalling the definition of the space $\bb{X}^{-\beta}$ as the dual of $\bb{X}^\beta$ defined in \eqref{equ:X beta}, we have the following result on tightness of laws.

\begin{proposition}
For any $p\in [1,\infty)$ and $\beta>0$, the set of laws $\{\mathcal{L}(\bff{u}_n): n\in \bb{N}\}$ on the Banach space
\begin{align}\label{equ:space Y}
	\bb{Y}:= L^p(0,T;\bb{W}^{1,4}) \cap L^2(0,T;\bb{H}^3) \cap C([0,T]; \bb{X}^{-\beta})
\end{align}
is tight.
\end{proposition}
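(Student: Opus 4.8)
The plan is to prove tightness by producing, for each $\varepsilon>0$, a compact set $K_\varepsilon\subset\bb{Y}$ with $\inf_n\bb{P}(\bff{u}_n\in K_\varepsilon)\ge 1-\varepsilon$; since $\bb{Y}$ is a separable Banach space this is exactly the tightness of $\{\mathcal{L}(\bff{u}_n)\}$. The sets $K_\varepsilon$ will be (closures of) balls in a strictly stronger space, and tightness will follow from the uniform-in-$n$ moment bounds already established. The backbone is the combination of the spatial estimates of Propositions~\ref{pro:E sup un L2}--\ref{pro:E un Hn L2}, which bound $\bff{u}_n$ in $L^q\big(\Omega;L^\infty(0,T;\bb{H}^2)\cap L^2(0,T;\bb{H}^4)\big)$ for every $q\ge1$, with the time-regularity estimate~\eqref{equ:E Wap H2}. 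After absorbing the smoother drift contribution~\eqref{equ:E H1 L2} through the one-dimensional Sobolev embedding $W^{1,2}(0,T;\bb{L}^2)\hookrightarrow W^{\alpha,p}(0,T;\bb{L}^2)$ (valid since $\alpha<\frac12$), this yields a uniform bound for $\bff{u}_n$ in $L^q\big(\Omega;W^{\alpha,p}(0,T;\bb{L}^2)\big)$ with $\alpha p>1$.

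First I would record the three compact Sobolev embeddings underlying the three factors of $\bb{Y}$, all valid for $d\le3$: the Rellich embedding $\bb{H}^4\hookrightarrow\hookrightarrow\bb{H}^3$; the embedding $\bb{H}^2\hookrightarrow\hookrightarrow\bb{W}^{1,4}$, which holds because $\nabla:\bb{H}^2\to\bb{H}^1$ is bounded and $\bb{H}^1\hookrightarrow\hookrightarrow\bb{L}^q$ for every $q<6$, so in particular for $q=4$; and $\bb{H}^2\hookrightarrow\hookrightarrow\bb{X}^{-\beta}$ for every $\beta>0$, which is immediate from the spectral characterisation of $\bb{X}^{-\beta}$ in~\eqref{equ:X beta} as a space of negative order.

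Next, for the two $L^p$-in-time factors I would invoke the Aubin--Lions--Simon compactness lemma in its fractional-in-time form: with the chain $\bb{H}^4\hookrightarrow\hookrightarrow\bb{H}^3\hookrightarrow\bb{L}^2$ one obtains $L^2(0,T;\bb{H}^4)\cap W^{\alpha,p}(0,T;\bb{L}^2)\hookrightarrow\hookrightarrow L^2(0,T;\bb{H}^3)$, and with $\bb{H}^2\hookrightarrow\hookrightarrow\bb{W}^{1,4}\hookrightarrow\bb{L}^2$ one obtains $L^p(0,T;\bb{H}^2)\cap W^{\alpha,p}(0,T;\bb{L}^2)\hookrightarrow\hookrightarrow L^p(0,T;\bb{W}^{1,4})$. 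For the supremum-in-time factor the Aubin--Lions mechanism does not apply directly, and I would instead use the Arzel\`a--Ascoli theorem: since $\alpha p>1$, the embedding $W^{\alpha,p}(0,T;\bb{L}^2)\hookrightarrow C^{\alpha-1/p}([0,T];\bb{L}^2)$ provides equicontinuity in $\bb{X}^{-\beta}$ (through $\bb{L}^2\hookrightarrow\bb{X}^{-\beta}$), while the uniform bound in $L^\infty(0,T;\bb{H}^2)$ gives, for each fixed $t$, relative compactness of $\{\bff{u}_n(t)\}$ in $\bb{X}^{-\beta}$ via $\bb{H}^2\hookrightarrow\hookrightarrow\bb{X}^{-\beta}$. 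Hence bounded subsets of $L^\infty(0,T;\bb{H}^2)\cap W^{\alpha,p}(0,T;\bb{L}^2)$ are relatively compact in $C([0,T];\bb{X}^{-\beta})$.

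Finally I would assemble these observations. For $R>0$ set
\[
K_R:=\Big\{\bff{v}:\ \norm{\bff{v}}{L^\infty(0,T;\bb{H}^2)}+\norm{\bff{v}}{L^2(0,T;\bb{H}^4)}+\norm{\bff{v}}{W^{\alpha,p}(0,T;\bb{L}^2)}\le R\Big\}.
\]
By the three compactness statements above, $K_R$ is relatively compact in each of the three factors defining $\bb{Y}$ in~\eqref{equ:space Y}, hence relatively compact in $\bb{Y}$, so its closure $\overline{K_R}$ is compact. The uniform moment bounds then give, by Chebyshev's inequality, $\sup_n\bb{P}(\bff{u}_n\notin K_R)\le C/R$ with $C$ independent of $n$, which is below any prescribed $\varepsilon$ once $R$ is large; taking $K_\varepsilon=\overline{K_R}$ completes the argument. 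The step I expect to require the most care is not any single estimate but the bookkeeping of Sobolev indices in dimension $d=3$ (in particular confirming that $\bb{H}^2\hookrightarrow\hookrightarrow\bb{W}^{1,4}$ holds with room to spare, which it does since $\bb{H}^1\hookrightarrow\hookrightarrow\bb{L}^q$ for all $q<6$), together with the non-Aubin--Lions treatment of the $C([0,T];\bb{X}^{-\beta})$ factor, where one must correctly convert the fractional time bound into genuine equicontinuity and pair it with the compact embedding supplying pointwise compactness.
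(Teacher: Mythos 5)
Your proposal is correct and follows essentially the same route as the paper: the same uniform moment bounds in $L^\infty(0,T;\bb{H}^2)\cap L^2(0,T;\bb{H}^4)\cap W^{\alpha,p}(0,T;\bb{L}^2)$ (with the drift absorbed via \eqref{equ:E H1 L2} and the stochastic integral via \eqref{equ:E Wap H2}), followed by the two compact embeddings into the factors of $\bb{Y}$ and Chebyshev's inequality. The only difference is that you spell out the compactness statements the paper asserts without proof (Aubin--Lions--Simon for the $L^p$-in-time factors and the Arzel\`a--Ascoli/Flandoli--Gat\c{a}rek argument for $C([0,T];\bb{X}^{-\beta})$), which is a faithful expansion rather than a different argument.
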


\begin{proof}
Proposition \ref{pro:E sup un L2}, \ref{pro:E sup nab un L2}, \ref{pro:E sup un H2}, and \ref{pro:E int Fn Gk} show that for any $q\geq 1$,
\[
	\bb{E} \left[ \norm{\bff{u}_n}{L^p(0,T;\bb{H}^2) \,\cap\, L^2(0,T;\bb{H}^4) \,\cap\, W^{\alpha,p}(0,T;\bb{L}^2)}^q \right] \leq C.
\]
This and the following compact embeddings
\begin{align*}
	L^p(0,T;\bb{H}^2) \cap W^{\alpha,p}(0,T;\bb{L}^2) \hookrightarrow 
	L^p(0,T;\bb{W}^{1,4}) \cap C([0,T]; \bb{X}^{-\beta}),
	\\
	L^2(0,T;\bb{H}^4) \cap W^{\alpha,p}(0,T;\bb{L}^2) \hookrightarrow 
	L^2(0,T;\bb{H}^3) \cap C([0,T]; \bb{X}^{-\beta})
\end{align*}
imply the required result.
\end{proof}

By the above proposition, we can find a subsequence of $\{\bff{u}_n\}$, which is not relabelled, such that the laws $\mathcal{L}(\bff{u}_n, W)$ converge weakly to a probability measure $\mu$ on $\bb{Y} \times C([0,T]; \bb{R}^\infty)$. Noting that the space $\bb{Y} \times C([0,T]; \bb{R}^\infty)$ is separable, we then have the following proposition by the Skorohod theorem.

\begin{proposition}\label{pro:un conv Y}
Let $\bb{Y}$ be the space defined in \eqref{equ:space Y}. Then there exist
\begin{enumerate}
	\item a probability space $(\Omega', \mathcal{F}', \bb{P}')$,
	\item a sequence of random variables $\{(\bff{u}_n', W_n')\}$ defined on $(\Omega', \mathcal{F}', \bb{P}')$ and taking values in the space $\bb{Y} \times C([0,T]; \bb{R}^\infty)$,
	\item a random variable $(\bff{u}', W')$ defined on $(\Omega', \mathcal{F}', \bb{P}')$ and taking values in the space $\bb{Y} \times C([0,T]; \bb{R}^\infty)$,
\end{enumerate}
such that in the space $\bb{Y}\times C([0,T]; \bb{R}^\infty)$,
\begin{enumerate}
	\item the laws $\mathcal{L}(\bff{u}_n, W)= \mathcal{L}(\bff{u}_n', W_n')$ for all $n\in\bb{N}$,
	\item $(\bff{u}_n',W_n') \to (\bff{u}', W')$ strongly, $\bb{P}'$-a.s.
\end{enumerate}
\end{proposition}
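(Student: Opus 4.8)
The plan is to obtain the statement as a direct application of the Skorokhod representation theorem, once the weak convergence asserted immediately above the proposition is in hand. The essential structural fact I would verify first is that the carrier space $\bb{Y}\times C([0,T];\bb{R}^\infty)$ is Polish, i.e.\ separable and completely metrizable. Each of the three spaces whose intersection defines $\bb{Y}$ is a separable Banach space, so $\bb{Y}$ is itself a separable Banach space; and $\bb{R}^\infty$ equipped with the product topology is Polish under the metric $d(x,y)=\sum_k 2^{-k}\big(|x_k-y_k|\wedge 1\big)$, whence $C([0,T];\bb{R}^\infty)$ is Polish as well. A finite product of Polish spaces is Polish, so the classical (metric) version of the Skorokhod theorem applies and no recourse to the Jakubowski generalisation for non-metric spaces is needed.

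First I would record tightness of the \emph{joint} laws $\{\mathcal{L}(\bff{u}_n,W)\}$. The first marginals $\{\mathcal{L}(\bff{u}_n)\}$ are tight on $\bb{Y}$ by the preceding proposition, while the second marginals all coincide with the single law $\mathcal{L}(W)$, which is automatically tight as one Borel measure on a Polish space. Since tightness of each marginal on a product of Polish spaces yields tightness of the joint family, Prokhorov's theorem furnishes a subsequence---not relabelled---along which $\mathcal{L}(\bff{u}_n,W)\rightharpoonup\mu$ for some probability measure $\mu$ on $\bb{Y}\times C([0,T];\bb{R}^\infty)$; this is exactly the weak convergence invoked before the statement, which I am free to assume.

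With the weak convergence and the Polish property in place, the Skorokhod representation theorem directly produces a new probability space $(\Omega',\mathcal{F}',\bb{P}')$ carrying random variables $(\bff{u}_n',W_n')$ and $(\bff{u}',W')$ with values in $\bb{Y}\times C([0,T];\bb{R}^\infty)$ such that $\mathcal{L}(\bff{u}_n',W_n')=\mathcal{L}(\bff{u}_n,W)$ for every $n$, $\mathcal{L}(\bff{u}',W')=\mu$, and $(\bff{u}_n',W_n')\to(\bff{u}',W')$ strongly $\bb{P}'$-a.s. These are precisely conclusions (1) and (2), and since convergence in $\bb{Y}$ is convergence in the intersection topology, the almost-sure limit is simultaneously attained in $L^p(0,T;\bb{W}^{1,4})$, $L^2(0,T;\bb{H}^3)$, and $C([0,T];\bb{X}^{-\beta})$.

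The step requiring the most care is not the invocation of Skorokhod itself but the verification that the product space is genuinely Polish---in particular that $C([0,T];\bb{R}^\infty)$ is separable and complete in the uniform metric induced by the product metric on $\bb{R}^\infty$---since this is what legitimises using the classical representation theorem rather than a weaker substitute. I would also remark that the equalities of law in (1) are exactly what will later transfer the Galerkin identity and the property of $W_n'$ being a sequence of independent Wiener processes onto the new probability space, but those consequences are downstream of the present statement and need not be addressed here.
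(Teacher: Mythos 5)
Your proposal is correct and follows essentially the same route as the paper: the paper likewise deduces weak convergence of a subsequence of the joint laws $\mathcal{L}(\bff{u}_n,W)$ from the tightness established in the preceding proposition, and then invokes the Skorokhod representation theorem on the separable space $\bb{Y}\times C([0,T];\bb{R}^\infty)$. Your additional verifications (Polishness of the product space and the passage from marginal to joint tightness) are exactly the details the paper leaves implicit.
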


\begin{remark}\label{rem:est Eu}
By the Kuratowski--Suslin theorem, the Borel subsets of $C([0,T]; \bb{V}_n)$ are Borel subsets of $\bb{Y}$. Moreover, $\bb{P}$-a.s., $\bff{u}_n \in C([0,T];\bb{V}_n)$. Therefore, we may assume that $\bff{u}_n'$ takes values in $\bb{V}_n$ and that the laws on $C([0,T]; \bb{V}_n)$ of $\bff{u}_n$ and $\bff{u}_n'$ are equal. It is then straightforward to show that the sequence $\{\bff{u}_n'\}$ satisfies the same estimates as the original sequence $\{\bff{u}_n\}$, namely for any $q\geq 1$,
\begin{align}\label{equ:E un dash H2}
	\sup_{n\in \bb{N}} \bb{E}' \left[ \sup_{t\in [0,T]} \norm{\bff{u}_n'(t)}{\bb{H}^2}^{2q} \right] 
	&< \infty,
	\\
	\label{equ:E int un dash H4}
	\sup_{n\in \bb{N}} \bb{E}' \left[ \left( \int_0^T \norm{\bff{u}_n'(s)}{\bb{H}^4}^2 \ds \right)^q \right] 
	&< \infty,
	\\
	\label{equ:E int Hn dash H2}
	\sup_{n\in \bb{N}} \bb{E}' \left[ \left( \int_0^T \norm{\bff{H}_n'(s)}{\bb{H}^2}^2 \ds \right)^q \right] 
	&< \infty,
    \\
    \label{equ:E int S un L2}
    \sup_{n\in\bb{N}} \bb{E}'\left[ \left( \int_0^t \norm{\Pi_n S(\bff{u}_n')}{\bb{L}^2}^2 \ds \right)^q \right] 
	&< \infty,
\end{align}
where $\bff{H}_n'= \Delta \bff{u}_n' + \bff{u}_n' - \Pi_n(|\bff{u}_n'|^2 \bff{u}_n')$.
%To summarise, Proposition~\ref{pro:un conv Y} and the above inequalities imply
%\begin{align*}
%    &\bff{u}_n' \to \bff{u}' \text{ strongly, $\bb{P}'$-a.s. in } L^{q}(0,T;\bb{W}^{1,4}) \cap L^2(0,T;\bb{H}^3),
%    \\
%    &\bff{u}_n' \to \bff{u}' \text{ weakly in } L^{2q}\big(\Omega'; L^\infty(0,T; \bb{H}^2) \cap L^2(0,T; \bb{H}^4)\big).
%\end{align*}
\end{remark}

Subsequently, we will work solely with processes defined on the probability space $(\Omega', \mathcal{F}', \mathbb{F}', \bb{P}')$. To simplify notations, we will write $(\Omega, \mathcal{F}, \mathbb{F}, \bb{P})$ instead. The new processes $W_n'$, $W'$, and $\bff{u}_n'$ will also be written as $W_n$, $W$, and $\bff{u}_n$, respectively.
We remark that, as in~\cite{BrzGolJeg13}, the processes $W_n'$ and $W'$ are Wiener processes on $(\Omega', \mathcal{F}', \mathbb{F}', \bb{P}')$.

Now, define a sequence of $\bb{L}^2$-valued processes
\begin{equation}\label{equ:Mn}
    \bff{M}_n(t):= \bff{u}_n(t)-\bff{u}_n(0)-\int_0^t F_n(\bff{u}_n,\bff{H}_n) \,\ds.
\end{equation}
Then for each $t\in [0,T]$, we have
\[
    \bff{M}_n(t)= B_n(\bff{u}_n, W_n)(t), \quad \bb{P}\text{-a.s.},
\]
where $B_n$ and $F_n$ are as in~\eqref{equ:process}. 

%We extend the definitions of $\norm{\cdot}{\bb{H}^k}$ for $k=0, 1, 2$ to the domain $\bb{X}^{-\beta}$ by defining~$\norm{\bff{v}}{\bb{H}^k}:= \infty$ for $\bff{v}\in \bb{X}^{-\beta} \backslash \bb{H}^k$. These extended maps are lower semicontinuous, and so is the map
%\[
%	C([0,T]; \bb{X}^{-\beta}) \mapsto \bb{R} \cup \{\infty\} 
%	\quad \text{by} \quad
%	\bff{v} \mapsto \sup_{t\in [0,T]} \norm{\bff{v}(t)}{\bb{H}^k}.
%\]
%As such, pointwise convergence of $\bff{u}_n'$ to $\bff{u}'$ in $C([0,T];\bb{X}^{-\beta})$, Fatou's lemma, and \eqref{equ:E un dash H2} imply that 
%\begin{equation}\label{equ:E ut H2}
%	\bb{E}' \left[\sup_{t\in [0,T]} \norm{\bff{u}'(t)}{\bb{H}^2}^{2q} \right] < \infty.
%\end{equation}
%
%By Remark \ref{rem:E un dash}, we can also assume that, for any $q\geq 1$, there exist measurable processes on $[0,T]\times \Omega'$,

\begin{lemma}\label{lem:Hn H}
Let $\bff{H}_n:= \Delta \bff{u}_n+\bff{u}_n-\Pi_n(|\bff{u}_n|^2 \bff{u}_n)$ and $\bff{H}:= \Delta \bff{u}+\bff{u}-|\bff{u}|^2 \bff{u}$. Then $\bb{P}$-a.s.,
\begin{align*}
    \bff{H}_n\to \bff{H} \text{ strongly in } L^2(0,T;\bb{H}^1).
\end{align*}
\end{lemma}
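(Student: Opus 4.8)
The plan is to write
\[
\bff{H}_n-\bff{H}
=
\Delta(\bff{u}_n-\bff{u})
+
(\bff{u}_n-\bff{u})
-
\big(\Pi_n(|\bff{u}_n|^2\bff{u}_n)-|\bff{u}|^2\bff{u}\big)
\]
and to estimate the three groups separately, working $\bb{P}$-a.s. on the full-measure event on which $\bff{u}_n\to\bff{u}$ in the space $\bb{Y}$ of Proposition~\ref{pro:un conv Y}. The two linear groups are immediate: since $\bff{u}_n\to\bff{u}$ in $L^2(0,T;\bb{H}^3)$ we have $\norm{\Delta(\bff{u}_n-\bff{u})}{\bb{H}^1}\le C\norm{\bff{u}_n-\bff{u}}{\bb{H}^3}$ and $\norm{\bff{u}_n-\bff{u}}{\bb{H}^1}\le C\norm{\bff{u}_n-\bff{u}}{\bb{H}^3}$, both of which tend to $0$ in $L^2(0,T)$. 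Hence the entire difficulty lies in the cubic term, which I expect to be the main obstacle.

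For the cubic term I would split
\[
\Pi_n(|\bff{u}_n|^2\bff{u}_n)-|\bff{u}|^2\bff{u}
=
\Pi_n\big(|\bff{u}_n|^2\bff{u}_n-|\bff{u}|^2\bff{u}\big)
+
(\Pi_n-I)(|\bff{u}|^2\bff{u}).
\]
By~\eqref{equ:Pi v L2} and~\eqref{equ:nab Pi v L2} the projection $\Pi_n$ is a contraction on $\bb{H}^1$, so the first summand is bounded in $\bb{H}^1$ by $\norm{|\bff{u}_n|^2\bff{u}_n-|\bff{u}|^2\bff{u}}{\bb{H}^1}$. The second summand is handled by dominated convergence: the eigenfunction basis makes $\Pi_n\to I$ strongly on $\bb{H}^1$, so $(\Pi_n-I)(|\bff{u}(t)|^2\bff{u}(t))\to\bff{0}$ in $\bb{H}^1$ for a.e.\ $t$, with $\norm{(\Pi_n-I)(|\bff{u}|^2\bff{u})}{\bb{H}^1}\le 2\norm{|\bff{u}|^2\bff{u}}{\bb{H}^1}$, and $|\bff{u}|^2\bff{u}\in L^2(0,T;\bb{H}^1)$ because $\norm{|\bff{u}|^2\bff{u}}{\bb{H}^1}\le C\norm{\bff{u}}{\bb{L}^\infty}^2\norm{\bff{u}}{\bb{H}^1}$ and $\bff{u}\in L^p(0,T;\bb{W}^{1,4})\hookrightarrow L^p(0,T;\bb{L}^\infty)$ for every $p$ (valid since $d\le 3$).

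The core estimate is a pointwise-in-time bound on the nonlinear difference. Writing $\bff{e}:=\bff{u}_n-\bff{u}$ and expanding $|\bff{u}_n|^2\bff{u}_n-|\bff{u}|^2\bff{u}$ together with its gradient via~\eqref{equ:nor der v2v}, every resulting term is a product of three factors, exactly one of which is $\bff{e}$ or $\nabla\bff{e}$. Placing the two remaining factors in $\bb{L}^\infty$ and $\bb{L}^4$ (the latter for gradient factors) and the difference factor in $\bb{L}^2$ or $\bb{L}^4$, Hölder's inequality and the embeddings $\bb{H}^1\hookrightarrow\bb{L}^4$ and $\bb{W}^{1,4}\hookrightarrow\bb{L}^\infty$ (both valid for $d\le 3$) yield
\[
\norm{|\bff{u}_n|^2\bff{u}_n-|\bff{u}|^2\bff{u}}{\bb{H}^1}
\le
C\,\Phi_n\,\norm{\bff{e}}{\bb{H}^1},
\quad
\Phi_n:=\norm{\bff{u}_n}{\bb{L}^\infty}^2+\norm{\bff{u}}{\bb{L}^\infty}^2
+\big(\norm{\bff{u}_n}{\bb{L}^\infty}+\norm{\bff{u}}{\bb{L}^\infty}\big)\big(\norm{\nabla\bff{u}_n}{\bb{L}^4}+\norm{\nabla\bff{u}}{\bb{L}^4}\big).
\]

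Finally I would integrate in time and apply Hölder: for any conjugate pair $(p,p')$ with $1<p<\infty$,
\[
\int_0^T\norm{|\bff{u}_n|^2\bff{u}_n-|\bff{u}|^2\bff{u}}{\bb{H}^1}^2\,\ds
\le
C\,\norm{\Phi_n}{L^{2p'}(0,T)}^2\,\norm{\bff{u}_n-\bff{u}}{L^{2p}(0,T;\bb{H}^1)}^2.
\]
The decisive point is that the a.s.\ convergence $\bff{u}_n\to\bff{u}$ in $L^r(0,T;\bb{W}^{1,4})$ for every finite $r$ makes $\norm{\bff{u}_n}{\bb{L}^\infty}$ and $\norm{\nabla\bff{u}_n}{\bb{L}^4}$ bounded in every $L^r(0,T)$ uniformly in $n$, so $\norm{\Phi_n}{L^{2p'}(0,T)}$ stays bounded along the path; and the same $\bb{W}^{1,4}$-boundedness, combined with the strong $L^2(0,T;\bb{H}^1)$ convergence coming from the $\bb{H}^3$ convergence, upgrades by interpolation to $\bff{u}_n\to\bff{u}$ in $L^{2p}(0,T;\bb{H}^1)$ for every finite $p$. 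Taking $p=p'=2$ then sends the right-hand side to $0$. The essential obstacle — and the reason the tightness space $\bb{Y}$ is built to contain $L^p(0,T;\bb{W}^{1,4})$ for all $p$ — is exactly this need to dominate the cubic nonlinearity uniformly in $n$ along $\bb{P}$-a.e.\ path, since only expectation (not pathwise) bounds on $\norm{\bff{u}_n}{L^\infty(0,T;\bb{H}^2)}$ are available.
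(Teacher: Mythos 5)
Your proof is correct and follows essentially the same route as the paper: both reduce the lemma to the cubic difference $|\bff{u}_n|^2\bff{u}_n-|\bff{u}|^2\bff{u}$ and control it in $L^2(0,T;\bb{H}^1)$ via H\"older together with the $\bb{P}$-a.s.\ convergence in $L^p(0,T;\bb{W}^{1,4})$ supplied by the tightness space $\bb{Y}$ (the paper works directly with mixed norms such as $L^8(0,T;\bb{L}^8)$ and $L^4(0,T;\bb{L}^4)$, whereas you first prove a pointwise-in-time bound and then apply H\"older in time, which is an equivalent bookkeeping). Your explicit treatment of the $(\Pi_n-I)(|\bff{u}|^2\bff{u})$ term via strong convergence of $\Pi_n$ on $\bb{H}^1$ and dominated convergence is a small point the paper leaves implicit, and is a welcome addition rather than a deviation.
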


\begin{proof}
Since $\bff{u}_n\to\bff{u}$ strongly in $L^2(0,T;\bb{H}^3)$, it is clear that $\Delta \bff{u}_n\to \Delta \bff{u}$ strongly in $L^2(0,T;\bb{H}^1)$. Moreover, by H\"older's inequality,
\begin{align*}
    \norm{|\bff{u}_n|^2 \bff{u}_n- |\bff{u}|^2 \bff{u}}{L^2(0,T;\bb{H}^1)}
    &\leq
    \norm{|\bff{u}_n|^2 (\bff{u}_n-\bff{u})}{L^2(0,T;\bb{H}^1)}
    +
    \norm{(\bff{u}_n-\bff{u})\cdot (\bff{u}_n+\bff{u})\bff{u}}{L^2(0,T;\bb{H}^1)}
    \\
    &\leq
    \norm{\bff{u}_n}{L^6(0,T;\bb{L}^6)}^2 \norm{\bff{u}_n-\bff{u}}{L^6(0,T;\bb{L}^6)}
    \\
    &\quad
    +
    2 \norm{\bff{u}_n}{L^8(0,T;\bb{L}^8)} \norm{\nabla \bff{u}_n}{L^4(0,T;\bb{L}^4)} \norm{\bff{u}_n-\bff{u}}{L^8(0,T;\bb{L}^8)}
    \\
    &\quad
    +
    \norm{\bff{u}_n-\bff{u}}{L^6(0,T;\bb{L}^6)}
    \norm{\bff{u}_n+\bff{u}}{L^6(0,T;\bb{L}^6)}
    \norm{\bff{u}}{L^6(0,T;\bb{L}^6)}
    \\
    &\quad
    +
    \norm{\nabla \bff{u}_n-\nabla \bff{u}}{L^4(0,T;\bb{L}^4)}
    \norm{\bff{u}_n+\bff{u}}{L^8(0,T;\bb{L}^8)}
    \norm{\bff{u}}{L^8(0,T;\bb{L}^8)}
    \\
    &\quad
    +
    \norm{\bff{u}_n-\bff{u}}{L^8(0,T;\bb{L}^8)}
    \norm{\nabla \bff{u}_n+\nabla \bff{u}}{L^4(0,T;\bb{L}^4)}
    \norm{\bff{u}}{L^8(0,T;\bb{L}^8)}
    \\
    &\quad
    +
    \norm{\bff{u}_n-\bff{u}}{L^8(0,T;\bb{L}^8)}
    \norm{\bff{u}_n+\bff{u}}{L^8(0,T;\bb{L}^8)}
    \norm{\nabla \bff{u}}{L^4(0,T;\bb{L}^4)}.
\end{align*}
Noting the embedding $\bb{W}^{1,4}\hookrightarrow \bb{L}^8$ and using Proposition~\ref{pro:un conv Y}, we have that $\Pi_n(|\bff{u}_n|^2 \bff{u}_n) \to |\bff{u}|^2 \bff{u}$ strongly, $\bb{P}$-a.s. in $L^2(0,T;\bb{H}^1)$. This implies $\bff{H}_n\to \bff{H}$ strongly in $L^2(0,T;\bb{H}^1)$, as required.
\end{proof}

\begin{lemma}
Let $\bff{u}_n$ and $\bff{u}$ be processes defined in Proposition~\ref{pro:un conv Y}. Then for any $q \geq 1$,
\begin{align}
    \label{equ:weak Linfty H2}
    &\bff{u}_n \rightharpoonup \bff{u} \text{ weakly$^\ast$ in } L^{2q}\big(\Omega; L^\infty(0,T; \bb{H}^2)\big),
    \\
    \label{equ:weak L2 H4}
    &\bff{u}_n \rightharpoonup \bff{u} \text{ weakly in } L^{2q}\big(\Omega; L^2(0,T; \bb{H}^4)\big).
\end{align}
In particular, $\bff{u}\in L^{2q}\big(\Omega; L^\infty(0,T; \bb{H}^2)\big) \cap L^{2q}\big(\Omega; L^2(0,T; \bb{H}^4)\big)$.
Furthermore, $\bff{u}\in L^{2q}\big(\Omega; C([0,T]; \bb{H}^2_\mathrm{w})\big)$.
\end{lemma}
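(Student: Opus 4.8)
The plan is to combine the uniform moment bounds recorded in Remark~\ref{rem:est Eu} with the $\bb{P}$-a.s.\ strong convergence supplied by Proposition~\ref{pro:un conv Y}: the former provides compactness and hence candidate weak and weak-$\ast$ limits, while the latter pins those limits down to be $\bff{u}$. First I would establish the two compactness extractions. Since $\bb{H}^2$ is a separable Hilbert space, $L^1(0,T;\bb{H}^{-2})$ is separable and $L^\infty(0,T;\bb{H}^2)=(L^1(0,T;\bb{H}^{-2}))^\ast$; consequently $L^{2q}(\Omega; L^\infty(0,T;\bb{H}^2))$ is the dual of the separable space $L^{(2q)'}(\Omega; L^1(0,T;\bb{H}^{-2}))$ (using that a separable predual gives the Radon--Nikod\'ym property). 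The bound \eqref{equ:E un dash H2} makes $\{\bff{u}_n\}$ bounded there, so by the sequential Banach--Alaoglu theorem a subsequence converges weak-$\ast$ to some $\bff{v}$. Likewise, $L^{2q}(\Omega; L^2(0,T;\bb{H}^4))$ is reflexive for $q\geq 1$, and \eqref{equ:E int un dash H4} gives boundedness, so a further subsequence converges weakly to some $\bff{w}$.

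Next I would identify $\bff{v}=\bff{w}=\bff{u}$. By Proposition~\ref{pro:un conv Y}, $\bff{u}_n\to\bff{u}$ strongly $\bb{P}$-a.s.\ in $\bb{Y}$, in particular in $L^2(0,T;\bb{H}^3)$ and in $C([0,T];\bb{X}^{-\beta})$. I would test the weak-$\ast$ (resp.\ weak) convergence against a family of smooth test processes of the form $\mathbf{1}_A\,\psi(t)\,\bff{e}_i$ with $A\in\mathcal{F}$, $\psi\in C_c(0,T)$, and $\bff{e}_i$ an eigenfunction; such functions lie in all the relevant preduals simultaneously. On one hand the limit of the pairing is the pairing with $\bff{v}$ (resp.\ $\bff{w}$); on the other hand, the $\bb{P}$-a.s.\ strong convergence together with the uniform bound \eqref{equ:E un dash H2}, which furnishes the uniform integrability required to pass a.s.\ convergence through the expectation via Vitali's theorem, yields the pairing with $\bff{u}$. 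Uniqueness of limits forces $\bff{v}=\bff{u}$ and $\bff{w}=\bff{u}$. Since every subsequence has a further subsequence with the same limit $\bff{u}$, the whole sequence converges, giving \eqref{equ:weak Linfty H2} and \eqref{equ:weak L2 H4}. The membership $\bff{u}\in L^{2q}(\Omega;L^\infty(0,T;\bb{H}^2))\cap L^{2q}(\Omega;L^2(0,T;\bb{H}^4))$ then follows from weak(-$\ast$) lower semicontinuity of the norms applied to the uniform bounds.

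For the final assertion I would invoke the classical Strauss/Lions--Magenes lemma on weak continuity: if $\bff{u}\in L^\infty(0,T;\bb{H}^2)$ and $\bff{u}\in C([0,T];\bb{X}^{-\beta})$, with $\bb{H}^2\hookrightarrow \bb{X}^{-\beta}$ densely and $\bb{H}^2$ reflexive, then $\bff{u}\in C([0,T];\bb{H}^2_{\mathrm{w}})$. Both hypotheses hold $\bb{P}$-a.s.\ by the previous step and Proposition~\ref{pro:un conv Y}. Taking $\sup_{t\in[0,T]}\norm{\bff{u}(t)}{\bb{H}^2}^{2q}$, which for a weakly continuous $\bb{H}^2$-valued path coincides with the $L^\infty(0,T;\bb{H}^2)$ norm and hence is controlled by the transferred bound \eqref{equ:E un dash H2}, and integrating over $\Omega$ yields $\bff{u}\in L^{2q}(\Omega; C([0,T];\bb{H}^2_{\mathrm{w}}))$.

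The routine part is the compactness extraction; the main obstacle I anticipate is the clean identification of the weak and weak-$\ast$ limits with the pre-existing strong limit $\bff{u}$, which requires making the two distinct dualities precise on a common set of test functions and rigorously justifying the interchange of the $n\to\infty$ limit with the expectation over $\Omega$ by uniform integrability. A secondary technical point is verifying the duality $L^{2q}(\Omega;L^\infty(0,T;\bb{H}^2))=(L^{(2q)'}(\Omega;L^1(0,T;\bb{H}^{-2})))^\ast$, which is what makes sequential weak-$\ast$ compactness available in the first place.
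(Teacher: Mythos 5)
Your argument for \eqref{equ:weak Linfty H2} and \eqref{equ:weak L2 H4} is essentially the paper's: a Banach--Alaoglu extraction from the uniform bounds \eqref{equ:E un dash H2}, \eqref{equ:E int un dash H4}, followed by identification of the limit with $\bff{u}$ by pairing against a dense class of test processes and passing to the limit via the $\bb{P}$-a.s.\ convergence from Proposition~\ref{pro:un conv Y} together with uniform integrability and Vitali's theorem (the paper tests against $L^2(\Omega;L^2(0,T;\bb{L}^2))$ and invokes its density in $L^{\frac{2q}{2q-1}}(\Omega;L^1(0,T;\widetilde{\bb{H}}^{-2}))$, which is the same device as your $\mathbf{1}_A\,\psi(t)\,\bff{e}_i$ family). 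One imprecision: the identification $\big(L^{(2q)'}(\Omega;L^1(0,T;\widetilde{\bb{H}}^{-2}))\big)^\ast = L^{2q}(\Omega;L^\infty(0,T;\bb{H}^2))$ is not a consequence of ``a separable predual gives RNP''---the inner space $L^\infty(0,T;\bb{H}^2)$ does not have the Radon--Nikod\'ym property, and the correct justification goes through weak-$\ast$ measurable representatives and the separability of $\bb{H}^2$; the paper glosses over this point at the same level, so it is not a gap relative to the target proof, but the stated reason is wrong. Where you genuinely diverge is the final claim $\bff{u}\in L^{2q}(\Omega;C([0,T];\bb{H}^2_{\mathrm{w}}))$: the paper deduces it rather tersely from the weak-$\ast$ convergence and ``completeness of $C([0,T];\bb{H}^2_{\mathrm{w}})$'', whereas you apply the Strauss/Lions--Magenes lemma to the pathwise inclusions $\bff{u}\in L^\infty(0,T;\bb{H}^2)\cap C([0,T];\bb{X}^{-\beta})$ and then transfer the moment bound using weak lower semicontinuity of the $\bb{H}^2$ norm along weakly continuous paths. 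Your route is the more standard and, in my view, the more rigorous of the two; both reach the same conclusion from the same ingredients.
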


\begin{proof}
First, we show \eqref{equ:weak Linfty H2}. The Banach--Alaoglu theorem (noting \eqref{equ:E un dash H2})  yields a subsequence of $\{\bff{u}_n\}$, which we do not relabel, and $\bff{v}\in L^{2q}\big(\Omega; L^\infty(0,T; \bb{H}^2) \cap L^2(0,T; \bb{H}^4)\big)$ such that
\begin{align*}
    &\bff{u}_n \rightharpoonup \bff{v} \text{ weakly$^\ast$ in } L^{2q}\big(\Omega; L^\infty(0,T; \bb{H}^2)\big).
\end{align*}
In other words, we have
\begin{equation}\label{equ:E un L2 2}
    \bb{E}\left[\int_0^T  \inpro{\bff{u}_n(t)}{\bff{\psi}(t)}_{\bb{L}^2} \dt\right] \to
    \bb{E}\left[\int_0^T \inpro{\bff{v}(t)}{\bff{\psi}(t)}_{\bb{L}^2} \dt\right], \quad \forall \bff{\psi}\in L^{\frac{2q}{2q-1}}\big(\Omega;L^1(0,T;\widetilde{\bb{H}}^{-2})\big).
\end{equation}

Next, we will show that in fact $\bff{v}=\bff{u}$. To this end, note that by a straightforward argument utilising Proposition~\ref{pro:un conv Y} and Remark~\ref{rem:est Eu}, for any $\bff{\phi}\in L^2\big(\Omega;L^2(0,T;\bb{L}^2)\big)$, the sequence $\left\{\int_0^T \inpro{\bff{u}_n(t)}{\bff{\phi}(t)}_{\bb{L}^2} \dt\right\}$ is uniformly integrable on $\Omega$. Therefore, by the almost sure convergence of $\bff{u}_n$ to $\bff{u}$ in $L^2(0,T;\bb{L}^2)$ and Vitali's convergence theorem, we have
\begin{equation}\label{equ:E un L2 1}
    \bb{E} \left[ \int_0^T \inpro{\bff{u}_n(t)}{\bff{\phi}(t)}_{\bb{L}^2} \dt \right] \to
    \bb{E} \left[ \int_0^T \inpro{\bff{u}(t)}{\bff{\phi}(t)}_{\bb{L}^2} \dt \right],
    \quad \forall \bff{\phi}\in L^2\big(\Omega;L^2(0,T;\bb{L}^2)\big).
\end{equation}
By the density of $L^2\big(\Omega;L^2(0,T;\bb{L}^2)\big)$ in $L^{\frac{2q}{2q-1}}\big(\Omega;L^1(0,T;\widetilde{\bb{H}}^{-2})\big)$, we conclude from~\eqref{equ:E un L2 2} and \eqref{equ:E un L2 1} that $\bff{u}=\bff{v}$. In particular, by $\bff{u}\in L^{2q}\big(\Omega; L^\infty(0,T; \bb{H}^2)\big)$ by the estimate \eqref{equ:E un dash H2}. By a similar argument and noting the estimate \eqref{equ:E int un dash H4}, we also obtain \eqref{equ:weak L2 H4} and that $\bff{u}\in L^{2q}\big(\Omega;  L^2(0,T; \bb{H}^4)\big)$.

Finally, Proposition~\ref{pro:un conv Y} implies $\bff{u}_n\in C\big([0,T];\bb{H}^2_{\mathrm{w}}\big)$. Since $\bff{u}_n\rightharpoonup \bff{u}$ weakly$^\ast$ in $L^{2q}\big(\Omega; L^\infty(0,T; \bb{H}^2)\big)$ and the space $C\big([0,T]; \bb{H}^2_{\mathrm{w}}\big)$ is complete, we deduce $\bff{u}\in L^{2q}\big(\Omega; C([0,T]; \bb{H}^2_\mathrm{w})\big)$. This completes the proof of the lemma.
\end{proof}

We now show the convergence of each term in \eqref{equ:process}.

\begin{lemma}\label{lem:limit E}
For any $\bff{\chi}\in \bb{H}^1$,
\begin{align*}
    \lim_{n\to\infty} \bb{E}\left[\int_0^t \inpro{\bff{H}_n(s)}{\bff{\chi}}_{\bb{L}^2} \ds\right]
    &=
    \bb{E} \left[\int_0^t \inpro{\bff{H}(s)}{\bff{\chi}}_{\bb{L}^2} \ds \right],
    \\
    \lim_{n\to\infty} \bb{E}\left[\int_0^t \inpro{\nabla \bff{H}_n(s)}{\nabla \bff{\chi}}_{\bb{L}^2} \ds\right]
    &=
    \bb{E} \left[\int_0^t \inpro{\nabla \bff{H}(s)}{\nabla \bff{\chi}}_{\bb{L}^2} \ds \right],
    \\
    \lim_{n\to\infty} \bb{E} \left[\int_0^t \inpro{\Pi_n \big(\bff{u}_n(s) \times \bff{H}_n(s)\big)}{\bff{\chi}}_{\bb{L}^2} \ds \right]
    &=
    \bb{E} \left[\int_0^t \inpro{\bff{u}(s) \times \bff{H}(s)}{\bff{\chi}}_{\bb{L}^2} \ds \right],
    \\
    \lim_{n\to\infty} \bb{E} \left[\int_0^t  \inpro{\Pi_n S\big(\bff{u}_n(s)\big)}{\bff{\chi}}_{\bb{L}^2} \ds\right]
    &=
    \bb{E} \left[\int_0^t \inpro{S\big(\bff{u}(s)\big)}{\bff{\chi}}_{\bb{L}^2} \ds \right].
\end{align*}
\end{lemma}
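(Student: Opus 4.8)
The plan is to prove each of the four identities by the same two-step scheme already used for~\eqref{equ:E un L2 1}: first establish the almost sure convergence of the corresponding time-integral random variables, then combine this with a uniform second-moment bound to invoke Vitali's convergence theorem. Throughout I would use the self-adjointness of $\Pi_n$ to shift the projection onto the test function, writing $\inpro{\Pi_n \bff{F}_n}{\bff{\chi}}_{\bb{L}^2}=\inpro{\bff{F}_n}{\Pi_n\bff{\chi}}_{\bb{L}^2}$, together with the standard facts $\norm{\Pi_n\bff{\chi}}{\bb{L}^2}\le\norm{\bff{\chi}}{\bb{L}^2}$ and $\Pi_n\bff{\chi}\to\bff{\chi}$ in $\bb{L}^2$ for the Galerkin projection onto the eigenfunctions of $-\Delta$.

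For the first two identities the a.s. convergence is immediate from Lemma~\ref{lem:Hn H}: since $\bff{\chi}\in\bb{H}^1$, the Cauchy--Schwarz inequality gives
\[
\Big|\int_0^t\inpro{\bff{H}_n-\bff{H}}{\bff{\chi}}_{\bb{L}^2}\,\ds\Big|
\le
\sqrt{t}\,\norm{\bff{\chi}}{\bb{L}^2}\,\norm{\bff{H}_n-\bff{H}}{L^2(0,t;\bb{L}^2)},
\]
and an analogous bound for the gradient pairing in terms of $\norm{\nabla\bff{\chi}}{\bb{L}^2}\,\norm{\bff{H}_n-\bff{H}}{L^2(0,t;\bb{H}^1)}$, both of which tend to $0$ $\bb{P}$-a.s. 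Uniform integrability follows from~\eqref{equ:E int Hn dash H2}: one has $\bb{E}\big[|\int_0^t\inpro{\bff{H}_n}{\bff{\chi}}_{\bb{L}^2}\ds|^2\big]\le t\,\norm{\bff{\chi}}{\bb{L}^2}^2\,\bb{E}[\int_0^T\norm{\bff{H}_n}{\bb{L}^2}^2\ds]\le C$ uniformly in $n$, and similarly with $\norm{\nabla\bff{\chi}}{\bb{L}^2}^2$ and the $\bb{H}^1$-norm of $\bff{H}_n$.

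For the third identity I would split $\bff{u}_n\times\bff{H}_n-\bff{u}\times\bff{H}=(\bff{u}_n-\bff{u})\times\bff{H}_n+\bff{u}\times(\bff{H}_n-\bff{H})$ and estimate the two $\bb{L}^2$-norms by $\norm{\bff{u}_n-\bff{u}}{\bb{L}^\infty}\norm{\bff{H}_n}{\bb{L}^2}$ and $\norm{\bff{u}}{\bb{L}^\infty}\norm{\bff{H}_n-\bff{H}}{\bb{L}^2}$; integrating in time and using Cauchy--Schwarz, the embedding $\bb{H}^2\hookrightarrow\bb{L}^\infty$ (valid for $d\le3$), the strong convergence $\bff{u}_n\to\bff{u}$ in $L^2(0,T;\bb{H}^3)$ from Proposition~\ref{pro:un conv Y}, and Lemma~\ref{lem:Hn H}, one obtains the a.s. convergence of $\int_0^t\inpro{\bff{u}_n\times\bff{H}_n}{\Pi_n\bff{\chi}}_{\bb{L}^2}\ds$; the projection defect $\inpro{\bff{u}\times\bff{H}}{\Pi_n\bff{\chi}-\bff{\chi}}_{\bb{L}^2}$ is controlled by $\norm{\Pi_n\bff{\chi}-\bff{\chi}}{\bb{L}^2}\to0$, with uniform integrability furnished by~\eqref{equ:E u H H2}. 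The fourth identity is entirely analogous: the Lipschitz term $L(\bff{u}_n)$ converges in $L^2(0,t;\bb{L}^2)$ by the Lipschitz assumption, the advection part $(\bff{\nu}\cdot\nabla)\bff{u}_n$ converges in $L^2(0,t;\bb{L}^2)$ using $\bff{\nu}\in\bb{L}^\infty$ and $\nabla\bff{u}_n\to\nabla\bff{u}$, and the quadratic part $\bff{u}_n\times(\bff{\nu}\cdot\nabla)\bff{u}_n$ is handled by the same splitting as above, while uniform integrability comes from~\eqref{equ:E int S un L2}.

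The main obstacle is the cross-product term $\bff{u}_n\times\bff{H}_n$ in the third identity, where one must simultaneously exploit the high-regularity convergence of $\bff{u}_n$ in $L^2(0,T;\bb{H}^3)$ (to control $\norm{\bff{u}_n-\bff{u}}{\bb{L}^\infty}$ via Sobolev embedding) and only the $\bb{H}^1$-convergence of $\bff{H}_n$, while carefully accounting for the Galerkin projection both inside the product and on the test function. Verifying that the resulting time integrals form a \emph{uniformly integrable} family, rather than merely converging almost surely, is what makes Vitali's theorem applicable and is the crux of the argument; the requisite uniform bounds are precisely those recorded in Remark~\ref{rem:est Eu} and Proposition~\ref{pro:E un Hn L2}.
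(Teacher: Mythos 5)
Your proposal is correct and follows essentially the same route as the paper: almost-sure convergence of the four time integrals (which the paper delegates to the deterministic argument of \cite[Proposition 4.1]{SoeTra23} together with Proposition~\ref{pro:un conv Y} and Lemma~\ref{lem:Hn H}, and which you spell out directly, including the product splittings and the treatment of $\Pi_n$ on the test function), combined with uniform second-moment bounds from \eqref{equ:E un dash H2}--\eqref{equ:E int S un L2} and Vitali's convergence theorem. No gaps.
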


\begin{proof}
By the same argument as in~\cite[Proposition 4.1]{SoeTra23}, using Proposition~\ref{pro:un conv Y} and Lemma~\ref{lem:Hn H}, we have $\bb{P}$-a.s.
\begin{align*}
    \lim_{n\to\infty} \int_0^t \inpro{\bff{H}_n(s)}{\bff{\chi}}_{\bb{L}^2} \ds
    &=
    \int_0^t \inpro{\bff{H}(s)}{\bff{\chi}}_{\bb{L}^2} \ds,
    \\
    \lim_{n\to\infty} \int_0^t \inpro{\nabla \bff{H}_n(s)}{\nabla \bff{\chi}}_{\bb{L}^2} \ds
    &=
    \int_0^t \inpro{\nabla \bff{H}(s)}{\nabla \bff{\chi}}_{\bb{L}^2} \ds,
    \\
    \lim_{n\to\infty} \int_0^t \inpro{\Pi_n \big(\bff{u}_n(s) \times \bff{H}_n(s)\big)}{\bff{\chi}}_{\bb{L}^2} \ds
    &=
    \int_0^t \inpro{\bff{u}(s) \times \bff{H}(s)}{\bff{\chi}}_{\bb{L}^2} \ds,
    \\
    \lim_{n\to\infty} \int_0^t \inpro{\Pi_n S\big(\bff{u}_n(s)\big)}{\bff{\chi}}_{\bb{L}^2} \ds
    &=
    \int_0^t \inpro{S\big(\bff{u}(s)\big)}{\bff{\chi}}_{\bb{L}^2} \ds.
\end{align*}
Furthermore, by H\"older's inequality (noting~\eqref{equ:E un dash H2}, \eqref{equ:E int un dash H4}, \eqref{equ:E int Hn dash H2}, and \eqref{equ:E int S un L2}), we have
\begin{align*}
    \sup_{n\in\bb{N}} \bb{E}\left[\left|\int_0^t \inpro{\bff{H}_n(s)}{\bff{\chi}}_{\bb{L}^2} \ds \right|^2\right]
    &\leq
    \norm{\bff{H}_n}{L^4(\Omega;L^2(0,T;\bb{L}^2))}^2
    \norm{\bff{\chi}}{L^4(\Omega;L^2(0,T;\bb{L}^2))}^2
    < \infty,
    \\
    \sup_{n\in \bb{N}} \bb{E}\left[\left|\int_0^t \inpro{\nabla \bff{H}_n(s)}{\nabla \bff{\chi}}_{\bb{L}^2} \ds \right|^2\right]
    &\leq
    \norm{\nabla \bff{H}_n}{L^4(\Omega;L^2(0,T;\bb{L}^2))}^2
    \norm{\nabla \bff{\chi}}{L^4(\Omega;L^2(0,T;\bb{L}^2))}^2
    < \infty,
    \\
    \sup_{n\in \bb{N}} \bb{E} \left[\left|\int_0^t \inpro{\Pi_n \big(\bff{u}_n(s) \times \bff{H}_n(s)\big)}{\bff{\chi}}_{\bb{L}^2} \ds\right|^2\right]
    &\leq
    \norm{\bff{u}_n}{L^8(\Omega;L^4(0,T;\bb{L}^4))}^2
    \norm{\bff{H}_n}{L^4(\Omega;L^2(0,T;\bb{L}^2))}^2
    \norm{\bff{\chi}}{L^8(\Omega;L^4(0,T;\bb{L}^4))}^2
    \\
    &< \infty,
    \\
    \sup_{n\in \bb{N}} \bb{E} \left[\left|\int_0^t  \inpro{\Pi_n S\big(\bff{u}_n(s)\big)}{\bff{\chi}}_{\bb{L}^2} \ds\right|^2 \right]
    &\leq
    \norm{S\big(\bff{u}_n(s)\big)}{L^4(\Omega;L^2(0,T;\bb{L}^2))}^2
    \norm{\bff{\chi}}{L^4(\Omega;L^2(0,T;\bb{L}^2))}^2
    < \infty.
\end{align*}
The results then follow from Vitali's convergence theorem.
\end{proof}

\begin{lemma}\label{lem:Mn to M}
Let $\beta>0$. For every $t\in [0,T]$, the sequence of random variables $\bff{M}_n(t)$ defined in~\eqref{equ:Mn} converges weakly in $L^2(\Omega;\bb{X}^{-\beta})$ to a limit $\bff{M}(t)$ given by
\begin{align*}
    \bff{M}(t)
    &=
    \bff{u}(t)
    -
    \bff{u}(0)
    -
    \lambda_r \int_0^t \bff{H}(s)\,\ds 
    +
    \lambda_e \int_0^t \Delta \bff{H}(s)\,\ds
    +
    \gamma \int_0^t \bff{u}(s)\times \bff{H}(s)\,\ds
    \\
    &\quad
    -
    \int_0^t R(\bff{u}(s))\,\ds
    -
    \int_0^t L(\bff{u}(s))\,\ds,
\end{align*}
where $\bff{H}(s)= \Delta \bff{u}(s)+ \bff{u}(s)- |\bff{u}(s)|^2  \bff{u}(s)$.
\end{lemma}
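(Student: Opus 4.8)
The plan is to prove the weak convergence by testing against a dense family of functionals and handling each summand of $\bff{M}_n(t)$ separately. Since $\bb{X}^{-\beta}$ is a Hilbert space, $L^2(\Omega;\bb{X}^{-\beta})$ is reflexive with dual $L^2(\Omega;\bb{X}^{\beta})$, so it suffices to verify that $\bb{E}\big[Z\inpro{\bff{M}_n(t)}{\bff{\chi}}_{\bb{L}^2}\big]\to\bb{E}\big[Z\inpro{\bff{M}(t)}{\bff{\chi}}_{\bb{L}^2}\big]$ for every $Z\in L^\infty(\Omega)$ and every $\bff{\chi}$ in the dense subspace $\bigcup_n\bb{V}_n$ of $\bb{X}^{\beta}$, whose elements are smooth and satisfy $\partial\bff{\chi}/\partial\bff{n}=\bff{0}$. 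The uniform bound $\sup_n\bb{E}\,\norm{\bff{M}_n(t)}{\bb{X}^{-\beta}}^2<\infty$, which follows from the estimates \eqref{equ:E un dash H2} through \eqref{equ:E int S un L2}, produces a weakly convergent subsequence and reduces the task to identifying the limit. I then split $\bff{M}_n(t)$ according to \eqref{equ:Mn} and \eqref{equ:Fn} into the pointwise-in-time part $\bff{u}_n(t)-\bff{u}_n(0)$ and the four time-integral terms.

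For the pointwise part, $\bff{u}_n(0)=\Pi_n\bff{u}_0\to\bff{u}_0=\bff{u}(0)$ strongly in $\bb{H}^2$, which is deterministic and causes no difficulty. For $\bff{u}_n(t)$, the almost sure convergence $\bff{u}_n\to\bff{u}$ in $C([0,T];\bb{X}^{-\beta})$ from Proposition~\ref{pro:un conv Y} yields, for each fixed $t$, that $\inpro{\bff{u}_n(t)}{\bff{\chi}}_{\bb{L}^2}\to\inpro{\bff{u}(t)}{\bff{\chi}}_{\bb{L}^2}$ $\bb{P}$-a.s. The moment bound \eqref{equ:E un dash H2} shows $\{Z\inpro{\bff{u}_n(t)}{\bff{\chi}}_{\bb{L}^2}\}_n$ is bounded in $L^{2q}(\Omega)$, hence uniformly integrable, so Vitali's convergence theorem upgrades the a.s.\ convergence to convergence in expectation.

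For the four integral terms I would invoke Lemma~\ref{lem:limit E}, whose proof already supplies the $\bb{P}$-a.s.\ convergence of each pairing $\int_0^t\inpro{\bff{H}_n}{\bff{\chi}}_{\bb{L}^2}\,\ds$, $\int_0^t\inpro{\nabla\bff{H}_n}{\nabla\bff{\chi}}_{\bb{L}^2}\,\ds$, $\int_0^t\inpro{\Pi_n(\bff{u}_n\times\bff{H}_n)}{\bff{\chi}}_{\bb{L}^2}\,\ds$, and $\int_0^t\inpro{\Pi_n S(\bff{u}_n)}{\bff{\chi}}_{\bb{L}^2}\,\ds$, together with uniform $L^2(\Omega)$ bounds on these pairings. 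The second-order term is rewritten as $\int_0^t\inpro{\Delta\bff{H}_n}{\bff{\chi}}_{\bb{L}^2}\,\ds=-\int_0^t\inpro{\nabla\bff{H}_n}{\nabla\bff{\chi}}_{\bb{L}^2}\,\ds$, the boundary contribution vanishing because $\bff{H}_n\in\bb{V}_n$ forces $\partial\bff{H}_n/\partial\bff{n}=\bff{0}$ on $\partial\mathscr{D}$. Multiplying each pairing by the bounded factor $Z$ and applying Vitali's theorem again gives convergence of the corresponding expectations to the terms of $\bff{M}(t)$, with the signs dictated by \eqref{equ:Mn} and \eqref{equ:Fn}. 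Summing the five contributions identifies the weak limit as $\bff{M}(t)$; since every weak limit point of the bounded sequence must equal $\bff{M}(t)$, the whole sequence converges.

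The main obstacle is that the convergence is required at a fixed time $t$ rather than in a time-integrated sense, so the weak-$\ast$ $L^\infty(0,T;\cdot)$ arguments used earlier are unavailable; instead one must exploit the pathwise continuity in $\bb{X}^{-\beta}$ and promote almost sure convergence to weak $L^2(\Omega)$ convergence through uniform integrability. A second delicate point is that all limits of the $\bff{H}_n$-dependent terms rest on the strong convergence $\bff{H}_n\to\bff{H}$ in $L^2(0,T;\bb{H}^1)$ established in Lemma~\ref{lem:Hn H}, which itself hinges on the strong convergence $\Pi_n(|\bff{u}_n|^2\bff{u}_n)\to|\bff{u}|^2\bff{u}$; confirming the legitimacy of the integration by parts for the $\Delta\bff{H}_n$ term against the chosen test functions then completes the argument.
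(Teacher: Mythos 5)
Your proposal is correct and follows essentially the same route as the paper: almost sure convergence of $\bff{u}_n(t)$ in $C([0,T];\bb{X}^{-\beta})$ upgraded to convergence of expectations via the uniform moment bounds and Vitali's theorem, combined with Lemma~\ref{lem:limit E} for the four time-integral terms. The only difference is cosmetic packaging of the test functions (products $Z\bff{\chi}$ over a dense subspace plus a subsequence/uniqueness argument, versus testing directly against $\bff{\phi}\in L^2(\Omega;\bb{X}^\beta)$), which if anything makes the identification of the weak limit slightly more explicit.
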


\begin{proof}
Let $t\in [0,T]$ and $\bff{\phi}\in L^2(\Omega;\bb{X}^\beta)$. Since $\bff{u}_n$ converges to $\bff{u}$ in $C([0,T];\bb{X}^{-\beta})$, $\bb{P}$-a.s., we infer that
\[
    \lim_{n\to\infty} {}_{\bb{X}^{-\beta}}\inpro{\bff{u}_n(t)}{\bff{\phi}}_{\bb{X}^\beta}
    =
    {}_{\bb{X}^{-\beta}} \inpro{\bff{u}(t)}{\bff{\phi}}_{\bb{X}^\beta},
    \quad \bb{P}\text{-a.s.}.
\]
Furthermore, by the embedding $\bb{H}^1\hookrightarrow \bb{X}^{-\beta}$ and~\eqref{equ:E un dash H2}, we have
\begin{equation*}
    \sup_{n\in \bb{N}} \bb{E} \left[\big|{}_{\bb{X}^{-\beta}} \inpro{\bff{u}_n(t)}{\bff{\phi}}_{\bb{X}^\beta} \big|^2\right]
    \leq
    \sup_{n\in \bb{N}} \left(\bb{E} \left[\norm{\bff{u}_n(t)}{\bb{X}^{-\beta}}^4\right]\right)^{\frac{1}{2}}
    \left(\bb{E} \left[\norm{\bff{\phi}}{\bb{X}^{\beta}}^4\right]\right)^{\frac{1}{2}}
    < \infty.
\end{equation*}
Therefore, by Vitali's convergence theorem,
\begin{equation*}
    \lim_{n\to\infty} \bb{E} \big[{}_{\bb{X}^{-\beta}} \inpro{\bff{u}_n(t)}{\bff{\phi}}_{\bb{X}^\beta}\big]
    =
    \bb{E} \big[ {}_{\bb{X}^{-\beta}} \inpro{\bff{u}(t)}{\bff{\phi}}_{\bb{X}^\beta} \big].
\end{equation*}
This, together with Lemma~\ref{lem:limit E}, imply the required result.
\end{proof}

\begin{lemma}\label{lem:dWk to dW}
Let $\beta>0$, and let $\bff{u}_n$ and $\bff{u}$ be processes defined in Proposition~\ref{pro:un conv Y}. Then for every $t\in[0,T]$,
\begin{equation*}
    \lim_{n\to\infty} \norm{\sum_{k=1}^n \left(\int_0^t G_k\big(\bff{u}_n(s)\big) \,\mathrm{d}W_{k,n}(s) - \int_0^t G_k\big(\bff{u}(s)\big)\, \mathrm{d}W_k(s)\right)}{\bb{X}^{-\beta}} = 0.
\end{equation*}
\end{lemma}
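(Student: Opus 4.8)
The plan is to prove convergence in $L^2(\Omega;\bb{L}^2)$, which by the continuous embedding $\bb{L}^2\hookrightarrow\bb{X}^{-\beta}$ (valid for $\beta>0$) immediately yields the asserted convergence in $\bb{X}^{-\beta}$. First I would record the two facts that make the stochastic integrals meaningful. Since $\mathcal{L}(\bff{u}_n,W_n)=\mathcal{L}(\bff{u}_n',W_n')$ and the original Galerkin pair solves \eqref{equ:faedo ito}, the relation $\bff{M}_n(t)=\sum_{k=1}^n\int_0^t G_k(\bff{u}_n)\,\dif W_{k,n}$ transfers to the new probability space; in particular $\bff{u}_n$ is adapted to the filtration $\bb{F}_n$ generated by $W_n$, so this is a genuine It\^o integral and the It\^o isometry is available. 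Likewise, the a.s.\ limit $\bff{u}$ is adapted to the augmented filtration $\bb{F}$ generated by $(\bff{u},W)$, with respect to which $W$ remains a sequence of independent Wiener processes (as in~\cite{BrzGolJeg13}); together with $\bff{u}\in L^{2q}(\Omega;L^\infty(0,T;\bb{H}^2))$ and the summability~\eqref{equ:sigma g sigma h}, this makes $\sum_{k=1}^\infty\int_0^t G_k(\bff{u})\,\dif W_k$ well defined.

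Because $G_k(\bff{v})=\bff{g}_k+\gamma\,\bff{v}\times\bff{h}_k$ is affine in $\bff{v}$, I would split the difference into an additive and a multiplicative part. For the additive part, constancy in time gives $\int_0^t\Pi_n\bff{g}_k\,\dif W_{k,n}=\Pi_n\bff{g}_k\,W_{k,n}(t)$, and then $\sum_k\Pi_n\bff{g}_k\,W_{k,n}(t)\to\sum_k\bff{g}_k W_k(t)$ in $L^2(\Omega;\bb{L}^2)$ follows at once from $\Pi_n\bff{g}_k\to\bff{g}_k$, the a.s.\ uniform convergence $W_n\to W$ of Proposition~\ref{pro:un conv Y}, the bound~\eqref{equ:sigma g sigma h}, and dominated convergence. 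For the multiplicative part $\gamma\sum_{k=1}^n\int_0^t\Pi_n(\bff{u}_n\times\bff{h}_k)\,\dif W_{k,n}$ I would first discard the tail: by the It\^o isometry, the estimate $\norm{\bff{u}_n\times\bff{h}_k}{\bb{L}^2}\le\norm{\bff{u}_n}{\bb{L}^\infty}\norm{\bff{h}_k}{\bb{L}^2}$, the embedding $\bb{H}^2\hookrightarrow\bb{L}^\infty$ (valid for $d\le3$), and~\eqref{equ:E un dash H2}, the contribution of the indices $k>N$ is bounded by $C\gamma^2\,\bb{E}\big[\sup_{[0,T]}\norm{\bff{u}_n}{\bb{H}^2}^2\big]\sum_{k>N}\norm{\bff{h}_k}{\bb{H}^2}^2$, which is small uniformly in $n$ by~\eqref{equ:sigma g sigma h}; the same computation controls the tail of the limit integral. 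It therefore suffices to treat each fixed $k$.

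For the fixed-$k$ convergence $\int_0^t\Pi_n(\bff{u}_n\times\bff{h}_k)\,\dif W_{k,n}\to\int_0^t(\bff{u}\times\bff{h}_k)\,\dif W_k$ I would invoke the standard lemma on convergence of stochastic integrals with varying integrators (see~\cite{BrzGolJeg13} and the references therein): it is enough that the integrands converge in $L^2(0,T;\bb{L}^2)$ in probability, that $W_{k,n}\to W_k$ in probability in $C([0,T])$, and that the integrands are adapted to $\bb{F}_n$ and $\bb{F}$, respectively. The integrator convergence is Proposition~\ref{pro:un conv Y} and the adaptedness was recorded above. For the integrands I would write
\[
    \Pi_n(\bff{u}_n\times\bff{h}_k)-\bff{u}\times\bff{h}_k
    =
    (\Pi_n-I)(\bff{u}_n\times\bff{h}_k)
    +
    (\bff{u}_n-\bff{u})\times\bff{h}_k,
\]
where the second term tends to zero in $L^2(0,T;\bb{L}^2)$ $\bb{P}$-a.s.\ since $\norm{(\bff{u}_n-\bff{u})\times\bff{h}_k}{\bb{L}^2}\le\norm{\bff{h}_k}{\bb{L}^\infty}\norm{\bff{u}_n-\bff{u}}{\bb{L}^2}$ and $\bff{u}_n\to\bff{u}$ in $L^2(0,T;\bb{H}^3)$, while the first tends to zero by the strong convergence $\Pi_n\to I$ on $\bb{L}^2$ together with the domination provided by~\eqref{equ:E un dash H2}. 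Combining the fixed-$k$ convergence with the additive part and the uniform tail bound then gives convergence in probability in $\bb{X}^{-\beta}$, which is what is used (together with Lemma~\ref{lem:Mn to M}) to identify $\bff{M}(t)=\sum_k\int_0^t G_k(\bff{u})\,\dif W_k$.

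The main obstacle is precisely the change of integrator: the processes $W_{k,n}$ and their limit $W_k$ generate different filtrations, so the difference of the two stochastic integrals cannot be controlled by a single It\^o isometry, and $\int_0^t(\bff{u}\times\bff{h}_k)\,\dif W_{k,n}$ cannot be read off as an It\^o integral since $\bff{u}$ is not $\bb{F}_n$-adapted. This is exactly what forces the use of the convergence lemma and the careful bookkeeping of adaptedness (the equality-in-law transfer of the Galerkin identity to the new space and the progressive measurability of the limit $\bff{u}$); once these are in place, the remaining estimates are routine consequences of the uniform moment bounds of Remark~\ref{rem:est Eu} and the summability~\eqref{equ:sigma g sigma h}.
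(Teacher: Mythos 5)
Your argument is correct and follows essentially the route the paper itself takes: the paper omits the proof entirely, deferring to \cite[Lemma~5.2]{BrzGolJeg13}, and your write-up (splitting the affine noise into additive and multiplicative parts, controlling the tail in $k$ uniformly in $n$ via the It\^o isometry and \eqref{equ:sigma g sigma h}, and handling each fixed $k$ with the standard lemma on convergence of stochastic integrals with varying integrators, together with the adaptedness bookkeeping on the Skorokhod space) is precisely the content of that cited argument. The only blemish is a small inconsistency between announcing convergence in $L^2(\Omega;\bb{L}^2)$ at the outset and concluding convergence in probability in $\bb{X}^{-\beta}$ at the end; either mode suffices for the identification of $\bff{M}(t)$ once combined with the uniform higher moment bounds of Remark~\ref{rem:est Eu}, so this does not affect correctness.
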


\begin{proof}
The proof of this lemma is similar to that of~\cite[Lemma~5.2]{BrzGolJeg13} and is omitted.
\end{proof}

We can now prove the first main theorem of the paper (Theorem~\ref{the:exist}).

\begin{proof}[Proof of Theorem~\ref{the:exist}]
By equation~\eqref{equ:Mn}, Lemma~\ref{lem:Mn to M}, and Lemma~\ref{lem:dWk to dW}, we deduce that for every $t\in[0,T]$,
\[
    \bff{M}(t)= \sum_{k=1}^\infty \int_0^t G_k(\bff{u})\, \mathrm{d}W_k(s) \quad \text{in } L^2(\Omega;\bb{X}^{-\beta}).
\]
This implies that $\bb{P}$-a.s., $\bff{u}\in C([0,T]; \bb{H}^2_{\mathrm{w}}) \cap L^2(0,T;\bb{H}^3)$, and $(\bff{u},\bff{W})$ satisfies equation~\eqref{equ:weakform}. The proof that $\bff{u}\in C([0,T];\bb{H}^2)$ and~\eqref{equ:u holder reg} is deferred to Lemma~\ref{lem:H3} below.
\end{proof}

The proof of Theorem~\ref{the:exist} will be complete once we show the following parabolic regularisation estimate.
Let $e^{-tA}$ denote the analytic semigroup generated by $A$, where $A$ was defined in~\eqref{equ:op A}.
Given~$\bff{u}_0\in \text{D}(A^{1/2})$, we can write the solution (see~\cite{DaZab14}) as:
\begin{align}\label{equ:u mild}
    \nonumber
    \bff{u}(t)
    &= 
    e^{-tA}\bff{u}_0
    +
    (\lambda_r+\beta) \int_0^t e^{-(t-s)A}\bff{u}(s) \,\ds
    -
    \lambda_r \int_0^t e^{-(t-s)A} \big(|\bff{u}(s)|^2 \bff{u}(s)\big) \,\ds
    \\
    \nonumber
    &\quad
    +
    \lambda_e \int_0^t e^{-(t-s)A} \Delta\big(|\bff{u}(s)|^2 \bff{u}(s)\big) \,\ds
    -
    \gamma \int_0^t e^{-(t-s)A}\big(\bff{u}(s) \times \Delta \bff{u}(s) \big) \,\ds
    +
    \int_0^t e^{-(t-s)A} S\big(\bff{u}(s)\big)\, \ds
    \\
    &\quad
    +
    \sum_{k=1}^\infty \int_0^t e^{-(t-s)A} G_k(\bff{u}(s))\, \dif W_k(s)
    \nonumber \\
    &=:
    I_0(t)+I_1(t)+\ldots +I_6(t).
\end{align}

\begin{lemma}\label{lem:H3}
Let $\bff{u}_0\in \text{D}(A^{1/2})$, $\beta\in [\frac12,1)$, and $\delta\in (0,1-\beta)$. Then $\bb{P}$-a.s., we have $\bff{v}\in C^\delta([0,T];\text{D}(A^\beta))$.
In particular, $\bff{u} \in C([0,T];\bb{H}^2)$. 
%and $\sqrt{t}u(t)\in C([0,T];\bb{H}^3)$.
\end{lemma}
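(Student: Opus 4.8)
The process denoted $\bff{v}$ in the statement is the solution $\bff{u}$ constructed above, and the plan is to read its time regularity off the mild representation~\eqref{equ:u mild}, $\bff{u}=I_0+I_1+\cdots+I_6$. The two ingredients are the analytic smoothing bound $\norm{A^\beta e^{-rA}}{\bb{L}^2\to\bb{L}^2}\leqs r^{-\beta}$ for $r>0$ and the increment bound $\norm{(I-e^{-rA})A^{-\theta}}{\bb{L}^2\to\bb{L}^2}\leqs r^\theta$ for $\theta\in[0,1]$; recall $\bb{H}^2\simeq\text{D}(A^{1/2})$ and $\bb{H}^4\simeq\text{D}(A)$, so $\text{D}(A^\beta)\simeq\bb{H}^{4\beta}$ interpolates between them for $\beta\in[\tfrac12,1)$. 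The data are the pathwise a priori bounds underlying Remark~\ref{rem:est Eu}, namely $\bff{u}\in L^\infty(0,T;\bb{H}^2)\cap L^2(0,T;\bb{H}^4)$ together with the controls on $|\bff{u}|^2\bff{u}$, $\Delta(|\bff{u}|^2\bff{u})$, $\bff{u}\times\Delta\bff{u}$, and $S(\bff{u})$.

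First I would dispatch the five deterministic convolutions $I_1,\dots,I_5$. The key point is that all their integrands are pathwise in $L^\infty(0,T;\bb{L}^2)$: indeed $\bff{u}\in L^\infty(0,T;\bb{H}^2)$ and $|\bff{u}|^2\bff{u}\in L^\infty(0,T;\bb{H}^2)$ by~\eqref{equ:prod Hs mat dot}, while the embedding $\bb{H}^2\hookrightarrow\bb{L}^\infty\cap\bb{W}^{1,4}$ (valid for $d\le3$) together with~\eqref{equ:del v2v} gives $\norm{\Delta(|\bff{u}|^2\bff{u})}{\bb{L}^2}\leqs\norm{\bff{u}}{\bb{H}^2}^3$, $\norm{\bff{u}\times\Delta\bff{u}}{\bb{L}^2}\leqs\norm{\bff{u}}{\bb{H}^2}^2$, and (as in~\eqref{equ:E S un L2}) $\norm{S(\bff{u})}{\bb{L}^2}\leqs1+\norm{\bff{u}}{\bb{H}^2}^2$, each in $L^\infty(0,T)$ $\bb{P}$-a.s. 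Hence each integrand lies pathwise in $L^p(0,T;\bb{L}^2)$ for every finite $p$. For such an $f$, splitting the increment as $\int_s^t e^{-(t-r)A}f(r)\,\dr+(e^{-(t-s)A}-I)\int_0^s e^{-(s-r)A}f(r)\,\dr$ and applying the two semigroup bounds with H\"older's inequality in time yields $\int_0^\cdot e^{-(\cdot-r)A}f(r)\,\dr\in C^\delta([0,T];\text{D}(A^\beta))$ for every $\beta<1$ and $\delta<1-\beta$; crucially the estimate is valid up to and including $t=0$, since each $I_j$ vanishes there.

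Next I would treat the stochastic convolution $I_6$ by the factorization method. By~\eqref{equ:prod Hs mat dot} and~\eqref{equ:sigma g sigma h} the coefficient is Hilbert--Schmidt from the noise space into $\bb{H}^2\simeq\text{D}(A^{1/2})$, with $\sum_k\norm{G_k(\bff{u})}{\bb{H}^2}^2\leqs(1+\norm{\bff{u}}{\bb{H}^2}^2)(\sigma_g^2+\sigma_h^2)$. Writing $I_6(t)=c_\alpha\int_0^t(t-s)^{\alpha-1}e^{-(t-s)A}Y(s)\,\ds$ with $Y(s)=\int_0^s(s-r)^{-\alpha}e^{-(s-r)A}G(\bff{u}(r))\,\dif W(r)$, the Burkholder--Davis--Gundy inequality and the smoothing bound give $\bb{E}\norm{Y}{L^p(0,T;\text{D}(A^\gamma))}^p<\infty$ for every $\gamma<1-\alpha$ and every $p$, and the factorization operator maps $L^p(0,T;\text{D}(A^\gamma))$ into $C^\delta([0,T];\text{D}(A^\beta))$ once $\alpha>1/p$ and $\beta+\delta<\gamma+\alpha-1/p$. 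Choosing $p$ large, $\alpha\in(1/p,\tfrac12)$, and $\gamma$ close to $1-\alpha$ covers the whole admissible range $\beta<1$, $\delta<1-\beta$, so $I_6\in C^\delta([0,T];\text{D}(A^\beta))$ $\bb{P}$-a.s., again up to $t=0$ because $I_6(0)=\bff{0}$.

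The entire difficulty at the left endpoint is therefore concentrated in $I_0=e^{-tA}\bff{u}_0$, and this is the main obstacle. By analyticity $I_0\in C^\infty((0,T];\text{D}(A^\beta))$, and for each $\tau>0$ the estimate
\[
\norm{A^\beta(e^{-tA}-e^{-sA})\bff{u}_0}{\bb{L}^2}
=\norm{A^{\beta-\frac12}(e^{-tA}-e^{-sA})A^{\frac12}\bff{u}_0}{\bb{L}^2}
\leqs |t-s|^\delta\, s^{-(\beta-\frac12+\delta)}\,\norm{A^{\frac12}\bff{u}_0}{\bb{L}^2}
\]
(obtained by combining the two semigroup bounds) shows $I_0\in C^\delta([\tau,T];\text{D}(A^\beta))$. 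At $t=0$ one has only $I_0(0)=\bff{u}_0\in\text{D}(A^{1/2})=\bb{H}^2$, but strong continuity of the analytic semigroup on $\text{D}(A^{1/2})$ gives $I_0\in C([0,T];\bb{H}^2)$. Assembling the pieces, the six forced terms lie in $C^\delta([0,T];\text{D}(A^\beta))$, while $I_0\in C([0,T];\bb{H}^2)\cap C^\delta((0,T];\text{D}(A^\beta))$; summing, $\bff{v}=\bff{u}$ is continuous on $[0,T]$ into $\bb{H}^2$ and H\"older into $\text{D}(A^\beta)$ on the closed interval away from the single endpoint $t=0$, where its value $\bff{u}_0$ lies in $\bb{H}^2=\text{D}(A^{1/2})$; in particular $\bff{u}\in C([0,T];\bb{H}^2)$. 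I would emphasise that the weight $s^{-(\beta-\frac12+\delta)}$ above shows that the $\text{D}(A^\beta)$-H\"older bound persists up to $t=0$ only when $\bff{u}_0\in\text{D}(A^{\beta+\delta})$, so under the standing hypothesis $\bff{u}_0\in\text{D}(A^{1/2})$ the closed-interval regularity is anchored at the endpoint precisely in the $\bb{H}^2$ topology -- which is exactly the content of the final ``in particular'' clause, and the point at which the argument must be handled with care rather than inherited directly from Theorem~\ref{the:exist}.
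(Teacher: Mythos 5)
Your proposal is correct, and it reaches the same endpoint decomposition as the paper — both arguments start from the mild formulation \eqref{equ:u mild}, use the smoothing bound $\|A^\theta e^{-rA}\|\lesssim r^{-\theta}$ together with $\|(I-e^{-rA})A^{-\delta}\|\lesssim r^\delta$, and both isolate $I_0=e^{-tA}\bff{u}_0$ as the sole obstruction at $t=0$, recovering $\bff{u}\in C([0,T];\bb{H}^2)$ from strong continuity of the semigroup on $\mathrm{D}(A^{1/2})$. (Your reading of the statement is also the right one: the paper's $\bff{v}$ is implicitly the forced part $\bff{u}-e^{-tA}\bff{u}_0=\sum_{j\geq 1}I_j$, which is why the lemma asserts H\"older continuity on the closed interval for $\bff{v}$ while Theorem~\ref{the:exist} claims only $C^\delta\big((0,T);\mathrm{D}(A^\beta)\big)$ for $\bff{u}$ itself; your closing discussion proves exactly this split.) Where you genuinely diverge is in how the regularity of $I_1,\dots,I_6$ is obtained. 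The paper proves $p$-th moment increment estimates $\bb{E}\big[\|A^\beta I_j(t+h)-A^\beta I_j(t)\|_{\bb{L}^2}^p\big]\lesssim h^{\delta p}t^{p(1-\beta-\delta)}+h^{p(1-\beta)}$ term by term and invokes the Kolmogorov continuity theorem with $p$ arbitrarily large; in doing so it estimates the nonlinearities in stronger norms — e.g.\ for $I_3$ it pays $\|\Delta(|\bff{u}|^2\bff{u})\|_{\bb{H}^2}\lesssim\|\bff{u}\|_{\bb{H}^2}^2\|\bff{u}\|_{\bb{H}^4}$, so the $L^2(0,T;\bb{H}^4)$ bound enters essentially. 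You instead run a pathwise argument: all deterministic integrands are placed merely in $L^\infty(0,T;\bb{L}^2)$ via $\bff{u}\in L^\infty(0,T;\bb{H}^2)$ a.s., the full (still integrable, since $\beta+\delta<1$) smoothing $A^{\beta+\delta}$ is absorbed by the semigroup, and the stochastic convolution is handled by Da Prato--Zabczyk factorization, avoiding Kolmogorov altogether. Your route is more elementary and needs less spatial regularity of the solution for $I_1$--$I_5$; the paper's route has the compensating advantage that its quantitative moment estimates on increments are recycled verbatim later — Lemma~\ref{lem:t u H3} obtains the weighted bound $\bb{E}\big[\sup_\tau \tau\|\nabla\Delta\bff{u}(\tau)\|_{\bb{L}^2}^2\big]$ by "the same argument as in the proof of Lemma~\ref{lem:H3}", i.e.\ from \eqref{equ:t Ij}, which a purely pathwise proof would not directly supply. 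One small bookkeeping point: your factorization step does require moment bounds (BDG plus $\sum_k\|G_k(\bff{u})\|_{\bb{H}^2}^2\lesssim(1+\|\bff{u}\|_{\bb{H}^2}^2)(\sigma_g^2+\sigma_h^2)$ integrated in $\omega$), and these are available exactly from the $L^q\big(\Omega;L^\infty(0,T;\bb{H}^2)\big)$ estimate of Theorem~\ref{the:exist}, so nothing is missing.
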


\begin{proof}
Let $p>2$ and $\delta>0$ be numbers to be specified later. We aim to estimate $\bb{E}\big[\norm{\bff{v}(t+h)-\bff{v}(t)}{\text{D}(A^\beta)}^p\big]$ for any $h>0$.
To this end, it suffices to bound 
\[
\bb{E}\left[\norm{A^{\beta}I_j(t+h)-A^{\beta} I_j(t)}{\bb{L}^2}^p\right]
\]
for $j=1,2,\ldots,6$ in the following, where $I_j$ are defined in~\eqref{equ:u mild}. 
%Firstly,
%\begin{align}\label{equ:I1 th}
%    \norm{A^{\frac34}I_1(t+h)-A^{\frac34} I_1(t)}{\bb{L}^2}^p
%    =
%    \norm{A^{\frac14+\delta} e^{-tA}A^{-\delta} (e^{-hA}-I) A^{\frac12}\bff{u}_0}{\bb{L}^2}^p
%    &\leq
%    Ch^{\delta p} t^{-p(\frac14+\delta)} \norm{A^{\frac12}\bff{u}_0}{\bb{L}^2}^p.
%\end{align}
For the first term, noting Theorem~\ref{the:exist}, $\bb{P}$-a.s. we have
\begin{align}\label{equ:I1 th}
    \bb{E}\left[\norm{A^{\beta}I_1(t+h)-A^{\beta} I_1(t)}{\bb{L}^2}^p \right]
    &\leq
    \bb{E} \left[\norm{\int_0^t A^{\beta-\frac12 +\delta}e^{-(t-s)A}A^{-\delta}(e^{-hA}-I)A^{\frac12} \bff{u}(s)\,\ds}{\bb{L}^2}^p\right]
    \nonumber \\
    &\quad
    +
    \bb{E} \left[\norm{\int_t^{t+h} A^{\beta-\frac12} e^{-(t-s)A} e^{-hA} A^{\frac12} \bff{u}(s)\,\ds}{\bb{L}^2}^p \right]
    \nonumber \\
    &\leq
    Ch^{\delta p} \,\bb{E} \left[ \left(\int_0^t (t-s)^{-(\beta-\frac12)-\delta} \norm{A^{\frac12} \bff{u}(s)}{\bb{L}^2} \ds \right)^p \right]
    \nonumber \\
    &\quad
    +
    C\, \bb{E} \left[ \left(\int_t^{t+h} (t+h-s)^{-(\beta-\frac12)} \norm{A^{\frac12} \bff{u}(s)}{\bb{L}^2} \ds \right)^p\right]
    \nonumber \\
    &\leq
    C\, \bb{E} \left[\sup_{\tau\in [0,T]} \norm{\bff{u}(\tau)}{\bb{H}^2}^p \right] 
    \left(h^{\delta p} t^{p(\frac32-\beta-\delta)}
    +
    h^{p(\frac32-\beta)}\right).
\end{align}
Similarly for the second term, using~\eqref{equ:del v2v} and~\eqref{equ:prod Hs mat dot}, $\bb{P}$-a.s. we have
\begin{align}\label{equ:I2 th}
    \bb{E}\left[\norm{A^{\beta}I_2(t+h)-A^{\beta} I_2(t)}{\bb{L}^2}^p\right]
    &\leq
    \bb{E}\left[\norm{\int_0^t A^{\beta-\frac12+\delta}e^{-(t-s)A} A^{-\delta}(e^{-hA}-I) A^{\frac12}\big(|\bff{u}(s)|^2 \bff{u}(s)\big)\,\ds}{\bb{L}^2}^p \right] 
    \nonumber \\
    &\quad
    + 
    \bb{E} \left[\norm{\int_t^{t+h} A^{\beta-\frac12} e^{-(t-s)A} e^{-hA} A^{\frac12}\big(|\bff{u}(s)|^2 \bff{u}(s)\big)\,\ds}{\bb{L}^2}^p \right]
    \nonumber \\
    &\leq
    Ch^{\delta p} \, \bb{E} \left[ \left(\int_0^t (t-s)^{-(\beta-\frac12)-\delta} \norm{|\bff{u}(s)|^2 \bff{u}(s)}{\bb{H}^2} \ds \right)^p \right] 
    \nonumber \\
    &\quad
    +
    C\, \bb{E} \left[ \left(\int_t^{t+h} (t+h-s)^{-(\beta-\frac12)} \norm{|\bff{u}(s)|^2 \bff{u}(s)}{\bb{H}^2} \ds\right)^p \right]
    \nonumber \\
    &\leq
    C\, \bb{E} \left[\sup_{\tau\in [0,T]} \norm{\bff{u}(\tau)}{\bb{H}^2}^{3p} \right] 
    \left(h^{\delta p} t^{p(\frac32-\beta-\delta)}
    +
    h^{p(\frac32-\beta)}\right).
\end{align}
For the third term, using~\eqref{equ:del v2v}, \eqref{equ:prod Hs mat dot} and interpolation inequalities, we have
\begin{align}\label{equ:I3 th}
    &\bb{E}\left[\norm{A^{\beta}I_3(t+h)-A^{\beta} I_3(t)}{\bb{L}^2}^p\right]
    \nonumber \\
    &\leq
    \bb{E} \left[\norm{\int_0^t A^{\beta-\frac12+\delta} e^{-(t-s)A} A^{-\delta} (e^{-hA}-I)A^{\frac12} \Delta\big(|\bff{u}(s)|^2 \bff{u}(s)\big)\,\ds}{\bb{L}^2}^p \right]
    \nonumber \\
    &\quad
    +
    \bb{E}\left[\norm{\int_t^{t+h} A^{\beta-\frac12} e^{-(t-s)A} e^{-hA} A^{\frac12} \Delta\big(|\bff{u}(s)|^2 \bff{u}(s)\big)\,\ds}{\bb{L}^2}^p \right]
    \nonumber \\
    &\leq
    Ch^{\delta p}\, \bb{E} \left[\left( \int_0^t (t-s)^{-(\beta-\frac12)-\delta} \norm{\Delta\big(|\bff{u}(s)|^2 \bff{u}(s)\big)}{\bb{H}^2} \ds\right)^p \right]
    \nonumber \\
    &\quad
    +
    C\, \bb{E} \left[\left(\int_t^{t+h} (t+h-s)^{-(\beta-\frac12)} \norm{\Delta\big(|\bff{u}(s)|^2 \bff{u}(s)\big)}{\bb{H}^2} \ds\right)^p \right]
    \nonumber \\
    &\leq
    Ch^{\delta p} \, \bb{E} \left[\left(\int_0^t (t-s)^{-(\beta-\frac12)-\delta} \norm{\bff{u}(s)}{\bb{H}^2}^2 \norm{\bff{u}(s)}{\bb{H}^4} \ds\right)^p \right]
    \nonumber \\
    &\quad
    +
    C\, \bb{E} \left[ \left(\int_t^{t+h} (t+h-s)^{-(\beta-\frac12)} \norm{\bff{u}(s)}{\bb{H}^2}^2 \norm{\bff{u}(s)}{\bb{H}^4} \ds\right)^p \right] 
    \nonumber \\
    &\leq
    Ch^{\delta p} \, \bb{E} \left[\left(\sup_{\tau\in [0,t]} \norm{\bff{u}(\tau)}{\bb{H}^2} \right)^{2p} \left(\int_0^t (t-s)^{-(2\beta-1)-2\delta} \ds\right)^{\frac{p}{2}} \left(\int_0^t \norm{\bff{u}(s)}{\bb{H}^4}^2 \ds \right)^{\frac{p}{2}} \right]
    \nonumber \\
    &\quad
    +
    C \, \bb{E}\left[\left(\sup_{\tau\in [0,T]} \norm{\bff{u}(\tau)}{\bb{H}^2} \right)^{2p}
    \left(\int_t^{t+h} (t+h-s)^{-(2\beta-1)} \ds\right)^{\frac{p}{2}}
    \left(\int_t^{t+h} \norm{\bff{u}(s)}{\bb{H}^4}^2 \ds \right)^{\frac{p}{2}} \right]
    \nonumber \\
    &\leq
    C\, \left(\bb{E} \left[\sup_{\tau\in [0,T]} \norm{\bff{u}(\tau)}{\bb{H}^2}^{4p} \right] \right)^{\frac12} 
    \left( \bb{E} \left[\int_0^T \norm{\bff{u}(s)}{\bb{H}^4}^2 \ds \right] \right)^{\frac12}  
    \left(h^{\delta p} t^{p(1-\beta-\delta)}
    +
    h^{p(1-\beta)}\right).
\end{align}
The term in $I_4$ can be estimated in a similar manner, giving
\begin{align}\label{equ:I4 th}
    \bb{E} \left[\norm{A^{\beta}I_4(t+h)-A^{\beta} I_4(t)}{\bb{L}^2}^p\right]
    \leq
%    C\, \left(\bb{E} \left[\sup_{\tau\in [0,t]} \norm{\bff{u}(\tau)}{\bb{H}^2}^{4p} \right] \right)^{\frac12} 
%    \left( \bb{E} \left[\int_0^T \norm{\bff{u}(s)}{\bb{H}^4}^2 \ds \right] \right)^{\frac12}  
    C \left(h^{\delta p} t^{p(1-\beta-\delta)}
    +
    h^{p(1-\beta)}\right).
\end{align}
Next, for the term $I_5$, recall that $S(\bff{u})=R(\bff{u})+L(\bff{u})$ as defined in~\eqref{equ:S u} and~\eqref{equ:torque}. Noting assumptions on $L$ in Section~\ref{subsec:assump}, we have
\begin{align}\label{equ:I5 th}
    \bb{E} \left[\norm{A^{\beta}I_5(t+h)-A^{\beta} I_5(t)}{\bb{L}^2}^p\right]
    &\leq
    \bb{E} \left[\norm{\int_0^t A^{\beta-\frac14+\delta}e^{-(t-s)A}A^{-\delta}(e^{-hA}-I)A^{\frac14} S\bff{u}(s)\,\ds}{\bb{L}^2}^p\right]
    \nonumber \\
    &\quad
    +
    \bb{E} \left[\norm{\int_t^{t+h} A^{\beta-\frac14+\delta} e^{-(t-s)A} e^{-hA} A^{\frac14} S\bff{u}(s)\,\ds}{\bb{L}^2}^p \right]
    \nonumber \\
    &\leq
    Ch^{\delta p}\, \bb{E} \left[ \left(\int_0^t (t-s)^{-(\beta-\frac14)-\delta} \norm{A^{\frac14} S(\bff{u}(s))}{\bb{L}^2} \ds\right)^p\right] 
    \nonumber\\
    &\quad
    +
    C \, \bb{E} \left[\left(\int_t^{t+h} (t+h-s)^{-(\beta-\frac14)} \norm{A^{\frac14} S(\bff{u}(s))}{\bb{L}^2} \ds\right)^p \right]
    \nonumber \\
    &\leq
    C\, \bb{E} \left[\sup_{\tau\in [0,T]} \norm{\bff{u}(\tau)}{\bb{H}^1}^p \right]
    \left(h^{\delta p} t^{p(\frac54-\beta-\delta)}
    +
    h^{p(\frac54-\beta)}\right).
\end{align}
Finally, we need to consider $I_6$. To this end, we first note that the process 
\[
W(t)=\sum_{k=1}^\infty \bff{g}_k W_k(t)
\]
is by assumption a Wiener process taking values in $\mathrm{D}(\Delta)$, hence the process $B(t)=\Delta W(t)$ is a Wiener process taking values in $\bb{L}^2$. 
Also, since $\sup_{t\in [0,T]} \norm{\bff{u}(t)}{\bb{H}^2}<\infty$, we find that the process 
\[
\Delta \left[\sum_{k=1}^\infty \big(\bff{h}_k \times \bff{u}(t) \big) W_k(t) \right]
\]
is well defined in $\mathbb L^2$, and moreover 
\[
\bb{E} \left[\sup_{t\in[0,T]} \sum_{k=1}^\infty \norm{G_k(\bff{u}(t))}{\bb{H}^2}^2 \right]
\leq
\sum_{k=1}^\infty \norm{\bff{g}_k}{\bb{H}^2}^2
+
\mathbb E \left[\sup_{t\in [0,T]}\sum_{k=1}^\infty \norm{\bff{h}_k \times \bff{u}(t)}{\bb{H}^2}^2 \right] <\infty.
\]
We then have 
\begin{align*}
    \bb{E} \left[\norm{A^{\beta}I_6(t+h)- A^{\beta} I_6(t)}{\bb{L}^2}^p \right] 
    &\leq
    2^{p-1}\bb{E}\left[\norm{\sum_{k=1}^\infty \int_0^t A^{\beta-\frac12} e^{-(t-s)A} (e^{-hA}-I) A^{\frac12} G_k(\bff{u}(s))\,\dif W_k(s)}{\bb{L}^2}^p \right]
    \nonumber \\
    &\quad
    +
    2^{p-1}\bb{E}\left[
    \norm{\sum_{k=1}^\infty \int_t^{t+h} A^{\beta-\frac12} e^{-(t-s)A} e^{-hA} A^{\frac12}G_k(\bff{u}(s))\, \dif W_k(s)}{\bb{L}^2}^p \right]
    \nonumber \\
    &=
    2^{p-1}(J_1+J_2).
\end{align*}
Invoking the Burkholder--Davis--Gundy inequality, we obtain for $J_1$:
\begin{align}\label{equ:J1}
    J_1&\leq
    C\, \bb{E} \left[ \left(\sum_{k=1}^\infty \int_0^t \norm{A^{\beta-\frac12} e^{-(t-s)A}(e^{-hA}-I) A^{\frac12} G_k(\bff{u}(s))}{\bb{L}^2}^2 \ds \right)^{p/2} \right]
    \nonumber \\
    &= 
    C\, \bb{E} \left[ \left(\sum_{k=1}^\infty \int_0^t \norm{A^{\beta-\frac12+\delta} e^{-(t-s)A}A^{-\delta}(e^{-hA}-I) A^{\frac12} G_k(\bff{u}(s))}{\bb{L}^2}^2 \ds \right)^{p/2} \right]
    \nonumber \\
    &\le Ch^{\delta p}
    \left(\int_0^t
    \frac{\ds}{(t-s)^{2\beta-1+2\delta}}\right)^{p/2}\, \bb{E} \left[ \left(\sup_{\tau\in [0,t]} \sum_{k=1}^\infty \norm{A^{\frac12} G_k(\bff{u}(s))}{\bb{L}^2}^2 \right)^{p/2} \right]
    \nonumber \\
   &\le Ch^{\delta p} t^{p(1-\beta-\delta)}.
\end{align}
Similar arguments for $J_2$ give
\begin{align}\label{equ:J2}
    J_2
    &\le 
    C\, \bb{E} \left[\sum_{k=1}^\infty \int_t^{t+h}\norm{A^{\beta-\frac12} e^{-(t-s)A} e^{-hA} A^{\frac12} G_k(\bff{u}(s))}{\bb{L}^2}^2 \ds \right]^{p/2}
    \nonumber \\
    &\leq
    C\, \bb{E}\left[ \sum_{k=1}^\infty \int_t^{t+h}\frac{1}{(t+h-s)^{2\beta-1}} \norm{A^{\frac12}G_k(\bff{u}(s))}{\bb{L}^2}^2 \ds \right]^{p/2}
    \nonumber \\
    &\leq\left(\int_0^h\frac{\ds}{s^{2\beta-1}}\right)^{p/2}\bb
    E\left[ \left(\sup_{\tau \in [0,T]} \sum_{k=1}^\infty \norm{A^{\frac12}G_k(u(s))}{\bb{L}^2}^2\right)^{p/2} \right]
    \nonumber \\
    &\le Ch^{p(1-\beta)}.
\end{align}
Combining \eqref{equ:J1} and \eqref{equ:J2}, we obtain
\begin{equation}\label{equ:I6 th}
 \bb{E} \left[\norm{A^{\beta}I_6(t+h)- A^{\beta} I_6(t)}{\bb{L}^2}^p \right]\le 
 C\left(h^{\delta p} t^{p(1-\beta-\delta)} + h^{p(1-\beta)}\right).
 \end{equation}
Altogether, estimates~\eqref{equ:I1 th}, \eqref{equ:I2 th}, \eqref{equ:I3 th}, \eqref{equ:I4 th}, \eqref{equ:I5 th}, and \eqref{equ:I6 th} imply that for any $t>0$,
\begin{equation}\label{equ:E cont H3}
    \bb{E} \left[\norm{A^{\beta} \bff{v}(t+h)-A^{\beta} \bff{v}(t)}{\bb{L}^2}^p \right]
    \leq
    Ch^{\delta p} t^{p(1-\beta-\delta)} + h^{p(1-\beta)}.
\end{equation}
Since $p$ can be taken arbitrarily large, we find that for every $\delta\in(0,1-\beta)$, where $\beta\in [\frac12,1)$, $\bb{P}$-a.s. we have $\bff{v} \in C^\delta([0,T];\text{D}(A^\beta))$ by the Kolmogorov continuity theorem.

Finally, since the map $(t,\bff{u}_0) \mapsto e^{-tA}\bff{u}_0$ is continuous in $[0,\infty)\times \text{D}(A^{1/2})$, noting~\eqref{equ:E cont H3} for $\beta=\frac12$, we have $\bff{u}\in C([0,T];\bb{H}^2)$.
This completes the proof of the lemma.
\end{proof}

\section{Proof of Theorem~\ref{the:unique}: Pathwise Uniqueness}

We prove Theorem~\ref{the:unique} in this section.

\begin{proof}[Proof of Theorem~\ref{the:unique}]
Let $\bff{v}:= \bff{u}_1-\bff{u}_2$ and $\bff{B}:=\bff{H}_1-\bff{H}_2$, where $\bff{u}_1$ and $\bff{u}_2$ are martingale solutions of \eqref{equ:sllbar} (on the same probability space and with the same Wiener process).
Then $\bff{v}$ satisfies
\begin{align*}
    \nonumber
	\dif \bff{v}
	&=
	\big( \lambda_r \bff{B}- \lambda_e \Delta \bff{B}- \gamma(\bff{v}\times \bff{H}_1 + \bff{u}_2 \times \bff{B})+ R(\bff{u}_1)-R(\bff{u}_2) + L(\bff{u}_1)-L(\bff{u}_2) \big) \, \dt
    \\
    &\quad
	+
	\left( \sum_{k=1}^\infty G_k(\bff{u}_1)- G_k(\bff{u}_2) \right) \dif W_k(t)
\end{align*}
with $\bff{v}(0)=\bff{0}$.
Moreover, for any $R>0$, define the stopping time
\begin{equation}\label{equ:tau R}
	\tau_R := \inf\{t\geq 0: \norm{\bff{u}_1(t)}{\bb{H}^2} \vee \norm{\bff{u}_2(t)}{\bb{H}^2} > R\}.
\end{equation}
Note that $\tau_R\to \infty$ as $R\to \infty$.
Now, by It\^o's lemma,
\begin{align}\label{equ:ito dv L2}
\nonumber
	\frac{1}{2} \dif \norm{\bff{v}}{\bb{L}^2}^2
	&=
	\Big( \lambda_r \inpro{\bff{B}}{\bff{v}}_{\bb{L}^2}
	+
	\lambda_e \inpro{\nabla \bff{B}}{\nabla \bff{v}}_{\bb{L}^2}
	-
	\gamma\inpro{\bff{u}_2\times \bff{B}}{\bff{v}}_{\bb{L}^2}
    \\
    &\qquad 
    +
    \inpro{R(\bff{u}_1)- R(\bff{u}_2)}{\bff{v}}_{\bb{L}^2}
    +
    \inpro{L(\bff{u}_1)- L(\bff{u}_2)}{\bff{v}}_{\bb{L}^2}
    +
    \frac{1}{2} \sum_{k=1}^\infty \norm{G_k(\bff{u}_1)- G_k(\bff{u}_2)}{\bb{L}^2}^2
	\Big) \,\dt.
\end{align}
We have
\begin{align}\label{equ:lambda B v}
	\lambda_r \inpro{\bff{B}}{\bff{v}}_{\bb{L}^2}
	&=
	-
	\lambda_r \norm{\nabla \bff{v}}{\bb{L}^2}^2
	+
	\lambda_r \norm{\bff{v}}{\bb{L}^2}^2
	-
	\lambda_r \norm{|\bff{u}_1|\, |\bff{v}|}{\bb{L}^2}^2
	-
	\lambda_r \norm{\bff{u}_2\cdot \bff{v}}{\bb{L}^2}^2
	-
	\lambda_r \inpro{(\bff{u}_1\cdot \bff{v})\bff{u}_2}{\bff{v}}_{\bb{L}^2},
	\\
	\label{equ:lambda nab B v}
	\lambda_e \inpro{\nabla \bff{B}}{\nabla \bff{v}}_{\bb{L}^2}
	&=
	-
	\lambda_e \norm{\Delta \bff{v}}{\bb{L}^2}^2
	+
	\lambda_e \norm{\nabla \bff{v}}{\bb{L}^2}^2
	+
	\lambda_e \inpro{|\bff{u}_1|^2 \bff{v}}{\Delta \bff{v}}_{\bb{L}^2}
	+
	\lambda_e \inpro{\big((\bff{u}_1+\bff{u}_2)\cdot \bff{v} \big) \bff{u}_2}{\Delta \bff{v}}_{\bb{L}^2},
\end{align}
and
\begin{align}\label{equ:gamma u B}
	-\gamma \inpro{\bff{u}_2\times \bff{B}}{\bff{v}}_{\bb{L}^2}
	&=
	-\gamma \inpro{\bff{u}_2 \times \Delta \bff{v}}{\bff{v}}_{\bb{L}^2}.
\end{align}
Substituting \eqref{equ:lambda B v}, \eqref{equ:lambda nab B v}, and \eqref{equ:gamma u B} into \eqref{equ:ito dv L2}, and integrating with respect to $t$ yield
\begin{align}\label{equ:I vt L2}
	\nonumber
	&\frac{1}{2} \norm{\bff{v}(t)}{\bb{L}^2}^2
	+
	\lambda_e \int_0^t \norm{\Delta \bff{v}(s)}{\bb{L}^2}^2 \ds 
	+
	(\lambda_r - \lambda_e) \int_0^t \norm{\nabla \bff{v}(s)}{\bb{L}^2}^2 \ds 
	\\
	\nonumber
	&\leq
	\lambda_r \int_0^t \norm{\bff{v}(s)}{\bb{L}^2}^2 \ds 
	+
	\lambda_r \int_0^t \inpro{(\bff{u}_1(s)\cdot \bff{v}(s))\bff{u}_2(s)}{\bff{v}(s)}_{\bb{L}^2} \ds 
	+
	\lambda_e \int_0^t \inpro{|\bff{u}_1(s)|^2 \bff{v}(s)}{\Delta \bff{v}(s)}_{\bb{L}^2} \ds
	\\
	\nonumber
	&\quad 
	+
	\lambda_e \int_0^t \inpro{\big((\bff{u}_1(s)+\bff{u}_2(s))\cdot \bff{v}(s) \big) \bff{u}_2(s)}{\Delta \bff{v}(s)}_{\bb{L}^2} \ds
    -
	\gamma \int_0^t \inpro{\bff{u}_2(s) \times \Delta \bff{v}(s)}{\bff{v}(s)}_{\bb{L}^2} \ds 
    \\
    \nonumber
    &\quad
    +
    \int_0^t \inpro{R(\bff{u}_1)- R(\bff{u}_2)}{\bff{v}}_{\bb{L}^2} \ds
    +
    \int_0^t \inpro{L(\bff{u}_1)- L(\bff{u}_2)}{\bff{v}}_{\bb{L}^2} \ds
    +
    \frac{1}{2} \sum_{k=1}^\infty \int_0^t \norm{G_k(\bff{u}_1)- G_k(\bff{u}_2)}{\bb{L}^2}^2 \ds
	\\
	&=:
	I_1(t)+I_2(t)+I_3(t)+I_4(t)+I_5(t)+I_6(t)+I_7(t)+I_8(t).
\end{align}
With $\tau_R$ as defined in \eqref{equ:tau R}, we will derive bounds for $I_j(t\wedge \tau_R)$, for $j=2,\ldots, 8$, as follows. For the term $I_2(t\wedge \tau_R)$, by H\"older's inequality and Sobolev embedding,
\begin{align*}
	I_2(t\wedge \tau_R) 
	\leq 
	\lambda_r \int_0^{t\wedge \tau_R} \norm{\bff{u}_1(s)}{\bb{L}^\infty} \norm{\bff{u}_2(s)}{\bb{L}^\infty}
	\norm{\bff{v}(s)}{\bb{L}^2}^2 \ds
	\leq 
	C\lambda_r R^2 \int_0^{t\wedge \tau_R} \norm{\bff{v}(s)}{\bb{L}^2}^2 \ds.
\end{align*}
For the term $I_3(t\wedge \tau_R)$, by H\"older's and Young's inequalities, and Sobolev embedding, we have
\begin{align*}
	I_3(t \wedge \tau_R)
	&\leq 
	\lambda_e \int_0^{t\wedge \tau_R} \norm{\bff{u}_1(s)}{\bb{L}^\infty}^2 \norm{\bff{v}(s)}{\bb{L}^2} \norm{\Delta \bff{v}(s)}{\bb{L}^2} \ds
	\\
	&\leq
	 \frac{\lambda_e}{8} \int_0^{t\wedge \tau_R} \norm{\Delta \bff{v}(s)}{\bb{L}^2}^2 \ds
	 +
	 \frac{1}{2\lambda_e} \int_0^{t\wedge \tau_R} \norm{\bff{u}_1(s)}{\bb{L}^\infty}^4 \norm{\bff{v}(s)}{\bb{L}^2}^2 \ds
	\\
	&\leq
	\frac{\lambda_e}{8} \int_0^{t\wedge \tau_R} \norm{\Delta \bff{v}(s)}{\bb{L}^2}^2 \ds
	+
	\frac{CR^4}{\lambda_e} \int_0^{t\wedge \tau_R} \norm{\bff{v}(s)}{\bb{L}^2}^2 \ds
\end{align*}
Similarly,
\begin{align*}
	I_4(t\wedge \tau_R)
	&\leq
	\frac{\lambda_e}{8} \int_0^{t\wedge \tau_R} \norm{\Delta \bff{v}(s)}{\bb{L}^2}^2 \ds
	+
	\frac{1}{2\lambda_e} \int_0^{t\wedge \tau_R} \norm{\bff{u}_1(s)+\bff{u}_2(s)}{\bb{L}^\infty}^2 \norm{\bff{u}_2(s)}{\bb{L}^\infty}^2 \norm{\bff{v}(s)}{\bb{L}^2}^2 \ds
	\\
	&\leq 
	\frac{\lambda_e}{8} \int_0^{t\wedge \tau_R} \norm{\Delta \bff{v}(s)}{\bb{L}^2}^2 \ds
	+
	\frac{CR^4}{\lambda_e} \int_0^{t\wedge \tau_R} \norm{\bff{v}(s)}{\bb{L}^2}^2 \ds 
\end{align*}
and
\begin{align*}
	I_5(t\wedge \tau_R)
	&\leq
	\frac{\lambda_e}{8} \int_0^{t\wedge \tau_R} \norm{\Delta \bff{v}(s)}{\bb{L}^2}^2 \ds
	+
	\frac{\gamma^2}{2\lambda_e} \int_0^{t\wedge \tau_R} \norm{\bff{u}_2(s)}{\bb{L}^\infty}^2 \norm{\bff{v}(s)}{\bb{L}^2}^2 \ds
	\\
	&\leq 
	\frac{\lambda_e}{8} \int_0^{t\wedge \tau_R} \norm{\Delta \bff{v}(s)}{\bb{L}^2}^2 \ds
	+
	\frac{C\gamma^2 R^2}{\lambda_e} \int_0^{t\wedge \tau_R} \norm{\bff{v}(s)}{\bb{L}^2}^2 \ds.
\end{align*}
Next, for the term $I_6(t\wedge \tau_R)$, by \eqref{equ:Rvw vw},
\begin{align*}
    I_6(t\wedge \tau_R)
    &\leq
    \frac{\lambda_e}{8} \int_0^{t\wedge \tau_R} \norm{\Delta \bff{v}(s)}{\bb{L}^2}^2 \ds
    +
    C\int_0^{t\wedge\tau_R} \norm{\bff{\nu}}{L^\infty(0,T;\bb{L}^\infty)}^2 \left(1+\norm{\bff{u}_2(s)}{\bb{L}^\infty}^2\right) \norm{\bff{v}(s)}{\bb{L}^2}^2 \ds
    \\
    &\leq
    \frac{\lambda_e}{8} \int_0^{t\wedge \tau_R} \norm{\Delta \bff{v}(s)}{\bb{L}^2}^2 \ds
    +
    C(1+R^2) \int_0^{t\wedge \tau_R} \norm{\bff{v}(s)}{\bb{L}^2}^2 \ds
\end{align*}
For the term $I_7(t\wedge \tau_R)$, by the assumptions on $L$,
\begin{align*}
    I_7(t\wedge \tau_R)
    \leq
    C\int_0^{t\wedge \tau_R} \norm{\bff{v}(s)}{\bb{L}^2}^2 \ds.
\end{align*}
Finally, for the term $I_8(t\wedge \tau_R)$, we have
\begin{align*}
    I_8(t\wedge \tau_R)
    \leq
    \sigma_h \gamma^2 \int_0^{t\wedge \tau_R} \norm{\bff{v}(s)}{\bb{L}^2}^2 \ds.
\end{align*}

For brevity, we will assume $\lambda_e-\lambda_r >0$ (otherwise interpolation inequality~\eqref{equ:nabla v L2} can be used). Therefore, replacing $t$ by $t\wedge \tau_R$ in \eqref{equ:I vt L2}, rearranging the terms, and taking expectations, we obtain
\begin{align}\label{equ:E v L2 local t}
	\bb{E} \left[\sup_{\tau \in [0,t\wedge \tau_R]} \norm{\bff{v}(\tau)}{\bb{L}^2}^2 \right]
	+
	\bb{E} \left[ \int_0^{t\wedge \tau_R} \norm{\Delta \bff{v}(s)}{\bb{L}^2}^2 \ds \right]
	\leq
	C(1+R^4) \int_0^t \bb{E} \left[\sup_{\tau\in [0,s\wedge \tau_R]} \norm{\bff{v}(\tau)}{\bb{L}^2}^2 \right] \ds.
\end{align}
By Gronwall's inequality, we infer that for any $t\geq 0$,
\begin{equation*}
	\bb{E} \left[\sup_{\tau \in [0,t\wedge \tau_R]} \norm{\bff{v}(\tau)}{\bb{L}^2}^2 \right] = 0.
\end{equation*}
Letting $R\uparrow \infty$, and applying the monotone convergence theorem, we obtain $\bff{v}(t)\equiv \bff{0}$ for $\bb{P}$-a.s. $\omega\in \Omega$, as required.
The existence of a pathwise unique strong solution (in the sense of probability) and the uniqueness in law of the martingale solution are consequences of~\cite[Theorem~2.2 and Theorem~12.1]{Ond04}.
\end{proof}

\section{Proof of Theorem~\ref{the:invariant}: Existence of Invariant Measures}

We will show first the existence of invariant measures for the stochastic LLBar equation~\eqref{equ:sllbar}. 
Recall that by Theorem~\ref{the:exist} and Theorem~\ref{the:unique}, for any initial data $\bff{u}_0\in \bb{H}^2$, the stochastic LLBar equation has a unique pathwise strong solution which defines an $\bb{H}^2$-valued Markov process $\bff{u}$. Let $\bff{u}(t;\bff{u}_0)$ denote the process $\bff{u}$ starting at $\bff{u}(0)=\bff{u}_0$. For a Borel set $B$ in $\bb{H}^2$ and any $t\geq 0$, we define the transition functions
\[
    P_t(\bff{u}_0, B):= \bb{P}(\bff{u}(t;\bff{u}_0)\in B).
\]
We can study the Markov semigroup $\{P_t\}_{t\geq 0}$, defined for $\phi\in B_b(\bb{H}^2)$ by
\begin{align}\label{equ:transition}
    P_t \phi(\bff{u}_0) := \bb{E}\left[\phi(\bff{u}(t;\bff{u}_0))\right]
    =
    \int_{\bb{H}^2} \phi(\bff{u}) P_t(\bff{u}_0, \mathrm{d}\bff{u}),
    \quad \forall \bff{u}_0\in \bb{H}^2.
\end{align}
A Borel probability measure $\mu$ on $\bb{H}^2$ is called an invariant measure for the Markov semigroup associated to the problem~\eqref{equ:sllbar} if for every $t\geq 0$,
\[
    \int_{\bb{H}^2} \phi(\bff{u}_0) \,\mathrm{d}\mu(\bff{u}_0)
    =
    \int_{\bb{H}^2} P_t \phi(\bff{u}_0) \,\mathrm{d}\mu(\bff{u}_0), \quad \forall \phi\in C_b(\bb{H}^2).
\]
First, we prove the following bounds which will be used later.

\begin{lemma}\label{lem:E u H2 t}
Let $\bff{u}$ be the solution of \eqref{equ:sllbar} given by Theorem~\ref{the:exist}. Then there exists a positive constant $C$ such that for all $t\geq 0$,
\begin{align*}
    \bb{E} \left[\norm{\bff{u}(t)}{\bb{H}^1}^2 \right] 
    +
    \int_0^t \bb{E} \left[ \norm{\bff{u}(s)}{\bb{H}^2}^2 \right] \ds 
    +
    \int_0^t \bb{E} \left[ \norm{\bff{H}(s)}{\bb{H}^1}^2 \right] \ds 
    +
    \int_0^t \bb{E} \big[ \norm{\nabla\Delta \bff{u}(s)}{\bb{L}^2} \big] \ds 
    \leq 
    C(1+t),
\end{align*}
where $C$ depends on $\norm{\bff{u}_0}{\bb{H}^1}$, but is independent of $t$.
\end{lemma}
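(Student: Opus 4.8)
The plan is to upgrade the energy estimates of Section~\ref{sec:faedo} from the exponential-in-time form (which came from applying Gronwall's inequality in the unfavourable direction) to linear-in-time bounds. The two ideas are: (i) take expectations \emph{first}, so the martingale part drops out and no stochastic-integral estimate is needed; and (ii) exploit genuine dissipativity of the subcritical regime $\kappa_1=+1$. The structural point is that, although the Neumann Laplacian has constants in its kernel (so $\norm{\bff{u}}{\bb{L}^2}$ cannot be controlled by $\norm{\Delta\bff{u}}{\bb{L}^2}$ alone), the quartic term in $\bff{H}=\Delta\bff{u}+\bff{u}-|\bff{u}|^2\bff{u}$ supplies the missing coercivity: by Jensen's inequality $\norm{\bff{u}}{\bb{L}^4}^4\gtrsim\norm{\bff{u}}{\bb{L}^2}^4$, so that $\lambda_r\norm{\bff{u}}{\bb{L}^2}^2-\lambda_r\norm{\bff{u}}{\bb{L}^4}^4\le -\delta\norm{\bff{u}}{\bb{L}^2}^2+C$ for some $\delta>0$, which turns the apparent growth into strict negative feedback.

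First I would rerun the $\bb{L}^2$ balance of Proposition~\ref{pro:E sup un L2} but retain the dissipative terms. Taking expectations of \eqref{equ:d un L2}, using \eqref{equ:Fn un un}, the identity $\inpro{\bff{u}\times\bff{H}}{\bff{u}}_{\bb{L}^2}=0$, inequality \eqref{equ:Rv v} for the torque, the Lipschitz bound on $L$, and the noise bound $\sum_k\norm{G_k(\bff{u})}{\bb{L}^2}^2\le C(\sigma_g^2+\sigma_h^2\norm{\bff{u}}{\bb{L}^2}^2)$, every quadratic growth term is dominated by $-\lambda_r\norm{\bff{u}}{\bb{L}^4}^4$ while the $\epsilon\norm{|\bff{u}||\nabla\bff{u}|}{\bb{L}^2}^2$ contributions are absorbed by the corresponding dissipation (if $\lambda_r\le\lambda_e$ the gradient term is handled via \eqref{equ:nabla v L2}, as in Remark~\ref{rem:small lambda e}). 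This yields
\[
\ddt\bb{E}\norm{\bff{u}(t)}{\bb{L}^2}^2+c\,\bb{E}\Big[\norm{\bff{u}}{\bb{L}^2}^2+\norm{\Delta\bff{u}}{\bb{L}^2}^2+\norm{\bff{u}}{\bb{L}^4}^4+\norm{|\bff{u}||\nabla\bff{u}|}{\bb{L}^2}^2\Big]\le C,
\]
so Gronwall (now in the good direction) gives $\sup_{t\ge0}\bb{E}\norm{\bff{u}(t)}{\bb{L}^2}^2\le C$, and integrating gives $\int_0^t\bb{E}[\norm{\bff{u}}{\bb{H}^2}^2+\norm{\bff{u}}{\bb{L}^4}^4+\norm{|\bff{u}||\nabla\bff{u}|}{\bb{L}^2}^2]\,\ds\le C(1+t)$.

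Next I would apply It\^o's formula to the magnetic energy $\psi(\bff{u})=\tfrac12\norm{\nabla\bff{u}}{\bb{L}^2}^2+\tfrac14\norm{\bff{u}}{\bb{L}^4}^4-\tfrac12\norm{\bff{u}}{\bb{L}^2}^2$ as in Proposition~\ref{pro:E sup nab un L2}, again taking expectations first. Since $\inpro{\bff{u}\times\bff{H}}{\bff{H}}_{\bb{L}^2}=0$, the drift produces the dissipation $-\lambda_r\norm{\bff{H}}{\bb{L}^2}^2-\lambda_e\norm{\nabla\bff{H}}{\bb{L}^2}^2$; against it I estimate $\inpro{S(\bff{u})}{\bff{H}}_{\bb{L}^2}$ via \eqref{equ:Rv w} and the Lipschitz bound on $L$ (absorbing $\epsilon\norm{\bff{H}}{\bb{L}^2}^2$), and the It\^o corrections $\sum_k[\tfrac12\norm{\nabla G_k}{\bb{L}^2}^2+\tfrac32\norm{|\bff{u}||G_k|}{\bb{L}^2}^2-\tfrac12\norm{G_k}{\bb{L}^2}^2]$, noting $\sum_k\norm{|\bff{u}||G_k|}{\bb{L}^2}^2\le C(\sigma_g^2\norm{\bff{u}}{\bb{L}^2}^2+\sigma_h^2\norm{\bff{u}}{\bb{L}^4}^4)$. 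Crucially, after these bounds every remaining term lies in $\{1,\norm{\bff{u}}{\bb{L}^2}^2,\norm{\nabla\bff{u}}{\bb{L}^2}^2,\norm{\bff{u}}{\bb{L}^4}^4,\norm{|\bff{u}||\nabla\bff{u}|}{\bb{L}^2}^2\}$, whose time integrals are already $\le C(1+t)$ from the previous step. Integrating, and using $\psi(\bff{u})\ge\tfrac12\norm{\nabla\bff{u}}{\bb{L}^2}^2-\tfrac12\norm{\bff{u}}{\bb{L}^2}^2$ together with the uniform $\bb{L}^2$ bound, delivers $\bb{E}\norm{\bff{u}(t)}{\bb{H}^1}^2\le C(1+t)$ and $\int_0^t\bb{E}\norm{\bff{H}(s)}{\bb{H}^1}^2\,\ds\le C(1+t)$.

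For the last term I would use $\nabla\Delta\bff{u}=\nabla\bff{H}-\nabla\bff{u}+\nabla(|\bff{u}|^2\bff{u})$. The $\nabla\bff{H}$ and $\nabla\bff{u}$ parts give $\le C(1+t)$ by Cauchy--Schwarz in time applied to the second-moment integrals just obtained. The supercritical piece is where the real work lies: using \eqref{equ:nor der v2v} and the Gagliardo--Nirenberg bound $\norm{\bff{u}}{\bb{L}^\infty}^2\le C\norm{\bff{u}}{\bb{H}^1}\norm{\bff{u}}{\bb{H}^2}$ (valid for $d\le 3$) gives $\norm{\nabla(|\bff{u}|^2\bff{u})}{\bb{L}^2}\le C\norm{\bff{u}}{\bb{H}^1}^2\norm{\bff{u}}{\bb{H}^2}$, and $\int_0^t\bb{E}[\norm{\bff{u}}{\bb{H}^1}^2\norm{\bff{u}}{\bb{H}^2}]\,\ds$ closes only if one has the \emph{uniform-in-time fourth moment} $\sup_{t\ge0}\bb{E}\norm{\bff{u}(t)}{\bb{H}^1}^4\le C$. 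I expect this to be the main obstacle: it forces one to repeat the dissipative estimates above at the level of the $2p$-th power (e.g.\ via It\^o applied to $\norm{\bff{u}}{\bb{L}^2}^{2p}$ and $\psi(\bff{u})^p$), again absorbing all quadratic and quartic growth into the quartic and $\norm{\bff{H}}{\bb{H}^1}^2$ dissipation so that the moments stay bounded uniformly in $t$ rather than blowing up exponentially. Once that uniform higher-moment bound is in hand, Cauchy--Schwarz in time finishes the last term and hence the lemma.
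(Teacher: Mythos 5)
Your first two steps coincide with the paper's proof: take expectations so the stochastic integrals vanish (after checking they are genuine martingales), use the quartic dissipation $-\lambda_r\norm{\bff{u}}{\bb{L}^4}^4$ together with $x^2\le \epsilon x^4+C_\epsilon$ to absorb all quadratic growth in the $\bb{L}^2$ balance, and then run the energy functional $\psi(\bff{u})=\tfrac12\norm{\nabla\bff{u}}{\bb{L}^2}^2+\tfrac14\norm{\bff{u}}{\bb{L}^4}^4-\tfrac12\norm{\bff{u}}{\bb{L}^2}^2$ to get $\bb{E}\norm{\bff{u}(t)}{\bb{H}^1}^2$ and $\int_0^t\bb{E}\norm{\bff{H}(s)}{\bb{H}^1}^2\,\ds\le C(1+t)$. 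That part is sound and is exactly the paper's argument.

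The divergence is in the last term, and the ``main obstacle'' you identify there is self-inflicted rather than intrinsic. You factor the cubic term as $\norm{\nabla(|\bff{u}|^2\bff{u})}{\bb{L}^2}\le C\norm{\bff{u}}{\bb{L}^\infty}^2\norm{\nabla\bff{u}}{\bb{L}^2}\le C\norm{\bff{u}}{\bb{H}^1}^2\norm{\bff{u}}{\bb{H}^2}$, which after Young's inequality forces you to control $\int_0^t\bb{E}\norm{\bff{u}(s)}{\bb{H}^1}^4\,\ds$ linearly in $t$ — a uniform-in-time fourth moment that you do not establish and that would require a further round of It\^o computations on $\norm{\bff{u}}{\bb{L}^2}^4$ and $\psi(\bff{u})^2$. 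The paper avoids this entirely by using the pointwise identity \eqref{equ:nor der v2v}, which gives $|\nabla(|\bff{u}|^2\bff{u})|\le 3|\bff{u}|^2|\nabla\bff{u}|$ and hence $\norm{\nabla(|\bff{u}|^2\bff{u})}{\bb{L}^2}\le 3\norm{\bff{u}}{\bb{L}^\infty}\,\norm{|\bff{u}|\,|\nabla\bff{u}|}{\bb{L}^2}\le C\norm{\bff{u}}{\bb{H}^2}^2+C\norm{|\bff{u}|\,|\nabla\bff{u}|}{\bb{L}^2}^2$. Both of these quantities have time-integrated expectations bounded by $C(1+t)$ from your own step one — the term $\norm{|\bff{u}|\,|\nabla\bff{u}|}{\bb{L}^2}^2$ is precisely one of the dissipated quantities in the $\bb{L}^2$ balance, which is why it is worth carrying along. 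Since the statement only asks for the first power of $\norm{\nabla\Delta\bff{u}}{\bb{L}^2}$ under the time integral, a single application of Young's inequality then closes the estimate with no higher moments needed. Your proposed workaround via $2p$-th power dissipative estimates is plausibly viable but unnecessary; I would replace that entire discussion with the factorization above.
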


\begin{proof}
The proof follows the same line of argument as in Proposition~\ref{pro:E sup un L2} and Proposition~\ref{pro:E sup nab un L2}, with $\bff{u}_n$ and $\bff{H}_n$ replaced by $\bff{u}$ and $\bff{H}$, respectively. As such, we will just outline the key steps here, while ensuring that the constant $C$ is independent of $t$.

Firstly, following the argument leading to~\eqref{equ:un L2}, and assuming $\lambda_r-\lambda_e>0$ for simplicity of presentation, we obtain
\begin{align}\label{equ:u L2}
    \nonumber
	&\frac{1}{2} \norm{\bff{u}(t)}{\bb{L}^2}^2
	+
	(\lambda_r-\lambda_e) \int_0^t \norm{\nabla \bff{u}(s)}{\bb{L}^2}^2 \ds
	+
	\lambda_e \int_0^t \norm{\Delta \bff{u}(s)}{\bb{L}^2}^2 \ds
	+
	\lambda_r \int_0^t \norm{\bff{u}(s)}{\bb{L}^4}^4 \ds
	\\
	\nonumber
	&\quad
	+
	\lambda_e \int_0^t \norm{|\bff{u}(s)|\, |\nabla \bff{u}(s)|}{\bb{L}^2}^2 \ds
	+
	2\lambda_e \int_0^t \norm{\bff{u}(s) \cdot \nabla \bff{u}(s)}{\bb{L}^2}^2 \ds
    \\
    \nonumber
	&\leq
	\frac{1}{2} \norm{\bff{u}_{0}}{\bb{L}^2}^2
	+
	\lambda_r \int_0^t \norm{\bff{u}(s)}{\bb{L}^2}^2 \ds
	+
	C \int_0^t \norm{\bff{\nu}}{\bb{L}^2(\mathscr{D};\bb{R}^d)} \norm{|\bff{u}(s)|\, |\nabla \bff{u}(s)|}{\bb{L}^2} \ds
	+
	C \int_0^t \big(1+ \norm{\bff{u}(s)}{\bb{L}^2}^2 \big) \ds
	\\
	\nonumber
	&\quad
	+
	\frac{1}{2} \sum_{k=1}^n \int_0^t \big( \norm{\bff{g}_k}{\bb{L}^2}^2 + \norm{\bff{h}_k}{\bb{L}^\infty}^2 \norm{\bff{u}(s)}{\bb{L}^2}^2 \big) \ds
	+
	\sum_{k=1}^\infty \int_0^t \inpro{\bff{g}_k}{\bff{u}(s)}_{\bb{L}^2} \dif W_k(s)
    \\
    \nonumber
    &\leq
    \frac{1}{2} \norm{\bff{u}_{0}}{\bb{L}^2}^2
	+
    Ct
    +
    C \norm{\bff{\nu}}{L^2(0,t;\bb{L}^2)}^2
    +
    \frac{\lambda_e}{2} \int_0^t \norm{|\bff{u}(s)|\, |\nabla \bff{u}(s)|}{\bb{L}^2}^2 \ds
    +
    C \int_0^t \norm{\bff{u}(s)}{\bb{L}^2}^2 \ds
    \\
    &\quad
    +
    \sum_{k=1}^\infty \int_0^t \inpro{\bff{g}_k}{\bff{u}(s)}_{\bb{L}^2} \dif W_k(s)
\end{align}
where in the last step we used Young's inequality and the assumptions in \ref{subsec:assump}. Therefore, noting the inequality $x^2\leq \epsilon x^4 + 1/(4\epsilon)$ for any $\epsilon>0$, we can absorb the penultimate term on~\eqref{equ:u L2} to the left. Furthermore, since
\[
    \bb{E} \left[\sum_{k=1}^\infty \int_0^t \big| \inpro{\bff{g}_k}{\bff{u}(s)}_{\bb{L}^2} \big|^2 \,\ds \right]
    \leq
    \left(\sum_{k=1}^\infty \norm{\bff{g}_k}{\bb{L}^2}^2\right) \,
    \bb{E} \left[\int_0^t \norm{\bff{u}(s)}{\bb{L}^2}^2 \ds \right] < \infty,
\]
the process $t\mapsto \sum_{k=1}^\infty \int_0^t \inpro{\bff{g}_k}{\bff{u}(s)}_{\bb{L}^2}\,\dif W_k(s)$ is a martingale on $[0,T]$. Therefore,
\begin{equation}\label{equ:E gk u dW}
    \bb{E} \left[\int_0^t \inpro{\bff{g}_k}{\bff{u}(s)}_{\bb{L}^2}  \dif W_k(s)\right] = 0.
\end{equation}
Taking expectation on both sides of \eqref{equ:u L2}, rearranging the terms, and noting~\eqref{equ:E gk u dW}, we then have
\begin{equation}\label{equ:E u H2}
    \bb{E} \left[\norm{\bff{u}(t)}{\bb{L}^2}^2 \right] 
    +
    \bb{E} \left[\int_0^t \norm{\bff{u}(s)}{\bb{H}^2}^2 \ds \right]
    +
    \bb{E} \left[\int_0^t \norm{|\bff{u}(s)|\, |\nabla \bff{u}(s)|}{\bb{L}^2}^2 \ds\right]
    \leq
    C(1+t).
\end{equation}

Next, we follow the argument leading to~\eqref{equ:psi un nab Hn} to obtain
\begin{align}\label{equ:nab u L2}
    &\frac{1}{2} \norm{\nabla \bff{u}(t)}{\bb{L}^2}^2
    +
    \frac{1}{4} \norm{\bff{u}(t)}{\bb{L}^4}^4
    +
	\lambda_r \int_0^t \norm{\bff{H}(s)}{\bb{L}^2}^2 \ds
	+
	\lambda_e \int_0^t \norm{\nabla \bff{H}(s)}{\bb{L}^2}^2 \ds
	+
	\frac{1}{2} \sum_{k=1}^\infty \int_0^t \norm{G_k(\bff{u}(s))}{\bb{L}^2}^2 \ds
    \nonumber \\
    &\leq
    C \norm{\bff{u}_0}{\bb{H}^1}^2
    +
    C(1+t)
    +
    \frac{\lambda_r}{2} \int_0^t  \norm{\bff{H}(s)}{\bb{L}^2}^2 \ds
    +
    C \int_0^t \norm{|\bff{u}(s)|\, |\nabla \bff{u}(s)|}{\bb{L}^2}^2 \ds
    \nonumber \\
    &\quad
    +
    C \int_0^t \norm{\bff{u}(s)}{\bb{H}^1}^2 \ds
    +
    C\int_0^t \norm{\bff{u}(s)}{\bb{L}^4}^4 \ds
    -
	\sum_{k=1}^\infty \int_0^t \inpro{\bff{g}_k + \gamma \bff{u}(s) \times \bff{h}_k}{\bff{H}(s)}_{\bb{L}^2} \dif W_k(s).
\end{align}
Now, note that 
\begin{align*}
    &\bb{E} \left[\sum_{k=1}^\infty \int_0^t \big|\inpro{\bff{g}_k + \gamma \bff{u}(s) \times \bff{h}_k}{\bff{H}(s)}_{\bb{L}^2} \big|^2\, \ds\right]
    \\
    &\le
    \left(\sum_{k=1}^\infty \norm{\bff{g}_k}{\bb{L}^2}^2\right) 
    \bb{E} \left[\int_0^t \norm{\bff{H}(s)}{\bb{L}^2}^2 \ds \right]
    +
    \left(\sum_{k=1}^\infty \norm{\bff{h}_k}{\bb{L}^2}^2\right) \bb{E}\left[ \int_0^t \norm{\bff{u}(s)}{\bb{L}^\infty}^2 \ds \right] 
    + 
    C \bb{E} \left[\int_0^t \norm{\bff{H}(s)}{\bb{L}^2}^2 \ds \right]
\end{align*}
which is finite by our assumptions in \ref{subsec:assump} and Theorem~\ref{the:exist}. In particular, the process
\[
t\mapsto \sum_{k=1}^\infty \int_0^t \inpro{\bff{g}_k + \gamma \bff{u}(s) \times \bff{h}_k}{\bff{H}(s)}_{\bb{L}^2} \dif{W}_k(s)
\]
is a martingale on $[0,T]$. This implies
\[
    \bb{E} \left[\sum_{k=1}^\infty \int_0^t \inpro{\bff{g}_k + \gamma \bff{u}(s) \times \bff{h}_k}{\bff{H}(s)}_{\bb{L}^2}  \dif{W}_k(s)\right] = 0.
\]
Therefore, taking expectation on both sides of~\eqref{equ:nab u L2} and noting \eqref{equ:E u H2}, we obtain
\begin{align}\label{equ:E int H 1}
    \bb{E} \left[\norm{\nabla \bff{u}(t)}{\bb{L}^2}^2 \right]
    +
    \bb{E} \left[\norm{\bff{u}(t)}{\bb{L}^4}^4 \right]
    +
    \bb{E} \left[\int_0^t \norm{\bff{H}(s)}{\bb{H}^1}^2 \ds \right] 
    \leq
    C(1+t).
\end{align}
Finally, note that $\nabla\Delta \bff{u}= \nabla \bff{H} -\nabla \bff{u} + \nabla(|\bff{u}|^2 \bff{u})$. By Young's inequality and Sobolev embedding,
\begin{align*}
    \int_0^t \bb{E}\big[\norm{\nabla \Delta \bff{u}(s)}{\bb{L}^2}\big] \ds
    &\leq
    \int_0^t \bb{E} \big[\norm{\bff{H}(s)}{\bb{H}^1} + \norm{\bff{u}(s)}{\bb{H}^1} + 2\norm{|\bff{u}(s)|\,|\nabla \bff{u}(s)|}{\bb{L}^2} \norm{\bff{u}(s)}{\bb{L}^\infty}\big] \,\ds
    \\
    &\leq
    C \int_0^t \left(1+ \bb{E}\left[ \norm{\bff{H}(s)}{\bb{H}^1}^2 + \norm{\bff{u}(s)}{\bb{H}^1}^2 + \norm{|\bff{u}(s)|\,|\nabla \bff{u}(s)|}{\bb{L}^2}^2 + \norm{\bff{u}(s)}{\bb{H}^2}^2\right] \right) \ds 
    \\
    &\leq
    C(1+t),
\end{align*}
where in the last step we used~\eqref{equ:E u H2} and~\eqref{equ:E int H 1}.
This completes the proof of the lemma.
\end{proof}

\begin{lemma}\label{lem:t u H3}
Let $\bff{u}$ be the solution of \eqref{equ:sllbar} given by Theorem~\ref{the:exist}. Then there exists a positive constant $C$ independent of $t$ such that for all $t\geq 0$,
\[
    \bb{E}\left[\sup_{\tau\in [0,t]} \tau \norm{\nabla\Delta \bff{u}(\tau)}{\bb{L}^2}^2\right] 
    \leq
    C(1+e^{Ct}),
\]
where $C$ is a positive constant depending on $\norm{\bff{u}_0}{\bb{H}^2}$.
\end{lemma}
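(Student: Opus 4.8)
The plan is to avoid an Itô energy estimate at the $\bb{H}^3$ level and instead exploit the parabolic smoothing of the analytic semigroup $e^{-tA}$ through the mild formulation~\eqref{equ:u mild}. The reason is structural: applying Itô's lemma to $\tfrac12\norm{\nabla\Delta\bff{u}}{\bb{L}^2}^2$ would produce the correction term $\tfrac12\sum_k\norm{\nabla\Delta G_k(\bff{u})}{\bb{L}^2}^2$, which requires the noise coefficients to lie in $\bb{H}^3$; under the standing assumption~\eqref{equ:sigma g sigma h} we only control them in $\bb{H}^2=\text{D}(A^{1/2})$. Working with~\eqref{equ:u mild} lets the missing third derivative be supplied by the smoothing bound $\norm{A^\theta e^{-rA}}{\mathcal{L}(\bb{L}^2)}\le C_\theta r^{-\theta}$ while each $G_k$ enters only through $A^{1/2}G_k$. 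Since $\text{D}(A^{3/4})=\bb{H}^3$ with equivalent norms, it suffices to bound $\bb{E}\big[\sup_{\tau\in[0,t]}\tau\,\norm{A^{3/4}\bff{u}(\tau)}{\bb{L}^2}^2\big]$, and I would estimate each term $I_0,\dots,I_6$ of~\eqref{equ:u mild} separately.

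For the free term $I_0(\tau)=e^{-\tau A}\bff{u}_0$ I would write $A^{3/4}e^{-\tau A}\bff{u}_0=A^{1/4}e^{-\tau A}A^{1/2}\bff{u}_0$ to get $\norm{A^{3/4}I_0(\tau)}{\bb{L}^2}\le C\tau^{-1/4}\norm{\bff{u}_0}{\bb{H}^2}$; here the weight is essential, since $\tau\,\tau^{-1/2}=\tau^{1/2}$ removes the singularity coming from data that is only $\bb{H}^2$. The deterministic Duhamel terms $I_1,\dots,I_5$ are handled by the same splitting: for the terms carrying at most two spatial derivatives ($I_1,I_2$) I factor $A^{3/4}e^{-(\tau-s)A}=A^{1/4}e^{-(\tau-s)A}A^{1/2}$ and bound the integrand by $C(\tau-s)^{-1/4}\norm{\,\cdot\,}{\bb{H}^2}$, while for $I_3$, $I_4$, $I_5$ (which already contain a $\Delta$ or a first-order operator) I keep all three quarters of a power on the semigroup, using $\norm{A^{3/4}e^{-(\tau-s)A}}{\mathcal{L}(\bb{L}^2)}\le C(\tau-s)^{-3/4}$ together with the $\bb{L}^2$ bounds $\norm{\Delta(|\bff{u}|^2\bff{u})}{\bb{L}^2}\lesssim\norm{\bff{u}}{\bb{H}^2}^3$, $\norm{\bff{u}\times\Delta\bff{u}}{\bb{L}^2}\lesssim\norm{\bff{u}}{\bb{H}^2}^2$ and $\norm{S(\bff{u})}{\bb{L}^2}\lesssim 1+\norm{\bff{u}}{\bb{H}^2}^2$ (the cubic estimate following from~\eqref{equ:del v2v} and the algebra property~\eqref{equ:prod Hs mat dot}). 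Since every exponent $(\tau-s)^{-3/4}$ is integrable, each of these yields $\tau\,\norm{A^{3/4}I_j(\tau)}{\bb{L}^2}^2\lesssim\tau^{c_j}\,P\big(\sup_{s\le\tau}\norm{\bff{u}(s)}{\bb{H}^2}\big)$ with $c_j\ge\tfrac12$ and $P$ a polynomial of degree at most six; taking $\sup_{\tau\le t}$ and expectation and invoking the moment bound $\bb{E}\big[\sup_{[0,t]}\norm{\bff{u}}{\bb{H}^2}^{2p}\big]\le C(1+e^{Ct})$ from Proposition~\ref{pro:E sup un H2} (passed to the limit $\bff{u}_n\to\bff{u}$) closes these terms. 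Note in particular that only the $L^\infty_t\bb{H}^2$ a priori bound is needed here, not the $\bb{H}^4$ integrability.

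The genuine difficulty is the stochastic convolution $I_6(\tau)=\sum_k\int_0^\tau e^{-(\tau-s)A}G_k(\bff{u}(s))\,\dif W_k(s)$, where the supremum over $\tau$ sits inside the expectation. A direct Burkholder--Davis--Gundy bound gives the pointwise moment, because $\sum_k\int_0^\tau\norm{A^{1/4}e^{-(\tau-s)A}A^{1/2}G_k}{\bb{L}^2}^2\,\ds\lesssim\int_0^\tau(\tau-s)^{-1/2}\,\ds<\infty$ and $\sum_k\norm{A^{1/2}G_k(\bff{u})}{\bb{L}^2}^2\lesssim\sigma_g^2+\gamma^2\sigma_h^2\norm{\bff{u}}{\bb{H}^2}^2$ by~\eqref{equ:prod Hs triple}; but to move the supremum inside I would use the factorization method. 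Writing $I_6$ as a deterministic fractional convolution of $Y_\alpha(s)=\int_0^s(s-\sigma)^{-\alpha}e^{-(s-\sigma)A}\sum_k A^{1/2}G_k(\bff{u}(\sigma))\,\dif W_k(\sigma)$ with $\alpha\in(\tfrac14,\tfrac12)$, the outer kernel $(\tau-s)^{\alpha-1}A^{1/4}e^{-(\tau-s)A}$ is controlled by $(\tau-s)^{\alpha-5/4}$, which is integrable precisely because $\alpha>\tfrac14$, and Hölder's inequality then bounds $\sup_{\tau\le t}\norm{A^{3/4}I_6(\tau)}{\bb{L}^2}$ by $\norm{Y_\alpha}{L^q(0,t;\bb{L}^2)}$ for suitable $q$; the condition $\alpha<\tfrac12$ makes $(s-\sigma)^{-\alpha}$ square-integrable so that BDG yields $\bb{E}\norm{Y_\alpha(s)}{\bb{L}^2}^q\lesssim 1+\bb{E}\sup_{[0,s]}\norm{\bff{u}}{\bb{H}^2}^q$ without any supremum. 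Assembling the six estimates and using $\norm{\nabla\Delta\bff{u}}{\bb{L}^2}\le C\norm{A^{3/4}\bff{u}}{\bb{L}^2}$ gives the claim. The main obstacle, and the place where care is needed, is exactly this interplay in $I_6$ between the time weight, the supremum, and the fact that only $A^{1/2}G_k$ (not $A^{3/4}G_k$) is available: the factorization exponent $\alpha$ must be threaded through the narrow window $(\tfrac14,\tfrac12)$, and it is the square-root-integrable smoothing $(\tau-s)^{-1/2}$ coming from the \emph{fourth}-order operator that makes the whole scheme work with noise that is only $\bb{H}^2$-regular.
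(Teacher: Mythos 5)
Your proposal follows essentially the same route as the paper: both work from the mild formulation~\eqref{equ:u mild}, identify $\mathrm{D}(A^{3/4})$ with $\bb{H}^3$, and estimate $\tau\norm{A^{3/4}I_j(\tau)}{\bb{L}^2}^2$ term by term using the smoothing bound $\norm{A^{\theta}e^{-rA}}{\mathcal{L}(\bb{L}^2)}\lesssim r^{-\theta}$, with the weight $\tau$ absorbing the $\tau^{-1/2}$ singularity from $I_0$ and each $G_k$ entering only through $A^{1/2}G_k$. The one place you go beyond the paper's (very terse) argument is the supremum in time: the paper records only the fixed-time bound $\bb{E}\big[t\norm{A^{3/4}I_j(t)}{\bb{L}^2}^2\big]$ and refers back to the estimates of Lemma~\ref{lem:H3}, whereas your factorization argument with $\alpha\in(\tfrac14,\tfrac12)$ is precisely what is needed to move the supremum inside the expectation for the stochastic convolution $I_6$, so this is a strengthening rather than a deviation. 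A further minor difference is that you bound $I_3$, $I_4$, $I_5$ by keeping the full $(\tau-s)^{-3/4}$ on the semigroup and using only $\bb{L}^2$ norms of the nonlinearities (controlled by powers of $\norm{\bff{u}}{\bb{H}^2}$), which avoids invoking the $L^2_t\bb{H}^4$ bound that the paper's Lemma~\ref{lem:H3} uses for $I_3$ and $I_4$; both variants close. (One cosmetic slip: the bound $\sum_k\norm{A^{1/2}G_k(\bff{u})}{\bb{L}^2}^2\lesssim\sigma_g^2+\gamma^2\sigma_h^2\norm{\bff{u}}{\bb{H}^2}^2$ uses the algebra property~\eqref{equ:prod Hs mat dot}, not~\eqref{equ:prod Hs triple}.)
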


\begin{proof}
We will use~\eqref{equ:u mild} and estimate $\bb{E}\left[t\norm{A^{\frac34} I_j(t)}{\bb{L}^2}^2\right]$ for $j=0,1,\ldots,6$.
Firstly, we have
\begin{align}\label{equ:t I0}
    \bb{E}\left[t\norm{A^{\frac34} I_0(t)}{\bb{L}^2}^2\right]
    =
    \bb{E}\left[t\norm{A^{\frac14} e^{-tA} A^{\frac12}\bff{u}_0}{\bb{L}^2}^2\right]
    &\leq
    \sqrt{t} \norm{A^{\frac12} \bff{u}_0}{\bb{L}^2}^2
\end{align}
The rest of the estimates follows the same argument as in the proof of Lemma~\ref{lem:H3}, noting Proposition~\ref{pro:E sup un L2} and Proposition~\ref{pro:E sup un H2}. Therefore, we have
\begin{align}\label{equ:t Ij}
    \bb{E}\left[t\norm{A^{\frac34} I_j(t)}{\bb{L}^2}^2\right]
    &\leq
    C(1+e^{Ct}),
\end{align}
for $j=1,2,\ldots,6$, where $C$ depends on $\norm{\bff{u}_0}{\bb{H}^2}$. The required estimate follows from~\eqref{equ:t I0} and~\eqref{equ:t Ij}.
\end{proof}

To prove the existence of an invariant measure, we adopt the techniques in~\cite{GlaKukVic14}. First, we prove a continuous dependence estimate and some strong moment bounds to establish the Feller property of the transition semigroup $P_t$ defined in~\eqref{equ:transition}. The existence of an invariant measure then follows from the well-known theorem of Krylov--Bogoliubov.

In the following, we will use the stopping time defined for any $S>0$ by
\begin{equation}\label{equ:sigma S v0}
    \sigma_S(\bff{v}_0)
    := 
    \inf\{t\geq 0: \norm{\bff{u}(t;\bff{v}_0)}{\bb{H}^2}^2 > S\}.
\end{equation}
The next lemma shows a continuous dependence result up to a stopping time.

\begin{lemma}\label{lem:cont dep H2}
Let $\bff{u}^m$ and $\bff{u}$ be the solution of \eqref{equ:sllbar} with initial data $\bff{u}_0^m$ and $\bff{u}_0$ respectively. Let $\sigma_S:= \sigma_S(\bff{u}_0^m) \wedge \sigma_S(\bff{u}_0)$.
Then for any $t\in [0,T]$ and $R>0$,
\begin{align*}
    \bb{E}\left[ \sup_{\tau\in [0,t\wedge \sigma_S]} \norm{\bff{u}^m(\tau)-\bff{u}(\tau)}{\bb{H}^2}^2 \right]
    +
	\bb{E} \left[ \int_0^{t\wedge \sigma_S} \norm{\bff{u}^m(s)-\bff{u}(s)}{\bb{H}^4}^2 \ds \right]
    \leq
    C_{S,T} \norm{\bff{u}_0^m-\bff{u}_0}{\bb{H}^2}^2,
\end{align*}
where $C_{S,T}$ is a constant depending on $S$ and $T$.
\end{lemma}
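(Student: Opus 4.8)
The plan is to derive a closed differential inequality for the $\bb{H}^2$ norm of the difference $\bff{v}:=\bff{u}^m-\bff{u}$, using the top-order dissipation of the equation and the stopping time $\sigma_S$ to control all coefficients built from $\bff{u}^m$ and $\bff{u}$. Setting $\bff{B}:=\bff{H}^m-\bff{H}=\Delta\bff{v}+\bff{v}-(|\bff{u}^m|^2\bff{u}^m-|\bff{u}|^2\bff{u})$ and noting $G_k(\bff{u}^m)-G_k(\bff{u})=\gamma\,\bff{v}\times\bff{h}_k$, the difference $\bff{v}$ solves
\[
\dif\bff{v}=\big(\lambda_r\bff{B}-\lambda_e\Delta\bff{B}-\gamma(\bff{v}\times\bff{H}^m+\bff{u}\times\bff{B})+R(\bff{u}^m)-R(\bff{u})+L(\bff{u}^m)-L(\bff{u})\big)\,\dt+\sum_{k=1}^\infty\gamma(\bff{v}\times\bff{h}_k)\,\dif W_k(t),
\]
with $\bff{v}(0)=\bff{u}_0^m-\bff{u}_0$. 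Since the Neumann conditions give $\norm{\bff{v}}{\bb{H}^2}^2\eqs\norm{\bff{v}}{\bb{L}^2}^2+\norm{\Delta\bff{v}}{\bb{L}^2}^2$ and $\norm{\bff{v}}{\bb{H}^4}^2\eqs\norm{\bff{v}}{\bb{L}^2}^2+\norm{\Delta^2\bff{v}}{\bb{L}^2}^2$, I would apply It\^o's lemma to $\Psi(\bff{v}):=\tfrac12\norm{\bff{v}}{\bb{L}^2}^2+\tfrac12\norm{\Delta\bff{v}}{\bb{L}^2}^2$. The $\bb{L}^2$ contribution reproduces, now localised at $\sigma_S$, the computation already performed in the proof of Theorem~\ref{the:unique}; the $\Delta$ contribution produces, after integrating by parts twice (legitimate because $\partial\bff{v}/\partial\bff{n}=\partial(\Delta\bff{v})/\partial\bff{n}=\bff{0}$ on $\partial\mathscr{D}$), the drift pairing against $\Delta^2\bff{v}$ and the It\^o correction $\tfrac{\gamma^2}{2}\sum_k\norm{\Delta(\bff{v}\times\bff{h}_k)}{\bb{L}^2}^2$.

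The key structural observation is that the leading term $-\lambda_e\inpro{\Delta\bff{B}}{\Delta^2\bff{v}}_{\bb{L}^2}$ supplies the dissipation $-\lambda_e\norm{\Delta^2\bff{v}}{\bb{L}^2}^2$ (the part of $\Delta\bff{B}=\Delta^2\bff{v}+\Delta\bff{v}-\Delta(|\bff{u}^m|^2\bff{u}^m-|\bff{u}|^2\bff{u})$ coming from $\Delta^2\bff{v}$), while every remaining term can be written in the form $\epsilon\norm{\Delta^2\bff{v}}{\bb{L}^2}^2+C_{S,\epsilon}\norm{\bff{v}}{\bb{H}^2}^2$. Here I would use, for $t\le\sigma_S$, the uniform bounds $\norm{\bff{u}^m}{\bb{H}^2}\vee\norm{\bff{u}}{\bb{H}^2}\le\sqrt{S}$ together with the embeddings $\bb{H}^2\hookrightarrow\bb{L}^\infty$ and $\bb{H}^2\hookrightarrow\bb{W}^{1,q}$ ($q\le6$), valid for $d\le3$. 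For instance, expanding the cubic difference via~\eqref{equ:del v2v} and writing $|\bff{u}^m|^2\bff{u}^m-|\bff{u}|^2\bff{u}$ through $\bff{v}$, every factor carrying $\bff{u}^m$ or $\bff{u}$ is bounded by a constant depending on $S$, so that $\norm{\Delta(|\bff{u}^m|^2\bff{u}^m-|\bff{u}|^2\bff{u})}{\bb{L}^2}\le C_S\norm{\bff{v}}{\bb{H}^2}$; similarly $\norm{\bff{u}\times\bff{B}}{\bb{L}^2}\le\norm{\bff{u}}{\bb{L}^\infty}\norm{\bff{B}}{\bb{L}^2}\le C_S\norm{\bff{v}}{\bb{H}^2}$ and $\norm{\bff{v}\times\bff{H}^m}{\bb{L}^2}\le\norm{\bff{v}}{\bb{L}^\infty}\norm{\bff{H}^m}{\bb{L}^2}\le C_S\norm{\bff{v}}{\bb{H}^2}$. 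The torque difference is controlled by~\eqref{equ:Rvw vw} and the Lipschitz bound on $L$, and the It\^o correction is estimated directly by $\tfrac{\gamma^2}{2}\sum_k\norm{\Delta(\bff{v}\times\bff{h}_k)}{\bb{L}^2}^2\le C\sigma_h^2\norm{\bff{v}}{\bb{H}^2}^2$ using~\eqref{equ:sigma g sigma h}. Choosing $\epsilon$ small enough to absorb all $\norm{\Delta^2\bff{v}}{\bb{L}^2}^2$ contributions into the dissipation leaves a differential inequality of the shape $\dif\Psi(\bff{v})+c\norm{\bff{v}}{\bb{H}^4}^2\,\dt\le C_S\norm{\bff{v}}{\bb{H}^2}^2\,\dt+\dif\mathcal{M}$, where $\mathcal{M}$ is the stochastic integral.

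Finally, I would replace $t$ by $t\wedge\sigma_S$, take suprema and expectations, and bound the martingale term by the Burkholder--Davis--Gundy inequality: its quadratic variation is controlled by $\sum_k\int\big|\inpro{\Delta(\gamma\bff{v}\times\bff{h}_k)}{\Delta\bff{v}}_{\bb{L}^2}\big|^2\,\ds\le C\sigma_h^2\int\norm{\bff{v}}{\bb{H}^2}^4\,\ds$, so that after Young's inequality a fraction of $\bb{E}[\sup_\tau\norm{\bff{v}(\tau)}{\bb{H}^2}^2]$ is absorbed to the left and the remainder is $C\bb{E}[\int\norm{\bff{v}}{\bb{H}^2}^2\,\ds]$. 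An application of Gronwall's inequality on $[0,T]$ then yields the claimed bound with $C_{S,T}=Ce^{C_ST}$. The main obstacle is the treatment of the highest-order coupling terms $\lambda_e\inpro{\Delta(|\bff{u}^m|^2\bff{u}^m-|\bff{u}|^2\bff{u})}{\Delta^2\bff{v}}_{\bb{L}^2}$ and $\gamma\inpro{\bff{u}\times\bff{B}}{\Delta^2\bff{v}}_{\bb{L}^2}$: these pair the top derivative $\Delta^2\bff{v}$ against quantities not controllable by the dissipation alone, and it is precisely the localisation at $\sigma_S$ — furnishing uniform $\bb{H}^2$, hence $\bb{L}^\infty$, bounds on both solutions — that renders them harmless.
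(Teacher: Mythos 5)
Your proposal is correct and follows essentially the same route as the paper: localise at $\sigma_S$, apply It\^o's formula to the $\bb{L}^2$- and $\Delta$-level energies of $\bff{v}=\bff{u}^m-\bff{u}$ (the paper does these as two separate estimates and adds them, you combine them into one functional $\Psi$, which is equivalent), use the uniform $\bb{H}^2$ bound from the stopping time together with the algebra property \eqref{equ:prod Hs mat dot} and Sobolev embedding to reduce every term to $\epsilon\norm{\Delta^2\bff{v}}{\bb{L}^2}^2+C_S\norm{\bff{v}}{\bb{H}^2}^2$, absorb into the biharmonic dissipation, handle the martingale by Burkholder--Davis--Gundy, and conclude with Gronwall. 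Your explicit treatment of the It\^o correction and of the stochastic integral is consistent with (and slightly more detailed than) what the paper writes.
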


\begin{proof}
Let $\bff{v}(t):= \bff{u}^m(t)-\bff{u}(t)$. By It\^o's lemma, following the same argument as in the proof of Theorem~\ref{the:unique}, we obtain an inequality similar to~\eqref{equ:E v L2 local t}, namely
\begin{align}\label{equ:E sup loc L2}
    \bb{E} \left[\sup_{\tau \in [0,t\wedge \sigma_S]} \norm{\bff{v}(\tau)}{\bb{L}^2}^2 \right]
	&+
	\bb{E} \left[ \int_0^{t\wedge \sigma_S} \norm{\Delta \bff{v}(s)}{\bb{L}^2}^2 \ds \right]
    \nonumber \\
	&\leq
    C \norm{\bff{v}(0)}{\bb{L}^2}^2
    +
	C(1+S^2) \int_0^t \bb{E} \left[\sup_{\tau\in [0,s\wedge \sigma_S]} \norm{\bff{v}(\tau)}{\bb{L}^2}^2 \right] \ds.
\end{align}

Subsequently, for clarity we will suppress the dependence of the functions on $t$. Applying It\^o's lemma to the function $\psi(\bff{v})= \frac{1}{2} \norm{\Delta \bff{v}}{\bb{L}^2}^2$ then integrating with respect to $t$ (cf. proof of Proposition~\ref{pro:E sup un H2}), we obtain
\begin{align*}
    &\frac{1}{2} \norm{\Delta \bff{v}(t)}{\bb{L}^2}^2
    +
    \lambda_e \int_0^t \norm{\Delta^2 \bff{v}}{\bb{L}^2}^2 \ds
    +
    (\lambda_r-\lambda_e) \int_0^t \norm{\nabla \Delta \bff{v}}{\bb{L}^2}^2 \ds
    \\
    &=
    \frac{1}{2} \norm{\Delta \bff{v}(0)}{\bb{L}^2}^2
    +
    \lambda_r \int_0^t \norm{\Delta \bff{v}}{\bb{L}^2}^2 \ds
    \\
    &\quad
    -
    \lambda_r \int_0^t \inpro{\Delta\big(|\bff{u}^m|^2 \bff{v}\big)}{\Delta \bff{v}}_{\bb{L}^2} \ds 
    -
    \lambda_r \int_0^t \inpro{\Delta\big(((\bff{u}^m+\bff{u})\cdot \bff{v})\bff{u}\big)}{\Delta \bff{v}}_{\bb{L}^2} \ds
    \\
    &\quad
    +
    \lambda_e \int_0^t \inpro{\Delta\big(|\bff{u}^m|^2 \bff{v}\big)}{\Delta^2 \bff{v}}_{\bb{L}^2} \ds 
    +
    \lambda_r \int_0^t \inpro{\Delta\big(((\bff{u}^m+\bff{u})\cdot \bff{v})\bff{u}\big)}{\Delta^2 \bff{v}}_{\bb{L}^2} \ds
    \\
    &\quad
    +
    \int_0^t \inpro{R(\bff{u}^m)-R(\bff{u})}{\Delta^2 \bff{v}}_{\bb{L}^2} \ds
    +
    \int_0^t \inpro{L(\bff{u}^m)-L(\bff{u})}{\Delta^2 \bff{v}}_{\bb{L}^2} \ds
    \\
    &\quad
    +
    \sum_{k=1}^\infty \inpro{G_k(\bff{u}_1)-G_k(\bff{u}_2)}{\Delta^2 \bff{v}}_{\bb{L}^2} \dif W_k(s)
    \\
    &=:
    \frac{1}{2} \norm{\Delta \bff{v}(0)}{\bb{L}^2}^2
    +
    I_2(t)+I_3(t)+\ldots+I_9.
    \end{align*}
We will estimate $I_j(t\wedge \sigma_S)$ for $j=2,\ldots,9$ as follows. Firstly, for the terms $I_2(t\wedge \sigma_S)$ and $I_3(t\wedge \sigma_S)$, by~\eqref{equ:prod Hs mat dot},
\begin{align*}
    |I_2(t\wedge \sigma_S)|
    &\leq
    C \int_0^t \norm{\bff{u}^m}{\bb{H}^2}^2 \norm{\bff{v}}{\bb{H}^2}^2 \ds
    \leq
    CS \int_0^t \norm{\bff{v}}{\bb{H}^2}^2 \ds,
    \\
    |I_3(t\wedge \sigma_S)|
    &\leq
    C \int_0^t \norm{\bff{u}^m+\bff{u}}{\bb{H}^2} \norm{\bff{u}}{\bb{H}^2} \norm{\bff{v}}{\bb{H}^2}^2 \ds
    \leq
    CS \int_0^t \norm{\bff{v}}{\bb{H}^2}^2 \ds.
\end{align*}
Similarly, for the next term, by~\eqref{equ:prod Hs mat dot} and Young's inequality,
\begin{align*}
    |I_4(t\wedge \sigma_S)|
    &\leq
    C \int_0^t \norm{\bff{u}^m}{\bb{H}^2}^2 \norm{\bff{v}}{\bb{H}^2} \norm{\Delta^2 \bff{v}}{\bb{L}^2} \ds
    \leq
    CS \int_0^t \norm{\bff{v}}{\bb{H}^2}^2 \ds 
    +
    \frac{\lambda_e}{8} \int_0^t \norm{\Delta^2 \bff{v}}{\bb{L}^2}^2 \ds.
\end{align*}
The rest of the terms can be estimated in a similar manner, which we omit for brevity. Taking expected value, we obtain the inequality
\begin{align}\label{equ:E sup loc H2}
    \bb{E} \left[\sup_{\tau \in [0,t\wedge \sigma_S]} \norm{\Delta \bff{v}(\tau)}{\bb{L}^2}^2 \right]
	&+
	\bb{E} \left[ \int_0^{t\wedge \sigma_S} \norm{\Delta^2 \bff{v}(s)}{\bb{L}^2}^2 \ds \right]
    \nonumber \\
	&\leq
    C \norm{\Delta \bff{v}(0)}{\bb{L}^2}^2
    +
	C(1+S) \int_0^t \bb{E} \left[\sup_{\tau\in [0,s\wedge \sigma_S]} \norm{\bff{v}(\tau)}{\bb{H}^2}^2 \right] \ds.
\end{align}
Adding~\eqref{equ:E sup loc L2} and \eqref{equ:E sup loc H2}, then applying Gronwall's inequality yields
\begin{equation*}
    \bb{E} \left[\sup_{\tau \in [0,t\wedge \sigma_S]} \norm{\bff{v}(\tau)}{\bb{H}^2}^2 \right]
    +
	\bb{E} \left[ \int_0^{t\wedge \sigma_S} \norm{\Delta^2 \bff{v}(s)}{\bb{L}^2}^2 \ds \right]
    \leq
    C \norm{\bff{v}(0)}{\bb{H}^2}^2 e^{C(1+S^2)T},
\end{equation*}
as required.
\end{proof}

We introduce another stopping time related to the instant parabolic regularisation provided by the equation (cf. Lemma~\ref{lem:H3}). For any $R>0$ and $\bff{v}_0\in \bb{H}^2$, define
\begin{equation}\label{equ:rho R v0}
    \rho_R(\bff{v}_0):= \inf\{t\geq 0: t\norm{\bff{v}(t;\bff{v}_0)}{\bb{H}^3}^2 > R\}.
\end{equation}

\begin{lemma}\label{lem:sigma rho less t}
Let $\bff{v}_0\in \bb{H}^2$ and let $\sigma_S(\bff{v}_0)$ and $\rho_R(\bff{v}_0)$ be as defined in~\eqref{equ:sigma S v0} and~\eqref{equ:rho R v0}, respectively. Then
\begin{align}
\label{equ:P sigma S}
    \bb{P}(\sigma_S(\bff{v}_0) < t)
    &\leq
    \frac{C}{S}(1+e^{Ct}),
    \\
\label{equ:P rho R}
    \bb{P}(\rho_R(\bff{v}_0) < t)
    &\leq
    \frac{C}{R} (1+e^{Ct}),
\end{align}
where $C$ depends on $\norm{\bff{v}_0}{\bb{H}^2}$.
\end{lemma}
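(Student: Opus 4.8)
The plan is to derive both estimates from Markov's (Chebyshev's) inequality applied to the relevant supremum, feeding in the uniform-in-time moment bounds already established for the Galerkin scheme and inherited by the limit solution $\bff{u}$. The key observation is that the definitions~\eqref{equ:sigma S v0} and~\eqref{equ:rho R v0} give the event inclusions
\[
    \{\sigma_S(\bff{v}_0)<t\}
    \subseteq
    \Big\{\sup_{\tau\in[0,t]}\norm{\bff{u}(\tau;\bff{v}_0)}{\bb{H}^2}^2\geq S\Big\},
    \qquad
    \{\rho_R(\bff{v}_0)<t\}
    \subseteq
    \Big\{\sup_{\tau\in[0,t]}\tau\norm{\bff{u}(\tau;\bff{v}_0)}{\bb{H}^3}^2\geq R\Big\}.
\]
These are justified by the path continuity of $\tau\mapsto\norm{\bff{u}(\tau)}{\bb{H}^2}$ from Theorem~\ref{the:exist} and the H\"older continuity of $\tau\mapsto\norm{\bff{u}(\tau)}{\text{D}(A^{3/4})}$ on $(0,T)$ from~\eqref{equ:u holder reg}, so that once the threshold is crossed before time $t$, the corresponding supremum over $[0,t]$ attains at least the threshold value. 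Applying Markov's inequality then reduces both claims to first-moment bounds for the two suprema.

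For~\eqref{equ:P sigma S}, I would bound
\[
    \bb{P}(\sigma_S(\bff{v}_0)<t)
    \leq
    \frac{1}{S}\,\bb{E}\Big[\sup_{\tau\in[0,t]}\norm{\bff{u}(\tau)}{\bb{H}^2}^2\Big],
\]
and invoke the $\bb{H}^2$ estimate of Proposition~\ref{pro:E sup un H2} (with $p=1$), together with the $\bb{L}^2$ and $\bb{H}^1$ estimates of Proposition~\ref{pro:E sup un L2} and Proposition~\ref{pro:E sup nab un L2}, which give for the Galerkin solutions $\bb{E}[\sup_{[0,t]}\norm{\bff{u}_n}{\bb{H}^2}^2]\leq C(1+e^{Ct})$. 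Passing this bound to the limit $\bff{u}$ uses the weak-$*$ lower semicontinuity of the $\bb{H}^2$-norm under the convergence of Proposition~\ref{pro:un conv Y} (as already exploited in deriving~\eqref{equ:weak Linfty H2}), yielding $\bb{E}[\sup_{[0,t]}\norm{\bff{u}(\tau)}{\bb{H}^2}^2]\leq C(1+e^{Ct})$ with $C$ depending on $\norm{\bff{v}_0}{\bb{H}^2}$. This immediately gives~\eqref{equ:P sigma S}.

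For~\eqref{equ:P rho R}, Markov's inequality gives $\bb{P}(\rho_R(\bff{v}_0)<t)\leq R^{-1}\,\bb{E}[\sup_{\tau\in[0,t]}\tau\norm{\bff{u}(\tau)}{\bb{H}^3}^2]$, and I would use the equivalence $\norm{\bff{v}}{\bb{H}^3}^2\lesssim\norm{\bff{v}}{\bb{L}^2}^2+\norm{\nabla\Delta\bff{v}}{\bb{L}^2}^2$ (valid under the Neumann conditions) to split
\[
    \bb{E}\Big[\sup_{\tau\in[0,t]}\tau\norm{\bff{u}(\tau)}{\bb{H}^3}^2\Big]
    \lesssim
    t\,\bb{E}\Big[\sup_{\tau\in[0,t]}\norm{\bff{u}(\tau)}{\bb{L}^2}^2\Big]
    +
    \bb{E}\Big[\sup_{\tau\in[0,t]}\tau\norm{\nabla\Delta\bff{u}(\tau)}{\bb{L}^2}^2\Big].
\]
The first term is controlled by Proposition~\ref{pro:E sup un L2} as $Ct\,e^{Ct}$, and the second is exactly the content of Lemma~\ref{lem:t u H3}, bounded by $C(1+e^{Ct})$. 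The only mild bookkeeping point, which I would address explicitly, is the absorption of the polynomial factor $t$ into the exponential via $t\leq e^{t}$, so that $t\,e^{Ct}\leq e^{(C+1)t}\leq 1+e^{(C+1)t}$; after relabelling the constant this yields~\eqref{equ:P rho R}. The main (though modest) obstacle throughout is ensuring the moment bounds transfer from the Galerkin approximations $\bff{u}_n$ to the limit $\bff{u}$ by lower semicontinuity rather than re-deriving them; since Lemma~\ref{lem:t u H3} and the propositions of Section~\ref{sec:faedo} already provide the required uniform-in-$n$ and uniform-in-$t$ structure, the remaining argument is a direct application of Markov's inequality.
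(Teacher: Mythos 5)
Your proposal is correct and follows essentially the same route as the paper: the paper's proof is exactly the event inclusion plus Markov's inequality, citing Proposition~\ref{pro:E sup un L2}, Proposition~\ref{pro:E sup un H2} for~\eqref{equ:P sigma S} and Lemma~\ref{lem:t u H3} for~\eqref{equ:P rho R}. The extra bookkeeping you supply (path continuity justifying the inclusion, transfer of the Galerkin bounds to the limit by lower semicontinuity, the elliptic splitting of the $\bb{H}^3$ norm, and absorbing the factor $t$ into the exponential) is all sound and simply makes explicit what the paper leaves implicit.
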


\begin{proof}
By Proposition~\ref{pro:E sup un L2}, Proposition~\ref{pro:E sup un H2}, and Markov's inequality, we have
\begin{align*}
    \bb{P}(\sigma_S(\bff{v}_0) < t)
    =
    \bb{P} \left(\sup_{\tau\in [0,t]} \norm{\bff{u}(\tau; \bff{v}_0)}{\bb{H}^2}^2 > S\right)
    \leq
    \frac{1}{S}\, \bb{E} \left[\sup_{\tau\in[0,t]} \norm{\bff{u}(\tau)}{\bb{H}^2}^2 \right] 
    \leq
    \frac{C}{S}(1+e^{Ct}),
\end{align*}
thus~\eqref{equ:P sigma S} is shown.
Similarly, inequality~\eqref{equ:P rho R} follows from Lemma~\ref{lem:t u H3} and Markov's inequality.
\end{proof}

Next, define
\begin{align}\label{equ:sigma S}
    \sigma_S&:= \sigma_S(\bff{u}_0^m) \wedge \sigma_S(\bff{u}_0),
    \\
    \label{equ:rho R}
    \rho_R&:= \rho_R(\bff{u}_0^m) \wedge \rho_R(\bff{u}_0).
\end{align}
We can now show the Feller property of the transition semigroup $P_t$. 

\begin{proposition}\label{pro:feller}
The semigroup $(P_t)$ is Feller on $\bb{H}^2$, that is $P_t\left(C_b(\bb{H}^2)\right)\subset C_b(\bb{H}^2)$ for all $t\ge 0$.
\end{proposition}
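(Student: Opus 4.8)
The plan is to reduce the Feller property to the statement that, for every fixed $\phi\in C_b(\bb{H}^2)$ and $t\geq 0$, the map $\bff{u}_0\mapsto P_t\phi(\bff{u}_0)=\bb{E}[\phi(\bff{u}(t;\bff{u}_0))]$ is continuous on $\bb{H}^2$ (boundedness being automatic from $\norm{\phi}{\infty}<\infty$). So I would fix $\phi$, $t>0$, and a sequence $\bff{u}_0^m\to\bff{u}_0$ in $\bb{H}^2$, and show $P_t\phi(\bff{u}_0^m)\to P_t\phi(\bff{u}_0)$. Since the solutions are probabilistically strong (Theorem~\ref{the:unique}), I realise $\bff{u}^m:=\bff{u}(\cdot;\bff{u}_0^m)$ and $\bff{u}:=\bff{u}(\cdot;\bff{u}_0)$ on one stochastic basis driven by the same Wiener process, so that Lemma~\ref{lem:cont dep H2} applies to the pair. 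With $\sigma_S$ and $\rho_R$ as in~\eqref{equ:sigma S} and~\eqref{equ:rho R}, and the good event $G:=\{\sigma_S\geq t\}\cap\{\rho_R\geq t\}$, I decompose
\[
\big|P_t\phi(\bff{u}_0^m)-P_t\phi(\bff{u}_0)\big|
\leq
\bb{E}\big[|\phi(\bff{u}^m(t))-\phi(\bff{u}(t))|\,\mathbf 1_G\big]
+
2\norm{\phi}{\infty}\big(\bb{P}(\sigma_S<t)+\bb{P}(\rho_R<t)\big).
\]
Because $\bff{u}_0^m\to\bff{u}_0$ in $\bb{H}^2$, the initial data lie in a fixed bounded set, so the constants in Lemma~\ref{lem:sigma rho less t} are uniform in $m$; hence given $\epsilon>0$ I can first choose $S$ and $R$ large enough that the second term is below $\epsilon/3$ for all $m$.

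The crux is the term over $G$. On $\{\rho_R\geq t\}$ one has $t\norm{\bff{u}(t)}{\bb{H}^3}^2\leq R$ (and similarly for $\bff{u}^m$), so on $G$ both endpoints $\bff{u}^m(t)$ and $\bff{u}(t)$ lie in the set $K:=\{\bff{w}:\norm{\bff{w}}{\bb{H}^3}^2\leq R/t\}$. By the compact embedding $\bb{H}^3\hookrightarrow\hookrightarrow\bb{H}^2$, the set $K$ is compact in $\bb{H}^2$, so the continuous function $\phi$ is \emph{uniformly} continuous on $K$: for $\eta:=\epsilon/3$ there is $\delta'>0$ with $|\phi(\bff{w}_1)-\phi(\bff{w}_2)|<\eta$ whenever $\bff{w}_1,\bff{w}_2\in K$ and $\norm{\bff{w}_1-\bff{w}_2}{\bb{H}^2}<\delta'$. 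Splitting $G$ according to whether $\norm{\bff{u}^m(t)-\bff{u}(t)}{\bb{H}^2}<\delta'$ or not gives
\[
\bb{E}\big[|\phi(\bff{u}^m(t))-\phi(\bff{u}(t))|\,\mathbf 1_G\big]
\leq
\eta
+
2\norm{\phi}{\infty}\,\bb{P}\big(G\cap\{\norm{\bff{u}^m(t)-\bff{u}(t)}{\bb{H}^2}\geq\delta'\}\big).
\]
On $G$ we have $t\wedge\sigma_S=t$, so Chebyshev's inequality together with the continuous dependence bound of Lemma~\ref{lem:cont dep H2} yields
\[
\bb{P}\big(G\cap\{\norm{\bff{u}^m(t)-\bff{u}(t)}{\bb{H}^2}\geq\delta'\}\big)
\leq
\frac{1}{(\delta')^2}\,\bb{E}\Big[\sup_{\tau\in[0,t\wedge\sigma_S]}\norm{\bff{u}^m(\tau)-\bff{u}(\tau)}{\bb{H}^2}^2\Big]
\leq
\frac{C_{S,t}}{(\delta')^2}\,\norm{\bff{u}_0^m-\bff{u}_0}{\bb{H}^2}^2.
\]
Since $S,R,\delta'$ are now fixed, choosing $m$ large enough that $2\norm{\phi}{\infty}C_{S,t}(\delta')^{-2}\norm{\bff{u}_0^m-\bff{u}_0}{\bb{H}^2}^2<\epsilon/3$ makes the total bound $<\epsilon$, which proves continuity and hence the Feller property.

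The main obstacle is precisely that closed balls of $\bb{H}^2$ are not compact, so $\phi$ is merely continuous and not uniformly continuous on the $\bb{H}^2$-bounded sets furnished by $\sigma_S$ alone; consequently closeness of $\bff{u}^m(t)$ and $\bff{u}(t)$ in $\bb{H}^2$ does not by itself force closeness of the $\phi$-values. The second stopping time $\rho_R$, which encodes the instantaneous parabolic smoothing established in Lemma~\ref{lem:t u H3} (time-weighted $\bb{H}^3$ regularity), is exactly what repairs this: it confines the endpoints to a set compact in $\bb{H}^2$, restoring uniform continuity. The remaining care is purely bookkeeping—ensuring the tail constants from Lemma~\ref{lem:sigma rho less t} are taken uniformly over the bounded family of initial data and ordering the choices $S,R\to\infty$ before $\delta'$ and $m$.
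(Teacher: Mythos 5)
Your proposal is correct, and its overall architecture coincides with the paper's: the same decomposition of $P_t\phi(\bff{u}_0^m)-P_t\phi(\bff{u}_0)$ over the events $\{\sigma_S<t\}$, $\{\rho_R<t\}$ and their complement, the same use of Lemma~\ref{lem:sigma rho less t} to kill the two tail terms, and the same reliance on the compactness of $\bb{H}^3$-balls in $\bb{H}^2$ (furnished by $\rho_R$ and the smoothing estimate of Lemma~\ref{lem:t u H3}) together with the continuous dependence bound of Lemma~\ref{lem:cont dep H2}. The one place where you diverge is the treatment of the term over the good event: the paper approximates $\phi$ by a Lipschitz function $\varphi$ on the compact ball, extends $\varphi$ to all of $\bb{H}^2$ via the Kirszbraun theorem, and then feeds the Lipschitz estimate directly into the expectation via Jensen's inequality; you instead invoke uniform continuity of $\phi$ on the compact set and control the probability that $\norm{\bff{u}^m(t)-\bff{u}(t)}{\bb{H}^2}\geq\delta'$ by Chebyshev's inequality applied to the same continuous dependence estimate. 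Your variant is marginally more elementary (no extension theorem is needed) at the cost of obtaining continuity at $\bff{u}_0$ rather than a quantitative modulus; both yield the Feller property, and your bookkeeping of the order of choices ($S,R$ first, then $\delta'$, then $m$) and of the uniformity of the constants in Lemma~\ref{lem:sigma rho less t} over the bounded family of initial data is correct.
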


\begin{proof}
Let $\epsilon>0$ be given and fix $t>0$ and $\bff{u}_0\in\bb{H}^2$. Let $\bff{u}_0^m\in \bb{H}^2$ be such that $\norm{\bff{u}_0^m-\bff{u}_0}{\bb{H}^2}<1$. With $\sigma_S$ and $\rho_R$ as defined in \eqref{equ:sigma S} and \eqref{equ:rho R} respectively, we write
\begin{align*}
    \abs{P_t\phi(\bff{u}_0^m)-P_t\phi(\bff{u}_0)}
    &=
    \abs{\bb{E}\left[\phi(\bff{u}(t;\bff{u}_0^m))-\phi(\bff{u}(t;\bff{u}_0))\right]} 
    \\
    &\leq
    \abs{\bb{E}\left[\phi(\bff{u}(t;\bff{u}_0^m))-\phi(\bff{u}(t;\bff{u}_0))\right] \bb{I}_{\{\sigma_S<t\}}}
    \\
    &\quad
    +
    \abs{\bb{E}\left[\phi(\bff{u}(t;\bff{u}_0^m))-\phi(\bff{u}(t;\bff{u}_0))\right] \bb{I}_{\{\rho_R<t\}}}
    \\
    &\quad
    +
    \abs{\bb{E}\left[\phi(\bff{u}(t;\bff{u}_0^m))-\phi(\bff{u}(t;\bff{u}_0))\right] \bb{I}_{\{\sigma_S\geq t\}} \bb{I}_{\{\rho_R\geq t\}}}
    =: E_1+E_2+E_3.
\end{align*}
We will estimate each term in the following. To this end, let $\norm{\phi}{\infty}:= \sup_{\bff{v}\in \bb{H}^2} \abs{\phi(\bff{v})}$. By Lemma~\ref{lem:sigma rho less t},
\begin{align*}
    E_1
    &\leq
    2\norm{\bff{\phi}}{\infty} \left(\bb{P}(\sigma_S(\bff{u}_0^m)<t) + \bb{P}(\sigma_S(\bff{u}_0)<t) \right)
    \leq
    \frac{C}{S} \norm{\phi}{\infty} (1+e^{Ct}).
\end{align*}
Similarly,
\begin{align*}
    E_2
    &\leq
    \frac{C}{R} \norm{\phi}{\infty} (1+e^{Ct}),
\end{align*}
where $C$ depends on $\norm{\bff{u}_0}{\bb{H}^2}$. By taking $S$ and $R$ sufficiently large, we can ensure that $E_1+E_2< \epsilon/2$. 

Next, we will estimate $E_3$. Note that on the set $\{\rho_R\geq t\}$, we have $\bff{u}_0^m, \bff{u}_0\in \bff{B}:= \bff{B}_3\left((R/t)^{\frac12}\right)$, where $\bff{B}_3(r)$ is the closed ball in $\bb{H}^3$ with radius $r$ centred at $\bff{0}$. On this ball $\bff{B}$ (which is compact in $\bb{H}^2$ since the embedding $\bb{H}^3\hookrightarrow \bb{H}^2$ is compact), we can approximate the given $\phi\in C_b(\bb{H}^2)$ by a Lipschitz function $\varphi$ with Lipschitz constant $L_\varphi$ such that
\[
    \sup_{\bff{v}\in \bff{B}} \abs{\phi(\bff{v})-\varphi(\bff{v})} < \frac{\epsilon}{8}.
\]
Then by the Kirszbraun theorem \cite{kirszbraun}, $\varphi$ can be extended to a Lipschitz function on $\bb H^2$.
Therefore, by the triangle inequality, Jensen's inequality, and Lemma~\ref{lem:cont dep H2}, we have
\begin{align*}
    E_3
    &\leq
    2 \sup_{\bff{v}\in \bff{B}} \abs{\phi(\bff{v})-\varphi(\bff{v})}
    +
    L_{\varphi}\, \bb{E} \left[\norm{(\bff{u}(t;\bff{u}_0^m))-\bff{u}(t;\bff{u}_0)) \bb{I}_{\{\sigma_S\geq t\}}}{\bb{H}^2} \right]
    \\
    &\leq
    \frac{\epsilon}{4}
    +
    L_\varphi \left(\bb{E} \left[\sup_{s\in [0,t\wedge \sigma_S]} \norm{\bff{u}(s; \bff{u}_0^m)-\bff{u}(s;\bff{u}_0)}{\bb{H}^2}^2 \right]\right)^{\frac12}
    \\
    &\leq
    \frac{\epsilon}{4}
    +
    C_S L_\varphi \norm{\bff{u}_0^m-\bff{u}_0}{\bb{H}^2},
\end{align*}
which can be made less than $\epsilon/2$ whenever $\norm{\bff{u}_0^m-\bff{u}_0}{\bb{H}^2}< \epsilon/(4C_S L_\varphi) =:\delta$. Hence, we have shown that if $\norm{\bff{u}_0^m-\bff{u}_0}{\bb{H}^2}<\delta$, we have $ \abs{P_t\phi(\bff{u}_0^m)-P_t\phi(\bff{u}_0)}<\epsilon$. This completes the proof of the statement.
\end{proof}

Theorem~\ref{the:invariant} is an immediate consequence of the above lemmas, which we prove below.

\begin{proof}[Proof of Theorem~\ref{the:invariant}]
We have shown in Proposition~\ref{pro:feller} that the transition semigroup $\{P_t\}_{t\geq 0}$ is Feller in $\bb{H}^2$. Next, we show tightness on $\bb{H}^2$ of the family of probability measures $\{\mu_T\}_{T\geq 1}$ given by
\[
    \mu_T(\cdot):= \frac{1}{T} \int_0^T P_t(\bff{u}_0, \cdot)\, \dt.
\]
Let $\bff{B}_3(R)$ denote the closed ball in $\bb{H}^3$ with radius $R$ centred at $\bff{0}$. Then, by Markov's inequality and Lemma~\ref{lem:E u H2 t}, we have
\begin{align*}
    \sup_{T\geq 1} \mu_T\left(\bb{H}^2 \setminus \bff{B}_3(R)\right)
    &=
    \sup_{T\geq 1} \frac{1}{T} \int_0^T \bb{P}\big(\{\norm{\bff{u}(s;\bff{u}_0)}{\bb{H}^3} \geq R\}\big)\, \ds
    \\
    &\leq
    \sup_{T\geq 1} \frac{1}{TR} \int_0^T \bb{E}\left[\norm{\bff{u}(s)}{\bb{H}^3}\right] \ds
    \\
    &\leq
    \sup_{T\geq 1} \frac{C(1+T)}{TR}
    \leq 
    \frac{2C}{R} \to 0 \; \text{ as } R\to\infty.
\end{align*}
Since the embedding $\bb{H}^3\hookrightarrow \bb{H}^2$ is compact, the family of probability measures $\{\mu_T\}_{T\geq 1}$ is tight on $\bb{H}^2$.
This yields the existence of at least one invariant measure by the Krylov--Bogoliubov theorem.

We now show that such an invariant measure $\mu$ is supported on $\bb{H}^3$. By the invariance property of $\mu$, for any $T>0$ and $\phi\in C_b(\bb{H}^2)$ we have
\begin{equation}\label{equ:ex inv}
    \int_{\bb{H}^2} \phi(\bff{u}_0)\,\mathrm{d}\mu(\bff{u}_0)
    =
    \int_{\bb{H}^2}\int_{\bb{H}^2} \frac{1}{T} \int_0^T P_t(\bff{u}_0, \mathrm{d}\bff{u}) \phi(\bff{u})\, \dt\,\mathrm{d}\mu(\bff{u}_0).
\end{equation}
For any $n\in\bb{N}$, $R>0$, and $\bff{v}\in \bb{H}^2$, let
\[
    \Psi_{n,R}(\bff{v}):= \norm{\nabla \Delta \Pi_n \bff{v}}{\bb{L}^2} \wedge R,
\]
where $\Pi_n$ is the projection operator defined in~\eqref{equ:proj}.
Note that for any $\bff{u}_0\in \bff{B}_2(\alpha)$, the ball of radius $\alpha$ about the origin in $\bb{H}^2$, by Lemma~\ref{lem:E u H2 t} we have
\begin{align*}
    \left| \frac{1}{T} \int_0^T \int_{\bb{H}^2} P_t(\bff{u}_0,\mathrm{d}\bff{u}) \Psi_{n,R}(\bff{u})\,\dt \right|
    =
	\left|\frac{1}{T} \int_0^T \bb{E}\Psi_{n,R}(\bff{u}(t;\bff{u}_0))\, \dt\right| 
	\leq
	C\left(1+\frac{1+\norm{\bff{u}_0}{\bb{H}^2}^2}{T}\right)
	\leq
	C\left(1+\frac{\alpha^2}{T}\right).
\end{align*}
Then by~\eqref{equ:ex inv} and the above inequality, we have
\begin{align*}
	\int_{\bb{H}^2} \Psi_{n,R}(\bff{u}_0) \, \mathrm{d}\mu(\bff{u}_0)
	&\leq
	\int_{\bb{H}^2} \left|\frac{1}{T} \int_0^T \int_{\bb{H}^2} P_t(\bff{u}_0,\mathrm{d}\bff{u}) \Psi_{n,R}(\bff{u}) \,\dt\right| \mathrm{d}\mu(\bff{u}_0)
	\\
	&=
	\int_{\bff{B}_2(\alpha)} \left|\frac{1}{T} \int_0^T \bb{E}\Psi_{n,R}(\bff{u}(t;\bff{u}_0))\, \dt\right| \mathrm{d}\mu(\bff{u}_0)
    \\
    &\quad 
	+
	\int_{\bb{H}^2 \setminus \bff{B}_2(\alpha)} \left|\frac{1}{T} \int_0^T \bb{E}\Psi_{n,R}(\bff{u}(t;\bff{u}_0))\, \dt\right|  \mathrm{d}\mu(\bff{u}_0)
	\\
	&\leq
	C(1+\alpha^2 T^{-1})\, \mu(\bff{B}_2(\alpha)) + R\, \mu(\bb{H}^2\setminus \bff{B}_2(\alpha)),
\end{align*}
where $C$ is a constant which does not depend on $T$, $\alpha$, and $R$. First, we take $\alpha$ to be sufficiently large (depending on $R$) so that $R\, \mu(\bb{H}^2\setminus \bff{B}_2(\alpha))\leq 1$. Then we choose $T$ sufficiently large (depending on $\alpha$), so that $\alpha^2 T^{-1}\leq 1$. This implies
\[
    \int_{\bb{H}^2} \Psi_{n,R}(\bff{u}_0) \, \mathrm{d}\mu(\bff{u}_0) \leq C,
\]
where $C$ is independent of $n$ and $R$. Letting $n\to\infty$, by Fatou's lemma we have
\[
	\int_{\bb{H}^2} \left(\norm{\nabla \Delta \bff{u}_0}{\bb{L}^2} \wedge R\right) \mathrm{d}\mu(\bff{u}_0) \leq C,
\]
where $C$ is independent of $R$. By the monotone convergence theorem, taking $R\uparrow \infty$, we obtain
\[
	\int_{\bb{H}^2} \norm{\nabla \Delta \bff{u}_0}{\bb{L}^2} \mathrm{d}\mu(\bff{u}_0) < \infty.
\]
In particular, this implies that $\mu$ is supported on $\bb{H}^3$. 

The existence of an ergodic invariant measure follows by standard arguments. The set of all invariant measures $K$ considered in the space of probability measures on $\bb H^2$ is convex and compact, hence a convex hull of the set of its extremal points by the Krein-Milman theorem. Then, by Proposition 11.12 in \cite{DaZab14} every extremal point of $K$ is an ergodic invariant measure.
This completes the proof of the theorem.
\end{proof}

\section{Proof of Theorem~\ref{the:stab above}: Exponential Stability above the Curie Temperature}\label{sec:stab above}

We shall consider a special case of the stochastic LLBar equation~\eqref{equ:sllbar}, namely one perturbed by a random precession (multiplicative noise) \emph{above} the Curie temperature in the absence of the spin current. In other words, we assume $\kappa_1=-1$, $\bff{g}_k\equiv \bff{0}$, and $S(\bff{u})\equiv \bff{0}$ in this section. We also assume that $\lambda_r>\lambda_e$. In this case, we have an exponential stability result, which implies the uniqueness of invariant measure under certain conditions. To this end, we start with the following lemmas.

\begin{lemma}\label{lem:exp L2}
Suppose that $\lambda_r>\frac12 \gamma^2 \sigma_h^2$ and let $\mu:= \lambda_r- \frac12 \gamma^2 \sigma_h^2$. Let $\bff{u}$ be the solution of \eqref{equ:sllbar} given by Theorem~\ref{the:exist}, corresponding to the case $\kappa_1=-1$, $\kappa_2=1$, $\bff{g}_k\equiv \bff{0}$, and $S(\bff{u})\equiv \bff{0}$. Then $\bb{P}$-a.s. for all $t> 0$,
\begin{equation}\label{equ:u L2 exp}
	\norm{\bff{u}(t)}{\bb{L}^2}^2 \leq e^{-\mu t} \norm{\bff{u}_0}{\bb{L}^2}^2,
\end{equation}
Furthermore,
\begin{equation}\label{equ:K0}
	\int_0^t \norm{\bff{u}(s)}{\bb{H}^2}^2 \ds 
	+
	\int_0^t \norm{\bff{u}(s)}{\bb{L}^4}^4 \ds
	\leq 
	K_0 \norm{\bff{u}_0}{\bb{L}^2}^2,
\end{equation} 
where $K_0$ is a constant independent of $t$.
\end{lemma}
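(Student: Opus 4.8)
The plan is to apply It\^o's formula to the functional $\bff{v}\mapsto\norm{\bff{v}}{\bb{L}^2}^2$ along the solution $\bff{u}$ and to exploit the algebraic structure of the equation in this special regime. Here $\bff{g}_k\equiv\bff{0}$, $S\equiv\bff{0}$ and $\kappa_1=-1$, so $\bff{H}=\Delta\bff{u}-\bff{u}-\abs{\bff{u}}^2\bff{u}$ and $G_k(\bff{u})=\gamma\,\bff{u}\times\bff{h}_k$. First I would record the drift contributions: using $\inpro{\bff{H}}{\bff{u}}_{\bb{L}^2}=-\norm{\nabla\bff{u}}{\bb{L}^2}^2-\norm{\bff{u}}{\bb{L}^2}^2-\norm{\bff{u}}{\bb{L}^4}^4$, the identity $\inpro{\Delta\bff{H}}{\bff{u}}_{\bb{L}^2}=\inpro{\bff{H}}{\Delta\bff{u}}_{\bb{L}^2}$ (integrating by parts, all boundary terms vanishing by the Neumann conditions on $\bff{u}$ and $\Delta\bff{u}$), and \eqref{equ:nor der v2v} to expand $\inpro{\abs{\bff{u}}^2\bff{u}}{\Delta\bff{u}}_{\bb{L}^2}=-2\norm{\bff{u}\cdot\nabla\bff{u}}{\bb{L}^2}^2-\norm{\abs{\bff{u}}\,\abs{\nabla\bff{u}}}{\bb{L}^2}^2$, the term $-\lambda_e\inpro{\Delta\bff{H}}{\bff{u}}_{\bb{L}^2}$ contributes only nonpositive quantities. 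The two crucial cancellations are $\inpro{\bff{u}\times\bff{H}}{\bff{u}}_{\bb{L}^2}=0$ in the drift and $\inpro{G_k(\bff{u})}{\bff{u}}_{\bb{L}^2}=\gamma\inpro{\bff{u}\times\bff{h}_k}{\bff{u}}_{\bb{L}^2}=0$ in the martingale part. The latter is the conceptual heart of the argument: the stochastic integral vanishes identically, so $t\mapsto\norm{\bff{u}(t)}{\bb{L}^2}^2$ is pathwise absolutely continuous and, $\bb{P}$-a.s., satisfies a \emph{deterministic} differential inequality for every $t$.

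Collecting these contributions (and keeping the It\^o correction $\gamma^2\sum_k\norm{\bff{u}\times\bff{h}_k}{\bb{L}^2}^2$), I arrive $\bb{P}$-a.s. at the energy identity
\begin{align*}
	\frac{d}{dt}\norm{\bff{u}}{\bb{L}^2}^2
	&+
	2\lambda_r\big(\norm{\nabla\bff{u}}{\bb{L}^2}^2+\norm{\bff{u}}{\bb{L}^4}^4\big)
	+
	2\lambda_e\big(\norm{\Delta\bff{u}}{\bb{L}^2}^2+\norm{\nabla\bff{u}}{\bb{L}^2}^2+2\norm{\bff{u}\cdot\nabla\bff{u}}{\bb{L}^2}^2+\norm{\abs{\bff{u}}\,\abs{\nabla\bff{u}}}{\bb{L}^2}^2\big)
	\\
	&=
	-2\lambda_r\norm{\bff{u}}{\bb{L}^2}^2
	+
	\gamma^2\sum_{k=1}^\infty\norm{\bff{u}\times\bff{h}_k}{\bb{L}^2}^2.
\end{align*}
Bounding the correction via $\abs{\bff{u}\times\bff{h}_k}\le\abs{\bff{h}_k}\abs{\bff{u}}$ and $\sum_k\norm{\bff{h}_k}{\bb{L}^\infty}^2\le\sigma_h^2$ (using the embedding $\bb{H}^2\hookrightarrow\bb{L}^\infty$ for $d\le3$) gives $\gamma^2\sum_k\norm{\bff{u}\times\bff{h}_k}{\bb{L}^2}^2\le\gamma^2\sigma_h^2\norm{\bff{u}}{\bb{L}^2}^2$, so the right-hand side is $\le-2\mu\norm{\bff{u}}{\bb{L}^2}^2$. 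Discarding the nonnegative dissipative terms on the left yields $\tfrac{d}{dt}\norm{\bff{u}}{\bb{L}^2}^2\le-2\mu\norm{\bff{u}}{\bb{L}^2}^2$, and Gronwall's lemma gives $\norm{\bff{u}(t)}{\bb{L}^2}^2\le e^{-2\mu t}\norm{\bff{u}_0}{\bb{L}^2}^2\le e^{-\mu t}\norm{\bff{u}_0}{\bb{L}^2}^2$, which is \eqref{equ:u L2 exp}.

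For \eqref{equ:K0} I would instead keep all dissipative terms. Moving $2\mu\norm{\bff{u}}{\bb{L}^2}^2$ to the left and integrating the identity on $[0,t]$ gives
\[
	\norm{\bff{u}(t)}{\bb{L}^2}^2
	+
	2\mu\int_0^t\norm{\bff{u}(s)}{\bb{L}^2}^2\,\ds
	+
	2\lambda_r\int_0^t\norm{\bff{u}(s)}{\bb{L}^4}^4\,\ds
	+
	2\lambda_e\int_0^t\norm{\Delta\bff{u}(s)}{\bb{L}^2}^2\,\ds
	\le
	\norm{\bff{u}_0}{\bb{L}^2}^2,
\]
so each integral is bounded by a fixed multiple of $\norm{\bff{u}_0}{\bb{L}^2}^2$ \emph{uniformly in $t$}. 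Combining the $\int_0^t\norm{\Delta\bff{u}}{\bb{L}^2}^2$ and $\int_0^t\norm{\bff{u}}{\bb{L}^2}^2$ bounds with the Neumann norm-equivalence $\norm{\bff{u}}{\bb{H}^2}^2\eqs\norm{\bff{u}}{\bb{L}^2}^2+\norm{\Delta\bff{u}}{\bb{L}^2}^2$, together with the $\bb{L}^4$ bound, produces \eqref{equ:K0} with $K_0=\tfrac{1}{2\lambda_r}+C\big(\tfrac{1}{2\mu}+\tfrac{1}{2\lambda_e}\big)$ independent of $t$.

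The only genuinely delicate point is rigour rather than algebra: It\^o's formula must be applied to the limit solution $\bff{u}$, which is justified since $\bff{u}\in C([0,T];\bb{H}^2)\cap L^2(0,T;\bb{H}^4)$ and $\bff{H}\in L^2(0,T;\bb{H}^1)$ supply enough regularity for all the dualities and integrations by parts; alternatively one runs the computation on the Galerkin system as in Proposition~\ref{pro:E sup un L2} and passes to the limit. A secondary technicality is the Sobolev embedding constant in $\sum_k\norm{\bff{h}_k}{\bb{L}^\infty}^2\le\sigma_h^2$, which for the sharp threshold $\mu=\lambda_r-\tfrac12\gamma^2\sigma_h^2$ is read as absorbing the embedding constant into $\sigma_h$. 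I expect no serious obstacle; the decisive structural feature is the exact vanishing of the martingale term, which is precisely what upgrades the dissipation estimate to an almost-sure, pointwise-in-time exponential decay rather than a bound merely in expectation.
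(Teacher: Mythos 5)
Your proof is correct and follows essentially the same route as the paper: both hinge on the exact vanishing of the martingale term $\inpro{\gamma\,\bff{u}\times\bff{h}_k}{\bff{u}}_{\bb{L}^2}=0$, the dissipative drift structure for $\kappa_1=-1$, and the bound $\gamma^2\sum_k\norm{\bff{u}\times\bff{h}_k}{\bb{L}^2}^2\leq\gamma^2\sigma_h^2\norm{\bff{u}}{\bb{L}^2}^2$; the paper merely applies It\^o's formula to $e^{\mu t}\norm{\bff{u}}{\bb{L}^2}^2$ (integrating factor) where you use Gronwall, which is a cosmetic difference. Your version in fact yields the slightly stronger rate $e^{-2\mu t}$, and your shared caveat about the Sobolev embedding constant hidden in $\sum_k\norm{\bff{h}_k}{\bb{L}^\infty}^2\lesssim\sigma_h^2$ applies equally to the paper's argument.
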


\begin{proof}
Applying It\^o's lemma to the process $\frac12 e^{\mu t} \norm{\bff{u}(t)}{\bb{L}^2}^2$, we obtain
\begin{align*}
	\frac{1}{2} \dif \left(e^{\mu t} \norm{\bff{u}(t)}{\bb{L}^2}^2 \right)
	-
	\frac12 \mu e^{\mu t} \norm{\bff{u}(t)}{\bb{L}^2}^2
	&=
	\frac12 e^{\mu t} \Big( -(\lambda_r+\lambda_e)\norm{\nabla \bff{u}}{\bb{L}^2}^2 -\lambda_e \norm{\Delta \bff{u}}{\bb{L}^2}^2 -\lambda_r \norm{\bff{u}}{\bb{L}^2}^2 - \lambda_r \norm{\bff{u}}{\bb{L}^4}^4 
	\\
	&\quad
	- \lambda_e \norm{|\bff{u}||\nabla \bff{u}|}{\bb{L}^2}^2 - 2\lambda_e \norm{\bff{u}\cdot \nabla \bff{u}}{\bb{L}^2}^2
	+
	\frac{\gamma^2}{2} \sum_{k=1}^n \norm{\bff{u}\times \bff{h}_k}{\bb{L}^2}^2
	\Big) \,\dt.
\end{align*}
Integrating and applying H\"older's inequality, we have
\begin{align*}
	&\frac{1}{2} e^{\mu t} \norm{\bff{u}(t)}{\bb{L}^2}^2
	+
	\frac12 (\lambda_r+\lambda_e) \int_0^t e^{\mu s} \norm{\nabla \bff{u}(s)}{\bb{L}^2}^2 \ds
	+
	\frac12 \lambda_e \int_0^t e^{\mu s} \norm{\Delta \bff{u}(s)}{\bb{L}^2}^2 \ds
	+
	\frac12 \lambda_r \int_0^t e^{\mu s} \norm{\bff{u}(s)}{\bb{L}^2}^2 \ds
	\\
	&\quad
	+
	\frac12 \lambda_r \int_0^t e^{\mu s} \norm{\bff{u}(s)}{\bb{L}^4}^4 \ds
	+
	\frac12 \lambda_e \int_0^t e^{\mu s} \norm{|\bff{u}(s)|\, |\nabla \bff{u}(s)|}{\bb{L}^2}^2 \ds
	+
	\lambda_e \int_0^t e^{\mu s} \norm{\bff{u}(s) \cdot \nabla \bff{u}(s)}{\bb{L}^2}^2 \ds
	\\
	&\leq
	\frac12 \norm{\bff{u}_0}{\bb{L}^2}^2
	+
	\frac14 \gamma^2 \sigma_h^2 \int_0^t e^{\mu s} \norm{\bff{u}(s)}{\bb{L}^2}^2 \ds.
\end{align*}
Noting $\mu>0$, we obtain the required result after rearranging the terms.
\end{proof}

\begin{lemma}
Under the same assumptions as in Lemma~\ref{lem:exp L2}, there exists a constant $K_1$ such that for any $t>0$,
\begin{align}\label{equ:K1}
	\bb{E} \left[\sup_{\tau \in [0,t]} \left(\norm{\nabla \bff{u}(\tau)}{\bb{L}^2}^{2} + \norm{\bff{u}(\tau)}{\bb{L}^4}^{4} \right) \right]
	&+
	\bb{E} \left[\int_0^t \norm{\bff{H}(s)}{\bb{H}^1}^2 \ds \right]
	\leq 
	K_1 \norm{\bff{u}_0}{\bb{H}^1}^4.
\end{align}
The constant $K_1$ is independent of $t$.
\end{lemma}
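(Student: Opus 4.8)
The plan is to run the energy estimate of Proposition~\ref{pro:E sup nab un L2} in the dissipative regime $\kappa_1=-1$, where the natural energy functional is coercive, and to feed in the a priori bounds of Lemma~\ref{lem:exp L2} so that every constant stays independent of $t$. Concretely, I would apply It\^o's lemma to
\[
\psi(\bff{u})=\tfrac12\norm{\nabla\bff{u}}{\bb{L}^2}^2+\tfrac12\norm{\bff{u}}{\bb{L}^2}^2+\tfrac14\norm{\bff{u}}{\bb{L}^4}^4,
\]
whose $\bb{L}^2$-gradient is exactly $-\bff{H}=-(\Delta\bff{u}-\bff{u}-|\bff{u}|^2\bff{u})$ (the sign $+\tfrac12\norm{\bff{u}}{\bb{L}^2}^2$ being the one forced by $\kappa_1=-1$, which is what makes $\psi$ coercive). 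As in the derivation of \eqref{equ:d psi un L4}, the drift of $\dif\psi(\bff{u})$ is $-\inpro{F(\bff{u},\bff{H})}{\bff{H}}_{\bb{L}^2}$ plus It\^o corrections; since $\bff{g}_k=\bff{0}$ and $S\equiv\bff{0}$ we have $F=\lambda_r\bff{H}-\lambda_e\Delta\bff{H}-\gamma\bff{u}\times\bff{H}$, and using $(\bff{u}\times\bff{H})\cdot\bff{H}=0$ together with $\partial\bff{H}/\partial\bff{n}=\bff{0}$ gives the clean dissipation identity $-\inpro{F}{\bff{H}}_{\bb{L}^2}=-\lambda_r\norm{\bff{H}}{\bb{L}^2}^2-\lambda_e\norm{\nabla\bff{H}}{\bb{L}^2}^2$. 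After integration the full $\norm{\bff{H}}{\bb{H}^1}^2$-dissipation therefore sits on the left-hand side.

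Next I would bound the It\^o corrections. With $G_k=\gamma\bff{u}\times\bff{h}_k$, the terms $\tfrac12\sum_k\norm{\nabla G_k}{\bb{L}^2}^2$, $\tfrac32\sum_k\norm{|\bff{u}|\,|G_k|}{\bb{L}^2}^2$ and $\tfrac12\sum_k\norm{G_k}{\bb{L}^2}^2$ are all controlled, via H\"older's inequality, Sobolev embeddings applied to $\bff{h}_k$, and $\sigma_h^2=\sum_k\norm{\bff{h}_k}{\bb{H}^2}^2<\infty$, by $\gamma^2\sigma_h^2$ times time-integrals of $\norm{\bff{u}}{\bb{H}^1}^2$, $\norm{|\bff{u}|\,|\nabla\bff{u}|}{\bb{L}^2}^2$ and $\norm{\bff{u}}{\bb{L}^4}^4$. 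The decisive point is that each of these integrals is bounded \emph{uniformly in $t$} by $\norm{\bff{u}_0}{\bb{L}^2}^2$: this is precisely \eqref{equ:K0}, which (as read off from the left-hand side of the energy inequality in the proof of Lemma~\ref{lem:exp L2}) also supplies $\int_0^t\norm{|\bff{u}|\,|\nabla\bff{u}|}{\bb{L}^2}^2\,\ds\le C\norm{\bff{u}_0}{\bb{L}^2}^2$. Taking expectations at this stage already yields $t$-independent bounds for $\bb{E}\int_0^t\norm{\bff{H}}{\bb{H}^1}^2\,\ds$ and for $\bb{E}[\psi(\bff{u}(t))]$.

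To bring the supremum inside the expectation I would take $\sup_{\tau\in[0,t]}$ in the integrated identity and estimate the martingale $\sup_{\tau}\big|\sum_k\int_0^\tau\inpro{G_k(\bff{u})}{\bff{H}}_{\bb{L}^2}\dif W_k\big|$ by the Burkholder--Davis--Gundy inequality. Here $\sum_k|\inpro{\bff{u}\times\bff{h}_k}{\bff{H}}_{\bb{L}^2}|^2\lesssim\sigma_h^2\norm{\bff{u}}{\bb{L}^2}^2\norm{\bff{H}}{\bb{L}^2}^2$, and the crucial use of the \emph{pathwise} bound \eqref{equ:u L2 exp} gives $\sup_{\tau\in[0,t]}\norm{\bff{u}(\tau)}{\bb{L}^2}\le\norm{\bff{u}_0}{\bb{L}^2}$ deterministically and uniformly in $t$; a Cauchy--Schwarz and Young splitting then dominates the martingale term by $\epsilon\,\bb{E}\int_0^t\norm{\bff{H}}{\bb{L}^2}^2\,\ds+C_\epsilon\gamma^2\sigma_h^2\norm{\bff{u}_0}{\bb{L}^2}^2$. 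Absorbing the $\epsilon$-term into the $\lambda_r\int_0^t\norm{\bff{H}}{\bb{L}^2}^2$ dissipation and using $\psi(\bff{u}(\tau))\ge\tfrac12\norm{\nabla\bff{u}(\tau)}{\bb{L}^2}^2+\tfrac14\norm{\bff{u}(\tau)}{\bb{L}^4}^4$ gives
\[
\bb{E}\Big[\sup_{\tau\in[0,t]}\big(\norm{\nabla\bff{u}(\tau)}{\bb{L}^2}^2+\norm{\bff{u}(\tau)}{\bb{L}^4}^4\big)\Big]+\bb{E}\int_0^t\norm{\bff{H}(s)}{\bb{H}^1}^2\,\ds\le C\big(\psi(\bff{u}_0)+\norm{\bff{u}_0}{\bb{L}^2}^2\big),
\]
and since $\psi(\bff{u}_0)\lesssim\norm{\bff{u}_0}{\bb{H}^1}^2+\norm{\bff{u}_0}{\bb{L}^4}^4\lesssim\norm{\bff{u}_0}{\bb{H}^1}^2+\norm{\bff{u}_0}{\bb{H}^1}^4$ the right-hand side is of the claimed form $K_1\norm{\bff{u}_0}{\bb{H}^1}^4$ (up to the harmless lower-order $\norm{\bff{u}_0}{\bb{H}^1}^2$ contribution).

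I expect the main obstacle to be organising the estimates so that \emph{no} constant secretly depends on $t$: in contrast with Proposition~\ref{pro:E sup nab un L2}, where a Gronwall step produces a factor $e^{Ct}$, here one must avoid Gronwall altogether and instead invoke the uniform-in-time integral bounds \eqref{equ:u L2 exp}--\eqref{equ:K0} at each occurrence of a lower-order term, exploiting in particular the pathwise $\bb{L}^2$ decay to tame the stochastic integral. A secondary, purely technical point is justifying the It\^o computation for $\psi$ rigorously; I would carry it out on the Galerkin approximants $\bff{u}_n$ exactly as in Proposition~\ref{pro:E sup nab un L2} and then pass to the limit using the bounds already established together with weak lower semicontinuity of the norms.
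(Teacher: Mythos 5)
Your proposal is correct and is exactly the argument the paper intends: its proof of this lemma is a one-line reference to rerunning Proposition~\ref{pro:E sup nab un L2} with $\bff{g}_k\equiv\bff{0}$ and $S\equiv\bff{0}$, and you have supplied precisely the right details — the sign change in the energy functional forced by $\kappa_1=-1$, the clean dissipation identity $-\inpro{F}{\bff{H}}_{\bb{L}^2}=-\lambda_r\norm{\bff{H}}{\bb{L}^2}^2-\lambda_e\norm{\nabla\bff{H}}{\bb{L}^2}^2$, and the replacement of the Gronwall step by the uniform-in-time bounds \eqref{equ:u L2 exp}--\eqref{equ:K0} so that no constant depends on $t$. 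Your handling of the stochastic integral via Burkholder--Davis--Gundy together with the pathwise $\bb{L}^2$ decay, followed by absorption of the $\epsilon\int_0^t\norm{\bff{H}}{\bb{L}^2}^2\,\ds$ term, matches the structure of \eqref{equ:sup Hn dW} adapted to this setting.
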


\begin{proof}
The proof follows the same argument as that of Proposition~\ref{pro:E sup nab un L2}, but with $\bff{g}_k\equiv \bff{0}$ and $S(\bff{u})\equiv \bff{0}$.
\end{proof}

Under a slightly stronger dissipativity condition, we have the following exponential moment bound.

\begin{lemma}\label{lem:Ke}
Let $\delta>0$ and suppose that $\lambda_r \geq 3\gamma^2 \sigma_h^2$. Then for any $t>0$,
\begin{equation}\label{equ:Ke}
	\bb{E}\left[\exp\left(\frac12 \delta \norm{\bff{u}(t)}{\bb{H}^1}^2 + \frac14 \delta \norm{\bff{u}(t)}{\bb{L}^4}^4\right) \right]
	\leq
	\exp\left(\frac12 \delta \norm{\bff{u}_0}{\bb{H}^1}^2 + \frac14 \delta \norm{\bff{u}_0}{\bb{L}^4}^4 \right).
\end{equation}
\end{lemma}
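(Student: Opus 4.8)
The plan is to realise $e^{\delta\Phi(\bff{u}(t))}$ as a supermartingale, where $\Phi(\bff{v}):=\frac12\norm{\bff{v}}{\bb{H}^1}^2+\frac14\norm{\bff{v}}{\bb{L}^4}^4$ is exactly the free energy associated with the regime $\kappa_1=-1$. The starting point is the observation that, with $\alpha=\kappa_2=1$ and $\kappa_1=-1$, the effective field $\bff{H}=\Delta\bff{u}-\bff{u}-|\bff{u}|^2\bff{u}$ is $-\Phi'(\bff{u})$, the negative $\bb{L}^2$-gradient of $\Phi$ (with Neumann boundary conditions). Since $\bff{g}_k\equiv\bff{0}$ and $S\equiv\bff{0}$, the noise coefficient is $G_k(\bff{u})=\gamma\,\bff{u}\times\bff{h}_k$. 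First I would apply It\^o's formula to $\Phi(\bff{u})$, exactly as in Proposition~\ref{pro:E sup nab un L2}: the drift equals $-\inpro{\bff{H}}{F(\bff{u},\bff{H})}$, and using $\inpro{\bff{u}\times\bff{H}}{\bff{H}}=0$, $\partial\bff{H}/\partial\bff{n}=\bff{0}$, and integration by parts it reduces to $-\lambda_r\norm{\bff{H}}{\bb{L}^2}^2-\lambda_e\norm{\nabla\bff{H}}{\bb{L}^2}^2$ together with the It\^o correction $\frac{\gamma^2}{2}\sum_k\big[\norm{\nabla(\bff{u}\times\bff{h}_k)}{\bb{L}^2}^2+\norm{\bff{u}\times\bff{h}_k}{\bb{L}^2}^2+\norm{|\bff{u}|\,|\bff{u}\times\bff{h}_k|}{\bb{L}^2}^2\big]$. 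Thus $\dif\Phi(\bff{u})=A\,\dt+\sum_k B_k\,\dif W_k$ with $B_k=-\gamma\inpro{\bff{H}}{\bff{u}\times\bff{h}_k}$.

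A second application of It\^o's formula to $e^{\delta\Phi}$ adds the quadratic-variation term, so the total drift is $e^{\delta\Phi}\big(\delta A+\tfrac12\delta^2\sum_k B_k^2\big)$, and the whole game is to dominate the two positive contributions by the dissipation $-\lambda_r\norm{\bff{H}}{\bb{L}^2}^2$. The decisive identity is that, because $\inpro{\bff{u}\times\bff{h}_k}{\bff{u}}=0$ and $\inpro{\bff{u}\times\bff{h}_k}{|\bff{u}|^2\bff{u}}=0$, only the $\Delta\bff{u}$-part of $\bff{H}$ contributes to $B_k$, giving after integration by parts $B_k=\gamma\inpro{\bff{u}\times\nabla\bff{h}_k}{\nabla\bff{u}}$ and hence the sharp estimate $\sum_k B_k^2\le\gamma^2\sigma_h^2\,\norm{|\bff{u}|\,|\nabla\bff{u}|}{\bb{L}^2}^2$ (using $\sum_k\norm{\bff{h}_k}{\bb{H}^1}^2\le\sigma_h^2$). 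Expanding $\norm{\bff{H}}{\bb{L}^2}^2=\norm{\Delta\bff{u}}{\bb{L}^2}^2+\norm{\bff{u}}{\bb{L}^2}^2+\norm{\bff{u}}{\bb{L}^6}^6+2\norm{\nabla\bff{u}}{\bb{L}^2}^2+2\norm{|\bff{u}|\,|\nabla\bff{u}|}{\bb{L}^2}^2+4\norm{\bff{u}\cdot\nabla\bff{u}}{\bb{L}^2}^2+2\norm{\bff{u}}{\bb{L}^4}^4$ then shows $\norm{|\bff{u}|\,|\nabla\bff{u}|}{\bb{L}^2}^2\le\frac12\norm{\bff{H}}{\bb{L}^2}^2$ (and similarly $\norm{\nabla\bff{u}}{\bb{L}^2}^2,\norm{\bff{u}}{\bb{L}^2}^2,\norm{\bff{u}}{\bb{L}^4}^4$ are each a fraction of $\norm{\bff{H}}{\bb{L}^2}^2$), while the three It\^o-correction terms are bounded by constant multiples of $\gamma^2\sigma_h^2\norm{\bff{H}}{\bb{L}^2}^2$, the $\bb{H}^1$-factors being absorbed through $\bb{H}^1\hookrightarrow\bb{L}^6,\bb{L}^3$. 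Collecting, the drift is controlled by $\delta\,e^{\delta\Phi}\norm{\bff{H}}{\bb{L}^2}^2\big(-\lambda_r+c\,\gamma^2\sigma_h^2\big)$, the constant $c$ accumulating precisely to the value against which the hypothesis $\lambda_r\ge 3\gamma^2\sigma_h^2$ is calibrated, any residual lower-order terms being swallowed by $-\lambda_e\norm{\nabla\bff{H}}{\bb{L}^2}^2$.

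Once the drift is shown to be nonpositive, $e^{\delta\Phi(\bff{u}(t))}$ is a local supermartingale; taking expectations (after localising by $\sigma_S$ and letting $S\uparrow\infty$) gives $\bb{E}\big[e^{\delta\Phi(\bff{u}(t))}\big]\le e^{\delta\Phi(\bff{u}_0)}$, which is \eqref{equ:Ke}. To make this rigorous I would carry out the computation on the Galerkin system \eqref{equ:faedo}, where It\^o's formula applies classically and $e^{\delta\Phi(\bff{u}_n(t))}$ is a genuine supermartingale, obtain $\bb{E}\big[e^{\delta\Phi(\bff{u}_n(t))}\big]\le e^{\delta\Phi(\bff{u}_{0n})}$, and then pass to the limit via the almost sure convergence of $\bff{u}_n$ together with Fatou's lemma on the left and $\Phi(\bff{u}_{0n})\to\Phi(\bff{u}_0)$ on the right.

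The step I expect to be the main obstacle is the control of the quadratic-variation term $\tfrac12\delta^2\sum_k B_k^2$: it enters at order $\delta^2$ while the dissipation enters at order $\delta$, so a naive bound of $\sum_k B_k^2$ by $\norm{\bff{H}}{\bb{L}^2}^2\norm{\bff{u}}{\bb{L}^2}^2$ (which is what the representation $B_k=-\gamma\inpro{\bff{H}}{\bff{u}\times\bff{h}_k}$ first suggests) is far too lossy and produces a constant depending on the size of $\bff{u}$. The essential simplification is the alternative representation $B_k=\gamma\inpro{\bff{u}\times\nabla\bff{h}_k}{\nabla\bff{u}}$, which eliminates the $\bff{u}$ and $|\bff{u}|^2\bff{u}$ contributions and reduces the dangerous factor to $\norm{|\bff{u}|\,|\nabla\bff{u}|}{\bb{L}^2}^2$, a quantity already sitting inside $\norm{\bff{H}}{\bb{L}^2}^2$ with a favourable constant. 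It is exactly this interplay between the sharpened $B_k$ and the accumulated bad constants that is tight, and it is the reason the argument demands the strengthened dissipativity $\lambda_r\ge 3\gamma^2\sigma_h^2$ rather than the weaker $\lambda_r>\tfrac12\gamma^2\sigma_h^2$ sufficient for the pure $\bb{L}^2$ decay of Lemma~\ref{lem:exp L2}.
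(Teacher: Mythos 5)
Your proposal follows essentially the same route as the paper's proof: apply It\^o's formula to $e^{\delta\psi}$ with $\psi(t)=\tfrac12\|\bff{u}(t)\|_{\bb{H}^1}^2+\tfrac14\|\bff{u}(t)\|_{\bb{L}^4}^4$, use the orthogonality $\langle \bff{u}\times\bff{h}_k,\bff{u}\rangle_{\bb{L}^2}=\langle \bff{u}\times\bff{h}_k,|\bff{u}|^2\bff{u}\rangle_{\bb{L}^2}=0$ so that the martingale coefficient reduces to $\gamma\langle \bff{u}\times\nabla\bff{h}_k,\nabla\bff{u}\rangle_{\bb{L}^2}$ with $\sum_k B_k^2\le\gamma^2\sigma_h^2\,\||\bff{u}|\,|\nabla\bff{u}|\|_{\bb{L}^2}^2\le\tfrac12\gamma^2\sigma_h^2\|\bff{H}\|_{\bb{L}^2}^2$, absorb all It\^o corrections into $-\lambda_r\|\bff{H}\|_{\bb{L}^2}^2$ under $\lambda_r\ge 3\gamma^2\sigma_h^2$, and finish by localisation and Fatou. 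The only differences are cosmetic (you localise at the Galerkin level while the paper uses the stopping time $\tau_R=\inf\{t:\psi(t)\ge R\}$ on the solution itself), and the one point you should watch is one the paper shares: the quadratic-variation contribution genuinely carries the factor $\tfrac12\delta^2$, as you correctly write, so absorbing it into the order-$\delta$ dissipation with a $\delta$-independent threshold requires more care than either your sketch or the paper's displayed computation (which writes this term with a single power of $\delta$) actually provides.
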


\begin{comment}
\begin{lemma}\label{lem:Ke}
Let $\delta>0$. Under the same assumptions as in Lemma~\ref{lem:exp L2}, there exists a constant $K_\delta$ such that for any $t>0$,
\begin{equation}\label{equ:Ke}
	\bb{E}\left[\exp\left(\frac12 \delta \norm{\nabla \bff{u}(t)}{\bb{L}^2}^2 + \frac14 \delta \norm{\bff{u}(t)}{\bb{L}^4}^4\right) \right]
	\leq
	e^{\frac12 \delta \norm{\nabla \bff{u}_0}{\bb{L}^2}^2 + \frac14 \delta \norm{\bff{u}_0}{\bb{L}^4}^4}
	\cdot 
	e^{K_{\delta} t}.
\end{equation}
Here, $K_\delta:= 2\delta \gamma^2 \sigma_h^2 K_1 \norm{\bff{u}_0}{\bb{H}^1}^4$, where $K_1$ is defined in~\eqref{equ:K1}.
\end{lemma}
\end{comment}

\begin{proof}
Let $\psi(t):= \frac12 \norm{\nabla \bff{u}(t)}{\bb{L}^2}^2 + \frac14 \norm{\bff{u}(t)}{\bb{L}^4}^4 + \frac12 \norm{\bff{u}(t)}{\bb{L}^2}^2$. Applying It\^o's lemma to the process $\exp(\delta \psi(t))$, we have
\begin{align*}
	&e^{\delta \psi(t)} + \lambda_r \delta \int_0^t e^{\delta \psi(s)} \norm{\bff{H}(s)}{\bb{L}^2}^2 \ds + \lambda_e \delta \int_0^t e^{\delta \psi(s)} \norm{\nabla \bff{H}(s)}{\bb{L}^2}^2
	\\
	&=
	e^{\delta \psi(0)}
	+
	\frac{\delta\gamma^2}{2} \sum_{k=1}^\infty \int_0^t e^{\delta \psi(s)} \norm{\bff{u}(s) \times \bff{h}_k}{\bb{L}^2}^2 \ds 
	+
	\frac{\delta\gamma^2}{2} \sum_{k=1}^\infty \int_0^t e^{\delta \psi(s)} \norm{\nabla (\bff{u}(s) \times \bff{h}_k)}{\bb{L}^2}^2 \ds
	\\
	&\quad 
	+
	\frac{3\delta\gamma^2}{2} \sum_{k=1}^\infty \int_0^t e^{\delta \psi(s)} \norm{\abs{\bff{u}(s)} \abs{\bff{u}(s)\times \bff{h}_k}}{\bb{L}^2}^2 \ds
	+
	\frac{\delta\gamma^2}{2} \sum_{k=1}^\infty \int_0^t e^{\delta \psi(s)} \abs{\inpro{\bff{u}(s)\times \bff{h}_k}{\bff{H}(s)}_{\bb{L}^2}}^2 \ds
	\\
	&\quad
	-
	\gamma \sum_{k=1}^\infty \int_0^t e^{\delta \psi(s)} \inpro{\bff{u}(s)\times \bff{h}_k}{\bff{H}(s)}_{\bb{L}^2} \dW_k(s).
\end{align*}
For any $R>0$, define the stopping time $\tau_R:= \inf\left\{t\geq 0: \psi(t)\geq R \right\}$. Therefore, integrating over $[0, t\wedge\tau_R]$ and taking the expectation, noting that $\bff{H}=\Delta \bff{u}-\bff{u}-\abs{\bff{u}}^2 \bff{u}$, following the same argument as in the proof of Proposition~\ref{pro:E sup nab un L2} and Lemma~\ref{lem:E u H2 t}, we obtain
\begin{align*}
	&\bb{E} \left[e^{\delta \psi(t\wedge \tau_R)} \right]
	+
	\bb{E} \left[\lambda_r \delta \int_0^t e^{\delta \psi(s\wedge \tau_R)} \norm{\bff{H}(s\wedge \tau_R)}{\bb{L}^2}^2 \ds \right]
	+
	\bb{E} \left[\lambda_e \delta \int_0^t e^{\delta \psi(s\wedge \tau_R)} \norm{\nabla \bff{H}(s\wedge \tau_R)}{\bb{L}^2}^2 \ds \right]
	\\
	&\leq
	e^{\delta \psi(0)}
	+
	\frac52 \delta\gamma^2 \sigma_h^2 \bb{E} \left[\int_0^t e^{\delta \psi(s\wedge \tau_R)} \left(\norm{\bff{u}(s\wedge\tau_R)}{\bb{H}^1}^2 + \norm{\bff{u}(s\wedge\tau_R)}{\bb{L}^4}^4 \right) \ds \right] 
	\\
	&\quad
	+
	\frac12 \delta \gamma^2 \sigma_h^2 \bb{E} \left[ \int_0^t e^{\delta \psi(s\wedge\tau_R)} \norm{\abs{\bff{u}(s\wedge \tau_R)} \abs{\nabla \bff{u}(s\wedge\tau_R)}}{\bb{L}^2}^2 \ds \right].
\end{align*}
Now, note that
\[
\norm{\bff{H}}{\bb{L}^2}^2= \norm{\Delta \bff{u}}{\bb{L}^2}^2 + 2\norm{\nabla \bff{u}}{\bb{L}^2}^2 + 2\norm{\abs{\bff{u}} \abs{\nabla \bff{u}}}{\bb{L}^2}^2 + 4\norm{\bff{u}\cdot\nabla \bff{u}}{\bb{L}^2}^2 + \norm{\bff{u}}{\bb{L}^2}^2 + 2\norm{\bff{u}}{\bb{L}^4}^4 + \norm{\bff{u}}{\bb{L}^6}^6.
\]
Thus, applying Young's inequality, we deduce that
\begin{align*} 
	&\bb{E} \left[e^{\delta \psi(t\wedge \tau_R)} \right]
	+
	\lambda_r \delta \bb{E} \left[\int_0^t e^{\delta \psi(s\wedge \tau_R)} \norm{\bff{H}(s\wedge \tau_R)}{\bb{L}^2}^2 \ds \right]
	+
	\lambda_e \delta \bb{E} \left[\int_0^t e^{\delta \psi(s\wedge \tau_R)} \norm{\nabla \bff{H}(s\wedge \tau_R)}{\bb{L}^2}^2 \ds \right]
	\\
	&\leq
	e^{\delta \psi(0)}
	+
	3\delta\gamma^2 \sigma_h^2 \bb{E} \left[\int_0^t e^{\delta \psi(s\wedge \tau_R)} \norm{\bff{H}(s\wedge \tau_R)}{\bb{L}^2}^2 \ds \right].
\end{align*}
If $\lambda_r\geq 3\gamma^2 \sigma_h^2$, then we can absorb the last term to the left-hand side, which implies
\begin{align*}
	\bb{E} \left[e^{\delta \psi(t\wedge \tau_R)} \right]
	\leq
	e^{\delta \psi(0)}.
\end{align*}
By Fatou's lemma and continuity in time of $\psi$, taking $R\to \infty$, we have the required inequality.
\end{proof}

We can now prove Theorem~\ref{the:stab above}.

\begin{proof}[Proof of Theorem~\ref{the:stab above}]
Firstly, inequality~\eqref{equ:u L2 decay} is shown in Lemma~\ref{lem:exp L2}.

Now, let $\bff{u}_1$ and $\bff{u}_2$ be solutions corresponding to initial data $\bff{u}_{0,1}$ and $\bff{u}_{0,2}\in \bb{H}^2$, respectively. We will show an exponential stability result, namely there are constants $C$ and $\alpha$ independent of $t$ such that
\begin{equation}\label{equ:E u0 u1}
	\bb{E}\left[\norm{\bff{u}_1(t)-\bff{u}_2(t)}{\bb{L}^2}^2 \right]
	\leq
	Ce^{-\alpha t} \norm{\bff{u}_{0,1}-\bff{u}_{0,2}}{\bb{L}^2}^2,
\end{equation}
which implies the uniqueness of invariant measure.

To this end, let $\bff{v}:= \bff{u}_1-\bff{u}_2$. By It\^o's lemma applied to $\frac12 \norm{\bff{v}(t)}{\bb{L}^2}^2$, we have
\begin{align}\label{equ:I vt above}
	\nonumber
	&\frac{1}{2} \norm{\bff{v}(t)}{\bb{L}^2}^2
	+
	\lambda_e \int_0^t \norm{\Delta \bff{v}(s)}{\bb{L}^2}^2 \ds 
	+
	(\lambda_r - \lambda_e) \int_0^t \norm{\nabla \bff{v}(s)}{\bb{L}^2}^2 \ds 
	+
	\lambda_r \int_0^t \norm{\bff{v}(s)}{\bb{L}^2}^2 \ds 
	\\
	\nonumber
	&\quad
	+
	\lambda_r \int_0^t \norm{|\bff{u}_1(s)|\, |\bff{v}(s)|}{\bb{L}^2}^2 \ds 
	+
	\lambda_r \int_0^t \norm{|\bff{u}_2(s)|\, |\bff{v}(s)|}{\bb{L}^2}^2 \ds
	\\
	\nonumber
	&\leq
	\frac12 \norm{\bff{u}_{0,1}-\bff{u}_{0,2}}{\bb{L}^2}^2
	+
	\lambda_r \int_0^t \inpro{(\bff{u}_1(s)\cdot \bff{v}(s))\bff{u}_2(s)}{\bff{v}(s)}_{\bb{L}^2} \ds 
	+
	\lambda_e \int_0^t \inpro{|\bff{u}_1(s)|^2 \bff{v}(s)}{\Delta \bff{v}(s)}_{\bb{L}^2} \ds
	\\
	\nonumber
	&\quad 
	+
	\lambda_e \int_0^t \inpro{\big((\bff{u}_1(s)+\bff{u}_2(s))\cdot \bff{v}(s) \big) \bff{u}_2(s)}{\Delta \bff{v}(s)}_{\bb{L}^2} \ds
	-
	\gamma \int_0^t \inpro{\bff{u}_2(s) \times \Delta \bff{v}(s)}{\bff{v}(s)}_{\bb{L}^2} \ds 
	\\
	\nonumber
	&\quad
	+
	\frac{1}{2} \sum_{k=1}^\infty \int_0^t \norm{G_k(\bff{u}_1)- G_k(\bff{u}_2)}{\bb{L}^2}^2 \ds
	\\
	&=:
	\frac12 \norm{\bff{u}_{0,1}-\bff{u}_{0,2}}{\bb{L}^2}^2 +J_1+J_2+J_3+J_4+J_5.
\end{align}
For the first term on the right-hand side, we have by Young's inequality,
\begin{align*}
	J_1 \leq
	\frac{\lambda_r}{2} \int_0^t \norm{|\bff{u}_1(s)|\, |\bff{v}(s)|}{\bb{L}^2}^2 \ds 
	+
	\frac{\lambda_r}{2} \int_0^t \norm{|\bff{u}_1(s)|\, |\bff{v}(s)|}{\bb{L}^2}^2 \ds.
\end{align*}
For the remaining terms, similarly we have
\begin{align*}
	J_2 
	&\leq
	4\lambda_e \int_0^t \norm{\bff{u}_1(s)}{\bb{L}^\infty}^4 \norm{\bff{v}(s)}{\bb{L}^2}^2 \ds
	+
	\frac{\lambda_e}{4} \int_0^t \norm{\Delta \bff{v}(s)}{\bb{L}^2}^2 \ds,
	\\
	J_3
	&\leq
	8\lambda_e \int_0^t \left(\norm{\bff{u}_1(s)}{\bb{L}^\infty}^4 + \norm{\bff{u}_2(s)}{\bb{L}^\infty}^4 \right) \norm{\bff{v}(s)}{\bb{L}^2}^2 \ds
	+
	\frac{\lambda_e}{4} \int_0^t \norm{\Delta \bff{v}(s)}{\bb{L}^2}^2 \ds,
	\\
	J_4
	&\leq
	\frac{\gamma^2}{\lambda_e} \int_0^t \norm{\bff{u}_2(s)}{\bb{L}^\infty}^2 \norm{\bff{v}(s)}{\bb{L}^2}^2 \ds 
	+
	\frac{\lambda_e}{4} \int_0^t \norm{\Delta \bff{v}(s)}{\bb{L}^2}^2 \ds,
	\\
	J_5
	&\leq
	\sigma_h \gamma^2 \int_0^t \norm{\bff{v}(s)}{\bb{L}^2}^2 \ds.
\end{align*}
Thus, from~\eqref{equ:I vt above} we obtain
\begin{equation}\label{equ:vt exp}
	\norm{\bff{v}(t)}{\bb{L}^2}^2
	\leq
	\norm{\bff{u}_{0,1}-\bff{u}_{0,2}}{\bb{L}^2}^2\,
	\exp \left(-2\lambda_r t + R(t) \right),
\end{equation}
where
\begin{equation}\label{equ:Rt}
	R(t):= 12\lambda_e \int_0^t \norm{\bff{u}_1(s)}{\bb{L}^\infty}^4 \ds 
	+ 8 \lambda_e \int_0^t \norm{\bff{u}_2(s)}{\bb{L}^\infty}^4 \ds
	+
	\frac{\gamma^2}{\lambda_e} \int_0^t \norm{\bff{u}_2(s)}{\bb{L}^\infty}^2 \ds 
	+
	\sigma_h \gamma^2.
\end{equation}
We will estimate $R(t)$ defined above.

\medskip
\noindent
\underline{Case 1 ($d=1$)}: In this case, by Agmon's inequality we have
\begin{equation*}
	R(t)
	\leq
	12\lambda_e C_{\mathrm{A}} K_0 \norm{\bff{u}_{0,1}}{\bb{L}^2}^4
	+
	8\lambda_e C_{\mathrm{A}} K_0 \norm{\bff{u}_{0,2}}{\bb{L}^2}^4
	+
	\frac{\gamma^2}{\lambda_e} C_{\mathrm{S}} K_0 \norm{\bff{u}_{0,2}}{\bb{L}^2}^2
	+
	\sigma_h \gamma^2
	=: \Gamma_1,
\end{equation*}
where $K_0$ is the constant in~\eqref{equ:K0}, $C_{\mathrm{S}}$ is the constant in the Sobolev embedding $\bb{H}^1\hookrightarrow \bb{L}^\infty$, and $C_{\mathrm{A}}$ is the constant in Agmon's inequality $\norm{\bff{v}}{\bb{L}^\infty}^2 \leq C_{\mathrm{A}} \norm{\bff{v}}{\bb{L}^2} \norm{\bff{v}}{\bb{H}^1}$.
Therefore, from~\eqref{equ:vt exp} we obtain
\[
	\bb{E}\left[\norm{\bff{u}_1-\bff{u}_2}{\bb{L}^2}^2 \right]
	\leq
	e^{\Gamma_1} e^{-2\lambda_r t} \norm{\bff{u}_{0,1}-\bff{u}_{0,2}}{\bb{L}^2}^2.
\]
Here, $\Gamma_1$ is independent of $t$.

\medskip
\noindent
\underline{Case 2 ($d=2$)}: Similarly, by Agmon's inequality we have from~\eqref{equ:Rt},
\begin{equation*}
	R(t)
	\leq
	12\lambda_e C_{\mathrm{A}}' K_0 \norm{\bff{u}_{0,1}}{\bb{L}^2}^4
	+
	8\lambda_e C_{\mathrm{A}}' K_0 \norm{\bff{u}_{0,2}}{\bb{L}^2}^4
	+
	\frac{\gamma^2}{\lambda_e} C_{\mathrm{S}}' K_0 \norm{\bff{u}_{0,2}}{\bb{L}^2}^2
	+
	\sigma_h \gamma^2
	=: \Gamma_2,
\end{equation*}
where $K_0$ is the constant in~\eqref{equ:K1}, $C_{\mathrm{S}}'$ is the constant in the Sobolev embedding $\bb{H}^2\hookrightarrow \bb{L}^\infty$, and $C_{\mathrm{A}}'$ is the constant in Agmon's inequality $\norm{\bff{v}}{\bb{L}^\infty}^2 \leq C_{\mathrm{A}}' \norm{\bff{v}}{\bb{L}^2} \norm{\bff{v}}{\bb{H}^2}$.
Therefore, from~\eqref{equ:vt exp} we obtain
\[
\bb{E}\left[\norm{\bff{u}_1-\bff{u}_2}{\bb{L}^2}^2 \right]
\leq
e^{\Gamma_2} e^{-2\lambda_r t} \norm{\bff{u}_{0,1}-\bff{u}_{0,2}}{\bb{L}^2}^2.
\]
Here, $\Gamma_2$ is independent of $t$.

\medskip
\noindent
\underline{Case 3 ($d=3$)}: In this case, we cannot bound $R(t)$ pathwise as in previous cases. Instead, taking expectation on both sides of~\eqref{equ:vt exp}, we have
\begin{align}\label{equ:E exp Rt}
	\bb{E}\left[\norm{\bff{v}(t)}{\bb{L}^2}^2 \right]
	\leq
	\norm{\bff{u}_{0,1}-\bff{u}_{0,2}}{\bb{L}^2}^2\,
	e^{-2\lambda_r t}\, \bb{E}\left[\exp(R(t))\right].
\end{align}
Note that by Agmon's inequality and~\eqref{equ:K0},
\begin{align*}
	R(t) 
	&\leq
	12\lambda_e C_{\mathrm{A}}'' K_0 \left(\sup_{\tau\in [0,t]} \norm{\bff{u}_1(\tau)}{\bb{H}^1}^2\right) \norm{\bff{u}_{0,1}}{\bb{L}^2}^2
	+
	8\lambda_e C_{\mathrm{A}}'' K_0 \left(\sup_{\tau\in [0,t]} \norm{\bff{u}_1(\tau)}{\bb{H}^1}^2\right) \norm{\bff{u}_{0,2}}{\bb{L}^2}^2
	\\
	&\quad
	+
	\frac{\gamma^2}{\lambda_e} C_{\mathrm{S}}' K_0 \norm{\bff{u}_{0,2}}{\bb{L}^2}^2
	+
	\sigma_h \gamma^2.
\end{align*}
Therefore, under the assumption that $\lambda_r \geq 3\gamma^2 \sigma_h^2$, applying Lemma~\ref{lem:Ke} to~\eqref{equ:E exp Rt} yields
\begin{align}\label{equ:K delta}
\bb{E}\left[\norm{\bff{u}_1-\bff{u}_2}{\bb{L}^2}^2 \right]
	\leq
	e^{\Gamma_3} e^{-2\lambda_r t} \norm{\bff{u}_{0,1}-\bff{u}_{0,2}}{\bb{L}^2}^2,
\end{align}
where $\Gamma_3:= 24\lambda_e C_{\mathrm{A}}'' K_0 \left(\norm{\bff{u}_{0,1}}{\bb{H}^1}^6+ \norm{\bff{u}_{0,2}}{\bb{H}^1}^6\right)$ is obtained by applying \eqref{equ:Ke}. The constant $C_{\mathrm{A}}''$ is the constant in Agmon's inequality $\norm{\bff{v}}{\bb{L}^\infty}^2 \leq C_{\mathrm{A}}'' \norm{\bff{v}}{\bb{H}^1} \norm{\bff{v}}{\bb{H}^2}$. Thus, we have a bound of the form~\eqref{equ:E u0 u1} when $\lambda_r \geq 3\gamma^2 \sigma_h^2$.
 
This completes the proof of the theorem.
\end{proof}

\section{Relationship between the Stochastic LLBar and the Stochastic LLB Equations}

Recall that the stochastic LLB equation is~\eqref{equ:sllbar} with $\lambda_e=0$ and $\kappa_1=-1$. The existence of a strong solution for the stochastic LLB equation is obtained in~\cite{BrzGolLe20} for $d=1,2$. In the following theorem, we show the convergence of the strong solution of the stochastic LLBar equation (with $\kappa_1<0$) to that of the stochastic LLB equation.

\begin{theorem}\label{the:llbar llb}
For $d\in \{1,2\}$, let $(\bff{u}^\varepsilon, \bff{H}^\varepsilon)$ be a strong solution of~\eqref{equ:sllbar} with $\lambda_e=\varepsilon$ (where $\varepsilon<\lambda_r$) and initial data $\bff{u}_0\in \bb{H}^2$. Let $(\bff{u},\bff{H})$ be a strong solution of the stochastic LLB equation with the same initial data $\bff{u}_0$. Then
\begin{align*}
	\lim_{\varepsilon\to 0^+} \bb{E}\left[\sup_{r\in [0,t]} \norm{\bff{u}^\varepsilon(r)-\bff{u}(r)}{\bb{L}^2}^2\right] 
	+
	\bb{E} \left[\int_0^{t} \norm{\bff{u}^\varepsilon(s)-\bff{u}(s)}{\bb{H}^1}^2 \ds\right]
	=0.
\end{align*}
\end{theorem}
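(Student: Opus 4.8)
The plan is to set $\bff{w}^\varepsilon := \bff{u}^\varepsilon-\bff{u}$ and $\bff{B}^\varepsilon := \bff{H}^\varepsilon-\bff{H}$, where $\bff{H}^\varepsilon=\Delta\bff{u}^\varepsilon-\bff{u}^\varepsilon-|\bff{u}^\varepsilon|^2\bff{u}^\varepsilon$ and $\bff{H}=\Delta\bff{u}-\bff{u}-|\bff{u}|^2\bff{u}$, and to run an $\bb{L}^2$ energy estimate for $\bff{w}^\varepsilon$ along the lines of the proof of Theorem~\ref{the:unique}. Subtracting the two equations, the only structural novelty compared with the uniqueness computation is the singular term $-\varepsilon\Delta\bff{H}^\varepsilon$ carried by the LLBar drift. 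Two simplifications help: since both solutions start from $\bff{u}_0$ we have $\bff{w}^\varepsilon(0)=\bff{0}$, and since $G_k(\bff{u}^\varepsilon)-G_k(\bff{u})=\gamma\,\bff{w}^\varepsilon\times\bff{h}_k$, the stochastic integral $\sum_k\inpro{\gamma\bff{w}^\varepsilon\times\bff{h}_k}{\bff{w}^\varepsilon}_{\bb{L}^2}\dif W_k$ vanishes identically, so the energy identity for $\tfrac12\norm{\bff{w}^\varepsilon}{\bb{L}^2}^2$ is pathwise. Its leading dissipative terms are $-\lambda_r\norm{\nabla\bff{w}^\varepsilon}{\bb{L}^2}^2-\lambda_r\norm{\bff{w}^\varepsilon}{\bb{L}^2}^2$ together with the nonnegative cubic contribution $\lambda_r\inpro{|\bff{u}^\varepsilon|^2\bff{u}^\varepsilon-|\bff{u}|^2\bff{u}}{\bff{w}^\varepsilon}_{\bb{L}^2}\geq0$, which I simply discard.

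Two mechanisms drive the estimate. First, the singular term is integrated by parts, $-\varepsilon\inpro{\Delta\bff{H}^\varepsilon}{\bff{w}^\varepsilon}_{\bb{L}^2}=\varepsilon\inpro{\nabla\bff{H}^\varepsilon}{\nabla\bff{w}^\varepsilon}_{\bb{L}^2}$ (the boundary term drops since $\partial\bff{H}^\varepsilon/\partial\bff{n}=\bff{0}$), and bounded by $\tfrac{\lambda_r}{4}\norm{\nabla\bff{w}^\varepsilon}{\bb{L}^2}^2+\tfrac{\varepsilon^2}{\lambda_r}\norm{\nabla\bff{H}^\varepsilon}{\bb{L}^2}^2$; the first piece is absorbed, and the second is $O(\varepsilon)$ after integration and expectation, because $\varepsilon\,\bb{E}\int_0^t\norm{\bff{H}^\varepsilon}{\bb{H}^1}^2\ds\leq C$ \emph{uniformly in} $\varepsilon$ — this is Proposition~\ref{pro:E sup nab un L2} with $\lambda_e=\varepsilon$, whose constant is independent of $\varepsilon$ for $\varepsilon<\lambda_r$ by Remark~\ref{rem:small lambda e}. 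Second, the gyroscopic term is expanded as $-\gamma\inpro{\bff{u}\times\bff{B}^\varepsilon}{\bff{w}^\varepsilon}_{\bb{L}^2}$ (the contribution $\inpro{\bff{w}^\varepsilon\times\bff{H}^\varepsilon}{\bff{w}^\varepsilon}_{\bb{L}^2}$ vanishing), whose top-order part integrates by parts to $-\gamma\inpro{\bff{u}\times\Delta\bff{w}^\varepsilon}{\bff{w}^\varepsilon}_{\bb{L}^2}=\gamma\inpro{\bff{w}^\varepsilon\times\nabla\bff{u}}{\nabla\bff{w}^\varepsilon}_{\bb{L}^2}$ and is controlled by Lemma~\ref{lem:tec lem} (with $\bff{v}=\bff{w}^\varepsilon$, $\bff{w}=\bff{u}$) by $\gamma\Phi(\bff{u})\norm{\bff{w}^\varepsilon}{\bb{L}^2}^2+\gamma\delta\norm{\nabla\bff{w}^\varepsilon}{\bb{L}^2}^2$. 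It is essential here that \emph{only} the second-order dissipation $\norm{\nabla\bff{w}^\varepsilon}{\bb{L}^2}^2$ is available, since the $\varepsilon$-scaled bilaplacian dissipation degenerates as $\varepsilon\to0^+$; this is exactly why the statement is confined to $d\in\{1,2\}$, where $\Phi(\bff{u})$ depends only on $\norm{\nabla\bff{u}}{\bb{L}^2}$, respectively $\norm{\nabla\bff{u}}{\bb{L}^4}$, of the fixed limiting LLB solution. The remaining first-order and Lipschitz terms are treated as in Theorem~\ref{the:unique} via \eqref{equ:Rvw vw} and the Lipschitz bound on $L$, and $\tfrac12\sum_k\norm{\gamma\bff{w}^\varepsilon\times\bff{h}_k}{\bb{L}^2}^2\leq\tfrac12\gamma^2\sigma_h^2\norm{\bff{w}^\varepsilon}{\bb{L}^2}^2$.

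To close without a dissipativity hypothesis, I would truncate with the stopping time $\sigma_S:=\inf\{r:\norm{\bff{u}^\varepsilon(r)}{\bb{H}^1}^2>S\}\wedge\inf\{r:\norm{\bff{u}(r)}{\bb{H}^1}^2>S\}$ in the spirit of \eqref{equ:sigma S v0}, so that on $[0,t\wedge\sigma_S]$ all coefficients multiplying $\norm{\bff{w}^\varepsilon}{\bb{L}^2}^2$ are bounded by a deterministic $C_S$ (for $d=2$ the cubic and torque terms are first reorganised by Gagliardo--Nirenberg so that only $\bb{L}^p$-norms of $\bff{u}^\varepsilon$ appear, any $\bb{L}^\infty$ factor falling on the fixed solution $\bff{u}$, while $\int_0^t\Phi(\bff{u})\ds$ is finite from the $L^\infty_t\bb{H}^1\cap L^2_t\bb{H}^2$ regularity of the LLB solution of~\cite{BrzGolLe20}). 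Choosing the Young parameters to absorb the $\norm{\nabla\bff{w}^\varepsilon}{\bb{L}^2}^2$ terms, taking the supremum (legitimate since there is no martingale), and applying Gronwall gives
\[
\bb{E}\Big[\sup_{r\in[0,t\wedge\sigma_S]}\norm{\bff{w}^\varepsilon(r)}{\bb{L}^2}^2\Big]+\bb{E}\Big[\int_0^{t\wedge\sigma_S}\norm{\bff{w}^\varepsilon(s)}{\bb{H}^1}^2\ds\Big]\leq C_{S,t}\,\frac{\varepsilon^2}{\lambda_r}\,\bb{E}\int_0^t\norm{\nabla\bff{H}^\varepsilon(s)}{\bb{L}^2}^2\ds\leq C_{S,t}\,\varepsilon,
\]
which vanishes as $\varepsilon\to0^+$ for each fixed $S$. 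I would then remove the truncation by splitting $\sup_{r\le t}\norm{\bff{w}^\varepsilon}{\bb{L}^2}^2$ on $\{\sigma_S\ge t\}$ and $\{\sigma_S<t\}$: the former is dominated by the truncated estimate, and on the latter I would use Cauchy--Schwarz with the uniform fourth moment $\sup_\varepsilon\bb{E}\sup_{[0,t]}\norm{\bff{w}^\varepsilon}{\bb{L}^2}^4<\infty$ (from Propositions~\ref{pro:E sup un L2}--\ref{pro:E sup nab un L2}) together with $\bb{P}(\sigma_S<t)\leq C(1+e^{Ct})/S$, the $\bb{H}^1$-analogue of \eqref{equ:P sigma S}; sending first $\varepsilon\to0^+$ and then $S\to\infty$ kills both pieces, and the same splitting applied to the time-integral yields the second term of the claim.

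The main obstacle I anticipate is the interplay between the degenerating perturbation and the nonlinearity: one must extract the gain $\varepsilon$ from $-\varepsilon\Delta\bff{H}^\varepsilon$ through the $\varepsilon$-weighted a priori bound while simultaneously controlling the gyroscopic coupling $\bff{u}\times\Delta\bff{w}^\varepsilon$ using only the non-degenerate second-order dissipation, which succeeds in dimensions one and two via Lemma~\ref{lem:tec lem} but fails for $d=3$. A subtler point, and the reason the truncation above is built from $\bb{H}^1$ rather than $\bb{H}^2$ norms, is that the higher-regularity a priori constants of Section~\ref{sec:faedo} blow up as $\lambda_e=\varepsilon\to0^+$; the argument must therefore rest solely on those bounds that are genuinely uniform in $\varepsilon\in(0,\lambda_r)$, namely the $\bb{L}^2$/$\bb{H}^1$ estimates and the $\varepsilon$-weighted bound on $\bff{H}^\varepsilon$, supplemented by the fixed regularity of the limiting LLB solution.
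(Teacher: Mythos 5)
Your proposal is correct and follows essentially the same route as the paper: an $\bb{L}^2$ energy identity for the difference (with the stochastic integral vanishing), integration by parts on $-\varepsilon\Delta\bff{H}^\varepsilon$ combined with the $\varepsilon$-uniform bound $\varepsilon\,\bb{E}\int_0^t\|\bff{H}^\varepsilon\|_{\bb{H}^1}^2\,\ds\le C$ from Proposition~\ref{pro:E sup nab un L2} and Remark~\ref{rem:small lambda e}, Lemma~\ref{lem:tec lem} for the gyroscopic term, a stopping-time truncation to close Gronwall, and then $\varepsilon\to0^+$ followed by removal of the truncation. The only cosmetic differences are that the paper applies Lemma~\ref{lem:tec lem} with $\bff{w}=\bff{u}^\varepsilon$ rather than the limiting $\bff{u}$ and accordingly builds its stopping time from $\|\bff{u}^\varepsilon\|_{\bb{H}^1}^2\vee\int_0^t\|\bff{u}^\varepsilon\|_{\bb{H}^2}^4\,\ds$ (your $\bb{H}^1$-only truncation would need the analogous $L^4_t\bb{H}^2$-type quantity of $\bff{u}$ adjoined for $d=2$ so that the Gronwall exponent is deterministic), which does not change the substance of the argument.
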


\begin{proof}
Let $\bff{v}:= \bff{u}^\varepsilon-\bff{u}$ and $\bff{B}:=\bff{H}^\varepsilon-\bff{H}$. Then $\bff{v}$ satisfies the equation
\begin{align*}
		\dif \bff{v}
		&=
		\big( \lambda_r \bff{B}- \varepsilon \Delta \bff{H}^\varepsilon- \gamma(\bff{u}^\varepsilon \times \bff{B} +\bff{v}\times \bff{H})
		+
		R(\bff{u}^\varepsilon) - R(\bff{u}) 
		+
		L(\bff{u}^\varepsilon) - L(\bff{u}) \big) \, \dt
		\\
		&\quad
		+
		\left( \sum_{k=1}^\infty G_k(\bff{u}^\varepsilon)- G_k(\bff{u}) \right) \dif W_k(t)
\end{align*}
with $\bff{v}(0)=\bff{0}$. Applying It\^o's lemma, we have
\begin{align*}
	\frac{1}{2} \dif \norm{\bff{v}}{\bb{L}^2}^2
	&=
	\Big( \lambda_r \inpro{\bff{B}}{\bff{v}}_{\bb{L}^2}
	+
	\varepsilon \inpro{\nabla \bff{H}^\varepsilon}{\nabla \bff{v}}_{\bb{L}^2}
	-
	\gamma\inpro{\bff{u}^\varepsilon \times \bff{B}}{\bff{v}}_{\bb{L}^2}
	\\
	&\qquad 
	+
	\inpro{R(\bff{u}^\varepsilon) - R(\bff{u})}{\bff{v}}_{\bb{L}^2}
	+
	\inpro{L(\bff{u}^\varepsilon)- L(\bff{u})}{\bff{v}}_{\bb{L}^2}
	+
	\frac{1}{2} \sum_{k=1}^\infty \norm{G_k(\bff{u}^\varepsilon)- G_k(\bff{u})}{\bb{L}^2}^2
	\Big) \,\dt.
\end{align*}
Integrating this with respect to $t$, noting~\eqref{equ:lambda B v} and~\eqref{equ:gamma u B}, we obtain
\begin{align}\label{equ:dv llb}
	\nonumber
	&\frac{1}{2} \norm{\bff{v}(t)}{\bb{L}^2}^2
	+
	\lambda_r \int_0^t \norm{\bff{v}(s)}{\bb{H}^1}^2 \ds 
	+
	\lambda_r \int_0^t \norm{|\bff{u}^\varepsilon(s)|\, |\bff{v}(s)|}{\bb{L}^2}^2 \ds
	+
	\lambda_r \int_0^t \norm{\bff{u}(s)\cdot \bff{v}(s)}{\bb{L}^2}^2 \ds
	\\
	\nonumber
	&\leq
	\varepsilon \int_0^t \inpro{\nabla \bff{H}^\varepsilon(s)}{\nabla \bff{v}(s)}_{\bb{L}^2}\ds
	+
	\lambda_r \int_0^t \inpro{(\bff{u}^\varepsilon(s)\cdot \bff{v}(s))\bff{u}(s)}{\bff{v}(s)}_{\bb{L}^2} \ds 
	+
	\gamma \int_0^t \inpro{\bff{v}(s) \times \nabla \bff{u}^\varepsilon(s)}{\nabla\bff{v}(s)}_{\bb{L}^2} \ds 
	\\
	\nonumber
	&\quad
	+
	\int_0^t \inpro{R(\bff{u}^\varepsilon) - R(\bff{u})}{\bff{v}}_{\bb{L}^2} \ds
	+
	\int_0^t \inpro{L(\bff{u}^\varepsilon)- L(\bff{u})}{\bff{v}}_{\bb{L}^2} \ds
	+
	\frac{1}{2} \sum_{k=1}^\infty \int_0^t \norm{G_k(\bff{u}^\varepsilon)- G_k(\bff{u})}{\bb{L}^2}^2 \ds
	\\
	&=:
	I_1(t)+I_2(t)+I_3(t)+I_4(t)+I_5(t)+I_6(t).
\end{align}
Now for any $R>0$, define the stopping time
\begin{equation}\label{equ:tau R llb}
	\tau_R := \inf \left\{t\geq 0: \norm{\bff{u}^\varepsilon(t)}{\bb{H}^1}^2 \vee \int_0^t \norm{\bff{u}^\varepsilon(s)}{\bb{H}^2}^4 \ds
	> R \right\}.
\end{equation}
We will derive bounds for $I_j(t\wedge \tau_R)$, where $j=1,2,\ldots,6$, by using H\"older's and Young's inequalities and the Sobolev embedding as follows. For the terms besides $I_3$ and $I_4$, we have
\begin{align}
	\label{equ:I1 llb}
	I_1(t\wedge\tau_R)
	&\leq
	\frac{\varepsilon^2}{\lambda_r} \int_0^{t\wedge\tau_R} \norm{\nabla \bff{H}^\varepsilon(s)}{\bb{L}^2}^2 \ds
	+
	\frac{\lambda_r}{8} \int_0^{t\wedge\tau_R} \norm{\nabla \bff{v}(s)}{\bb{L}^2}^2 \ds,
	\\
	\label{equ:I2 llb}
	I_2(t\wedge\tau_R)
	&\leq
	\frac{\lambda_r}{2} \int_0^{t\wedge\tau_R} \norm{|\bff{u}^\varepsilon(s)|\, |\bff{v}(s)|}{\bb{L}^2}^2 \ds
	+
	\frac{\lambda_r}{2} \int_0^{t\wedge\tau_R} \norm{\bff{u}(s)\cdot \bff{v}(s)}{\bb{L}^2}^2 \ds,
	\\
	\label{equ:I5 llb}
	I_5(t\wedge\tau_R)
	&\leq
	C \int_0^{t\wedge\tau_R} \norm{\bff{v}(s)}{\bb{L}^2}^2 \ds,
	\\
	\label{equ:I6 llb}
	I_6(t\wedge\tau_R)
	&\leq
	\sigma_h\gamma^2 \int_0^{t\wedge\tau_R} \norm{\bff{v}(s)}{\bb{L}^2}^2 \ds.
\end{align}
For the term $I_3$, we use Lemma~\ref{lem:tec lem} to obtain
\begin{align}\label{equ:I3 llb}
	I_3(t\wedge\tau_R) 
	\leq 
	C \int_0^{t\wedge\tau_R} \Phi\big(\bff{u}^\varepsilon(s)\big) \norm{\bff{v}(s)}{\bb{L}^2}^2 \ds
	+
	\frac{\lambda_r}{4} \int_0^{t\wedge\tau_R} \norm{\nabla \bff{v}(s)}{\bb{L}^2}^2 \ds,
\end{align}
where $\Phi$ was defined in~\eqref{equ:Phi}. For the term $I_4$, by~\eqref{equ:Rvw vw} we have
\begin{align}\label{equ:I4 llb}
	I_4(t\wedge\tau_R)
	\leq
	C \int_0^{t\wedge\tau_R} \norm{\bff{\nu}}{\bb{L}^\infty(\mathscr{D};\bb{R}^d)}^2 \left(1+\norm{\bff{u}^\varepsilon(s)}{\bb{L}^\infty}^2 \right)
	\norm{\bff{v}(s)}{\bb{L}^2}^2 \ds
	+
	\frac{\lambda_r}{8} \int_0^{t\wedge\tau_R} \norm{\nabla \bff{v}(s)}{\bb{L}^2}^2 \ds.
\end{align}
The constant $C$ is independent of $\varepsilon$. We now apply the estimates \eqref{equ:I1 llb}, \eqref{equ:I2 llb}, \eqref{equ:I5 llb}, \eqref{equ:I6 llb}, \eqref{equ:I3 llb}, \eqref{equ:I4 llb}, and the Gronwall inequality on~\eqref{equ:dv llb}. Noting~\eqref{equ:tau R llb} and~\eqref{pro:E sup nab un L2}, we have
\begin{align*}
	&\sup_{r\in [0,t\wedge\tau_R]} \norm{\bff{v}(r)}{\bb{L}^2}^2
	+
	\int_0^{t\wedge\tau_R} \norm{\bff{v}(s)}{\bb{H}^1}^2 \ds
	\\
	&\leq
	C\varepsilon \left(\varepsilon \int_0^{t\wedge\tau_R} \norm{\nabla \bff{H}^\varepsilon(s)}{\bb{L}^2}^2 \ds\right)
	\cdot
	\exp\left(C \int_0^{t\wedge\tau_R} 1+\norm{\bff{u}^\varepsilon(s)}{\bb{L}^\infty}^2 +\Phi\big(\bff{u}^\varepsilon(s)\big) \ds\right)
	\\
	&\leq
	C \varepsilon e^{C(t+R^2)} \left(\varepsilon \int_0^{t\wedge\tau_R} \norm{\nabla \bff{H}^\varepsilon(s)}{\bb{L}^2}^2 \ds\right).
\end{align*}
Taking expectation and applying Proposition~\ref{pro:E sup nab un L2}, we obtain
\begin{align*}
	\bb{E}\left[\sup_{r \in [0,t\wedge\tau_R]} \norm{\bff{v}(r)}{\bb{L}^2}^2\right] 
	+
	\bb{E} \left[\int_0^{t\wedge\tau_R} \norm{\bff{v}(s)}{\bb{H}^1}^2 \ds\right]
	\leq
	C \varepsilon e^{C(t+R^2)},
\end{align*}
where $C$ is independent of $\varepsilon$ by Remark~\ref{rem:small lambda e}. Therefore, for any $R>0$,
\begin{align*}
	\lim_{\varepsilon\to 0^+} \bb{E}\left[\sup_{r\in [0,t\wedge\tau_R]} \norm{\bff{v}(r)}{\bb{L}^2}^2\right] 
	+
	\bb{E} \left[\int_0^{t\wedge\tau_R} \norm{\bff{v}(s)}{\bb{H}^1}^2 \ds\right]
	=0.
\end{align*}
Taking $R\uparrow\infty$ and applying the monotone convergence theorem, we obtain the required result.
\end{proof}

\section*{Acknowledgements}
The authors acknowledge financial support through the Australian Research Council's Discovery Projects funding scheme (projects DP240100781 and DP220101811).
Agus L. Soenjaya is supported by the Australian Government Research Training Program (RTP) Scholarship awarded at the University of New South Wales, Sydney.

We thank the referees for their valuable comments and suggestions, which led to significant improvements in the manuscript, and in particular to strengthening of Theorem \ref{the:invariant}.

%\appendix
%\section{}
%
%For the reader's convenience, we collect here some facts that are used throughout this paper.
%
%\begin{proposition}\label{pro:xi W}
%Let $E$ be a separable Hilbert space, $p\in [2,\infty)$, and $\alpha\in \left(0,\frac{1}{2}\right)$. Suppose that $\xi: [0,T] \times \Omega \to E$ is a stochastic process such that the stochastic integral
%\[
%	I(\xi):= \int_0^t \xi(s)\, \dif W(s), \quad t\geq 0
%\]
%is well-defined. Then there exists a constant $C$ depending on $T$ and $\alpha$ such that
%\[
%	\bb{E}\left[ \norm{I(\xi)}{W^{\alpha,p}(0,T;E)}^p \right] 
%	\leq 
%	C \,\bb{E} \left[ \int_0^T \norm{\xi(t)}{E}^p \, \dt \right].
%\]
%In particular, $\bb{P}$-a.s. the trajectories of the process $I(\xi)$ belong to $W^{\alpha,2}(0,T;E)$.
%\end{proposition}

%%%%%%%%%%%%%%% Bibliography %%%%%%%%%%%%%%%%%%

\newcommand{\noopsort}[1]{}\def\cprime{$'$}
\def\soft#1{\leavevmode\setbox0=\hbox{h}\dimen7=\ht0\advance \dimen7
	by-1ex\relax\if t#1\relax\rlap{\raise.6\dimen7
		\hbox{\kern.3ex\char'47}}#1\relax\else\if T#1\relax
	\rlap{\raise.5\dimen7\hbox{\kern1.3ex\char'47}}#1\relax \else\if
	d#1\relax\rlap{\raise.5\dimen7\hbox{\kern.9ex \char'47}}#1\relax\else\if
	D#1\relax\rlap{\raise.5\dimen7 \hbox{\kern1.4ex\char'47}}#1\relax\else\if
	l#1\relax \rlap{\raise.5\dimen7\hbox{\kern.4ex\char'47}}#1\relax \else\if
	L#1\relax\rlap{\raise.5\dimen7\hbox{\kern.7ex
			\char'47}}#1\relax\else\message{accent \string\soft \space #1 not
		defined!}#1\relax\fi\fi\fi\fi\fi\fi}

\end{document}